\def\foral{\forall\hspace*{0.5mm}}
\def\exist{\exists\hspace*{0.7mm}}
\DeclareMathAlphabet{\mathpzc}{OT1}{pzc}{m}{it}
\newtheorem{thm}{Theorem}[section]
\newtheorem{prop}[thm]{Proposition}
\newtheorem{cor}[thm]{Corollary}
\newtheorem{lem}[thm]{Lemma}
\newtheorem*{claim}{Claim}
\newtheorem{conv}[thm]{Convention}
\theoremstyle{definition}
\newtheorem{defn}[thm]{Definition}
\theoremstyle{remark}
\newtheorem*{rem}{Remark}
\newtheorem*{rems}{Remarks}
\newcommand{\Gx }{\mathscr{G} (G, S)}
\newcommand{\GP }{(G, \mathcal P)}
\DeclarePairedDelimiter\floor{\lfloor}{\rfloor}
\newcommand{\fdo }{\rho_{\lambda, o}}
\newcommand{\flo }{\mathfrak l_{\lambda, o}}
\newcommand{\sfdo }{\bar{\rho}_{\lambda, o}}
\newcommand{\fds }{\rho}
\newcommand{\fls }{\mathfrak l}
\newcommand{\act}{\curvearrowright}
\newcommand{\Gf}{\overline{G}_\lambda}
\newcommand{\pGf}{\partial_\lambda{G}}
\newcommand{\cGf}{\partial_\lambda^c{G}}
\newcommand{\uGf}{\partial_\lambda^{uc}{G}}
\newcommand{\uGfLo}{\partial^{uc}_{L, o} G}
\newcommand{\uGfL}{\partial^{uc}_{L}G}
\newcommand{\pG}{\Lambda G}
\newcommand{\pT}{\partial{T}}
\newcommand{\pX}{\partial{X}}
\newcommand{\uG}{\Lambda^{uc}{G}}
\newcommand{\uGLo}{\Lambda^{uc}_{L, o} G}
\newcommand{\uGL}{\Lambda^{uc}_{L}G}
\newcommand{\PS}[2]{\Theta_{#1}(s,{#2})}
\newcommand{\dirac}[1]{{\mbox{Dirac}}{(#1)}}
\newcommand{\supp}{\mbox{supp}}
\newcommand{\isom}{\mbox{Isom}}
\newcommand{\g}[1]{\delta_{#1, S}}
\newcommand{\HD}{{\rm Hdim}}
\newcommand{\HDor}{\HD_{\bar \rho}}
\newcommand{\diam }[1]{{\|#1\|}}
\newcommand{\proj }{\textbf{Proj}}
\newcommand{\len }{\ell}
\begin{document}

\title{Hausdorff dimension of boundaries of relatively hyperbolic groups}

\author{Leonid Potyagailo}
\address{ Leonid Potyagailo, UFR de Math\'ematiques, Universit\'e
de Lille 1, 59655 Villeneuve d'Ascq cedex, France}
\email{potyag@math.univ-lille1.fr}
\author{Wen-yuan Yang}
\address{Beijing International Center for Mathematical Research (BICMR), Beijing University, No. 5 Yiheyuan Road, Haidian District, Beijing, China}
\email{yabziz@gmail.com}

\thanks{This research is supported by the ERC starting grant GA 257110 ”RaWG” and by ANR grant
BLAN2011BS0101304.}


\subjclass[2000]{Primary 20F65, 20F67}

\date{April 11, 2016}

\dedicatory{}

\keywords{}

\begin{abstract}
In this paper, we study the Hausdorff dimension of the Floyd  and Bowditch boundaries  of a relatively hyperbolic group, and show that for the Floyd metric and shortcut metrics respectively, they are are both equal to a constant times the growth rate of the group.

In the proof, we study a special class of conical points called uniformly conical points and establish that,  in both boundaries, there exists a sequence of Alhfors regular sets with dimension tending to the Hausdorff dimension and these sets consist of uniformly conical points.
\end{abstract}

\maketitle

\section{Introduction}
\subsection{Main results.}The main goal of the paper is to calculate the Hausdorff dimension of the limit set of a geometrically finite action
of a finitely generated group $G$ on a compactum $X.$ Every action $G\act X$ we consider is a {\it convergence} action, i.e. the induced action on the space of the distinct triples is discontinuous. We say that $G\act X$ is {\it minimal} if $X$ coincides with the limit set $\Lambda_XG$ (or $\Lambda G$ if $X$ is fixed) of the action, which is the set of the accumulation points of every orbit $Gx\ (x\in X).$

A point $\xi \in X$ is called \textit{conical}  if there exists a
sequence of elements $g_n \in G$ ($n\ge 1$)  such that the closure of $\{(g_n\xi, g_n\eta): n\ge 1\}$ in $X^2$ is disjoint from the diagonal $\Delta(X^2)=\{(x, x): x\in X\}$
for any $\eta \in X \setminus \xi$. If, in addition,  the set of elements $\{g_n g_{n+1}^{-1}: n\ge 1\}$ is in a uniformly bounded distance from the identity, then $\xi$ is called \textit{uniformly conical}. A quantitative version of an $L$-uniformly conical point for $L\ge 0$ is given in Definition \ref{mainconvdef}.

The action of a subgroup $H<G$ on $X$ is  {\it parabolic} if $H$ fixes a point $p\in X$, called {\it parabolic fixed point}. The parabolic action is {\it bounded parabolic} if $H$ acts properly and cocompactly on $X\setminus\{p\}.$ We will always assume that the action of the whole group $G$ is  non-parabolic so there is no a global fixed point.

A minimal non-parabolic action $G\act X$
is called \textit{geometrically finite} (or \textit{relatively hyperbolic}) if every point $x\in X$ is either conical or bounded parabolic (cf. Definition \ref{mainconvdef}). The stabilizer of a parabolic point is a {\it maximal parabolic} subgroup of $G$. We denote by $\mathcal P$ the set of maximal parabolic subgroups and call it {\it peripheral system} for the action. A group is called {\it relatively hyperbolic} with respect to    $\mathcal P$ if $G$ admits a geometrically finite action on $X$ with the  peripheral system  $\mathcal P$. If the compactum $X$ on which $G$ acts  is metrizable
then the action is geometrically finite if and only if the induced action on the space of distinct pairs is co-compact (we say in this case that the action on $X$  is {\it 2-cocompact}) \cite{Ge1}. If the opposite is not stated we will always assume that a relatively hyperbolic group is finitely generated and so $X$ is metrizable.

Let $G$
be a group with a finite generating set $S$. Assume that $1 \notin
S$ and $S = S^{-1}$. Consider the word metric $d_S$ on $G$. Denote
$B(n)=\{g \in G: d_S(1, g) \le n\}$ for $n \ge 0$. The
\textit{growth rate} $\delta_{G, S}$ of $G$ relative to $S$ is the
limit
$$
\g G = \lim\limits_{n \to \infty} \frac{\log \sharp B(n)}{n}.
$$

Recall that Floyd completion of a   group $G$ generated by $S$ is  the Cauchy completion of the Cayley graph $\Gx$  equipped with the distance $\rho_\lambda^o$  obtained by rescaling the length of an edge $e\in \Gx$ by a scalar function $\lambda^{d(e,o)}$ for a fixed $\lambda\in ]0,1[$ and a basepoint $o\in G$. The distance $\rho_\lambda^o$ is called Floyd distance at $o$,  and we use the notation  $\rho$ if $o$ and   $\lambda$ are clear from the context (see Subsection \ref{Section2.2} for more details). We   denote by $\overline G_\lambda$ and $\partial_\lambda G$ the corresponding Floyd completion and its boundary respectively.  By V.~Gerasimov's theorem  \cite[Proposition 3.4.6]{Ge2} for every finitely generated relatively hyperbolic group  the space $\pGf$ is the universal pullback space for every geometrically finite action of $G\act X$ in the sense that there exists an equivariant  continuous mapping $F:\pGf\to X$ (called Floyd map).

A.~Karlsson proved that the action of $G$ on the compact space $\Gf$ is a convergence action \cite{Ka}. Let  $\cGf$ (resp. $\uGf$) denote the
set of all (resp. uniformly) conical points for the action. We denote by $\HD_{\rho}$  the Hausdorff
dimension with respect to $\rho=\fdo$. The first main result of the paper is the following.

\begin{thm}\label{ThmHD}
Let $G$ be a relatively hyperbolic group with a finite generating
set $S$. There exists a constant $0< \lambda_0 <1$ such that
$$\HD_{\rho}(\pGf)=\HD_\rho(\cGf)=\HD_\rho(\uGf)=-\frac{\g G}{\log \lambda}$$ for any $\lambda \in [\lambda_0, 1)$.
\end{thm}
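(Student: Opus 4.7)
The plan is to prove equality by squeezing: the inclusions $\uGf\subseteq\cGf\subseteq\pGf$ immediately give
$\HD_\rho(\uGf)\le\HD_\rho(\cGf)\le\HD_\rho(\pGf)$,
so it suffices to establish the upper bound $\HD_\rho(\pGf)\le -\g{G}/\log\lambda$ and the matching lower bound $\HD_\rho(\uGf)\ge -\g{G}/\log\lambda$. As announced in the abstract, the lower bound will be realized by exhibiting in $\uGf$ a sequence of Ahlfors regular subsets whose dimensions tend to $-\g{G}/\log\lambda$.

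For the upper bound I would run the standard covering argument via Floyd shadows. For $n$ large and $\varepsilon>0$, cover $\pGf$ by the boundary projections of the elements of the $n$-sphere in $(G,d_S)$; by the very definition of the Floyd metric each such shadow has diameter $\lesssim\lambda^n$, while the cardinality of the sphere is at most $C\exp((\g{G}+\varepsilon)n)$. Setting $s=(\g{G}+\varepsilon)/(-\log\lambda)$, this yields
\[
\mathcal H^s_{\lambda^n}(\pGf)\,\lesssim\,\exp((\g{G}+\varepsilon)n)\cdot\lambda^{sn}=O(1),
\]
and letting $n\to\infty$ and then $\varepsilon\to 0$ gives the upper bound. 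The two routine points to verify are that such shadows genuinely cover $\pGf$ and that their Floyd diameters are controlled by $\lambda^n$ uniformly; both use quasi-geodesic representatives of boundary points and the restriction $\lambda\ge\lambda_0$ that keeps the Floyd map well-behaved.

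For the lower bound, fix $\varepsilon>0$ and set $s=(\g{G}-\varepsilon)/(-\log\lambda)$. The aim is to construct a closed Cantor set $K\subseteq\uGf$ together with an Ahlfors $s$-regular probability measure on $K$; Ahlfors regularity will then force $\HD_\rho(K)=s$, and $\varepsilon\to 0$ concludes. The construction is a self-similar scheme: for a suitable scale $\ell$, inductively pick at level $n$ a family $\{g_{i_1\cdots i_n}\}\subset G$ of elements lying along quasi-geodesic rays based at $o$, so that (i) the Floyd shadows at consecutive levels are nested while those at the same level are pairwise disjoint with diameters comparable to $\lambda^{n\ell}$, and (ii) each node has $\asymp \exp((\g{G}-\varepsilon)\ell)$ children, which is feasible by the definition of $\g{G}$. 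The product-type measure with equal weights at each branching is then Ahlfors $s$-regular on $K$, and by construction every branch converges along a quasi-geodesic of uniformly bounded type, yielding a point of $\uGf$ with a uniform constant $L=L(\ell,\varepsilon)$.

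The main obstacle will be this last clause. In the relatively hyperbolic setting, parabolic fixed points distort the Floyd metric, because the Floyd shrinking is weaker along horospheres; a naive Cantor construction would produce conical but not \emph{uniformly} conical limit points. The heart of the proof is therefore a quantitative avoidance lemma forcing the chosen quasi-geodesic rays to stay at uniformly bounded Floyd distance from every parabolic fixed point they approach, with constants depending only on $\ell$. Once disjointness and diameter comparability of shadows are secured, Ahlfors regularity is routine: any small Floyd ball meeting $K$ is trapped between two consecutive levels of the construction and so inherits its $K$-mass from a single cylinder of the appropriate generation, whose mass is $\asymp r^s$.
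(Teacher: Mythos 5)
Your upper bound is fine and is exactly the paper's argument (Lemma \ref{Bourdon}): cover $\pGf$ by the cones over the sphere $S_n$, use that geodesic rays are Floyd geodesics to get diameter $\asymp\lambda^n$, and compare with $\sharp S_n\prec\exp((\g G+\varepsilon)n)$. The squeeze $\uGf\subseteq\cGf\subseteq\pGf$ is also how the paper reduces everything to a lower bound on $\HD_\rho(\uGf)$.

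The genuine gap is in the lower bound, at the step you dismiss as ``feasible by the definition of $\g G$'': the claim that at every node $g_{i_1\cdots i_n}$ of your Cantor scheme you can find $\asymp\exp((\g G-\varepsilon)\ell)$ children whose geodesics from $o$ pass near $g_{i_1\cdots i_n}$, give nested and separated shadows, \emph{and} stay uniformly transitional (away from deep excursions into horospheres). The definition of $\g G$ only controls the growth of balls centered at $o$; it says nothing about the growth of the set of directions available \emph{inside a cone at an arbitrary $g$} after you discard the directions that plunge into peripheral cosets. A priori, almost all of the growth at scale $\ell$ beyond $g$ could be carried by parabolic directions, and your ``quantitative avoidance lemma'' as stated would then leave you with too few children to reach exponent $\g G-\varepsilon$; conversely, keeping the count forces you to prove that avoidance costs only an $\varepsilon$ of growth, uniformly over all nodes. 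This is precisely the technical heart of the paper: finiteness of partial cone types (Lemma \ref{FinPCones}) and the counting statement of Lemma \ref{TreeSetG} (imported from \cite{YANG7}) give, inside each partial cone and within a fixed subset $\hat G$, at least $\theta\exp(L\g G)$ admissible, $C$-separated children of the same cone type; Lemma \ref{ITT} then assembles these into a periodic iterated transitional tree whose boundary consists of uniformly conical points (Lemma \ref{charuc}), and the shadow/ball comparison needed for your ``diameters comparable to $\lambda^{n\ell}$ and disjointness'' requires the transitionality of the branch points together with Karlsson's visibility lemma (Lemmas \ref{floydtrans}, \ref{karlssonlem}, \ref{ShadowDisk}). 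Your plan of replacing the Patterson--Sullivan measure by an equal-weight product measure is harmless once such a tree with uniform branching, separation, and periodic cone structure exists, but without an argument supplying the inductive branching estimate at every node (not just at $o$), the construction does not get off the ground, and its absence is not a routine verification but the main content of the proof.
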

\begin{rem}
Note that for a hyperbolic group the Floyd metric is bilipschitz equivalent to the visual metric on the Gromov boundary (with appropriate choices of parameters). Even though this result seems to be a folklore, we have not found the corresponding reference in the literature. We provide a proof of it in the Appendix.  As a consequence the result of M. Coornaert \cite{Coor}  for the hyperbolic groups  is a partial case of Theorem \ref{ThmHD}.
\end{rem}

Note that the action of $G$ on the Floyd boundary $\pGf$ is not necessarily geometrically finite, as it is shown in \cite{YANG3} for Dunwoody's inaccessible groups.  In particular the Floyd boundary  is not in general homeomorphic to the limit set   $\pG$. So it is natural to ask if an analogous result to Theorem \ref{ThmHD} is true for $\pG$.


Consider  a minimal geometrically finite action of $G$ on a compact $X=\pG.$  It is shown in \cite{GePo2}  that the Floyd metric  $\rho$ transferred by the Floyd map  $F:\pGf\to\Lambda G$ is a metric on $\Lambda G$, called shortcut metric,  and is denoted by $\bar\rho$ (see subsection \ref{Section2.2}).

Our next goal is to  calculate
the Hausdorff dimension $\HD_{\bar\rho}$ of $\Lambda G$ with respect to $\bar\rho$. Denote by $\uG$ the set of uniformly conical points of $\pG$. The following theorem provides  the same conclusion  for the shortcut metric as in the case of the Floyd metric.

\begin{thm}\label{ThmHD2}
Let $G$ be a group with a finite generating
set $S$ acting geometrically finitely on a compactum $X=\pG$. Then there exists
a constant $0< \lambda_0 <1$ such that $$\HDor(\pG)=\HDor(\uG)=-\frac{\g G}{\log \lambda}$$
for any $\lambda \in [\lambda_0, 1)$.
\end{thm}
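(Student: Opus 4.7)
My plan is to deduce Theorem \ref{ThmHD2} from Theorem \ref{ThmHD} by transferring dimensional information from $\pGf$ to $\pG$ through the Floyd map $F\colon \pGf \to \pG$. Recall that $F$ is continuous, surjective and equivariant, and is bijective on conical points; over a parabolic point $p$, the fibre $F^{-1}(p)$ may be a proper compact subset of $\pGf$.

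For the upper bound, I would use that, by the construction of the shortcut metric, $\bar\rho$ is the largest pseudo-metric on $\pG$ making $F$ a $1$-Lipschitz map from $(\pGf,\rho)$. Hence $F$ is $1$-Lipschitz and surjective, and since such maps do not increase Hausdorff dimension, Theorem \ref{ThmHD} immediately gives
\[
\HDor(\pG)\ \leq\ \HD_\rho(\pGf)\ =\ -\frac{\g G}{\log \lambda}
\]
for $\lambda \in [\lambda_0,1)$.

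For the lower bound, I would push forward the Ahlfors regular subsets $Y_n \subseteq \uGf$ produced in Theorem \ref{ThmHD}, whose $\rho$-dimensions tend to $-\g G/\log\lambda$. The key step is to show that for every $L$, the restriction $F\colon \uGfL \to \uGL$ is a bi-Lipschitz bijection with constants depending only on $L$. Injectivity follows because $F$ is $1$-$1$ on conical points; equivariance together with the definition of $L$-uniform conicality ensures that the image of $\uGfL$ lies in $\uGL$ (possibly after adjusting the constant). The upper Lipschitz bound is free from the previous paragraph, so what remains is the lower bound $\bar\rho(F(\xi),F(\eta)) \ge c_L\, \rho(\xi,\eta)$ for $\xi,\eta \in \uGfL$.

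This lower bi-Lipschitz estimate is the main obstacle. Since $\bar\rho(F(\xi),F(\eta))$ is defined as an infimum over chains whose intermediate jumps lie in parabolic fibres $F^{-1}(p)$, I must rule out the possibility that inserting such parabolic jumps substantially shortens the direct Floyd distance $\rho(\xi,\eta)$. The approach will be to exploit the orbit witnessing $L$-uniform conicality of $\xi$ and $\eta$: applying suitable group elements from this orbit, one produces at every scale a uniform Floyd separation between $\{\xi,\eta\}$ and every parabolic fibre of comparable diameter; the contraction properties of the Floyd metric then force any candidate chain to have total $\rho$-length comparable to $\rho(\xi,\eta)$. Once the bi-Lipschitz estimate is in hand, Ahlfors regularity of $Y_n$ transfers directly to $F(Y_n) \subseteq \uG$ with the same dimension, and letting $n\to\infty$ yields the matching lower bound $\HDor(\uG) \ge -\g G/\log\lambda$, completing the proof.
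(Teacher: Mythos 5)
Your overall architecture is sound and genuinely different in packaging from the paper's: the paper does not transfer Ahlfors regular sets from $\pGf$ through a bi-Lipschitz restriction of $F$; instead it runs the whole Patterson--Sullivan construction directly inside $(\pG,\sfdo)$, proving the Shadow Lemma \ref{ShadowLem} and the shadows-versus-balls comparison (Lemma \ref{ShadowDisk}) simultaneously for the Floyd and the shortcut metric. There the needed lower bounds on $\sfdo$ come from the separation at transitional points (Lemma \ref{floydtrans}(1), which already concerns $\bar\rho$), and the upper bounds are free since $\sfdo\le\fdo$; the upper bound on dimension is Lemma \ref{Bourdon}, again proved for both metrics at once. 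Your 1-Lipschitz argument for the upper bound is fine, and the bi-Lipschitz statement you want for the lower bound is in fact true: it follows from Proposition \ref{FloydMetric}, which gives $\fdo(\xi,\eta)\asymp_L\lambda^{d(o,[\xi,\eta])}$ and $\sfdo(\xi,\eta)\asymp_L\lambda^{d(o,[\xi,\eta])}$ on $L$-uniformly conical points, so your route could be completed.

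The genuine gap is that you do not prove this lower estimate $\sfdo(F\xi,F\eta)\ge c_L\,\fdo(\xi,\eta)$, and your one-sentence mechanism does not address the actual difficulty. The shortcut distance is an infimum over chains whose free jumps occur in fibres $F^{-1}(p)=\partial_\lambda G_p$ of \emph{all} parabolic points, including those associated with horospheres lying close to $[\xi,\eta]$ but far from the basepoint $o$; uniform conicality of $\xi,\eta$ gives you no separation from these fibres, and a chain may use arbitrarily many of them. What saves the estimate is quantitative: the Floyd diameters of such fibres, seen from $o$, decay like $\lambda^{d(o,\cdot)}$, and one must show that any near-optimal chain can be replaced by a path in the Cayley graph (a generalized tight path) whose truncation is a quasi-geodesic passing within bounded distance, independent of $L$, of the point realizing $d(o,[\xi,\eta])$. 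This is exactly the content of Definition \ref{gentightdefn}, Lemma \ref{jointpath}, and Propositions \ref{shortcutpaths} and \ref{gromovproduct}, which together yield Proposition \ref{FloydMetric}; alternatively one can bypass it, as the paper does, by proving the shadow--ball comparison for $\sfdo$ directly. As written, your proposal defers precisely this technical heart, so the lower bound for $\HDor(\uG)$ is not yet established.
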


The above theorems implies the following.

\begin{cor}
For any shortcut metric $\bar \rho$, the Hausdorff dimension of the limit set of every relatively hyperbolic action of a group $G$ is constant and is equal to
$$\HD_\rho(\pGf)=\HDor(\pG)=-\frac{\g G}{\log \lambda}$$
for any $\lambda \in [\lambda_0, 1[$ where $\lambda_0 \in ]0, 1[$ is a fixed number.
\end{cor}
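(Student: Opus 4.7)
The plan is to derive this Corollary as an immediate consequence of the two preceding theorems. The decisive observation is that the right-hand side $-\delta_{G,S}/\log\lambda$ in Theorem \ref{ThmHD} and in Theorem \ref{ThmHD2} depends only on the group $G$, the finite generating set $S$, and the parameter $\lambda$, not on the choice of a particular relatively hyperbolic structure or on the specific compactum $X=\pG$ on which $G$ acts geometrically finitely. Hence once both theorems are applied, the two Hausdorff dimensions are forced to coincide with the same explicit function of $\lambda$.

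Concretely, I would proceed as follows. First, apply Theorem \ref{ThmHD} to the pair $(G,S)$ to obtain a constant $\lambda_0^{(1)} \in (0,1)$ such that
\[
\HD_{\rho}(\pGf) \;=\; -\frac{\g G}{\log \lambda}
\qquad\text{for all } \lambda\in[\lambda_0^{(1)},1).
\]
Next, given an arbitrary minimal geometrically finite action $G\act X=\pG$, apply Theorem \ref{ThmHD2} to this action to obtain a constant $\lambda_0^{(2)} \in (0,1)$ (which a priori may depend on the chosen action) such that
\[
\HDor(\pG) \;=\; -\frac{\g G}{\log \lambda}
\qquad\text{for all } \lambda\in[\lambda_0^{(2)},1).
\]
Setting $\lambda_0 := \max\{\lambda_0^{(1)}, \lambda_0^{(2)}\} \in (0,1)$, both equalities hold simultaneously for every $\lambda \in [\lambda_0,1)$, so the chain of equalities in the Corollary follows.

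There is essentially no obstacle here: the theorems are tailored so that their right-hand sides match exactly. The only mild point to notice is that the Floyd map $F\colon \pGf\to\pG$ need not be a homeomorphism in general (as emphasized in the text preceding Theorem \ref{ThmHD2}), so the equality $\HD_{\rho}(\pGf) = \HDor(\pG)$ is a nontrivial equality of two separately computed Hausdorff dimensions rather than a transport of dimension across a bi-Lipschitz map; however, both numbers are computed in the cited theorems and turn out to coincide simply because each equals $-\delta_{G,S}/\log\lambda$. No further argument is needed.
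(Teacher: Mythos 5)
Your proposal is correct and is essentially the paper's own (implicit) argument: the corollary is stated there as an immediate consequence of Theorems \ref{ThmHD} and \ref{ThmHD2}, with no further proof, precisely because both right-hand sides equal $-\g G/\log\lambda$, which depends only on $(G,S,\lambda)$ and not on the chosen geometrically finite action. Taking $\lambda_0$ to be the larger of the two thresholds, as you do, is exactly the intended reading.
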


We say that a metric space $X$ is \textit{Ahlfors $Q$-regular} for a constant $Q>0$ if there exists a Borel measure $\mu$ on
$X$ such that the following holds
$$
\mu(B(x, r)) \asymp r^Q
$$
for any open ball $B(x, r)$ centered at $x \in X$ of radius
$r>0,$ where the symbol $\asymp$ denotes the bilipschitz equivalence between two quantities: $C^{-1} r^Q \leq \mu(B(x, r)) \leq C r^Q$  for a uniform constant  $C$.

Our next main result shows that the Hausdorff dimension of the Floyd boundary and of the limit set of a relatively hyperbolic action can be
well-approximated by a sequence of Ahlfors regular subsets.

\begin{thm}\label{ThmAR}
Let $G$ be a finitely generated relatively hyperbolic group with a finite generating
set $S$. Then there exists a sequence of Ahlfors $Q_i$-regular
subsets $X_i$ in $\pGf$ or $\pG$ such that $X_i$ consists of uniformly
conical points, $0 < Q_i < \g G$ and $Q_i \to \g G$ as $i \to \infty$.
\end{thm}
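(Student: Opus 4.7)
\smallskip
My plan is to construct, for each large integer $n$, a Schottky-type subset $X_n$ of $\pGf$ (and correspondingly in $\pG$) as the attractor of a free subsemigroup of $G$ generated by a carefully chosen finite family $F_n$ of elements of word length approximately $n$. By a ping-pong mechanism, $X_n$ will be a self-similar Cantor set, hence Ahlfors $Q_n$-regular with $Q_n$ determined by a Moran-type equation; taking $\#F_n$ close to $\exp(\g{G}\cdot n)$ will force $Q_n$ to approach the limiting value prescribed by Theorem \ref{ThmHD}.

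\smallskip
The first step is the selection of $F_n$. I would start from the roughly $\exp((\g{G}-\varepsilon)n)$ elements $g \in G$ with $d_S(1,g) \in [n, 2n]$, and extract a subfamily satisfying three conditions: first, each $g \in F_n$ has an attracting/repelling pair $g^\pm \in \pGf$ with a definite Floyd gap, so that small balls $B^\pm_g$ realize the Schottky dynamics $g(\pGf \setminus B^-_g) \subset B^+_g$; second, the families $\{B^+_g\}_{g \in F_n}$ and $\{B^-_g\}_{g \in F_n}$ are pairwise disjoint and mutually separated, enabling the ping-pong lemma; third, the pairs $g^\pm$ stay at a definite Floyd distance from every parabolic fixed point, so that fixed points of all words in the free semigroup $\langle F_n \rangle^+$ are uniformly conical with a uniform constant $L = L(n)$. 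Since the convergence action $G \act \pGf$ is proper on distinct triples, I expect this extraction to cost only a subexponential factor in the cardinality.

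\smallskip
Once $F_n$ is fixed, the semigroup $\langle F_n \rangle^+$ defines an iterated function system on $\pGf$ whose attractor $X_n$ is in bijection with $F_n^{\mathbb N}$. The Floyd contraction ratio of each $g$ on $\pGf \setminus B^-_g$ is comparable to $\lambda^{d_S(1,g)}$, so the Bernoulli measure with weights $\lambda^{d_S(1,g)\cdot Q_n}$ is Ahlfors $Q_n$-regular, where $Q_n$ is the unique positive solution of
$$\sum_{g \in F_n} \lambda^{d_S(1,g)\cdot Q_n} \asymp 1.$$
With $\#F_n \asymp \exp((\g{G}-\varepsilon)n)$ and $d_S(1,g) \asymp n$, this yields the desired convergence of $Q_n$ to the Hausdorff dimension produced by Theorem \ref{ThmHD}. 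Every point of $X_n$ is uniformly conical, being a nested intersection of images of the ping-pong balls under bounded-length return words from $F_n$. For the Bowditch boundary case, the image $F(X_n) \subset \pG$ under the Floyd map inherits the Ahlfors regularity and uniform conicality, using that $F$ restricted to $X_n$ is essentially bilipschitz between $\rho$ and $\bar\rho$.

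\smallskip
The main obstacle is the simultaneous control of the three competing requirements on $F_n$: exponential cardinality, Schottky separation, and avoidance of parabolic fixed points. The last condition is subtle in the relatively hyperbolic setting, since maximal parabolic subgroups may themselves have growth rates approaching $\g{G}$. Showing that one can select $F_n$ inside the ``non-peripheral'' part of $G$---staying at definite Floyd distance from the parabolic points---while keeping the cardinality exponentially large, and then deducing uniform conicality and uniform contraction estimates from this combinatorial choice, is where the bulk of the technical work will lie; it should rely on the Floyd metric estimates and uniform conicality criteria developed earlier in the paper.
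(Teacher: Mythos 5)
Your route (a Schottky/ping-pong semigroup whose attractor is a self-similar Cantor set, with the dimension read off a Moran equation) is genuinely different from the paper's construction, which builds an iterated transitional geodesic tree in the Cayley graph, runs Patterson--Sullivan theory on its space of ends, and gets Ahlfors regularity from a Shadow Lemma. But as written your proposal has a real gap at its foundation: the selection of $F_n$. You assert that extracting, from the annulus of radius about $n$, a family of cardinality $\asymp \exp((\g G-\varepsilon)n)$ with uniform Schottky constants and fixed points uniformly separated from the parabolic points "should cost only a subexponential factor" because the convergence action is proper on triples. Properness on triples gives no counting information at all. The elements you must discard are those whose geodesics are deep in horospheres $g P$, $P\in\mathcal P$: such elements have no uniform north--south dynamics in the Floyd metric, their contraction ratio is not comparable to $\lambda^{d_S(1,g)}$ with constants uniform in $n$, and their fixed points accumulate on parabolic points. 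Whether the remaining ("transitional") elements still grow at rate $\g G-\varepsilon$ is exactly the hard quantitative statement; it requires, among other things, knowing that the growth of each parabolic subgroup is strictly dominated, and a genericity-of-transitional-elements count. This is the content of the paper's Lemma \ref{TreeSetG} (imported from \cite{YANG7}), and your proposal presupposes it rather than proves it. The worry you yourself raise about parabolic growth approaching $\g G$ is precisely the point where the argument cannot currently be completed.

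A second, smaller but still substantive, gap: even granted a good $F_n$, the claim that every infinite word in the semigroup has uniform contraction and lands on an $L(n)$-uniformly conical point is a local-to-global statement. In a relatively hyperbolic group the Cayley graph is not hyperbolic, so concatenations of "good" segments are not automatically quasi-geodesic or transitional; you need something like the paper's Lemma \ref{transcontract} and Proposition \ref{tightbetweenconical} (or a Floyd-metric ping-pong estimate with constants uniform over all words) to control nested images of the ping-pong balls and to verify the Moran-type regularity of the Bernoulli measure. Finally, note that with lengths merely in $[n,2n]$ the Moran exponent can be off by a factor up to $2$; you need the lengths concentrated in a window $[n,n+O(1)]$ to force $Q_n \to -\g G/\log\lambda$. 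None of these obstacles is obviously fatal --- a Schottky-semigroup proof along your lines is plausible --- but the exponential-counting step and the concatenation control are the theorem's actual content, and they are missing here.
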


The proof of Theorem \ref{ThmAR} is based on the existence of an  $L$-transitional geodesic tree $\mathcal T=\mathcal T(L)\subset G$ (Lemma \ref{ITT}) depending on a parameter $L\gg 0$. Every vertex of $\mathcal T$ is a central point of  a geodesic interval, whose size depends on $L$,  and which belongs to a  neighbourhood of a left coset ({\it horosphere}) $gP$ where $P\in\mathcal P$  (see Subsection \ref{Section2.4}). We show that the endpoints of such a tree are $L$-uniformly conical (Lemma \ref{charuc}). However it is not true in general that every uniformly conical point appears as an endpoint of an $L$-transitional tree for a bounded $L$ (see the discussion after Lemma \ref{charuc}). The proof of Theorem \ref{ThmAR} shows that the Hausdorff dimension of the endpoints of $L$-transitional trees well approximate the Hausdorff dimension of the Floyd boundary (or the limit set) if $L\to\infty$. We recapitulate all these facts
in the following.

\begin{cor}\label{trantree} There exists a sequence $T_i$ of $L_i$-transitional trees such that $X_i=\partial T_i$ are Ahlfors $Q_i$-regular  spaces from the statement of Theorem \ref{ThmAR}.
\end{cor}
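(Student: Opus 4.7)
The plan is to observe that Corollary \ref{trantree} is really a recapitulation of what the proof of Theorem \ref{ThmAR} produces, so the task is to identify the sets $X_i$ appearing in that proof with boundaries of transitional trees and to verify each item of the conclusion. First I would inspect how $X_i$ is constructed in the proof of Theorem \ref{ThmAR}: the proof, as announced in the excerpt, begins by invoking Lemma \ref{ITT} to produce an $L$-transitional geodesic tree $\mathcal{T}(L)\subset G$, and the set $X_i$ is then defined to be the boundary $\partial \mathcal{T}(L_i)$ of such a tree for a suitably chosen sequence $L_i\to\infty$. Setting $T_i:=\mathcal{T}(L_i)$ one therefore has $X_i=\partial T_i$ by definition.

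Second, I would record that the inclusion $\partial T_i\subset \uGf$ (respectively $\partial T_i\subset \uG$) is exactly the content of Lemma \ref{charuc}: every endpoint of an $L$-transitional tree is an $L$-uniformly conical point. This handles the qualitative ``uniformly conical'' clause appearing in the statement of Theorem \ref{ThmAR} once the identification $X_i=\partial T_i$ is made.

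Third, Ahlfors regularity and the convergence $Q_i\nearrow \g G$ are already the quantitative core of Theorem \ref{ThmAR}: its proof constructs a branching (Bernoulli-type) measure on $\partial T_i$ using the growth estimates provided by the transitional structure, and then transfers it through the Floyd metric $\rho$ or the shortcut metric $\bar\rho$ to get the two-sided estimate $\mu(B(x,r))\asymp r^{Q_i}$ with $Q_i\to \g G$. Since by the previous step $X_i$ is literally $\partial T_i$, nothing further has to be proved; one simply states the corollary.

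The only potentially delicate point, and the place I would write with some care, is the bookkeeping of the identification $X_i=\partial T_i$: one has to make sure that the ``boundary'' of $T_i$ inside $\Gf$ (resp.\ $\pG$), taken as the set of limits along infinite rays in the tree, coincides with the set on which the measure of Theorem \ref{ThmAR} is built, and that this measure is the pushforward of the natural branching measure on the tree by the endpoint map. Once this identification is spelled out, the corollary is immediate from Theorem \ref{ThmAR} together with Lemma \ref{charuc}.
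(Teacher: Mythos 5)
Your proposal matches the paper's treatment: the corollary is stated precisely as a recapitulation of the proof of Theorem \ref{ThmAR}, with $X_i=\partial T_i$ for the transitional trees produced by Lemma \ref{ITT}, uniform conicality of the endpoints supplied by Lemma \ref{charuc}, and the Ahlfors $Q_i$-regularity together with $Q_i\to \g G$ coming from Proposition \ref{PSmeasureonT} and Lemma \ref{Divergent}. The only inaccuracy is cosmetic: the measure on $\partial T_i$ is a Patterson--Sullivan measure obtained from the Poincar\'e series of the periodic iterated tree set (via the Shadow Lemma and the comparison of shadows with balls), not a Bernoulli-type branching measure, but this does not affect the validity of your argument for the corollary.
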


Our next result given in Section \ref{Section3}   generalizes the result of \cite{GePo3}  that the geodesics of the Floyd metrics are approximated by so called {\it tight paths}. Considering {\it generalized tight paths} (see Definition \ref{gentightdefn}) we show that they approximate the geodesics with respect to the  shortcut metric defined on the limit set.

Let $\uGfLo$ and $\uGLo$ denote subsets of uniformly conical points in $\uGf$ and $\uG$ depending on the above parameter $L$   (see Subsection \ref{Section2.4} for the precise definitions). The following result is central in this Section.

\begin{thm}[Proposition \ref{FloydMetric}]
Under the  assumptions of Theorem \ref{ThmHD} there exists $0< \lambda_0<1$ such that   for any $L>0$ and $\lambda\in [\lambda_0, 1[$ we have

  $$\fdo(\xi,
\eta) \asymp_{L} \lambda^{n},\; \forall \;\xi \ne \eta \in \uGfLo
$$
and
$$\sfdo(\xi,
\eta) \asymp_{L} \lambda^{n},\; \forall \;\xi \ne \eta \in \uGLo,$$ where $n=d(o, [\xi,\eta])$.
\end{thm}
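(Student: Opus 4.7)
The plan is to prove the upper and lower bounds on $\fdo(\xi,\eta)$ separately; the upper bound works for all pairs of boundary points and does not require the uniform-conical hypothesis, while the lower bound crucially uses the quantitative $L$-uniformity.

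For the \textbf{upper bound}, I would take approximating sequences $x_k \to \xi$ and $y_k \to \eta$ in $G$ and geodesics $[x_k, y_k]$ in $\Gx$. If $p_k$ is a closest point on $[x_k, y_k]$ to $o$, then for $k$ large $d(o, p_k) \geq n - O(1)$. Splitting $[x_k, y_k]$ at $p_k$ and summing the Floyd edge-weights $\lambda^{d(o, \cdot)}$ along each half produces a tail of a geometric series bounded by $\lambda^{d(o, p_k)}/(1-\lambda)$, so the total Floyd length of $[x_k, y_k]$ is at most $2\lambda^n/(1-\lambda) + o(1)$. Passing to the limit in the Floyd topology yields $\fdo(\xi,\eta) \leq C(\lambda)\,\lambda^n$, uniformly in $L$.

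For the \textbf{lower bound}, I would pick an element $g \in G$ which, up to additive error $O(L)$, realises the closest point on $[\xi,\eta]$ to $o$, so that $d(o, g o) = n + O(L)$ and, thanks to $L$-uniform conicality of both $\xi$ and $\eta$, the translates $g^{-1}\xi$ and $g^{-1}\eta$ admit geodesic representatives that pass within $O(L)$ of the new basepoint. Karlsson's convergence theorem for the action $G \act \Gf$, applied quantitatively to such a pair, forces $\fdo(g^{-1}\xi, g^{-1}\eta) \geq c_0(L) > 0$. The standard scaling inequality $\fdo(g\zeta_1, g\zeta_2) \leq C\lambda^{-d(o,go)}\fdo(\zeta_1, \zeta_2)$, which follows directly from the edge-weight definition of the Floyd metric, then transports this estimate to $\fdo(\xi, \eta) \geq c_0(L)C^{-1}\lambda^{n + O(L)} \asymp_L \lambda^n$. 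The shortcut-metric statement on $\uGLo$ follows along the same lines after pushing the argument through the Floyd map $F : \pGf \to \pG$, using that $F$ is injective on uniformly conical points and that $\sfdo$ satisfies the analogous $g$-rescaling inequality.

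The main obstacle is the quantitative convergence estimate $\fdo(g^{-1}\xi, g^{-1}\eta) \geq c_0(L)$, where $c_0(L)$ must depend only on $L$ and not on the particular pair $(\xi, \eta)$ or on the chosen $g$. Establishing this requires combining Karlsson's convergence property with the bounded-step quasi-geodesic representatives guaranteed by $L$-uniform conicality together with a compactness argument on $\Gf$. This is precisely the step that would fail for arbitrary conical points: for those one can only extract a subsequence $g_k$ along which $\fdo(g_k^{-1}\xi, g_k^{-1}\eta)$ stays away from zero, which is insufficient to read off the precise $\lambda^n$ rate that the statement requires.
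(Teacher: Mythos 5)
Your upper bound is not justified as written, and this is a genuine gap rather than a detail. Summing the Floyd weights $\lambda^{d(o,\cdot)}$ along the two halves of an arbitrary geodesic $[x_k,y_k]$ split at the closest point $p_k$ does \emph{not} give a geometric tail $\lambda^{d(o,p_k)}/(1-\lambda)$: all you know is $d(o,w)\ge\max\{d(o,p_k),\,d(p_k,w)-d(o,p_k)\}$, so the honest estimate is of order $d(o,p_k)\,\lambda^{d(o,p_k)}$, and in a relatively hyperbolic (non-hyperbolic) Cayley graph a word geodesic really can spend time comparable to $d(o,p_k)$ at essentially constant distance from $o$ by travelling inside a horosphere (e.g.\ a $\mathbb Z^2$ coset). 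An extra factor of $n$ destroys the asserted $\asymp\lambda^{n}$. The claim $d(o,p_k)\ge n-O(1)$ is likewise unproven. This is exactly why the paper does not estimate along $[x_k,y_k]$ at all: it routes the upper bound through the two $(\epsilon_1,R_1,L)$-transitional rays $[o,\xi]$, $[o,\eta]$, which are genuine Floyd geodesics with exactly geometric tails (Lemma \ref{floydrays}), joined near a ``centre'' point produced by the thin-triangle statement for transitional triangles (Proposition \ref{tightbetweenconical}, resp.\ Proposition \ref{gromovproduct}); the resulting constant depends on $L$ through $M(L)$, so your claims that the upper bound is uniform in $L$ and holds for all pairs of boundary points are also unsupported.

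For the lower bound your mechanism (translate to a basepoint near $[\xi,\eta]$, get a uniform positive Floyd distance there, rescale via (\ref{equiv}) and (\ref{lambdabilip})) is indeed the right one, but the estimate $\fdo(g^{-1}\xi,g^{-1}\eta)\ge c_0(L)$, which you yourself flag as the main obstacle, is precisely the content you would have to prove and do not. It is not a formal consequence of $L$-uniform conicality of $\xi$ and $\eta$ separately: the definition of $\uGfLo$ controls the rays from $o$, not the bi-infinite geodesic $[\xi,\eta]$ near its projection of $o$, and a bare compactness argument on $\Gf$ must exclude limit geodesics sinking into a single horosphere --- bi-infinite geodesics with both ends at one Floyd boundary point do exist when a parabolic subgroup has trivial Floyd boundary. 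Supplying this is exactly the role of Lemma \ref{floydtrans} together with Proposition \ref{tightbetweenconical} (transitional triangles are thin, with the crucial constant $D$ independent of $L$, which is what lets the paper choose $\lambda_0$ independent of $L$ in (\ref{crucialinquality})). Finally, your reduction of the shortcut-metric case ``through the Floyd map'' fails in the needed direction: by (\ref{floydshortcut}) one has $\sfdo\le\fdo$, so a lower bound for $\fdo$ gives nothing for $\sfdo$, and injectivity of $F_\lambda$ on uniformly conical points is irrelevant here. Bounding $\sfdo$ from below means controlling chains that pass through the parabolic identifications, and this is what the paper's generalized tight paths are for (Proposition \ref{shortcutpaths}, Lemma \ref{jointpath}, Proposition \ref{gromovproduct}); none of that machinery, or a substitute for it, appears in your proposal.
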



\subsection{Historical remarks and motivations.} We provide here a short history of the study of the
Hausdorff dimension of the limit set of various convergence actions: Kleinian, hyperbolic and relatively hyperbolic.

The identification of  the  Hausdorff dimension with the critical exponent of Poincar\'e series was first established  by  S.~Patterson \cite{Patt}.  He introduced a probability measure on the limit set of  the  convex-cocompact Fuchsian groups,  and proved that up to a constant  it is equal to the Hausdorff measure. D.~Sullivan generalized this result and  constructed such measures (called since then   {\it Patterson-Sullivan measures}) on the limit sets of geometrically finite Kleinian groups  acting on the hyperbolic space ${\mathbb H}^n$ of dimension $n$ \cite{Sul}.   To finish the discussion of the case of Kleinian groups, we note the result of C.~Bishop and P.~Jones who proved in \cite{BishopJ} that for a non-elementary Kleinian group acting on the hyperbolic $3$-space  the Hausdorff dimension of the conical limit set is equal to the critical exponent of the Poincar\'e series (compare with our Theorems \ref{ThmHD} and \ref{ThmHD2}).    The latter results were generalized by F.~Paulin \cite{Paulin2} to discrete groups of isometries of Riemannian manifolds of strictly negative curvature.


M.~Coornaert has generalized the results of Patterson-Sullivan to the class of word-hyperbolic groups \cite{Coor}. In particular he proved that the Hausdorff dimension of the (Gromov) boundary of such a group with respect to the visual metric is equal to the critical exponent of the Poincar\'e series.

A natural question arises whether Coornaert's theorem holds for the class of relatively hyperbolic groups. However
 it was shown by M.~Burger and S.~Mozes   that if $G$ is a closed subgroup of the isometry group of a CAT($-1$) space $X$ and the parabolic subgroups of $G$ are not amenable then the critical exponent  is infinite  \cite[Proposition 1.6]{BuMo}. Such an  example
of a relatively hyperbolic group whose parabolic subgroups contain non-cyclic free subgroups  was constructed by D.~Gaboriau and F.~Paulin \cite[Example 1, p. 189]{GabPau}. By \cite{Paulin2} it then follows that  the Hausdorff dimension of the limit set
 for the action of such a group  with respect to the visual metric is infinite too.
  So in order to generalize Coornaert's theorem to the class of relatively hyperbolic groups one must replace the visual metric by a different one.

The Floyd metric obtained by a rescaling procedure of the word metric  is a natural candidat as it extends to the Floyd compactification of a group. Furthermore by a theorem of V.~Gerasimov   there exists  an equivariant and continuous map from the Floyd boundary $\partial_\lambda G$ to the limit set of any relatively hyperbolic action  of $G$ \cite{Ge2}. In particular, if $G$ is hyperbolic, the Floyd and Gromov boundaries are bilipschitz equivalent  for some exponential Floyd function.

M.~Bourdon has observed (private communication) that the Hausdorff dimension of the Floyd boundary of a relatively hyperbolic group, calculated with respect to the Floyd metric obtained with the exponential rescaling function $\lambda ^n\ (\lambda\in (0,1))$ is always upper bounded by $-\frac{\g G}{\log \lambda}$ (cf. Lemma \ref{Bourdon}). However the question whether it admits a lower strictly positive bound, which is equal to the same constant remained open. This was our first motivation giving rise to Theorem \ref{ThmHD}. Theorem \ref{ThmHD2} is then obtained by transferring the Floyd metric from the Floyd boundary $\partial_\lambda G$ to the limit set $\Lambda G$ of the geometrically finite action using the above  Gerasimov's map.

The lower bound estimate for the Hausdorff dimension in
Theorems \ref{ThmHD} and \ref{ThmHD2} follow from Theorem \ref{ThmAR}  providing the approximation of the boundary points by Ahlfors regular subsets  $X_i$. These subsets   entirely consist of uniformly conical points which are the space of ends of subtrees of the Cayley graph of $G.$ Note that the idea of such an approximation by trees is quite standard in  both settings: hyperbolic  (see e.g. \cite[6.1]{Gro}) or Kleinian (see \cite{BishopJ}).
However these constructions of trees  essentially use the hyperbolicity of the ambiant space. The latter property is not true for a relatively hyperbolic group: the Cayley graph is not in general hyperbolic and the relative Cayley graph is hyperbolic but the action on the set of vertices is not proper. The approximating trees constructed in the paper admits certain periodicity allowing us to obtain a   Patterson-Sullivan measure $\mu$ on $X_i$ also having periodic properties.   Theorem \ref{ThmAR} then shows that these measures converge to  the Hausdorff
measure  on a subset of uniformly conical points and whose dimension coincides with the full Hausdorff dimension of the ambiant space.

\ack The authors are deeply grateful to Marc Bourdon for several discussions
motivating our initial interest to the subject of the paper, we owe the proof of  Lemma \ref{Bourdon} to him
as well as  the references to the papers \cite{BuMo} and \cite{GabPau}. 

We also thank Misha Kapovich who indicated us that the bilipschitz equivalence between the Floyd and the visual metrics needs to be justified (see the Remark above).

During the work on this paper the first author was
partially supported by the ANR grant  DiscGroup (${\rm
BLAN}~2011\ BS01\ 013\ 04$).  He is also very grateful to the  Max-Planck-Institut f\"ur Mathematik in Bonn for the hospitality and support during his research stay at the Institute where a  part of the work   was done.

The
second author is grateful to   the CNRS    for providing him a
research fellowship   at the University of Lille 1, and was partially supported by the ERC starting grant GA 257110 RaWG when he was a postdoc in Orsay.

\section{Preliminaries}\label{Section2}

\subsection{Notations and Conventions}
Let $(Y, d)$ be a geodesic metric space. Given a subset $X$ and a
number $r \ge 0$, let $N_r(X) = \{y \in Y: d(y, X) \le r \}$. For $x\in Y$ denote  $B(x, r)=N_r(\{x\})$. Sometimes, we will write $B_d(x, r)$ to emphasize the metric $d$.


Given a point $y \in Y$ and a subset $X \subset Y$,
let $\proj_X(y)$ be the set of points $x$ in $X$ such that $d(y, x)
= d(y, X)$. The \textit{projection} of a subset
$A \subset Y$ to $X$ is then $\proj_X(A) = \cup_{a \in A} \proj_X(a)$.

We always consider a rectifiable path $\alpha$ in $Y$ with arc-length parametrization.  Denote by $\len (\alpha)$ the length
of $\alpha$, and by $\alpha_-$, $\alpha_+$ the initial and terminal points of $\alpha$ respectively.   Let $x, y \in \alpha$ be two points which are given by parametrization. Then denote by $[x,y]_\alpha$ the parametrized
subpath of $\alpha$ going from $x$ to $y$. We also denote by $[x, y]$ a choice of a geodesic in $Y$ between $x, y\in Y$.

A path $\alpha$ is called a \textit{$c$-quasi-geodesic} for $c\ge 1$ if the following holds
$$\len(\beta)\le c \cdot d(\beta_-, \beta_+)+c$$
for any rectifiable subpath $\beta$ of $\alpha$.

Let $\alpha, \beta$ be two paths in $Y$. Denote by $\alpha\cdot \beta$ (or simply $\alpha\beta$) the concatenated path provided that $\alpha_+ =
\beta_-$.

A path $\alpha$ going from $\alpha_-$ to $\alpha_+$ induces a first-last order as
follows. Given a property (P), a point $z$ on $\alpha$ is called
the \textit{first point} satisfying (P) if $z$ is among the points
$w$ on $\alpha$ with the property (P) such that $\len([\alpha_-, w]_\alpha)$ is
minimal. The \textit{last point} satisfying (P) is defined in a
similar way (replacing $[\alpha_-,w]_\alpha$  by $[w,\alpha_+]_\alpha$).

Let $f, g$ be real-valued functions with domain understood in
the context. Then $f \prec_{c_i} g$ means that
there is a constant $C >0$ depending on parameters $c_i$ such that
$f < Cg$,  and $\succ_{c_i}  $ is defined similarly. We use the symbol  $\asymp_{c_i}$ if both inequalities are true. For simplicity, we
omit $c_i$ if they are some universal constants.

Denote by $\diam\cdot$ the diameter of a set in a metric space. Recall the notion of Hausdorff measures in a metric space.
\begin{defn}
Let $X$ be a subset in a metric space $(Y, d)$. Given numbers
$\epsilon, s \ge 0$, define
$$
\mathcal H^s_{\epsilon}(X) = \inf\{ \sum \diam{U_i}^s: X \subset
\bigcup_{i=1}^{\infty} U_i, U_i \subset Y, \diam{U_i} \le \epsilon \}.
$$
Define $\mathcal H^s(X) = \lim\limits_{\epsilon \to 0} \mathcal H^s_{\epsilon}(X)$,
the \textit{$s$-dimensional Hausdorff measure} of $X$. The
\textit{Hausdorff dimension} of $X$ is defined as follows,
$$\HD_{d}(X) =
\inf\{s\ge 0: \mathcal H^s(X) =0 \} = \sup\{s\ge 0: \mathcal H^s(X) = \infty \}. $$ By
convention, set $\inf\emptyset = \sup\{s \in \mathbb R_{\ge 0}\}
= \infty$. Thus, $\HD_d{X} \in [0, \infty]$. Note that $\mathcal H^s(X)$ may
be zero for $s=\HD_d{X}$.
\end{defn}



\subsection{Floyd boundary and relative hyperbolicity}\label{Section2.2}
Let $G$ be a group with a finite generating set $S$. Assume that
$1\notin S$ and $S=S^{-1}$.  Let  $\Gx$ be  the \textit{Cayley graph}
of $G$ with respect to $S$. Denote by $d_S$ (or simply by $d$ if there is no ambiguity)
the word  metric on $\Gx$.

Fix $0 < \lambda <1$ and a basepoint $o \in G$. We define a Floyd metric $\fdo$ as follows. The \textit{Floyd
length} $\flo(e)$ of an edge $e$ in $\Gx $ is $\lambda^n$, where $n =
d(o, e)$. The Floyd length $\flo(\gamma)$ of a path $\gamma$ is the sum of Floyd lengths of its edges. This induces a length metric $\fdo$ on $\Gx$, which is the infimum of Floyd lengths of all possible paths between two points.

Let $\Gf$ be the Cauchy completion of $G$ with respect to $\fdo$.
The complement $\pGf$ of $\Gx$ in $\Gf$ is called \textit{Floyd
boundary} of $G$. The $\pGf$ is called \textit{non-trivial} if
$\sharp \pGf>2$. We refer the reader to \cite{Floyd},  \cite{Ge2}, \cite{GePo2}, \cite{Ka} for
more details.

By construction, the following equivariant property holds
\begin{equation}\label{equiv}
\rho_{\lambda, o} (x, y) =
\rho_{\lambda, go}(gx, gy)
\end{equation}
for any $g \in G$. The Floyd metrics with
different basepoints are related by a bi-Lipschitz inequality:
\begin{equation}\label{lambdabilip}
\lambda^{d(o, o')} \le \frac{\rho_{\lambda, o}(x, y)}{\rho_{\lambda, o'}(x, y)} \le
\lambda^{-d(o, o')}
\end{equation}
for any two points $o, o' \in G$.


We now recapitulate few standard definitions concerning geometrically finite convergence
 actions which will be often used further.
\begin{defn}\label{mainconvdef}
Let $X$ be a compact metrizable space on which $G$ admits a minimal and non-trivial convergence action  by
homeomorphisms.
\begin{enumerate}
\item
A point $\xi \in X$ is called \textit{conical}  if there exists a
sequence of elements $g_n \in G$ ($n\ge 1$)  such that the closure of $\{g_n(\xi, \eta): n\ge 1\}$ in $X^2$ is disjoint from the diagonal $\Delta(X^2)=\{(x, x): x\in X\}$
for any $\eta \in X \setminus \xi$.

If, in addition, there exists $L>0$ such that $d(1, g_n g_{n+1}^{-1}) \le L$, then $\xi$ is called \textit{$L$-uniformly conical} (or \textit{uniformly conical} if the constant $L$ is not important).
\item
A point $\xi \in X$ is called \textit{bounded parabolic} if the
stabilizer $G_\xi$ of $\xi$ in $G$ is infinite, and acts properly
and co-compactly on $X\setminus \xi$. The subgroup $G_\xi$ is called  {\it maximal parabolic}.
\item
A convergence group action of $G$ on $X$ is called \textit{geometrically
finite} if every limit point $\xi \in X$ is either a conical point or a
bounded parabolic point.
\end{enumerate}

\end{defn}

As it was mentioned in the Introduction
  a pair   $\GP$ is  \textit{relatively hyperbolic} if $G$ admits a geometrically finite group action on a compact metrizable space $X$ such that $\mathcal P$ coincides with the collection of maximal parabolic subgroups (peripheral system). Using the relative Cayley graph one can construct the limit set $\Lambda G$ of the action with the boundary of this graph \cite{Bow1}. We will often  call \textit{Bowditch boundary} the limit set $\pG$ of a geometrically finite action.  Bowditch proved that if $G$ is finitely generated then $\pG$ up to an equivariant homeomorphism depends only on the pair $\GP$ \cite{Bow2}. We also note the same result still holds in general case when $G$ is not finitely generated \cite[Corollary 6.1.e]{GePo5}.

The following result establishes the following universal \textit{pullback} property of the  Floyd boundary.

\begin{prop} \cite{Ge2}\label{Floydmap}
Suppose $\GP$ is a relatively hyperbolic pair. Then there exists $0< \lambda_0 <1$ such that for any  $\lambda \in [\lambda_0, 1)$ there exists a
continuous $G$-equivariant surjective map (called {\it Floyd map}):
$$
F_\lambda: \pGf \to \pG.
$$
\end{prop}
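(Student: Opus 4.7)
The plan is to construct $F_\lambda$ by first defining it on the dense subset $G \subset \Gf$ via a choice of basepoint orbit in $X$, then extending continuously to the boundary $\pGf$. Fix a compatible metric $d_X$ on the compactum $X$ and a basepoint $x_0 \in X$. Consider the orbit map $\varphi: G \to X,\ g \mapsto g x_0$. The goal is to prove that, for $\lambda$ sufficiently close to $1$, $\varphi$ extends to a continuous map $F_\lambda: \Gf \to X$ whose restriction to $\pGf$ lands in $\pG$; equivariance is then automatic from equivariance of the orbit map.

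The core technical task is to show that for $\lambda$ close to $1$, every Cauchy sequence $\{g_n\}$ in $(G, \fdo)$ produces a Cauchy sequence $\{g_n x_0\}$ in $(X, d_X)$. Concretely, I would establish a modulus of continuity: for each $\epsilon > 0$ there exist $\delta > 0$ and $\lambda_0 < 1$ such that $\fdo(g, h) < \delta$ implies $d_X(g x_0, h x_0) < \epsilon$ for every $\lambda \in [\lambda_0, 1)$ and all $g, h \in G$. A small Floyd distance forces the existence of a path from $g$ to $h$ in $\Gx$ whose Floyd length is small; by Karlsson's lemma this path must spend most of its length far from $o$, and combined with the convergence-action contraction property (elements $g_n$ that escape to infinity uniformly shrink compact sets away from their limit), this should force $g x_0$ and $h x_0$ to be close in $X$. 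Relative hyperbolicity enters because the coned-off Cayley graph is hyperbolic and the action on pairs is cocompact, so estimates on paths in $\Gx$ transfer through relative geodesics to dynamical estimates on $X$.

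Once the extension $F_\lambda$ is in place, the values on $\pGf$ lie in $\pG$ because any sequence $g_n \to \pGf$ has $d_S(o, g_n) \to \infty$, so $g_n x_0$ accumulates on the limit set. Surjectivity breaks into two cases. For a conical limit point $\xi \in \pG$, choose $g_n \in G$ with $g_n x_0 \to \xi$ lying along a quasi-geodesic ray in $\Gx$; such a sequence is Floyd-Cauchy, and its Floyd limit maps to $\xi$. For a bounded parabolic point $p$ with stabilizer $P$, one argues that each coset $gP$ has uniformly small Floyd diameter once $d(o, gP)$ is large (horospheres are Floyd-small); together with cocompactness of $P$ on $X \setminus \{p\}$, this yields a Floyd preimage of $p$.

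The main obstacle is the modulus-of-continuity step. A priori, closeness in the Floyd metric need not translate into closeness in $X$, and even granting the relatively hyperbolic structure, one must control the possibility that parabolic subgroups create \emph{bottlenecks} where many Floyd-boundary points collapse onto a single point of $\pG$ — which is permitted (the Floyd map is not injective in general) but must occur in a continuous, controlled fashion. Gerasimov's original argument handles this via entourages and tight curves, and the choice of $\lambda_0$ must be coordinated with the quasi-isometric constants of the peripheral embeddings so that no single horosphere can contribute non-negligible Floyd length to an escaping path.
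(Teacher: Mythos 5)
This proposition is not proved in the paper at all: it is quoted from Gerasimov \cite{Ge2}, so there is no internal argument to compare yours with, and your proposal has to be judged as an attempt at Gerasimov's theorem itself. As such it has a genuine gap at exactly the place you flag: the ``modulus of continuity'' step ($\fdo(g,h)$ small $\Rightarrow d_X(gx_0,hx_0)$ small, uniformly in $g,h$ and for all $\lambda\in[\lambda_0,1)$) \emph{is} the theorem, and the sketch you give does not establish it. Lemma \ref{karlssonlem} only says that a path of definite Floyd length at $v$ must pass near $v$; it gives no mechanism converting small Floyd length of a path joining $g$ to $h$ into a dynamical statement about the pair $(gx_0,hx_0)$ in $X$, and the convergence-group contraction property applies to sequences escaping to infinity, not to a uniform two-variable estimate. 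In particular your outline contains no explanation of how $\lambda_0$ is forced by the peripheral structure; in Gerasimov's argument this comes out of a delicate analysis (tight curves, entourages, and the cocompactness of the action on distinct pairs), none of which is replaced by anything in your plan.

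A secondary but concrete error: in the surjectivity step you assert that a coset $gP$ far from $o$ has uniformly small Floyd diameter (``horospheres are Floyd-small''). This is false in general: by Proposition \ref{FloydKernel} the preimage $F_\lambda^{-1}(p)$ of a parabolic point is the whole Floyd boundary $\partial_\lambda G_p$ of its stabilizer, which is typically not a point (e.g.\ free or non-elementary parabolic subgroups), so far horospheres retain definite Floyd diameter --- this is precisely why the Floyd map is non-injective and the shortcut metric is introduced. Fortunately that claim is not needed: once continuity and equivariance of $F_\lambda$ are in hand, the image of the compact set $\Gf$ is a closed $G$-invariant subset of the attractor sum containing $G$, so its boundary part contains the limit set by minimality, and surjectivity follows; similarly the conical case does not require choosing $g_n$ along a quasi-geodesic. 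So the surjectivity discussion can be repaired cheaply, but the central continuity step remains unproved, and with it the proposal does not constitute a proof.
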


\medskip

Let $G_p$ be the stabilizer of a parabolic point $p\in X$ for the action $G\act X=\Lambda G$. Denote by $\Lambda_{\pGf}(G_p)$  and $\partial_\lambda G_p$ the limit set of $G_p$ for its action   on the Floyd boundary $\pGf$ of $G$, and the Floyd boundary of $G_p$ respectively.  The following result precisely  describes the kernel of the Floyd map.

\begin{prop} \cite{GePo2} \label{FloydKernel}
Under the assumption of Proposition \ref{Floydmap}, the following holds
$$
F_\lambda^{-1}(p) = \Lambda_{\pGf}(G_p)=\partial_\lambda G_p
$$
for any parabolic point $p$ in $\pG$. Moreover, $F_\lambda^{-1}(p)$ consists of one point if $p$ is a conical point.
\end{prop}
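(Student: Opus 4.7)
\medskip

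\textbf{Proof plan.} The statement packages together four assertions which I would attack in sequence: (A) $\Lambda_{\pGf}(G_p)\subseteq F_\lambda^{-1}(p)$; (B) $F_\lambda^{-1}(p)\subseteq \Lambda_{\pGf}(G_p)$; (C) the canonical map $\partial_\lambda G_p\to \Lambda_{\pGf}(G_p)$ is a homeomorphism; and (D) if $p$ is conical then the fiber is a single point. Throughout I would exploit two pieces of machinery already in play, namely Karlsson's theorem that $G\act \overline G_\lambda$ is a convergence action, and the equivariance and continuity of $F_\lambda$ from Proposition \ref{Floydmap}.

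For (A), pick $\xi\in\Lambda_{\pGf}(G_p)$, write $\xi=\lim g_n o$ with $g_n\in G_p$, and apply continuity together with $G$-equivariance of $F_\lambda$ to get $F_\lambda(\xi)=\lim g_no$ in $\Lambda G$. Since $p$ is bounded parabolic with $G_p$ acting cocompactly on $\Lambda G\setminus\{p\}$, the only accumulation point of $G_po$ in $\Lambda G$ is $p$, so $F_\lambda(\xi)=p$.

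For (B) and (C), I would consider the restricted convergence action $G_p\act \overline G_\lambda$ and its limit set $\Lambda_{\pGf}(G_p)$. For (C), the natural inclusion $(G_p,d_{S_p})\hookrightarrow (G,d_S)$ extends to a continuous equivariant map $\overline{G_p}_{\lambda}\to \overline G_\lambda$, and its image on the boundary is exactly $\Lambda_{\pGf}(G_p)$: injectivity on the boundary follows from a Floyd-length comparison between edges in the Cayley graph of $G_p$ and their $d_S$-length in $G$ (this comparison is what needs work, since the scaling factor $\lambda^{d(e,o)}$ uses the ambient metric). For (B), suppose $\xi\in F_\lambda^{-1}(p)\setminus\Lambda_{\pGf}(G_p)$; then $G_p$ acts properly on a neighbourhood of $\xi$ in $\overline G_\lambda\setminus\Lambda_{\pGf}(G_p)$, so for any sequence $h_n\in G_p$ going to infinity, $h_n\xi$ accumulates only on $\Lambda_{\pGf}(G_p)$; on the other hand $F_\lambda(h_n\xi)=h_nF_\lambda(\xi)=h_np=p$, so every accumulation point of $h_n\xi$ lies in $F_\lambda^{-1}(p)$. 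Combining this with a precise description of how $F_\lambda$ identifies points yields the desired contradiction.

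For (D), let $p$ be conical with defining sequence $g_n$; as noted in Definition \ref{mainconvdef}, $p$ is then necessarily the repelling fixed point of $g_n$ and some $a\in\Lambda G$ is the attracting one. By Karlsson's convergence, on a subsequence there are $r_\lambda,a_\lambda\in\pGf$ with $g_n$ converging locally uniformly to $a_\lambda$ off $r_\lambda$. Equivariance of $F_\lambda$ forces $F_\lambda(r_\lambda)=p$ and $F_\lambda(a_\lambda)=a$. If $\xi_1,\xi_2\in F_\lambda^{-1}(p)$ are distinct, both are different from $a_\lambda$, so Karlsson gives $g_n\xi_i\to a_\lambda$ in $\pGf$, hence $\rho_{\lambda,o}(g_n\xi_1,g_n\xi_2)\to 0$. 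The key estimate, obtained by combining equivariance \eqref{equiv} with the bilipschitz inequality \eqref{lambdabilip} applied at the moving basepoint $g_n^{-1}o$ (which goes to $p$ along a quasi-geodesic ray whose $\lambda$-rescaled length is summable), is that this rate beats the distortion $\lambda^{-d(o,g_n^{-1}o)}$, forcing $\rho_{\lambda,o}(\xi_1,\xi_2)=0$.

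The hardest step will be (C), namely proving that the Floyd boundary of the parabolic subgroup $G_p$ injects onto its limit set in $\pGf$: since the Floyd scale $\lambda^{d(e,o)}$ uses the ambient word metric rather than the metric intrinsic to $G_p$, one must control the distortion of the inclusion $G_p\hookrightarrow G$ on a neighbourhood of infinity. The anticipated route is to use that the horospheres $gP$ near $p$ are mapped by $F_\lambda$ to sets of shrinking Floyd diameter, so that the contribution of ambient edges outside $G_p$ to paths in the Floyd metric is negligible.
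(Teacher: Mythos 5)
First, a point of comparison: the paper itself gives no proof of Proposition \ref{FloydKernel} --- it is imported verbatim from \cite{GePo2}, where it is (essentially) the main kernel theorem for the Floyd map, proved there by a quantitative analysis of horospheres, quasiconvexity and visibility. So your proposal has to stand on its own, and as it stands it is a plan rather than a proof: step (A) is fine (via the extension of $F_\lambda$ to the attractor sum with $F_\lambda|_G=\mathrm{id}$), but the two hard inclusions are not actually argued. In (B) you conclude by "combining this with a precise description of how $F_\lambda$ identifies points", which is circular --- that description is exactly the statement being proved; the dynamics you invoke (orbits $h_n\xi$ accumulate on $\Lambda_{\pGf}(G_p)$, while their images stay at $p$) yields no contradiction without already knowing what the fiber over $p$ is. Step (C), the identification $\Lambda_{\pGf}(G_p)=\partial_\lambda G_p$, you explicitly leave as an "anticipated route"; note moreover that even though horospheres are undistorted (quasiconvex, \cite[Corollary 3.9]{GePo2}), a multiplicative distortion constant changes the exponential scaling $\lambda^{n}$ into $\lambda^{Cn}$, so the comparison of the intrinsic and ambient Floyd metrics on $G_p$ is a genuine issue, not a routine bookkeeping step.

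Step (D) contains the most concrete gap. Knowing only that $g_n\xi_1,g_n\xi_2\to a_\lambda$ cannot separate fibers: under a convergence action \emph{every} point off the repelling point is attracted to $a_\lambda$, so this qualitative statement carries no information about $\xi_1$ versus $\xi_2$. To run your computation you need, via (\ref{equiv}) and (\ref{lambdabilip}),
$$
\fdo(\xi_1,\xi_2)\;\le\;\lambda^{-d(o,\,g_n^{-1}o)}\,\fdo(g_n\xi_1,g_n\xi_2),
$$
so you must show that $\fdo(g_n\xi_1,g_n\xi_2)$ decays \emph{faster} than $\lambda^{d(o,g_n^{-1}o)}$. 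That rate is precisely the hard quantitative content (in the spirit of the visibility Lemma \ref{karlssonlem} together with the fact that, for a conical point, geodesics between the two lifts pass uniformly close to the points $g_n^{-1}o$), and you assert it rather than prove it; nothing in Karlsson's convergence theorem supplies it. There is also a smaller slip: to apply convergence you need $\xi_i$ distinct from the \emph{repelling} point $r_\lambda$, not from $a_\lambda$, and since $F_\lambda(r_\lambda)=p$ the point $r_\lambda$ itself lies in the fiber you are trying to control, so the case $\xi_1=r_\lambda$ must be handled separately. In short, the architecture (A)--(D) is reasonable, but (B), (C) and the key estimate in (D) are exactly where the content of \cite{GePo2} lies, and none of them is established in the proposal.
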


We   equip $\pG$ with a \textit{shortcut metric} as follows: let
$$\omega=\{(\eta, \xi) \in \pGf \times \pGf: F_\lambda(\xi)=F_\lambda(\eta)\}$$
be the relation on $\pGf$ given by the Floyd map $F_\lambda: \pGf \to \pG$. For any $\xi, \eta \in \Gf$, define a pseudo-distance $\tilde \rho_{\lambda, o}(\xi, \eta)$ on $\Gf$ to be
\begin{equation}\label{shortcutdefn}
\tilde \rho_{\lambda, o}(\xi, \eta)=\inf_{n\ge 1} \{\sum_{i=1}^{n} \fdo(\xi_i,\eta_{i}): (\eta_i,\xi_{i+1}) \in \omega, 1\le i < n, \xi_1=\xi,\; \eta_{n}=\eta\}.
\end{equation}

We have
\begin{equation}
\label{floydshortcut}
\forall \xi, \eta\in\Gf\  :\  \tilde \rho_{\lambda, o}(\xi, \eta)\leq   \fdo(\xi, \eta),\end{equation}
and  it is a maximal pseudo-metric on $\Gf\times\Gf$ satisfying this inequality. It is shown in \cite{Ge2} that the space $\widetilde{\Lambda G}:=\Lambda G\sqcup\Gx $ (called {\it attractor sum})     is compact. The action  $G\act\widetilde{\Lambda G} $ is convergence such that its restriction  on $\Gx$ is the identity and on $ \Lambda G$ it coincides with the initial action. Furthermore the Floyd map $F_\lambda$ extends to an equivariant continuous map (denoted by the same symbol):
$$F_\lambda : \Gf\to \widetilde{\Lambda G}$$ such that $F_\lambda\vert_G\equiv{\rm id}.$
Pushing forward $\tilde \rho_{\lambda, o}$ with $F_\lambda$, we obtain a \textit{shortcut
pseudo-metric} on $\widetilde{\Lambda G}:$
\begin{equation}\label{shortcut}
 \forall
x,y\in \widetilde{\Lambda G}:\; \sfdo(x, y)=\tilde \rho_{\lambda, o}(F_\lambda^{-1}(x), F_\lambda^{-1}(y)),
\end{equation}
 which turns out to be a real metric on $\widetilde{\Lambda G}$ (see \cite[Section 3]{GePo2} for details).
By the above construction, one can easily see that the shortcut metrics $\sfdo$ satisfy the properties (\ref{equiv}) and (\ref{lambdabilip}) too.

\begin{conv}\label{constlambda}
Since now on we will always suppose that $\lambda \in [\lambda_0, 1)$ where $\lambda_0$ is given by Proposition \ref{Floydmap}. We omit the index $\lambda$ in $\flo, \fdo$ and $\sfdo$ if $\lambda$ is given in the context.
\end{conv}

Finally, we recall the following Visibility Lemma.
\begin{lem}[Visibility lemma] \label{karlssonlem}\cite{Ka}
There is a function $\varphi: \mathbb R_{\ge 0} \to \mathbb R_{\ge 0}$ such that for
any $v \in G$ and any geodesic $\gamma$ in $\Gx$, we have if
$\fls_v(\gamma) \ge \kappa,$ then $d(v,
\gamma) \le \varphi(\kappa)$.
\end{lem}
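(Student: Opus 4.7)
The plan is to prove the contrapositive: if $v$ lies far from $\gamma$, then $\fls_v(\gamma)$ is small. The whole proof reduces to a geometric series estimate using one elementary fact from the triangle inequality.

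First, set $R := d(v, \gamma)$ and pick a closest-point $w \in \gamma$ with $d(v, w) = R$. For any edge (or point) $e$ of $\gamma$, let $k = d_\gamma(w, e)$ be its distance along $\gamma$ from $w$. I claim that
\[
d(v, e) \;\ge\; \max\bigl(R,\; k - R\bigr).
\]
Indeed, $d(v, e) \ge R$ is immediate since $w$ was a closest point; and because $\gamma$ is a geodesic, $d(w, e) = k$, so the triangle inequality gives $d(v, e) \ge d(w, e) - d(v, w) = k - R$.

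Next I would estimate the Floyd length. Since each distance $k \ge 0$ is realized by at most two edges of $\gamma$ (one on each side of $w$), we get
\[
\fls_v(\gamma) \;=\; \sum_{e \subset \gamma} \lambda^{d(v, e)} \;\le\; 2\sum_{k=0}^{\infty} \lambda^{\max(R,\, k - R)}.
\]
Split the sum at $k = 2R$. For $k \le 2R$ the exponent equals $R$, contributing at most $2(2R+1)\lambda^{R}$. For $k > 2R$ the exponent equals $k - R$, and the geometric series gives a contribution of at most $\dfrac{2\lambda^{R+1}}{1 - \lambda}$. Hence
\[
\fls_v(\gamma) \;\le\; 2(2R+1)\lambda^{R} + \frac{2\lambda^{R+1}}{1 - \lambda},
\]
and the right-hand side tends to $0$ as $R \to \infty$ (since $\lambda < 1$ dominates the polynomial factor).

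Finally, for each $\kappa > 0$, define $\varphi(\kappa)$ to be the least integer $R_0$ such that the above upper bound is strictly less than $\kappa$ for every $R \ge R_0$. Such an $R_0$ exists by the preceding limit. Then $\fls_v(\gamma) \ge \kappa$ forces $R < R_0 = \varphi(\kappa)$, which is exactly the conclusion. Note there is no real obstacle here; the only structural input is that $\gamma$ is a geodesic (so that distance along $\gamma$ bounds distance in $\Gx$ from below), and everything else is a one-line geometric series bound — which also reveals that $\varphi$ can be taken of order $-\log \kappa / \log \lambda$ up to a logarithmic correction.
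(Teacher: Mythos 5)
Your proof is correct, and since the paper does not prove this lemma itself but simply cites Karlsson, there is nothing in the text for it to diverge from: your argument (closest point $w$, the bound $d(v,e)\ge\max(R,k-R)$ along the geodesic, and a geometric-series tail estimate forcing $\fls_v(\gamma)\to 0$ as $R=d(v,\gamma)\to\infty$) is exactly the standard one behind the cited result. The only difference from Karlsson's statement is that his lemma holds for an arbitrary summable Floyd scaling function, whereas your series estimate is specialized to the exponential function $\lambda^n$; this is all that is needed in this paper, and your quantitative remark that $\varphi(\kappa)$ can be taken of order $-\log\kappa/\log\lambda$ is consistent with how the lemma is used here.
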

\begin{rem}
The same result is valid for quasi-geodesics or more general $\Theta$-geodesics where $\Theta:\Bbb N\to G$ is a polynomial distortion function \cite[Lemma 5.1]{GePo2}.
\end{rem}

\subsection{Floyd geodesics}\label{Section2.3}
In this subsection, we provide a few basic tools to study Floyd geodesics.

We say that a path $\alpha: \Bbb Z\to \Gx$ \textit{ends at} $\xi \in \pGf$ if $\xi=\lim_{n\to\infty}\alpha(n)$. Denote in this case $\alpha_+=\xi$, and $\alpha_-=\lim_{n\to-\infty}\alpha(n)$. It follows from Lemma \ref{karlssonlem} that every geodesic ray ends at a point of the Floyd boundary. Moreover,  $\Gf$ is a geodesic metric space and is a visual boundary: any two distinct points $\xi, \eta \in \Gf$ are connected by a bi-infinite word geodesic belonging to the Cayley graph \cite[Proposition 2.4]{GePo2}.

We note that a Floyd geodesic between $\xi, \eta$ does not necessarily belong to the graph (e.g. an example of such situation is given by the Floyd geodesic $[n, +\infty]\cup [-\infty, -n]$ between $-n$ and $n$ for the group $\mathbb Z+\mathbb Z)$. A method to  overcome this problem was proposed in \cite{GePo3}. It consists in  introducing  a special type of paths called {\it tight paths} (see Definition \ref{tightdefn} in section \ref{Section3}) situating in the Cayley graph which will  approximate well the Floyd geodesics.  To provide a certain development of  this method  we will  need the following preliminary statements.

\begin{lem}\label{Floydgeod}\cite[Lemma 7.2]{GePo3}
For any $l>0$, there exists $0< \lambda_0 < 1$ such that the following property hold for any $\lambda\in [\lambda_0, 1)$.

Let $x, y \in \Gx$ such that $d(x, y) \le l$, and $p$ be a path with $\alpha_-=x$ such that $\len(\alpha) \ge d(x, y)+1$. Then $\flo(\alpha) > \fdo(x,y)$. In particular, the $\fdo$-geodesic between $x, y$ is a geodesic in $\Gx$
\end{lem}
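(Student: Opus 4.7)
The plan is to sandwich $\flo(\alpha)$ from below and $\fdo(x,y)$ from above by explicit geometric sums in $\lambda^r$ (where $r = d(o,x)$), then take $\lambda$ close enough to $1$ so that the resulting ratio exceeds $1$ uniformly in $d(x,y) \in \{1,\dots,l\}$.

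Set $m = d(x,y) \le l$ and $n = \len(\alpha) \ge m+1$. The case $m = 0$ is immediate since $\fdo(x,y) = 0$, so assume $m \ge 1$. For the upper bound on $\fdo(x,y)$, I take the word geodesic $\gamma = [x,y]$: its $j$-th edge $e'_j$ has endpoints within word distance $j$ of $x$, so the triangle inequality forces $d(o,e'_j) \ge r-j$ and hence $\flo(e'_j) \le \lambda^{r-j}$. Summing yields
\[
\fdo(x,y) \;\le\; \flo(\gamma) \;\le\; \lambda^r \sum_{j=1}^{m} \lambda^{-j}.
\]
For the lower bound on $\flo(\alpha)$, I apply the triangle inequality in the opposite direction: the $i$-th edge $e_i$ of $\alpha$ has endpoints within word distance $i$ of $x$, so $d(o,e_i) \le r+i-1$ and $\flo(e_i) \ge \lambda^{r+i-1}$. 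Summing over the first $m+1$ edges (at least that many are available),
\[
\flo(\alpha) \;\ge\; \sum_{i=1}^{m+1}\lambda^{r+i-1} \;=\; \lambda^r \sum_{i=0}^{m}\lambda^{i}.
\]

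Dividing the two bounds, the factor $\lambda^r$ cancels and one obtains
\[
\frac{\flo(\alpha)}{\fdo(x,y)} \;\ge\; \frac{\sum_{i=0}^{m}\lambda^i}{\sum_{j=1}^{m}\lambda^{-j}},
\]
whose right-hand side tends to $(m+1)/m > 1$ as $\lambda \to 1^-$. By continuity in $\lambda$, for each $m \in \{1,\dots,l\}$ there is $\lambda_0(m) < 1$ such that the ratio exceeds $1$ for all $\lambda \in [\lambda_0(m),1)$. Setting $\lambda_0 = \max_{1 \le m \le l}\lambda_0(m)$ yields $\flo(\alpha) > \fdo(x,y)$ uniformly in the admissible range of~$m$.

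For the ``in particular'' clause: any path in $\Gx$ from $x$ to $y$ of length strictly greater than $m$ has Floyd length strictly greater than $\fdo(x,y)$ by what has just been shown, so the $\fdo$-minimising path between $x$ and $y$ must have word length exactly $m = d(x,y)$, i.e.\ it is a word geodesic. The only delicate point is arranging $\lambda_0$ uniformly over $m \in \{1,\dots,l\}$, which the finite maximum handles; the remainder is a direct geometric-series estimate.
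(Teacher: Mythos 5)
Your argument is correct, and it is worth noting that the paper itself gives no proof of this lemma at all: it is quoted from \cite[Lemma 7.2]{GePo3}, so there is nothing internal to compare against. Your self-contained estimate is the natural one and almost certainly the same in spirit as the cited source: bounding $\fdo(x,y)$ above by the Floyd length of a word geodesic, $\fdo(x,y)\le \lambda^{r}\sum_{j=1}^{m}\lambda^{-j}$, bounding $\flo(\alpha)$ below by $\lambda^{r}\sum_{i=0}^{m}\lambda^{i}$ using only that $\alpha$ issues from $x$ and has length at least $m+1$ (correctly, you never use where $\alpha$ ends), and observing that the base-point factor $\lambda^{r}$ cancels so that $\lambda_0$ depends on $l$ alone --- which is exactly the uniformity the lemma asserts. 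Two small points you gloss over, both routine: (i) your ``first $m+1$ edges'' implicitly treats $\alpha$ as an edge path; for a general rectifiable path one partitions by arc length into unit pieces, each lying within word distance $i$ of $x$, which costs at most one extra factor of $\lambda$ and does not affect the limit $\lambda\to 1^{-}$; (ii) in the ``in particular'' clause you speak of ``the $\fdo$-minimising path'' without arguing it exists, but your own two bounds give this for free: every path from $x$ to $y$ of length at least $m+1$ has Floyd length at least $\lambda^{r}\sum_{i=0}^{m}\lambda^{i}$, which strictly exceeds the upper bound $\lambda^{r}\sum_{j=1}^{m}\lambda^{-j}$ valid for each of the finitely many word geodesics, so the infimum is attained on a word geodesic. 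With these remarks the proof is complete and uniform in $m\in\{1,\dots,\lfloor l\rfloor\}$, e.g.\ via the explicit sufficient condition $l(\lambda^{-l}-\lambda^{l})<1$.
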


We consider the following \textbf{shortening procedure} introduced in \cite{GePo3}: consider two points $x, y \in \Gf$, we take a sequence of paths $\gamma_n$ in $\Gx$ such that  $(\gamma_n)_-\to x, \;(\gamma_n)_+\to y$ and
$$
\flo(\gamma_n) \to \fdo(x, y).
$$
For every $l>0$ we can choose $\lambda_0 \in ]0, 1[$ such that $\gamma_n$ is an $l$-local geodesic. Indeed,  if a segment between two points of $\gamma_n$ at  distance at most $l$ is not a geodesic, then it can be replaced by a geodesic.  Applying  this  procedure several times, we obtain a $l$-local geodesic, still denoted by $\gamma_n$, whose Floyd length is not increased by Lemma \ref{Floydgeod} (see Lemma \ref{tithapprox} for more details).

The following lemma  states that word geodesic rays are also  Floyd and shortcut geodesics.

\begin{lem}\label{floydrays}
Let $o\in G$ be a base point and  $\gamma$ be a geodesic ray with $\gamma_-=o$. Then for any $v\in \gamma$ we have  $$\flo([v, x]_\gamma)= \sfdo(v, y),$$
 and
$$\flo([v, x]_\gamma)=\fdo(v, x),$$
where $x=\gamma_+ \in \pGf$ and $y=F(x)\in \pG$ where $F$ is the Floyd map given in Proposition \ref{Floydmap}.
\end{lem}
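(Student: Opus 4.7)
Set $n_0 := d(o,v)$. Since $\gamma$ is a word geodesic from $o$ through $v$, for every $k \ge n_0$ the sub-ray $[v,x]_\gamma$ contains the edge from $\gamma(k)$ to $\gamma(k+1)$, which lies at $d_S$-distance $k$ from $o$ and hence has Floyd length $\lambda^k$. Summing,
$$\flo([v,x]_\gamma) = \sum_{k\ge n_0}\lambda^k = \frac{\lambda^{n_0}}{1-\lambda}.$$
The upper bound $\fdo(v,x) \le \flo([v,x]_\gamma)$ is automatic since $[v,x]_\gamma$ is a path in $\Gf$ from $v$ to $x$. For the matching lower bound I use a sphere-crossing argument: denote by $S_k$ the $d_S$-sphere of radius $k$ about $o$ and set $x_N := \gamma(n_0+N)$, so that $x_N \to x$ in $\Gf$ and $\fdo(v,x) = \lim_N \fdo(v,x_N)$. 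Any word path $\alpha$ from $v \in S_{n_0}$ to $x_N \in S_{n_0+N}$ must, for every integer $k$ with $n_0 < k \le n_0+N$, contain at least one edge joining $S_{k-1}$ to $S_k$; such an edge lies at $d_S$-distance $k-1$ from $o$ and contributes $\lambda^{k-1}$ to $\flo(\alpha)$. Hence $\flo(\alpha) \ge \sum_{j=n_0}^{n_0+N-1}\lambda^j$, and letting $N \to \infty$ yields $\fdo(v,x) \ge \lambda^{n_0}/(1-\lambda)$.

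For the shortcut equality, the upper bound $\sfdo(v,y) \le \fdo(v,x)$ is immediate from inequality (\ref{floydshortcut}). For the lower bound I unpack the broken-path definition of $\tilde\rho$ and \emph{normalize} an arbitrary broken path $v = \xi_1, \eta_1, \xi_2, \ldots, \eta_m$ with $\eta_m \in F^{-1}(y)$ and $(\eta_i,\xi_{i+1}) \in \omega$. If some $\eta_i \in G$ then $F(\eta_i) = \eta_i$, and the identity $F|_G = \mathrm{id}$ forces $\xi_{i+1} = \eta_i$, so the triangle inequality for $\fdo$ lets me fuse the $i$-th and $(i{+}1)$-th pieces into a single one with no larger Floyd-sum. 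After iteratively removing all such trivial jumps, every remaining non-trivial jump $(\eta_i,\xi_{i+1})$ maps under $F$ to a point of $\pG$ with more than one preimage, which by Proposition \ref{FloydKernel} is a parabolic point whose fiber equals $\partial_\lambda G_{F(\eta_i)} \subset \pGf$. Thus every $\eta_i$ of the normalized path lies in $\pGf$ (as does $\eta_m \in F^{-1}(y) \subset \pGf$).

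In particular $\eta_1 \in \pGf$. Approximating $\eta_1$ by a sequence $z_k \in G$, which must satisfy $d(o,z_k) \to \infty$ because $\eta_1 \notin \Gx$, and applying the sphere-crossing bound of the Floyd case to paths from $v$ to $z_k$, I obtain in the limit $\fdo(v,\eta_1) \ge \lambda^{n_0}/(1-\lambda)$. Since every other term of the Floyd-sum is non-negative, $\sum_i \fdo(\xi_i, \eta_i) \ge \fdo(v,\eta_1) \ge \lambda^{n_0}/(1-\lambda)$, and taking the infimum over normalized broken paths and over $\eta_m \in F^{-1}(y)$ gives $\sfdo(v,y) \ge \lambda^{n_0}/(1-\lambda) = \flo([v,x]_\gamma)$.

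The step I expect to require the most care is the normalization and its justification via Proposition \ref{FloydKernel}: one must verify that after removing trivial jumps every intermediate endpoint genuinely lies on the Floyd boundary, so that the sphere-crossing bound (which requires the endpoint to escape to infinity in $\Gx$) actually applies. Once this structural fact is in place, the rest reduces to the elementary combinatorics of counting outward sphere-crossings.
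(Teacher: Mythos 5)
Your proof is correct, and its overall shape matches the paper's: both arguments reduce the shortcut lower bound to the first Floyd term of a near-optimal chain and then show that this term already dominates $\flo([v,x]_\gamma)$. Where you differ is in how that key estimate is obtained. The paper picks a vertex $\tilde\eta_1\in G$ on a word geodesic $[o,\eta_1]$ that is Floyd-close to $\eta_1$, matches it with a point $w\in\gamma$ at the same word distance from $v$, and compares the $k$-th edge of an arbitrary path from $v$ to $\tilde\eta_1$ with the $k$-th edge of $[v,w]_\gamma$, which entails some $\varepsilon$- and tail-error bookkeeping before passing to the limit. You instead prove the direction-independent bound $\fdo(v,\zeta)\ge\lambda^{d(o,v)}/(1-\lambda)$ for \emph{every} $\zeta\in\pGf$ by counting sphere crossings: any graph path from $v$ to a vertex $z$ with $d(o,z)=N$ contains, for each $k$ with $d(o,v)<k\le N$, a distinct edge joining $S_{k-1}$ to $S_k$, contributing at least $\lambda^{k-1}$, and $d(o,z_k)\to\infty$ for any vertex sequence converging to a boundary point. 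This is more elementary, yields both equalities of the lemma at once (the Floyd-metric case, which the paper dismisses as a straightforward calculation, is the special case $\zeta=x$), and dispenses with the auxiliary point $w$ and the limit bookkeeping. One remark: your normalization step and the appeal to Proposition \ref{FloydKernel} are superfluous, since by the paper's definition the relation $\omega$ is a subset of $\pGf\times\pGf$, so every intermediate point $\eta_i$ of a chain (and every $\eta_m\in F^{-1}(y)$ with $y\in\pG$) already lies on the Floyd boundary; the extra care is harmless, though, and makes the argument robust to a variant definition of $\omega$ on all of $\Gf$.
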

\begin{proof}
We only prove the result for the shortcut metric. In the case of Floyd metric   a straightforward calculation shows that  the geodesic ray   $\gamma$  as well as every its subray is also a $\fdo$-Floyd geodesic.

By definition (\ref{shortcutdefn}) of a shortcut metric, for any $n \in \mathbb N$, there exist
 pairs $(\eta_i, \xi_{i+1}) \in \omega$ where $1\le i < m$ such that
$$
\sfdo(v, y) \ge \sum_{1\le i \le m} \fdo (\xi_i, \eta_{i}) -\frac{1}{2n},
$$
where $\xi_1=v, \;\eta_{m}=x$.
 Every geodesic ray $[o, \eta_1]$ is also a Floyd geodesic so we can choose $\tilde \eta_1\in [o, \eta_1]$ such that $\fdo(\tilde \eta_1, \eta_1)\le \frac{1}{2n}$.   It follows $$\sfdo(v, y)\ge \fdo(v,\tilde\eta_1)-{1\over {n}}.$$

Choose $w\in [v, y]_\gamma$ such that $d(v, w)=d(v, \tilde \eta_1)=m$. Then the following is true: \begin{equation}\label{techn}\fdo(v, \tilde\eta_1)\geq\fdo(v,w).\end{equation}

Indeed, connect $v$ and $\tilde\eta_1$ by a curve $\alpha$. There exists a point $u=\alpha(t_0)$ such that $d(v,u)=m $ and choose a sub-curve $\alpha'=[v,u]_\alpha$ containing $m$ edges.  Since $\gamma$ is a word geodesic,  for the k-th edge $e\in \alpha'$ and the $k$-th edge $e_1\in[v,w]_\gamma$ we have $\flo (e)\geq \flo (e_1)\ (k\in\{0,...,m\}).$ Then $\flo(\alpha)\geq\flo(\alpha')\geq\fdo(v,w).$ So  (\ref{techn}) follows.

 \medskip


 \centerline{\includegraphics[width=7cm, height=4.5cm]{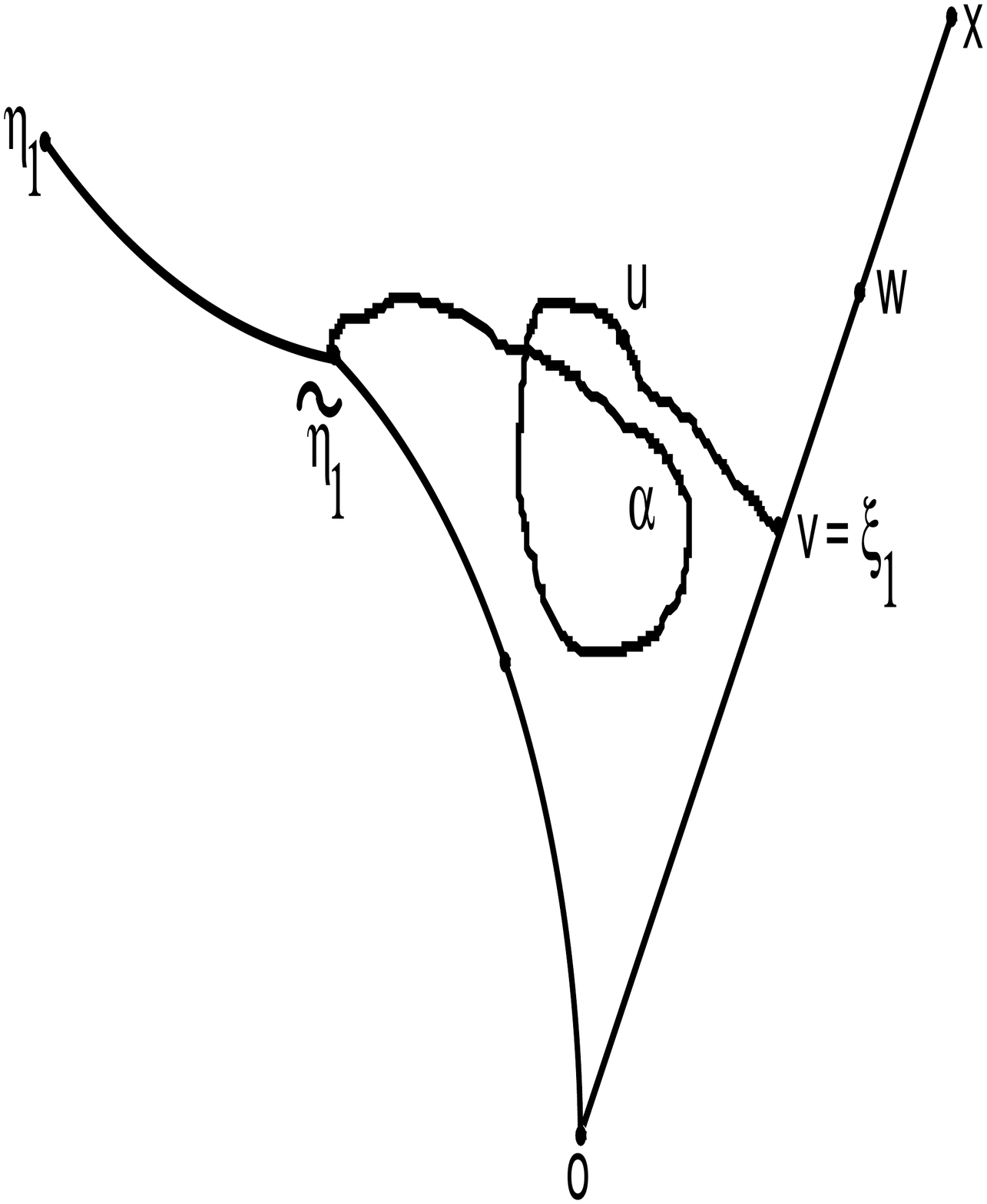}}
 \medskip

We have

$$\sfdo(v,y)\geq\fdo(v,\tilde\eta_1)-{1\over 2n}\geq \fdo(v,w)-{1\over n}\geq$$
$$\geq \flo([v,x]_{\gamma})-{\lambda^n\over {1-\lambda}}-{1\over n}.$$

 Passing to the limit    we obtain $\sfdo(v,y)\geq \flo([v, x]_\gamma)=\fdo(v, x)$. Since  $\fdo(v, x)\ge \sfdo(v, y)$, we conclude that $\flo([v, \gamma_+]_\gamma)= \sfdo(v, \gamma_+)$.
\end{proof}

\subsection{Transitional paths and uniformly conical points}\label{Section2.4}
In this subsection we shall give a description of
uniformly conical points in $\pG$ using the geometry of Cayley graph.

Let $\GP$ be a relatively hyperbolic pair. Denote $\mathbb
P=\{gP: g \in G, P\in \mathcal{\tilde P}\}$, where $\mathcal{\tilde P}$ is a maximal set of non-conjugate subgroups in $\mathcal P$. Following \cite{GePo2} we call  the elements of $\mathbb P$ {\it horospheres}.

\begin{defn}
Fix $\epsilon, R>0$. Let $\gamma$ be a path in $\Gx$ and $v \in
\gamma$ a vertex. 
Given $X \in \mathbb P$, we say that $v$ is \textit{$(\epsilon,
R)$-deep} in $X$ if $$\gamma \cap B(v, R) \subset
N_\epsilon(X).$$
If $v$ is not $(\epsilon, R)$-deep in any $X \in \mathbb P$, then
$v$ is called an \textit{$(\epsilon, R)$-transition point} of
$\gamma$.
\end{defn}

The following lemma together with Lemma \ref{karlssonlem} will be invoked several times.
\begin{lem}\label{floydtrans}
\begin{enumerate}

\item
For any $c\ge 1, R> 0$, there exists $\epsilon=\epsilon(c), \kappa =
\kappa(\epsilon, R)>0$ such that for any $c$-quasi-geodesic $\gamma$
and an $(\epsilon, R)$-transitional point $v$ in $\gamma$, we have
$$
\fds_v(\gamma_-, \gamma_+)  \ge \bar \fds_v(\gamma_-, \gamma_+) > \kappa.
$$

\item  For any $c\ge 1, \kappa, \epsilon>0$ there exists
$R=R(c, \kappa, \epsilon)>0$ such that for any $c$-quasi-geodesic $\gamma$
and a point $v\in \gamma$ with $\bar\fds_v(\gamma_-, \gamma_+)>\kappa$,  we have that $v$ is an $(\epsilon, R)$-transitional  point of $\gamma$.

\end{enumerate}
\end{lem}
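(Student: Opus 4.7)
The plan is to prove both parts through the interplay between Floyd/shortcut distance at $v$ and the geometry of $\gamma$ near $v$ with respect to the horospheres $\mathbb{P}$. For part (1), I would argue by contradiction: suppose for $\kappa_n \to 0$ there exist $c$-quasi-geodesics $\gamma_n$ with $(\epsilon, R)$-transition points $v_n$ satisfying $\bar\fds_{v_n}(\gamma_{n,-}, \gamma_{n,+}) \le \kappa_n$. Translate so that all $v_n$ equal a fixed basepoint $o$. The definition (\ref{shortcutdefn}) of the shortcut pseudo-metric gives, for each $n$, a concatenation of word paths with jumps between pairs $(\eta_i,\xi_{i+1})\in\omega$ whose total Floyd length at $o$ is at most $\kappa_n$. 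By the Visibility Lemma \ref{karlssonlem} each constituent sub-path stays within the bounded distance $\varphi(\kappa_n)$ of $o$, and each jump connects two Floyd-boundary points sharing the same $F_\lambda$-image, i.e.\ a common parabolic fixed point. Combined with the $c$-quasi-geodesic hypothesis, this forces $\gamma_n$ to lie within a bounded neighbourhood of a single horosphere $X_n\in\mathbb{P}$ on a ball around $o$ whose radius goes to infinity, contradicting the $(\epsilon, R)$-transitional assumption once $\epsilon=\epsilon(c)$ and $R$ are chosen.

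For part (2), I would prove the contrapositive: assume $v$ is $(\epsilon, R)$-deep in some horosphere $X \in \mathbb P$, and exhibit $R=R(c,\kappa,\epsilon)$ large enough so that $\bar\fds_v(\gamma_-,\gamma_+)\le\kappa$. Let $a_-, a_+\in\gamma$ be the first and last intersections of $\gamma$ with the sphere $\partial B(v,R)$. The two outer subpaths $[\gamma_-,a_-]_\gamma$ and $[a_+,\gamma_+]_\gamma$ lie outside $B(v,R)$, so each contributes at most $\lambda^R/(1-\lambda)$ to $\fls_v$, and hence to $\bar\fds_v$ by (\ref{floydshortcut}). For the middle segment both endpoints lie in $N_\epsilon(X)$, so after a Floyd-bounded correction they sit on the horosphere $X$; their $F_\lambda$-images then both belong to the kernel $F_\lambda^{-1}(p)=\Lambda_{\pGf}(G_p)=\partial_\lambda G_p$ associated with the parabolic fixed point $p$ stabilised by $G_X$ (Proposition \ref{FloydKernel}). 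Since the shortcut pseudo-metric collapses this kernel to a single point, the middle contribution to $\bar\fds_v$ is again $O(\lambda^R)$. Summing the three contributions gives $\bar\fds_v(\gamma_-,\gamma_+)=O(\lambda^R)\le\kappa$ for $R$ large.

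The main obstacle is the extraction step in part (1): passing from ``$\bar\fds_{v_n}(\gamma_{n,-},\gamma_{n,+})$ is small'' to ``$\gamma_n$ is deep in a single horosphere near $v_n$''. The delicacy is twofold. First, the jumps across parabolic points permitted in (\ref{shortcutdefn}) add flexibility that must be controlled, since they could a priori connect distinct horospheres; here one uses that each jump pair lies in one $\partial_\lambda G_p$, forcing the successive bounded paths to tail into the same horosphere. Second, one must upgrade convergence of $\gamma_n$ towards a parabolic point into the quantitative statement that $\gamma_n\cap B(v_n,R)\subset N_\epsilon(X)$ on a ball of definite radius; this will follow from a compactness argument in the attractor sum $\widetilde{\Lambda G}$ together with the properness of the action of $G_p$ on $X\setminus\{p\}$. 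The constants $\epsilon(c)$ and $\kappa(\epsilon,R)$ emerge from this limit argument.
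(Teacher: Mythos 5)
Your part (2) (argued as a contrapositive) breaks down exactly at the step that carries all the content. The entry/exit points $a_\pm$ of the middle segment are vertices of the Cayley graph lying in $N_\epsilon(X)$, and $F_\lambda$ restricts to the identity on $G$; their images therefore do \emph{not} lie in the kernel $F_\lambda^{-1}(p)=\partial_\lambda G_p$ of Proposition \ref{FloydKernel}, which consists of boundary points only. The shortcut pseudo-metric (\ref{shortcutdefn}) collapses that kernel, not the horosphere itself, so the assertion that ``the middle contribution to $\bar\fds_v$ is again $O(\lambda^R)$'' is unjustified. What you actually need is to join $a_-$ and $a_+$ to genuine boundary points $\xi,\eta\in\partial_\lambda(X)\subset F_\lambda^{-1}(p)$ by paths whose $\fls_v$-length is small, i.e.\ to escape to infinity inside a neighbourhood of the horosphere while staying far from $v$ (recall $v$ itself lies in $N_\epsilon(X)$, so this is not automatic); this escape estimate is precisely where the dependence $R=R(c,\kappa,\epsilon)$ must come from, and nothing in your proposal provides it. The paper does not attempt an effective bound at all: it argues by contradiction, normalizes $v_n=v$, lets the depth $n\to\infty$, and extracts by Tikhonoff compactness a limit horocycle $\alpha$ with $\alpha_-=\alpha_+=q$ a parabolic point, whence $\bar\fds_v((\gamma_n)_-,(\gamma_n)_+)\to 0$ because the shortcut metric identifies $F_\lambda^{-1}(q)$ to a point. (Minor additional point: the bound $\lambda^R/(1-\lambda)$ for the outer subpaths needs the quasi-geodesic constant $c$ to control how many edges lie in each annulus about $v$; easily repaired, but the constant is $\asymp_c R\lambda^{R/c}$ rather than $\lambda^R/(1-\lambda)$.)

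For part (1), two problems. First, you misapply Lemma \ref{karlssonlem}: it says that a path of $\fls_o$-length at least $\kappa$ must pass within $\varphi(\kappa)$ of $o$; from the smallness $\flo(\cdot)\le\kappa_n$ of the almost-optimal chains realizing $\tilde\rho_{\lambda,o}$ you can conclude nothing about their location (if anything such chains stay far from $o$), and $\varphi(\kappa_n)$ is not a uniform bound as $\kappa_n\to 0$. Second, and decisively, the step you yourself call ``the main obstacle'' --- upgrading smallness of $\bar\fds_{v_n}((\gamma_n)_-,(\gamma_n)_+)$ to the statement that $\gamma_n$ is deep in a single horosphere on a ball of definite radius about $v_n$ --- is exactly the assertion to be proved, and it is left as a promised but unexecuted ``compactness argument in the attractor sum''; as written, part (1) is a plan rather than a proof, and no mechanism producing the constants $\epsilon(c)$ and $\kappa(\epsilon,R)$ is exhibited. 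For comparison, the paper fixes $\epsilon=\epsilon(c)$ as the uniform quasiconvexity constant of the horospheres for $c$-quasi-geodesics (\cite{GePo2}) and then invokes \cite{GePo4} together with the same limiting argument as in (2), so your overall strategy is in the right spirit, but the two steps above are genuine gaps that must be filled before the argument stands.
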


\begin{proof}
Let us first prove (2). Suppose not, then $\exists c\ge 1, \kappa, \epsilon >0 : \forall n,  \exists$ $c$-quasi-geodesics $\gamma_n$ and $v_n\in\gamma_n$ such that $v_n$ is $(\epsilon, n)$-deep and $\bar\fds_{v_n}((\gamma_n)_-, (\gamma_n)_+)>\kappa$. Up to a normalization we may assume that $v_n=v=\gamma_n(0)$. Then $\gamma_n(]-n,n[)\subset N_{\epsilon}( X_n)$ for $X_n\in \mathbb P$. By compactness of geodesics  in the Tikhonoff topology, we obtain a limit horocycle $\alpha$ such that $\alpha_{\pm}=q$
and every part of $\alpha$ belongs to $\gamma_n$ for sufficiently large $n$ (see \cite[Prop. 5.2.3]{GePo3} for more details). Then the diameter of $ \partial(\gamma_n\cap\alpha) $ with respect to the distance $\overline\rho_v$ tends to $0$. As $\gamma_n$ are geodesics whose all interior points are in the graph we must have $\bar\fds_v((\gamma_n)_-, (\gamma_n)_+)\to 0$ which is a contradiction.

(1). By \cite[Corollary 3.9]{GePo2} there exists a constant $\epsilon=\epsilon(c)$  such that for every $X\in\mathbb P$ any  $c$-quasi-geodesic with endpoints in $X$ lies in $N_\epsilon(X)$ (all horospheres are uniformly quasi-convex). For the constants $c$ and $\epsilon=\epsilon(c)$  the statement now follows from \cite[Corollary 5.10]{GePo4}, following a similar argument as above.
\end{proof}

We introduce a special class of paths, which plays an important role
in the present study.
\begin{defn}\label{trans}
 Given $\epsilon, R, L>0$, a path $\gamma$ in $\Gx$ is called \textit{$(\epsilon, R, L)$-transitional} (or simply \textit{transitional} if the choice of the constants is not important) if for any
point $v \in \gamma$, there exists an $(\epsilon, R)$-transitional
point $w\in \gamma$ such that $\len([v, w]_\gamma) \le L$.

We say that an infinite path $\gamma$ in $\Gx$ is \textit{eventually} $(\epsilon, R, L)$-transitional if there exists $v\in \gamma$ such that $[v, \gamma_+)_\gamma$ is
$(\epsilon, R, L)$-transitional.
\end{defn}

We fix the constant $\epsilon=\epsilon(1) >0$ given by Lemma \ref{floydtrans}.1. The following lemma characterizes uniformly conical points as the endpoints of transitional geodesic rays.

\begin{lem}\label{charuc}
Let $\GP$ be a relatively hyperbolic pair.  There exists $R>0$ for which tplhe following property is true:

a point $\xi \in \pG$ is uniformly conical if and only if some (or any) geodesic ray ending at $\xi$ is eventually an $(\epsilon, R, L)$-transitional geodesic ray for some $L>0$.
\end{lem}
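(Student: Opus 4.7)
The strategy is to pass through the Floyd boundary $\pGf$ and combine convergence-action dynamics of the witnessing sequence with the transitional/Floyd dictionary of Lemma~\ref{floydtrans}. Since $\xi \in \pG$ is (in both directions) conical, Proposition~\ref{FloydKernel} gives a unique preimage $\xi' \in \pGf$ with $F(\xi') = \xi$, and any word-geodesic ray $\alpha$ in $\Gx$ ending at $\xi$ ends at $\xi'$ in $\pGf$.

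For the forward direction, let $g_n$ witness $L_0$-uniform conicality of $\xi$ and set $v_n := g_n^{-1}$, so that $d(v_n, v_{n+1}) = |g_n g_{n+1}^{-1}| \le L_0$. The Floyd map $F$ together with the singleton preimage of $\xi$ implies that $\xi'$ is $L_0$-uniformly conical for the convergence action on $\Gf$ via the same $g_n$, so the pair $(g_n \xi', g_n o)$ in $\Gf^2$ has cluster set off the diagonal. Hence $\rho_o(g_n \xi', g_n o) \ge \kappa > 0$ for all large $n$, and the equivariance~(\ref{equiv}) gives $\rho_{v_n}(o, \xi') \ge \kappa$. The Visibility Lemma~\ref{karlssonlem} then places $v_n$ within distance $R_0 := \varphi(\kappa)$ of $\alpha$. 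Choose $w_n \in \proj_\alpha(v_n)$; the spacing bound $d(w_n, w_{n+1}) \le L_0 + 2R_0 =: L$ is immediate, and the bi-Lipschitz inequality~(\ref{lambdabilip}) combined with Lemma~\ref{floydtrans}(2) yields that each $w_n$ is an $(\epsilon, R)$-transitional point of $\alpha$ for some $R$ depending only on $\kappa$ and $R_0$. Thus $\alpha$ is eventually $(\epsilon, R, L)$-transitional.

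For the backward direction, pick transitional points $w_n$ along $\alpha$ with $d(w_n, w_{n+1}) \le L$ and set $g_n := w_n^{-1}$, giving $d(1, g_n g_{n+1}^{-1}) \le L$. Applying the shortcut half of Lemma~\ref{floydtrans}(1) to $\alpha$ at $w_n$ yields $\bar\rho_{w_n}(o, \xi) > \kappa$, and equivariance of the shortcut metric gives $\bar\rho_o(g_n o, g_n \xi) > \kappa$. To verify the conical cluster condition, fix $\eta \in \pG \setminus \{\xi\}$ and any cluster point $(a, b)$ of $(g_n \xi, g_n \eta)$ in $\pG^2$ obtained along a subsequence; refine further so that $g_n$ converges in the convergence-action sense on $\widetilde{\pG}$ to an attractor--repeller pair $(c, c^-)$. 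Because $g_n^{-1} \cdot o = w_n \to \xi$ in $\widetilde{\pG}$, the repeller must be $c^- = \xi$, so for $\eta \neq \xi = c^-$ one has $g_n \eta \to c$, forcing $b = c$; likewise $g_n o \to c$. Passing to the limit in $\bar\rho_o(g_n o, g_n \xi) > \kappa$ via continuity of $\bar\rho$ gives $\bar\rho_o(c, a) \ge \kappa$, whence $a \neq c = b$.

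I expect the main obstacle to be the last step: identifying the abstract repeller $c^-$ of the convergence-action limit of $g_n$ on $\widetilde{\pG}$ as $\xi$ itself. This is carried out by exploiting that the extended action on the attractor sum pins $c^-$ down as the limit of the trajectory $g_n^{-1} \cdot o = w_n$, which tends to $\xi$ by construction of $w_n$ as transitional points going out to infinity along $\alpha$. A secondary subtlety, in the forward direction, is that one must transfer the off-diagonal cluster condition from $\pG^2$ up to $\Gf^2$; this uses the singleton-fiber portion of Proposition~\ref{FloydKernel} so that $F \times F$ reflects off-diagonality for the pair involving $\xi'$.
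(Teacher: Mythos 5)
Your ``$\Leftarrow$'' direction is correct and is essentially the paper's argument, with the dynamical step that the paper compresses into ``since the action is convergence, $\xi$ is conical'' usefully spelled out on the attractor sum: identifying the repeller of a convergence subsequence of $(g_n)$ with $\xi$ via $g_n^{-1}o=w_n\to\xi$ is exactly the right way to justify that sentence.

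The ``$\Rightarrow$'' direction, however, has a genuine gap. The lemma asserts a \emph{single} $R>0$ valid for every uniformly conical $\xi\in\pG$ (only $L$ may depend on $\xi$). In your argument the separation constant $\kappa$ comes from the off-diagonal cluster condition for the particular witnessing sequence of the particular point $\xi$; it therefore depends on $\xi$, and so do $R_0=\varphi(\kappa)$ and the constant $R$ you extract from Lemma \ref{floydtrans}(2). What you prove is that for each uniformly conical $\xi$ there exist $R(\xi)$ and $L$ making the ray eventually $(\epsilon,R(\xi),L)$-transitional, which is strictly weaker than the statement (and the uniformity of $R$ is what is actually used later, e.g.\ in the definition of $\uGL$ and in Section 4). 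The paper gets the uniform constant from geometric finiteness: by Tukia's theorem \cite{Tukia} (equivalently, cocompactness on distinct pairs) there is one $\delta>0$ such that every conical point admits a sequence $(g_n)$ with $\bar\fds_1(g_n\xi,g_n\eta)>\delta$ for all $\eta\ne\xi$ in $\pG\cup G$; this is precisely where the hypothesis that the action is geometrically finite enters, and your proposal never invokes it.

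Independently of uniformity, the step where you pass to $\pGf$ and then apply Lemma \ref{floydtrans}(2) does not go through as written: upstairs you only obtain a lower bound for the Floyd metric, $\fds_{w_n}(o,\xi')\succ\kappa$, whereas the hypothesis of Lemma \ref{floydtrans}(2) is a lower bound for the shortcut metric $\bar\fds_{w_n}$, and the inequality $\bar\fds\le\fds$ goes the wrong way. The $\fds$-version of that implication is false in general: if a maximal parabolic subgroup $G_p$ has non-trivial Floyd boundary, a bi-infinite geodesic joining two distinct points of the fibre $F_\lambda^{-1}(p)=\partial_\lambda G_p$ lies in a uniform neighbourhood of the horosphere, so it contains points that are $(\epsilon,R)$-deep for arbitrarily large $R$ while the Floyd distance, based at such points, between its endpoints stays bounded below. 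This is exactly why the paper runs the separation estimate downstairs with $\bar\fds$ on the attractor sum (where the pair $(\xi,\gamma_-)$ with $\gamma_-\in G$ makes sense); once you do that, the detour through Proposition \ref{FloydKernel} and $\pGf$ becomes unnecessary, but the uniformity issue of the previous paragraph still has to be resolved via geometric finiteness.
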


{\it Proof of} $``\Rightarrow".$
Since $G$ acts geometrically finitely on $\pG$,  it follows from \cite[Theorem 1C]{Tukia}  that there exists $\delta>0$ such that for any conical point $\xi\in \pG$, there exists a sequence $(g_n)\subset G$ such that for all points $\eta\in(\pG\cup G)\setminus\xi$ one has   $\bar\fds_1(g_n\xi, g_n\eta) > \delta$. Denote $r_0 :=
\varphi(\delta/2)$, where $\varphi$ is given by Lemma \ref{karlssonlem}.

  Assume that $\xi$ is an $L$-uniformly conical point for some $L>0$. Let $\gamma=[\gamma_-, \xi[$ be a   geodesic ray ending at $\xi$ and $(g_n)\subset G$ be the above sequence taken for the pair $(\gamma_-,\xi).$  Then  $\bar\fds_1(g_n\xi, g_n\gamma_-)=\bar\fds_{g_n^{-1}}(\xi, \gamma_-) > \delta/2$ and $d(1, g_n g_{n+1}^{-1}) \le L$ for all $n \ge 1$.  By Lemma \ref{karlssonlem}, $\gamma \cap B(g_n^{-1}, r_0) \neq \emptyset$ for $n\ge 1$. Let $v_n \in \gamma$ such that $d(v_n,
g_n^{-1})<r_0$. By the inequality (\ref{lambdabilip})  such that $\bar\fds_{v_n}(\gamma_-, \xi) > \kappa$ where $\kappa=\lambda^{r_0}\cdot\delta/2$ is a uniform constant. Moreover, $d(v_n, v_{n+1}) \le L + 2r_0$.

Hence, Lemma \ref{floydtrans}.2 gives rise to a uniform constant $R$ for which $v_n$ are all $(\epsilon, R)$-transitional for $n\ge 1$.


{\it Proof of} $``\Leftarrow".$
Let $\gamma$ be an $(\epsilon, R, L)$-transitional geodesic ray at $\xi=\gamma_+$, and $v_n$ ($n \ge 0$)  a sequence of $(\epsilon,
R)$-transitional points in $\gamma$ such that $d(v_n, v_{n+1}) \le L$ and $v_n \to \xi$. Then $\bar\fds_{v_n}(\gamma_-, \xi)\ge \kappa$, where $\kappa>0$ is given by Lemma \ref{floydtrans}.1. Denote $g_n:=v_n^{-1}$. Then $\bar\fds_1(g_n \gamma_-, g_n\xi) \ge \kappa$. In other words, $\{(g_n \gamma_-, g_n\xi)\}$ lies outside a uniform neighborhood of the diagonal $\Delta(\pG^2)$.

Since the action is convergence the point $\xi$ is conical. As $d(1, g_ng_{n+1}^{-1}) \le L$ it is uniformly conical.
\qed

\begin{rems}\label{sizef}
\begin{enumerate}
\item
The proof of the $``\Leftarrow"$ direction equally applies to a conical point in Floyd boundary $\pGf$ without assuming the geometrical finiteness of the action.

\item
The existence of the uniform constant $\delta>0$ which measures the size of a compact fundamental set for the co-compact action of $G$ on the set of distinct pairs   was only used to prove the implication $``\Rightarrow"$ (in order to get a uniform constant $R$).  The existence of such a constant implies that the  action of $G$  on a metrizable space $\Lambda G$ is 2-cocompact; the converse statement that a    2-cocompact and  non-elementary convergence action is geometrically finite is shown in \cite{Ge1}, and its proof does not request the metrisability of the space $X=\Lambda G.$
\item
As a corollary we see that for each $L>0$ the set of $L$-uniformly conical points is $G$-invariant, although this is not clear at all from the dynamical definition.
\end{enumerate}
 \end{rems}

\begin{cor}
Let $\epsilon=\epsilon(1) >0$ given by Lemma \ref{floydtrans}.1.
For any $R, L>0$, an  $(\epsilon, R, L)$-transitional geodesic ray ends at a uniformly conical point $\xi \in \pGf$.
\end{cor}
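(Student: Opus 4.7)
The plan is to imitate the $``\Leftarrow"$ direction of Lemma \ref{charuc}, but now working with the Floyd boundary $\pGf$ instead of the Bowditch boundary $\pG$. This is essentially the content of Remark \ref{sizef}(1), so I only need to verify that each ingredient used there is available in the Floyd setting and to organize the steps.

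First, I would fix an $(\epsilon, R, L)$-transitional geodesic ray $\gamma$ with $\xi = \gamma_+\in\pGf$ (which exists because geodesic rays in the Cayley graph end at points of the Floyd boundary by Lemma \ref{karlssonlem}). By definition of a transitional path, I can choose a sequence of $(\epsilon, R)$-transitional points $v_n\in\gamma$ with $v_n\to\xi$ and $d(v_n, v_{n+1})\le L$. Then Lemma \ref{floydtrans}(1), applied to the geodesic $\gamma$ (which is a $1$-quasi-geodesic) at the transitional point $v_n$, produces a uniform constant $\kappa=\kappa(\epsilon, R)>0$ such that $\fds_{v_n}(\gamma_-,\xi)\ge\kappa$ for every $n$, where I pass to the limit in the endpoint using the continuity of $\fds$.

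Next I would set $g_n := v_n^{-1}$. The equivariance property \eqref{equiv} translates the lower bound into $\fds_1(g_n\gamma_-, g_n\xi)\ge\kappa$, so the pairs $\{(g_n\gamma_-, g_n\xi)\}$ stay outside a uniform neighborhood of the diagonal $\Delta(\pGf^2)$ with respect to $\fds_1$. By Karlsson's theorem, the action $G\curvearrowright\Gf$ is a convergence action, so the standard convergence-action criterion (applied exactly as in the proof of Lemma \ref{charuc}, replacing the reference to $\pG$ by $\pGf$) shows that $\xi$ is a conical point in $\pGf$: for any $\eta\neq\xi$ the closure of $\{(g_n\xi, g_n\eta)\}$ in $\Gf^2$ is disjoint from the diagonal.

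Finally, the relation $d(1, g_n g_{n+1}^{-1}) = d(v_n, v_{n+1}) \le L$ provides the uniform control on successive $g_n$'s required by Definition \ref{mainconvdef}(1), so $\xi$ is in fact $L$-uniformly conical. There is no real obstacle here; the only point worth double-checking is that Lemma \ref{floydtrans}(1) and Remark \ref{sizef}(1) together guarantee that we never needed geometric finiteness or metrisability of the target space to run the conicality argument — only the convergence property of $G\curvearrowright\Gf$, which is provided by Karlsson's theorem.
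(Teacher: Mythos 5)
Your argument is exactly the paper's: the corollary is obtained by rerunning the $``\Leftarrow"$ direction of Lemma \ref{charuc} in the Floyd boundary, using Lemma \ref{floydtrans}(1) at the transitional points $v_n$, equivariance, and Karlsson's convergence property of $G\act\Gf$, precisely as indicated in Remark \ref{sizef}(1). Your verification that geometric finiteness is never needed here is the same observation the paper makes, so the proposal is correct and follows the paper's route.
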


As another consequence of the proof, we have the following result.

\begin{cor}\label{ballcovering}
Let $G\act X$ be a geometrically finite action. Then there exists a constant $L >0$ such that for any conical point  $\xi \in X$  there is a sequence of elements $g_n \in G$ such that for any
geodesic $\gamma$ ending at $\xi$, we have
$$[v, \xi[_\gamma \subset \cup_{n\ge 1} B(g_n, L)$$
for some $v \in \gamma$. 
\end{cor}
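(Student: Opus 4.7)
The plan is to extract the corollary from the ``$\Rightarrow$'' direction of the proof of Lemma~\ref{charuc}, once a \emph{universal} uniformity constant $L_0$ for conical points has been established. Since $G\act X$ is geometrically finite, the action is $2$-cocompact on the space of distinct pairs, and by Tukia's work on uniform convergence groups this yields both the universal Floyd constant $\delta>0$ used in the proof of Lemma~\ref{charuc} \emph{and} a universal Cayley constant $L_0>0$: the witnessing sequence $(h_n)\subset G$ associated to any conical $\xi\in X$ may be chosen to satisfy $\bar\rho_1(h_n\xi, h_n\eta)>\delta$ for every $\eta\neq\xi$ together with $d(1,h_nh_{n+1}^{-1})\le L_0$. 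Equivalently, every conical point of a geometrically finite action is $L_0$-uniformly conical for the \emph{same} $L_0$.

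With this uniformity in hand I would fix $\xi$ and set $g_n:=h_n^{-1}$; the resulting sequence depends only on $\xi$. For any geodesic $\gamma$ ending at $\xi$, Karlsson's visibility lemma (Lemma~\ref{karlssonlem}) with $r_0:=\varphi(\delta/2)$ produces $v_n\in\gamma$ with $d(v_n,g_n)\le r_0$, reproducing the construction in the $\Rightarrow$ part of Lemma~\ref{charuc}. The triangle inequality together with $d(g_n,g_{n+1})\le L_0$ forces $d(v_n,v_{n+1})\le L_0+2r_0$, and since $d(1,h_n^{-1})\to\infty$ by properness of the convergence action on triples, the points $v_n$ march along $\gamma$ to $\xi$. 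Taking $v:=v_1$, every point of $[v,\xi[_\gamma$ lies within $L:=L_0+3r_0$ of some $g_n$, and $L$ depends only on universal data and so is independent of $\xi$.

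The main obstacle is the universal choice of $L_0$. The uniformity of $\delta$ is immediate from the $2$-cocompactness of the pair action, but the uniform step bound $d(1,h_nh_{n+1}^{-1})\le L_0$ requires a separate compactness argument. I would fix a compact fundamental set $K$ for the diagonal $G$-action on the pair space together with a reference point $\eta_0\in X$, and then construct $h_n$ inductively so that $h_n(\xi,\eta_0)$ cycles through $K$; the bound on $d(1,h_nh_{n+1}^{-1})$ comes from properness of the $G$-action on the Cayley graph, which forces the set of group elements shuttling $K$ to within itself to have uniformly bounded word length. Once $L_0$ is pinned down, the rest is a direct reading of the proof of Lemma~\ref{charuc}, as the authors themselves signal with the phrase ``another consequence of the proof''.
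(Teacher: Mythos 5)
Your argument stands or falls with the claim that the witnessing sequence of \emph{any} conical point can be chosen with a universal step bound $d(1,h_nh_{n+1}^{-1})\le L_0$, i.e.\ that every conical point of a geometrically finite action is $L_0$-uniformly conical for one and the same $L_0$. This is false, and it is exactly the distinction the paper is built around: when some $P\in\mathcal P$ is infinite, a geodesic ray making deeper and deeper excursions into horospheres $N_\epsilon(gP)$ ends at a conical (non-parabolic) point which is not $L$-uniformly conical for \emph{any} $L$, since by Lemma \ref{charuc} $L$-uniform conicality forces the ray to be eventually $(\epsilon,R,L)$-transitional, whereas such a ray contains $(\epsilon,R')$-deep points with $R'\to\infty$. (If your claim held, the sets $\cGf$ and $\uGf$ would coincide and the limiting procedure $L\to\infty$ in Theorems \ref{ThmHD} and \ref{ThmAR} would be pointless.) The compactness argument you sketch does not rescue the step bound: a convergence action is proper on distinct \emph{triples} but only cocompact, not proper, on distinct pairs (pair stabilizers may be infinite), so the collection of elements returning the compact fundamental set $K$ into itself need not be finite; concretely, the successive times at which $h_n(\xi,\eta_0)$ visits $K$ have unbounded gaps precisely when the geodesics to $\xi$ make unboundedly deep horospherical excursions, and then $d(1,h_nh_{n+1}^{-1})$ is unbounded. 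In effect you have assumed the bounded-gap property, which is the whole difficulty.

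This is also not the paper's route. The corollary is extracted from the ``$\Rightarrow$'' part of the proof of Lemma \ref{charuc} using only what is available for an \emph{arbitrary} conical point: Tukia's theorem gives a universal $\delta>0$ and, for each conical $\xi$, a sequence $(g_n)$ with $\bar\fds_1(g_n\xi,g_n\eta)>\delta$ for all $\eta\ne\xi$; the Visibility Lemma \ref{karlssonlem} then produces the universal radius $r_0=\varphi(\delta/2)$ so that every geodesic ending at $\xi$ meets each ball $B(g_n^{-1},r_0)$. The universal constant of the corollary thus comes from $\delta$ and $\varphi$ alone; the bound $d(1,g_ng_{n+1}^{-1})\le L$ is used in Lemma \ref{charuc} only under the \emph{additional} hypothesis that $\xi$ is $L$-uniformly conical, and is not invoked (and not available) for a general conical point. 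Since your proposal's central lemma is false and its justification relies on a properness statement that fails for the pair action, the proof as written does not establish the corollary.
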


\begin{rem}
In the setting of Kleinian groups, this property is used to define uniformly conical points, cf. \cite{Strat}. Here we do not need to assume that $G$ acts geometrically finitely on $\pGf$. Also the corollary holds for ``quasi-geodesics" instead of ``geodesics".
\end{rem}

We setup some notations for future discussions about uniformly conical points.

Let $\epsilon, R$ be given by Lemma \ref{charuc}. Denote by $\uGL$ the set of uniformly conical points $\xi \in \pG$ such that there exists an $(\epsilon, R, L)$-transitional geodesic ray $\gamma$ ending at $\xi$. It is obvious that $\uGL$ is a $G$-invariant set.

Fixing a basepoint $o\in G$, denote by $\uGLo$ the set of all uniformly conical points $\xi \in \uGL$ where
a geodesic $\gamma$ between $o$ and $\xi$ is $(\epsilon, R, L)$-transitional.

Clearly, $G\cdot \uGLo =\uGL$.
Thus, the set $\uGLo$ can be thought as a
fundamental domain for the action of $G$ on the set $\uGL$.

Similarly, we define the set of uniformly conical points $\uGfLo$ and $\uGfL$ on the Floyd boundary $\pGf$.  By Proposition \ref{FloydKernel}, there exists one-to-one correspondence between $\uGL$ and $\uGfL$.

\subsection{Contracting property}
Recall that $\diam{\cdot}$ denotes the diameter of a set in a metric space.
\begin{defn}\label{contractdefn}
For $c\ge 1$, a subset $X$ is called \textit{$c$-contracting} in a metric space $Y$ if there exists $\mu_c, D_c>0$ such that the following holds
\begin{equation}\label{contractingproperty}
\diam{\proj_X(\gamma)} < D_c
\end{equation}
for any $c$-quasi-geodesic $\gamma$ in $Y$ with $N_{\mu_c}(X) \cap \gamma =
\emptyset$.

A collection of $c$-contracting subsets is referred to as a
$c$-\textit{contracting system} if $\mu_c, D_c$ depends only on $c$.
\end{defn}

A system $\mathbb X$ has \textit{a bounded intersection property} if for any $\epsilon >0$
there exists $\mathcal R =\mathcal R(\epsilon)>0$ such that
$$
\diam{N_\epsilon(X) \cap N_\epsilon(X')} <\mathcal R$$ for any two distinct
$X, X' \in \mathbb X$.

In what follows, our discussion applies to the Cayley graph of a relatively hyperbolic group $\GP$ with a finite generating set $S$. In particular, we are interested in the contracting system with bounded intersection given by the following lemma.
\begin{lem}\label{periphsystem}\cite{GePo2}
Let $\mathbb P=\{gP: g \in G, P\in \mathcal{\tilde P}\}$, where $\mathcal{\tilde P}$ is a complete set of conjugacy representatives in $\mathcal P$. There exists $\mathcal R: \mathbb R_{>0} \to \mathbb R_{>0}$ such that the collection $\mathbb
P$ is a $c$-contracting system with the $\mathcal R$-bounded intersection for each $c\ge 1$.
\end{lem}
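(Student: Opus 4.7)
The plan is to deduce both properties from the geometry of transitional points, which links the combinatorics of the Cayley graph to the Floyd/shortcut metric via Lemma~\ref{floydtrans}. Both assertions ultimately rely on quasi-convexity of peripheral cosets (which is \cite[Corollary~3.9]{GePo2}, already invoked in the proof of Lemma~\ref{floydtrans}) together with the uniformity furnished by the $G$-action and the finiteness of $\tilde{\mathcal{P}}$.

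For the contracting property, I would fix $c\ge 1$ and proceed by contradiction. Suppose there exist a $c$-quasi-geodesic $\gamma$ with $N_{\mu_c}(gP)\cap\gamma=\emptyset$ and points $x,y\in\proj_{gP}(\gamma)$ with $d(x,y)$ very large. Choose $u,v\in\gamma$ realizing these projections and form the quadrilateral $Q$ whose sides are $[u,v]_\gamma$, a geodesic $[v,y]$, a geodesic $[y,x]$ inside $gP$, and a geodesic $[x,u]$. Since $[u,x]$ and $[v,y]$ realize the nearest-point projection, they avoid $N_{\mu_c/2}(gP)$ until their endpoints on $gP$, so their lengths are bounded (otherwise a first entry into the neighborhood would contradict projection minimality via quasi-convexity of $gP$). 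The middle of $[x,y]\subset N_\epsilon(gP)$ would then be $(\epsilon,R)$-deep in $gP$ for any prescribed $R$ once $d(x,y)$ is large. Applying Lemma~\ref{floydtrans}(1) to $[u,v]_\gamma$, one finds a transition point $w\in[u,v]_\gamma$ whose Floyd distance from $(\gamma_-,\gamma_+)$ is at least a uniform $\kappa$; pushing $w$ through the bounded sides $[u,x],[v,y]$ forces $w$ to sit at bounded distance from $gP$, which contradicts $\gamma\cap N_{\mu_c}(gP)=\emptyset$ once $\mu_c$ is chosen large enough.

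For the bounded intersection property, fix $\epsilon>0$ and suppose $x,y\in N_\epsilon(gP)\cap N_\epsilon(g'P')$ with $gP\ne g'P'$ and $d(x,y)$ arbitrarily large. The geodesic $[x,y]$ then lies in $N_{\epsilon'}(gP)\cap N_{\epsilon'}(g'P')$ for a slightly enlarged $\epsilon'$ by quasi-convexity of both cosets. Its midpoint would be simultaneously $(\epsilon',R)$-deep in $gP$ and in $g'P'$ for arbitrarily large $R$. But Lemma~\ref{floydtrans}(2) combined with the compactness-of-geodesics argument used in its proof shows that a point can be deep in at most one horosphere once $R$ exceeds a threshold depending only on $\epsilon'$; more concretely, a horocyclic path along $gP$ through the midpoint would diverge from $g'P'$ at linear rate once $gP\ne g'P'$. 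The resulting contradiction yields the desired uniform bound $\mathcal{R}(\epsilon)$.

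The main obstacle is obtaining the uniformity of $\mu_c,D_c$ in terms of $c$ alone, and of $\mathcal{R}$ in terms of $\epsilon$ alone, independently of which peripheral cosets are involved. This is overcome because $G$ acts on $\mathbb{P}$ with finitely many orbits (one per element of $\tilde{\mathcal{P}}$) and by isometries on the Cayley graph: after translating, one reduces to finitely many configurations, so any locally finite constant becomes a global one. A secondary subtlety is the reliance on Lemma~\ref{floydtrans} in both directions, but since that lemma is already established via \cite[Corollary~3.9]{GePo2} and \cite[Corollary~5.10]{GePo4}, no circularity arises.
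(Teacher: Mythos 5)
There is a genuine gap here, and also a mismatch with what the paper actually does: the paper does not prove Lemma \ref{periphsystem} at all, it imports it from \cite{GePo2} (the contracting property is \cite[Proposition 8.5]{GePo2}, the bounded intersection is \cite[Corollary 5.7]{GePo2}). Your attempt to re-derive it from the paper's internal machinery does not go through as written. For the contracting property, the step asserting that the projection segments $[u,x]$ and $[v,y]$ ``avoid $N_{\mu_c/2}(gP)$ until their endpoints, so their lengths are bounded'' is false: these geodesics end on $gP$, so they certainly enter every neighbourhood of $gP$, and their lengths equal $d(u,gP)$, $d(v,gP)$, which are not bounded in terms of anything — the quasi-geodesic may lie arbitrarily far from $gP$. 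Moreover, Lemma \ref{floydtrans}(1) does not \emph{produce} an $(\epsilon,R)$-transitional point on $[u,v]_\gamma$; it only says something about such a point if one exists, and the subpath could be deep in other horospheres and contain no transition point. So the contradiction with $\gamma\cap N_{\mu_c}(gP)=\emptyset$ is never actually reached.

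For the bounded intersection, the decisive steps — ``a point can be deep in at most one horosphere once $R$ exceeds a threshold depending only on $\epsilon'$'' and ``a horocyclic path along $gP$ diverges from $g'P'$ at linear rate'' — are restatements of the bounded intersection (equivalently, bounded coset penetration) property itself, not consequences of Lemma \ref{floydtrans}(2), which only converts a lower bound on the Floyd distance of the endpoints into transitionality of a point and says nothing about two distinct horospheres. In \cite{GePo2} this is proved from the dynamics of parabolic fixed points (properness/cocompactness of the peripheral actions), which is real input your sketch does not supply. There is also a structural circularity risk: within this paper the transitional-point framework (Convention \ref{epsilonR1}, Lemmas \ref{uniformtrans}, \ref{transcontract}) is calibrated by the constants $\mu_c$, $\mathcal R$ coming from Lemma \ref{periphsystem}, so the only ingredient you could safely use is the quasi-convexity of horospheres \cite[Corollary 3.9]{GePo2}, and quasi-convexity alone yields neither the contraction nor the bounded intersection. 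The correct move — and the one the paper makes — is simply to cite \cite{GePo2} for both statements.
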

\begin{proof}
The contracting property is proven in \cite[Proposition 8.5]{GePo2}, and the bounded intersection is in \cite[Corollary 5.7]{GePo2}.
\end{proof}

The following lemma will be  often used further on.

\begin{lem}\label{uniformtrans}
Let $\mathbb P$ be the collection of horospheres in (\ref{periphsystem}). For any $c\ge 1$,  there exist $\epsilon_c=\epsilon(c)>0$ such that  for every $c$-quasi-geodesic $\gamma$ in $\Gx$ and $\epsilon \ge
\epsilon_c$ we have:

  $\foral R\ge 0, \exist L=L(\epsilon, R)
>0$ such that the condition  $\max\{d(\gamma_-, X),\; d(\gamma_+, X)\} < \epsilon$, for some $X \in \mathbb P$, implies that every point $ z \in \gamma$ satisfying
$d(z, \gamma_-),\; d(z, \gamma_+) >L$  is $(\epsilon_c, R)$-deep in $X$.
 \end{lem}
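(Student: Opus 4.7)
The plan is to combine the contracting property of horospheres (Lemma \ref{periphsystem}) with the $c$-quasi-geodesic inequality to decompose $\gamma$ into a bounded-length \emph{preamble} near $\gamma_-$, a \emph{body} in which the distance to $X$ is uniformly bounded by some $\epsilon_c$ depending only on $c$, and a symmetric \emph{postamble} near $\gamma_+$. I will then choose $L$ large enough so that $z$ together with $\gamma\cap B(z,R)$ lies entirely in the body, forcing the $(\epsilon_c,R)$-deep condition.

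Let $\mu_c$ and $D_c$ be the contracting constants for $c$-quasi-geodesics given by Lemma \ref{periphsystem}. Let $v$ be the first point of $\gamma$ in $N_{\mu_c}(X)$ and $v'$ the last such point (existence is discussed below). The preamble $[\gamma_-,v]_\gamma$ meets $N_{\mu_c}(X)$ only at $v$, so the contracting property bounds $\diam\proj_X([\gamma_-,v]_\gamma)\le D_c$. Combined with $d(\gamma_-,\proj_X(\gamma_-))<\epsilon$ and $d(v,\proj_X(v))\le \mu_c$, this yields $d(\gamma_-,v)\le\epsilon+D_c+\mu_c+O(1)$, whence the $c$-quasi-geodesic inequality gives $\len([\gamma_-,v]_\gamma)\le c(\epsilon+D_c+\mu_c)+O(c)$. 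The postamble $[v',\gamma_+]_\gamma$ admits the identical bound.

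For the body $[v,v']_\gamma$ I apply the same contracting/quasi-geodesic argument to each maximal excursion of $\gamma$ outside $N_{\mu_c}(X)$: both endpoints of such an excursion lie within distance $\mu_c+O(1)$ of $X$, so by contracting its endpoints are at Cayley distance at most $D_c+2\mu_c+O(1)$, and the $c$-quasi-geodesic bound gives excursion length at most $E:=c(D_c+2\mu_c)+O(c)$, a constant depending only on $c$. Consequently every point of the body lies within Cayley distance $E$ of a point of $N_{\mu_c}(X)$, hence at depth at most $\epsilon_c:=\mu_c+E$ from $X$. Crucially, $\epsilon_c$ depends only on $c$, not on $\epsilon$.

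Finally, I set $L:=c(\epsilon+D_c+\mu_c)+cR+O(c)$. Whenever $d(z,\gamma_\pm)>L$, the arc-lengths of $[\gamma_-,z]_\gamma$ and $[z,\gamma_+]_\gamma$ each exceed the pre/postamble bounds by more than $cR+c$; since any $z'\in \gamma\cap B(z,R)$ sits within arc-length $cR+c$ of $z$ by the quasi-geodesic inequality, both $z$ and $z'$ lie in the body, so $d(z',X)\le\epsilon_c$ and $z$ is $(\epsilon_c,R)$-deep in $X$. The degenerate case $\gamma\cap N_{\mu_c}(X)=\emptyset$ is handled by observing that $\gamma$ itself then projects to a set of diameter $\le D_c$, forcing $d(\gamma_-,\gamma_+)\le 2\epsilon+D_c$ and $\len(\gamma)\le c(2\epsilon+D_c)+c$, so enlarging $L$ above this quantity makes the hypothesis on $z$ vacuous. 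The main technicality I anticipate is bookkeeping the constants so that the whole ball $B(z,R)\cap\gamma$ (and not merely $z$ itself) is guaranteed to sit in the body: this is exactly the role of the $cR+c$ buffer built into the choice of $L$.
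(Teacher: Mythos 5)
Your proof is correct and follows essentially the same route as the paper's: the same contracting constants $\mu_c, D_c$, the same entry/exit decomposition (your preamble/postamble are the paper's segments $[\gamma_-,x]_\gamma$, $[y,\gamma_+]_\gamma$), the same excursion argument showing the middle portion lies in $N_{\epsilon_c}(X)$ with $\epsilon_c$ depending only on $c$, and the same choice of $L$ up to the harmless $cR+c$ buffer. That buffer, ensuring all of $\gamma\cap B(z,R)$ (not just $z$) stays in the body, is a point the paper leaves implicit, so your bookkeeping is if anything slightly more explicit.
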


\begin{proof} The result is  proved in \cite[Lemma 2.8]{YANG7} for geodesics. We provide below a proof to precise  the choice of the constants.

By Lemma \ref{periphsystem}, let $\mu_c, D_c$ be the constants such that for any $X\in \mathbb P$, for any $c$-quasi-geodesic outside $N_{\mu_c}(X)$, the diameter of its projection to $X$  is upper bounded by $D_c$.

Set $\epsilon_c:=c(2\mu_c+D_c)+c$. If a $c$-quasi-geodesic has two endpoints in $N_{\mu_c}(X)$ for $X\in \mathbb P$, then it lies in $N_{\epsilon_c}(X)$. Indeed, if $x,y\in \gamma$ satisfy $$\max\{d(x, X), d(y, X)\}\leq\mu_c$$ and $]x, y[_\gamma\cap N_{\mu_c}(X)=\emptyset$, then   by  Lemma \ref{periphsystem},   $d(x,y)\leq 2\mu_c+D_c$. Since $\gamma$ is $c$-quasi-geodesic we have $\ell([x,y]_\gamma)\leq \epsilon_c$ and $\displaystyle [x,y]_\gamma\subset N_{\epsilon_c}(X)$.

Set $L= c(2\epsilon+D_c)+c+R$ for $\epsilon\ge \epsilon_c$. We first claim $\gamma\cap N_{\mu_c}(X)\ne \emptyset$. Otherwise, we obtain using projection the following $$2L\le \len(\gamma) \le  cd(\gamma_-, \gamma_+)+c\le c(2\epsilon+D_c)+c.$$ This gives a contradiction by the choice of $L$. Thus, there exist  the entry point $x$ and the exit point $y$ of $\gamma$ in
$N_{\mu_c}(X)$.

By the same argument one obtains  $$\max\{\len([\gamma_-, x]_\gamma), \len([y, \gamma_+]_\gamma)\}\le c(\epsilon + \mu_c+D_c)+c < L.$$ Since $\min\{d(z, \gamma_-),\; d(z, \gamma_+)\} >L$, we have $z\in [x,y]_{\gamma}$. Then we obtain $$\min\{d(x, z), d(z, y)\} \ge  L- (\epsilon + \mu_c+D_c)> R.$$

By  definition of $\epsilon_c$, we have $[x, z]_\gamma\subset N_{\epsilon_c}(X)$ and $[z, y]_\gamma\subset N_{\epsilon_c}(X)$. So $z$ is $(\epsilon_c,R)$-deep in $X$.
\end{proof}
\begin{rem}
By the proof, we actually have $\epsilon_c>\mu_c$, where $\mu_c$ is uniform for every $X\in \mathbb P$ by Lemma \ref{periphsystem}.
\end{rem}

In what follows, we take constants $\epsilon, R$ as in Convention \ref{epsilonR1}.
\begin{conv}[About $\epsilon_c, R_c$]\label{epsilonR1}
When talking about $(\epsilon_c, R_c, L)$-transitional $c$-quasi-geodesics, or $(\epsilon_c, R_c)$-transitional and $(\epsilon_c, R_c)$-deep points in a $c$-quasi-geodesic, we assume without explicitely specifying the quantifiers:
\begin{enumerate}
\item
$\epsilon_c=\epsilon(c)>\mu_c$ to satisfy Lemmas \ref{floydtrans} and \ref{uniformtrans}, where $\mu_c$ is given by Definition \ref{contractdefn}.
\item
$R_c>\mathcal R(\epsilon)$, where $\mathcal R$ is given by Lemma \ref{periphsystem}.
\end{enumerate}
\end{conv}

Besides the peripheral cosets (horospheres),  transitional quasi-geodesics provide another source of contracting subsets.
\begin{lem}[Transitional geodesic is contracting]\label{transcontract}
For any $L\ge 0$,   any $(\epsilon, R, L)$-transitional geodesic $\gamma$ is $1$-contracting.
\end{lem}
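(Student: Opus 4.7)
The plan is to proceed by contradiction, combining Lemma \ref{floydtrans}.1 with the Visibility Lemma \ref{karlssonlem}: a large projection of $\alpha$ onto $\gamma$ forces an interior transition point $w$ of $\gamma$ to satisfy a definite Floyd-distance lower bound $\fds_w(a_1,a_2) > \kappa$ for two points $a_1,a_2 \in \alpha$; the Visibility Lemma then obliges $\alpha$ to pass within distance $\varphi(\kappa)$ of $w$, contradicting the assumption that $\alpha$ stays far from $\gamma$.

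Concretely, by Convention \ref{epsilonR1} the parameters $\epsilon, R$ of the $(\epsilon, R, L)$-transitional geodesic $\gamma$ may be assumed to serve Lemma \ref{floydtrans}.1 for $c$-quasi-geodesics with a universal $c\ge 1$ (chosen below), giving a constant $\kappa = \kappa(\epsilon,R)>0$. Setting $\mu_1 := \varphi(\kappa)+1$ and $D_1 := 4R + 2L + 1$, suppose for contradiction that some geodesic $\alpha$ with $N_{\mu_1}(\gamma)\cap\alpha=\emptyset$ has points $a_1, a_2\in\alpha$ whose projections $v_i\in\proj_\gamma(a_i)$ satisfy $d(v_1,v_2) > D_1$.

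The key construction is the concatenation
$$
\beta := [a_1,v_1]\cdot [v_1,v_2]_\gamma\cdot [v_2,a_2].
$$
Using the projection property ($v_i\in\proj_\gamma(p)$ for every $p \in [a_i,v_i]$, hence $d(p,q)\ge d(p,v_i)$ for all $q\in\gamma$) together with the triangle inequality at the two corners $v_1,v_2$, one verifies that $\beta$ is a $c$-quasi-geodesic with uniform $c\ge 1$. Since $d(v_1,v_2)>4R+2L$, the transitional property of $\gamma$ supplies an $(\epsilon,R)$-transition point $w$ of $\gamma$ lying on $[v_1,v_2]_\gamma$ with $d(w,v_i) > 2R$ for both $i$. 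The bounds $d(p,w)\ge d(p,v_i)$ and $d(p,w)\ge d(w,v_i)-d(p,v_i)>2R - d(p,v_i)$ together force $d(p,w)>R$ for every $p \in [a_i,v_i]$, so $\beta\cap B(w,R)\subset \gamma\cap B(w,R)$, and $w$ remains an $(\epsilon,R)$-transition point of $\beta$.

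Lemma \ref{floydtrans}.1 applied to $\beta$ at $w$ then gives $\fds_w(a_1,a_2) > \kappa$, so $\fls_w([a_1,a_2]_\alpha) \ge \fds_w(a_1,a_2) > \kappa$ for the geodesic subpath $[a_1,a_2]_\alpha$ of $\alpha$. The Visibility Lemma forces $d(w,\alpha)\le \varphi(\kappa) < \mu_1$, contradicting $w\in\gamma$ and $d(\alpha,\gamma)\ge\mu_1$. The main obstacle is the quasi-geodesic estimate for $\beta$ with constants independent of $\alpha$, resting on the identity $d(p,\gamma) = d(p,v_i)$ for $p\in [a_i,v_i]$; a secondary subtlety, absorbed into Convention \ref{epsilonR1}, is the compatibility of the parameters $(\epsilon,R)$ governing $\gamma$ with those required by Lemma \ref{floydtrans}.1 on the quasi-geodesic $\beta$.
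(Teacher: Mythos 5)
Your overall strategy (contradiction via Lemma \ref{floydtrans}.1 plus the Visibility Lemma \ref{karlssonlem}) is the same as the paper's, but the step on which your argument hinges is a genuine gap: the claim that $\beta=[a_1,v_1]\cdot[v_1,v_2]_\gamma\cdot[v_2,a_2]$ is a $c$-quasi-geodesic with uniform $c$ does not follow from the projection identity $d(p,\gamma)=d(p,v_i)$ and triangle inequalities at the corners. Those tools only control pairs where one point lies on a leg and the other on the middle segment (there one indeed gets a factor $3$). For two points on the two \emph{different} legs — in particular for $p=a_1$, $q=a_2$ — you would need $d(a_1,v_1)+d(v_1,v_2)+d(v_2,a_2)\le c\,d(a_1,a_2)+c$, and nothing in your setup bounds the leg lengths $d(a_i,\gamma)$ in terms of $d(a_1,a_2)$: the hypothesis only says the legs are \emph{long} ($\ge\mu_1$), not short. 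Indeed, for a geodesic line in $\mathbb Z^2$ (which is not contracting) the very same concatenation has $\len(\beta)/d(a_1,a_2)$ unbounded, so any proof of the quasi-geodesicity of $\beta$ must use the specific geometry of transitional geodesics — essentially the contracting property (or the thinness of transitional triangles, Proposition \ref{tightbetweenconical}, which in the paper is derived \emph{after} and \emph{from} this lemma). As written, the argument is circular at this point. A secondary issue is the parameter mismatch you try to absorb into Convention \ref{epsilonR1}: the hypothesis gives $w$ transitional only for the constants $(\epsilon,R)=(\epsilon(1),R)$ attached to geodesics, whereas Lemma \ref{floydtrans}.1 applied to a $c$-quasi-geodesic $\beta$ with $c>1$ requires $(\epsilon(c),R_c)$-transitionality with $\epsilon(c)\ge\epsilon(1)$, and being non-deep for the smaller $\epsilon$ does not imply being non-deep for the larger one.

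The paper's proof sidesteps the concatenation entirely: it applies Lemma \ref{floydtrans} only to $\gamma$ itself at a transition point $v\in[x,y]_\gamma$ chosen far from both projection points, obtaining $\fds_v(x,y)\ge\kappa$; then the elementary projection inequality $2d(z,[x,\tilde x])\ge d(z,x)$ (valid because $x\in\proj_\gamma(\tilde x)$) shows the geodesics $[x,\tilde x]$ and $[y,\tilde y]$ stay at distance $>D_0=\phi(\kappa/4)$ from $v$, so by the Visibility Lemma $\fds_v(x,\tilde x),\fds_v(y,\tilde y)<\kappa/4$, whence $\fds_v(\tilde x,\tilde y)>\kappa/2$ and $d(v,\beta)\le\phi(\kappa/2)=\mu$, contradicting $\beta\cap N_\mu(\gamma)=\emptyset$. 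To repair your write-up, replace the quasi-geodesic claim for $\beta$ by this transfer of the Floyd estimate from $(x,y)$ to $(\tilde x,\tilde y)$ via two applications of the Visibility Lemma; your containment $\beta\cap B(w,R)\subset\gamma\cap B(w,R)$ is then unnecessary.
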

\begin{rem}
The same  argument also applies (with natural changes for the constants) to show that if $\gamma$   is $c$-quasi-geodesic then it is contracting for any $c\ge 1$.  For our purposes, we only need to consider the case when $c=1$.
\end{rem}
\begin{proof}
Let $\kappa=\kappa(\epsilon, R)$ given by Lemma \ref{floydtrans} and $\phi$ given by Lemma \ref{karlssonlem}. By Lemma \ref{karlssonlem}, there exists $D_0=\phi(\kappa/4)>0$ such that for any $v\in G$, a geodesic segment outside the ball $B(v, D_0)$ has $\fls_v$-Floyd length less than $\kappa/4$.

Let $D=2(L+2D_0+1)$ and $\mu=\phi(\kappa/2)$. Let $\beta$ be a geodesic such that $\beta \cap N_\mu(\gamma)=\emptyset$. Let $x, y \in \proj_\gamma(\beta)$ such that $d(x, y)=\diam{\proj_\gamma(\beta)}$. We are going to prove that $d(x, y) \le D$. Suppose by contradiction that $d(x, y) > D$.

Assume that $x, y$ are projection points of $\tilde x, \tilde y\in \beta$ respectively. Observe that
\begin{equation}\label{projtriangle}
2d(z, [x, \tilde x])\ge d(z, x), \;
2d(z, [y, \tilde y])\ge d(z, y),
\end{equation}
for any $z\in [x, y]_\gamma$. We only prove the first inequality; the second one is completely analogous. Let $m\in [x, \tilde x]$ such that $d(z,m)=d(z, [x, \tilde x])$. Note that $d(m, z)+d(m, \tilde x)\ge d(x, \tilde x)$  by the shortest point property. Since $d(x, \tilde x)= d(x, m)+d(m, \tilde x)$ we obtain $d(m, z)\ge d(x, m)$. Then $d(z, x) \le d(z, m)+d(m, x) \le 2d(z, m)$ implying (\ref{projtriangle}).

\medskip

 \centerline{\includegraphics[width=8.5cm, height=4.5cm]{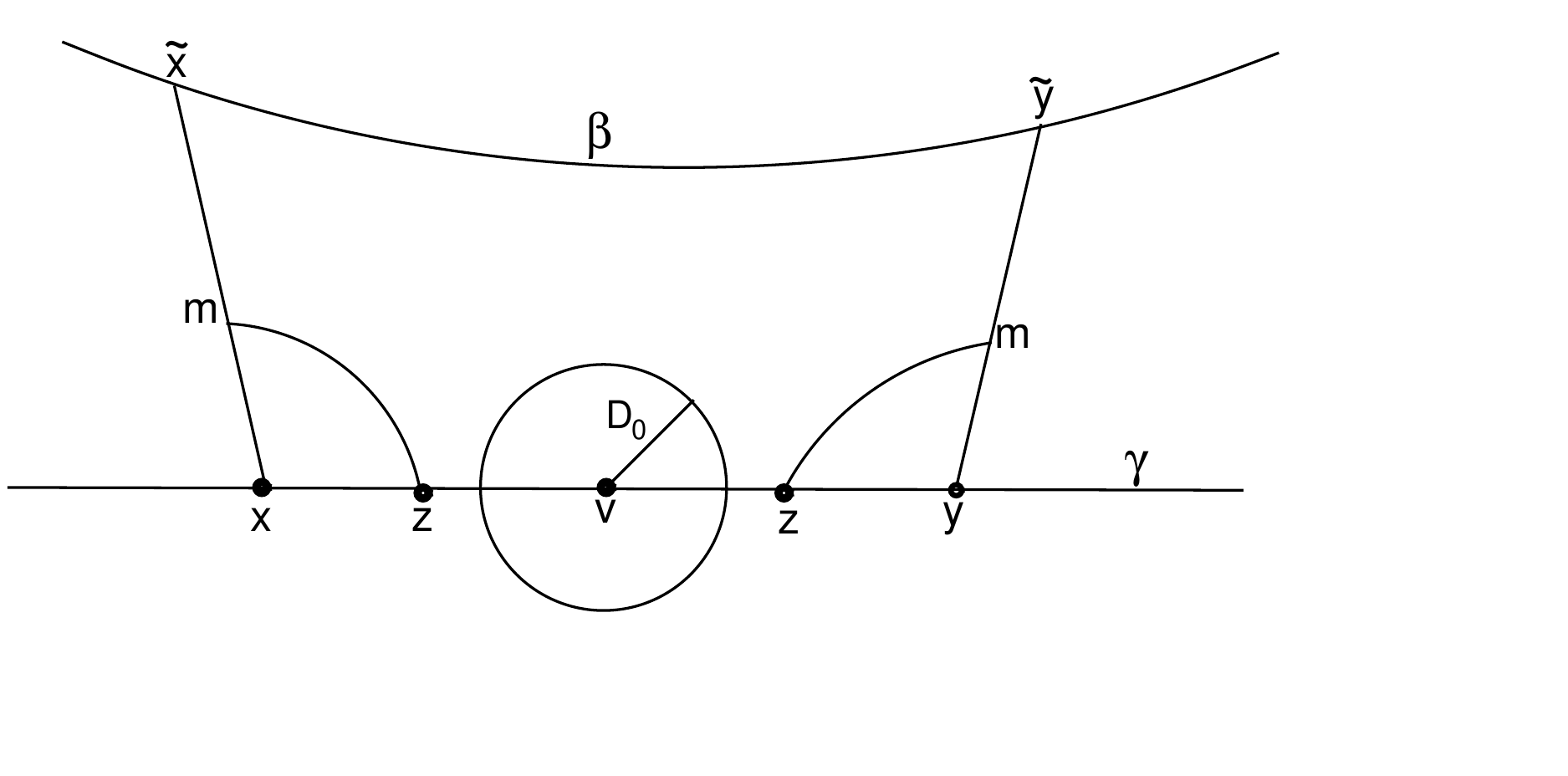}}

Since $d(x, y) > D$, there exists $z\in [x,y]_\gamma$ such that
$$\min \{d(z,x), d(z,y)) > D/2=L+2D_0+1.$$ Since $\gamma$ is $(\epsilon, R, L)$-transitional, one of the intervals $[x,z]_\gamma$ or $[z,y]_\gamma$ contains an $(\epsilon,R)$-transitional point $v$  such that $\min \{d(x,v), d(y,v)\} >2D_0.$ Hence by (\ref{projtriangle}), $\min \{d(v, [x, \tilde x]), d(v, [y,\tilde y]))\} > D_0.$ By the choice of $D_0=\phi(\kappa/4)$, we have $$\max\{\rho_v(x, \tilde x), \rho_v(y,\tilde y)\} <  \kappa/4.$$ From the other hand, $v$ is $(\epsilon, R)$-transitional, so $\rho_v(x,y)\ge\kappa$ by Lemma \ref{floydtrans}. Hence,  $\rho_v(\tilde x, \tilde y) > \kappa/2$ and thus $d(v, \beta) \leq\mu$ which is impossible.
\end{proof}

For a c-quasigeodesic we denote by $\epsilon_c=\epsilon(c), R_c=\mathcal R(\epsilon_c)$ any numbers  satisfying  Convention \ref{epsilonR1} (in particular $\epsilon_1$ and $R_1$ correspond to geodesics).
In the following Proposition we will establish a "thinness" of a triangle whose two sides are  transitional geodesics.

\begin{prop}[Transitional triangle is thin]\label{tightbetweenconical}
For any $L, c>0$ and the there exist constants $D=D(c), M=M(L, c), L'=L'(L, c)>0$ with the following properties.

Let $\alpha_1, \alpha_2$ be $(\epsilon_1, R_1,
L)$-transitional geodesic rays issuing at $o$ and ending at $\xi \ne \eta\in \pG$ respectively.  Then for any $c$-quasi-geodesic $\gamma$ with $\gamma_-\in \alpha_1, \gamma_+\in \alpha_2$, the following holds.
\begin{enumerate}
\item
$\gamma$ is $(\epsilon_c, R_c, L')$-transitional.
\item
If the length of $\gamma$ is sufficiently large then there exists an $(\epsilon_c, R_c)$-transitional point $z \in \gamma$ such that $d(z, \alpha_1\cup\alpha_2) \le D$ and $d(z, \alpha_i)\le M$ for $i=1, 2$.
\item
Let $d(o, [\xi, \eta])$ denote the distance from $o$ to a geodesic between $\xi, \eta$.  If $\min\{d(\gamma_-, o), d(\gamma_+, o)\} \gg 0$. Then
$|d(o, [\xi, \eta]) -d(o, \gamma)| \le M$.
\end{enumerate}
\end{prop}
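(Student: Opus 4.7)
The argument rests on two contracting facts: $\alpha_1,\alpha_2$ are $1$-contracting by Lemma~\ref{transcontract}, and the horospheres in $\mathbb{P}$ form a contracting system with bounded intersection (Lemma~\ref{periphsystem}). The geometric picture I want to implement is that $\gamma$ is forced close to the ``Gromov product region'' where $\alpha_1$ and $\alpha_2$ diverge, and the required transitional point $z\in\gamma$ lies there. I will establish (1), (2), (3) in order.

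\textbf{For (1)}, suppose $[a,b]_\gamma$ is a maximal $(\epsilon_c,R_c)$-deep subsegment of $\gamma$ in some horosphere $X\in\mathbb{P}$; then $a,b\in N_{\mu_c}(X)$. The $1$-contracting property of $X$ applied to the exterior subpaths $[\gamma_-,a]_\gamma$ and $[b,\gamma_+]_\gamma$ forces $\proj_X(\gamma_-)\in N_{D_c+\mu_c}(a)$ and $\proj_X(\gamma_+)\in N_{D_c+\mu_c}(b)$. On the other hand, since $\alpha_1$ is $(\epsilon_1,R_1,L)$-transitional, Lemma~\ref{uniformtrans} bounds the length of $\alpha_1\cap N_{\mu_1}(X)$ by $O(L)$, and $1$-contracting of $X$ bounds the projection of the remainder of $\alpha_1$; combining, $\proj_X(\alpha_1)$ is contained in a set of diameter $A=A(L,c)$ containing $\proj_X(o)$. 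Thus $\proj_X(\gamma_-)\in N_A(\proj_X(o))$, and analogously $\proj_X(\gamma_+)\in N_A(\proj_X(o))$ via $\alpha_2$. It follows that $d(a,b)\le 2(A+D_c+\mu_c)$, so $\len([a,b]_\gamma)\le L'(L,c)$, which is the transitionality condition.

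\textbf{For (2)}, let $p$ be the last point of $\gamma$ in $N_{\mu_c}(\alpha_1)$ and $q$ the first point past $p$ in $N_{\mu_c}(\alpha_2)$. The $1$-contracting property of $\alpha_1$ applied to $[p,q]_\gamma$ yields $\proj_{\alpha_1}(q)\in N_{D_c+2\mu_c}(p)$. Next I control $\proj_{\alpha_1}(\alpha_2)$: since $\alpha_2$ ends at $\eta\neq\xi$, its divergent tail projects into a $D_c$-set on $\alpha_1$, and transitionality of $\alpha_2$ rules out long horospherical detours that could let it linger near $\alpha_1$, so $\proj_{\alpha_1}(\alpha_2)$ is confined to a $B(L,c)$-neighborhood of the initial shared segment $[o,w]\subset\alpha_1$ (where $w$ is the ``divergence point''). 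Hence $\proj_{\alpha_1}(q)$, and therefore $p$, lies within $M_0(L,c)$ of $\alpha_2$, while $d(p,\alpha_1)\le\mu_c$. Part (1) then provides a transitional point $z\in\gamma$ within $L'$ of $p$, giving $d(z,\alpha_i)\le M(L,c)$ for $i=1,2$. For the sharper bound $d(z,\alpha_1\cup\alpha_2)\le D(c)$, transitionality gives $\bar\rho_z(\gamma_-,\gamma_+)>\kappa(c)$ by Lemma~\ref{floydtrans}(1), and Karlsson's Lemma together with the polynomial-distortion extension noted after Lemma~\ref{karlssonlem}, applied to the concatenation $\sigma=[\gamma_-,o]_{\alpha_1}\cdot[o,\gamma_+]_{\alpha_2}$, yields $d(z,\sigma)\le\varphi(\kappa(c))=:D$.

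\textbf{For (3)}, I compare the thin point $z$ of (2) with its analogue $z'$ for the ideal triangle $(o,\xi,\eta)$ having sides $\alpha_1,\alpha_2,[\xi,\eta]$: the same projection argument places $z'$ near the divergence point $w$, so $d(o,\gamma)$ and $d(o,[\xi,\eta])$ are each within $M$ of $d(o,w)$, and the conclusion follows after absorbing constants; the hypothesis $\min\{d(\gamma_-,o),d(\gamma_+,o)\}\gg 0$ ensures $\gamma$ actually traverses the divergence region rather than terminating before $w$. \textbf{Main obstacle.} The crux is the bounded-projection estimate $\proj_{\alpha_1}(\alpha_2)\subset N_B([o,w])$, a ``bounded intersection'' statement for $1$-contracting transitional rays that parallels Lemma~\ref{periphsystem} for horospheres. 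This is exactly where the transitionality hypothesis enters: it prevents long horospherical excursions of $\alpha_2$ inside $N_{\mu_c}(\alpha_1)$ that would otherwise wreck the $1$-contracting estimate, and so allows the projection picture to faithfully reproduce the Gromov-product geometry in the absence of global hyperbolicity.
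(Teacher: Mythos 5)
Your part (1) is essentially correct and is in fact a slightly more direct variant of the paper's argument: instead of placing the entry/exit points of a deep segment near $\alpha_1\cup\alpha_2$ via Floyd estimates and then deriving a contradiction (as the paper does), you bound $\proj_X(\alpha_i)$ directly using Lemma \ref{uniformtrans} plus the contracting property, and project $\gamma_\pm$ (which lie on the rays) to $X$. Two small repairs are needed: the endpoints of a maximal deep segment lie only in $N_{\epsilon_c}(X)$, so one should work with the entry/exit points of $\gamma$ in $N_{\epsilon_c}(X)$ (whose complementary subpaths then avoid $N_{\mu_c}(X)$); and a bound on the length of each deep segment does not by itself give $(\epsilon_c,R_c,L')$-transitionality — you must also check, via bounded intersection ($R_c>\mathcal R(\epsilon_c)$, as in the paper), that these entry/exit points are themselves transitional, since otherwise deep segments in different horospheres could a priori chain. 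The genuine gap is in (2) and (3): your route hinges on the estimate $\proj_{\alpha_1}(\alpha_2)\subset N_B([o,w])$ \emph{together with} the implicit claim that the portion of $\alpha_1$ near $\proj_{\alpha_1}(\gamma_+)$ is $M_0$-close to $\alpha_2$. This is precisely the thin-triangle/fellow-travelling statement that the proposition is meant to supply in the absence of hyperbolicity; you identify it as the ``main obstacle'' but do not prove it, and the mechanism you invoke (transitionality forbidding horospherical lingering) is not the right one — a proof would have to use the contracting property of the rays (Lemma \ref{transcontract}) applied to excursions of $\alpha_2$ off a neighbourhood of $\alpha_1$. The paper sidesteps the issue entirely: once (1) gives that $\gamma$ is transitional, hence contracting, the projection $v$ of $o$ to $\gamma$ is automatically within $D_3(L,c)$ of \emph{both} rays (project each $\alpha_i$, which joins $o$ to an endpoint of $\gamma$, onto $\gamma$), and the transitional point $z$ is chosen near $v$; statement (3) then follows from a Floyd estimate at $z$ against $[\xi,\eta]$. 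Your (3), which leans on the same unproved ``divergence point'' picture and on applying (2) to the ideal triangle with side $[\xi,\eta]$ (not a quasi-geodesic with endpoints on the rays), inherits the gap.

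A second concrete error: you apply Lemma \ref{karlssonlem} (with the polynomial-distortion extension) to the concatenation $\sigma=[\gamma_-,o]_{\alpha_1}\cdot[o,\gamma_+]_{\alpha_2}$, but $\sigma$ is in general neither a quasi-geodesic nor of polynomial distortion: if the two rays fellow-travel for a long time before diverging, a subpath of $\sigma$ through $o$ has bounded distance between its endpoints but arbitrarily large length, and the visibility lemma fails for such paths. The conclusion you want is nonetheless true and can be recovered by the paper's own trick from its proof of (1): since $\bar\fds_z(\gamma_-,\gamma_+)\ge\kappa$ at a transitional point $z$, the triangle inequality gives $\max\{\fds_z(\gamma_-,o),\fds_z(o,\gamma_+)\}\ge\kappa/2$, and Karlsson's lemma applied to the genuine geodesic segment of $\alpha_1$ or $\alpha_2$ yields $d(z,\alpha_1\cup\alpha_2)\le\varphi(\kappa/2)=D(c)$. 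Keep in mind that the $L$-independence of $D$ is the delicate point (see the remark after the proposition and its use in Proposition \ref{FloydMetric}); the paper secures both the uniform $D$ and the $M(L,c)$-closeness to each ray simultaneously by choosing $z$ on $\gamma$ at controlled distance (at least $D+D_3$) from the projection point $v$ before invoking the visibility lemma, a step your construction does not provide.
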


\begin{rem}
 Note that in (2) $D$ is a uniform constant not depending on $L$, this will play a crucial role in establishing Lemma \ref{FloydMetric} below.
\end{rem}

\begin{proof}
Let $\kappa=\kappa(\epsilon_c, R_c)$ given by Lemma \ref{floydtrans} and $D=\phi(\kappa/2)$, where $\phi$ is given by Lemma \ref{karlssonlem}.  The constant $L'$ will be computed below.

\textbf{(1)}  Given a point $x$ in $\gamma$,  assume that $x$ is $(\epsilon, R_c)$-deep in some $X \in \mathbb P$. Let $x_-, x_+$ be the entry and exit points of $\gamma$ in $N_{\epsilon_c}(X)$ respectively.

Observe first that $x_-, x_+$ are $(\epsilon_c, R_c)$-transitional in $\gamma$. Indeed, if not, there exists $Y\in \mathbb P$ such that $x_-$ is $(\epsilon_c, R_c)$-deep in $Y.$ Then $Y\ne X$ by the choice of $x_-$ as the entry point of $\gamma$ in $N_{\epsilon_c}(X)$. Since $d(x, x_-)\ge R_c$, we have $\diam{N_\epsilon(X) \cap N_\epsilon(Y)} \ge R_c > \mathcal R(\epsilon_c)$ by Convention \ref{epsilonR1}. This contradicts to Lemma \ref{periphsystem}.

 To find a constant $L'$  we  will check when   the opposite inequality: \begin{equation}\label{opineq}\min\{\len([x, x_-]_\gamma), \len([x, x_+]_\gamma)\}> L'\end{equation}
 is not valid.
  We have  $\len([x_-, x_+]_\gamma)\ge 2L'$.
Since $x_-, x_+$ are $(\epsilon_c, R_c)$-transitional, by Lemma \ref{floydtrans}, $$\min\{\fds_{x_-}(\gamma_-, \gamma_+), \fds_{x_+}(\gamma_-, \gamma_+) \}>\kappa.$$ By the triangle inequality $$\max\{\fds_{x_-}(\gamma_-,o), \fds_{x_-}(o,\gamma_+)\}\geq {\kappa\over 2},$$ and the same for $\fds_{x_+}.$ Then $\max\{d(x_-, \alpha_1\cup\alpha_2), \; d(x_+, \alpha_1\cup\alpha_2)\} \le D=\phi(\kappa/2)$. For concreteness consider the case that
\begin{equation}\label{EQ2}
d(x_-, \alpha_1), \;d(x_+, \alpha_2) \le D;
\end{equation}
the other cases are similar and even easier.

Project $x_-, x_+$ to $x'_-, x'_+ \in X$ so that $d(x_-, x'_-), \; d(x_+, x'_+) \le \epsilon_c.$  So $$d(x'_-, \alpha_1),\; d(x'_+, \alpha_2) \le \epsilon_c+D$$ and $N_{D+\epsilon_c}(X)\cap \alpha_i \ne \emptyset\ (i=1,2)$.

Let $w \in X$ be a projection point of $o$ to $X$. We claim that
\begin{equation}\label{EQ2'}
d(w, \alpha_i) \le D_2:=\max\{D+\epsilon_c+D_1, \mu_1+D_1\}\ (i=1,2),
\end{equation}
  where $\mu_1, D_1>0$ are given for $1$-contracting $X\in \mathbb P$ such that  (\ref{contractingproperty}) holds.

  Indeed if, first, $o\in N_{\mu_1}(X)$ then there is nothing to prove. If not, there are two more cases:  if $\alpha_i\cap N_{\mu_1}(X)=\emptyset$, then by the contracting property we have $d(w, \alpha_i) \le D+\epsilon_c+D_1$; otherwise the projection on $X$ of the maximal connected subcurve of $\gamma$, situated  outside of $N_{\mu_1}(X)$ and containing $o$,  gives $d(w, \alpha_i) \le \mu_1+D_1$. So (\ref{EQ2'}) follows.

Let $L_0=L(\epsilon_c+D+D_2, R_1+L)$ given by Lemma \ref{uniformtrans}. Set
\begin{equation}\label{needv}L'=2c(D_2+D+L_0+2\epsilon_c)+c^2.\end{equation}

Since $\gamma$ is a $c$-quasi-geodesic, we have
$$
\begin{array}{lll}\label{EQ1}
d(x'_-, x'_+)\ge \len([x_-,x_+]_\gamma)/c-c- 2\epsilon_c&\ge 2L'/c -c- 2\epsilon_c\\
&\ge 4(\epsilon_c+D+D_2+L_0).
\end{array}
$$

 Since $\max\{d(x'_-, \alpha_1),\; d(x'_+, \alpha_2)\} \le \epsilon_c+D$, we obtain from  (\ref{EQ2'}) $$\diam{\alpha_1\cap N_{\epsilon_c+D+D_2}(X)} \ge d(x'_-, w),\; \diam{\alpha_2\cap N_{\epsilon_c+D+D_2}(X)} \ge d(x'_+, w).$$
We have $d(x'_-, w)+d(x'_+, w)\ge d(x'_-, x'_+)\ge 4(\epsilon_c+D+D_2+L_0)$.  Thus
$$
\max_{i=1,2}\diam{\alpha_i\cap N_{\epsilon_c+D+D_2}(X)} \ge 2(\epsilon_c+D+D_2+L_0).
$$

Hence, $\alpha_i$ contains a subcurve of length at least $2L_0$  such that its endpoints lie in $N_{\epsilon_c+D+D_2}(X)$. By the choice of $L_0$ and Lemma \ref{uniformtrans}, $\alpha_i$ contains an $(\epsilon_1, R_1+L)$-deep point  in $X$. This gives a contradiction, as $\alpha_i$ is $(\epsilon_1, R_1, L)$-transitional. So for the value of $L'$ chosen in (\ref{needv}) the  inequality  (\ref{opineq}) is not valid. The statement (1) is proved.

\textbf{(2)}  By the statement (1) $\gamma$ is $(\epsilon_c, R_c, L')$-transitional.  Lemma \ref{transcontract} implies that $\gamma$ is   contracting. By the projection argument (used to prove  (\ref{EQ2'})) we have a constant $D_3=D_3(\epsilon_c, R_c, L')>0$ such that for any projection point $v$ of $o$ to $\gamma$ we have $d(v, \alpha_i) \le D_3$ for $i=1, 2$.

\begin{rem} We need a new constant $D_3$ (and not $D_2$ used above) since we project now on $\gamma$ and not on a horosphere.
\end{rem}

Recall that $D=\phi(\kappa/2)$. By Lemma \ref{karlssonlem}, for any $z\in G$, a geodesic segment outside $B(z, D)$ has $\fls_z$-Floyd length less than $\kappa/2$.

The curve $\gamma$ is quasigeodesic and its length is sufficently large. So by continuity of the distance function $d(v,x)\ (x\in \gamma)$ we find a point $z'$ such that  $D+D_3+L'\leq d(v, z')\leq D+D_3+L'+1.$  Since $\gamma$ is $(\epsilon_c, R_c, L')$-transitional by Definition \ref{trans}  there exists an $(\epsilon_c, R_c)$-transitional point $z\in\gamma$ for which $d(z',z)\leq l([z,z']_\gamma)\leq  L'$. We obtain
\begin{equation}\label{EQ4}
D+D_3 \le d(v, z) \le  D+D_3+2L'+1.
\end{equation}

 Then $$d(z, \alpha_i) \le d(z, v)+d(v, \alpha_i)\le M,$$

\noindent where $M=2L'+2D_3+D+1.$

To prove the first claim of {\bf (2)} assume for definiteness that $z \in [v, \gamma_+]$.  Lemma \ref{floydtrans} yields $\fds_z(v, \gamma_+) \ge \kappa$.

Let $z_2 \in \alpha_2$ such that $d(v, z_2) \le D_3$. We have $d(z,[v,z_2])\geq d(z,v)-d(z_2,v)\geq D+D_3-D_3=D.$   By Lemma \ref{karlssonlem} $\fds_z(v, z_2) < \kappa/2$ and so $\fds_z(z_2, \gamma_+)\ge \fds_z(v, \gamma_+)-\fds_z(v,z_2)\geq\kappa/2$. Lemma \ref{karlssonlem} gives $$d(z, \alpha_2) \le D.$$   The statement (2) is proved.

\textbf{(3)}  Since $\xi, \eta$ are distinct, by  Lemma \ref{karlssonlem}, there exists $n_0=n_0(\xi, \eta), r=r(\xi, \eta)>0$ such that if  $$\min\{d(\gamma_-, o), d(\gamma_+, o)\}>n_0,$$ then $d(o, \gamma)\le r$. In the proof of the statement (2), we projected $o$ to a point $v$ in $\gamma$, and found an $(\epsilon, R)$-transitional point $z \in \gamma$ such that $d(v, z) \le M$.

Since $d(o, z)\le M+r$ (and these constants do not depend on $\gamma$)  up to increasing $n_0$, by Lemma \ref{karlssonlem} we have  $$\max\{\bar \fds_z(\xi, \gamma_-),\bar \fds_z(\eta, \gamma_+) \}\le \kappa/4.$$ The point $z$ is $(\epsilon, R)$-transitional, thus $\bar\fds_z(\gamma_-, \gamma_+)\ge \kappa$, and so $\bar\fds_z(\xi, \eta) \ge \kappa/2$. Consequently  $d(z,  [\xi, \eta]) \le D$ which yields: $$
\begin{array}{lll}
d(o,  [\xi, \eta])& \le d(o, z) +  d(z,  [\xi, \eta])\le d(o, \gamma) + M+ d(z,  [\xi, \eta]) \\
&\le d(o, \gamma) + M+ D.
\end{array}$$
By symmetry, we obtain  $d(o, \gamma) \le d(o, [\xi, \eta]) + M+D$.

Since $D$ is a uniform constant not depending on $L$ we put $M:=M+D$. Then the statements of (2) and (3) are both valid for the same constant $M.$
The Proposition is proved.
\end{proof}

 The  claim (3) of the Proposition and Lemma \ref{charuc} imply:
\begin{cor} Suppose $\GP$ is a relatively hyperbolic pair.
Then for any $L>0$, there exists $M=M(L)$ such that for any $\xi, \eta \in \uGLo$ or $\xi, \eta \in \uGfLo$, the distance $d(o, [\xi, \eta])$ is comparable with the distance $d(o,\gamma)$ where $\gamma$ is a $c$-quasi-geodesic with the endpoints on the corresponding geodesic rays converging to $\xi$ and $\eta.$.
\end{cor}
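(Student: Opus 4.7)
The plan is to deduce the corollary as a direct consequence of Proposition \ref{tightbetweenconical}(3), using the characterization of uniformly conical points and the definition of the sets $\uGLo$ and $\uGfLo$ given at the end of Subsection \ref{Section2.4}. By construction, a point $\xi\in \uGLo$ (respectively $\uGfLo$) is a uniformly conical point in $\pG$ (respectively $\pGf$) such that \emph{some} geodesic ray from the basepoint $o$ to $\xi$ is $(\epsilon_1,R_1,L)$-transitional. This is exactly the structural hypothesis required on the two sides $\alpha_1,\alpha_2$ in Proposition \ref{tightbetweenconical}.

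More concretely, first I would fix $\xi,\eta\in \uGLo$ and choose geodesic rays $\alpha_1$ from $o$ to $\xi$ and $\alpha_2$ from $o$ to $\eta$ that are $(\epsilon_1,R_1,L)$-transitional (by the very definition of $\uGLo$, such rays exist). Given any $c$-quasi-geodesic $\gamma$ with $\gamma_-\in \alpha_1$ and $\gamma_+\in \alpha_2$, I would apply Proposition \ref{tightbetweenconical}(3) to obtain a constant $M=M(L,c)$ such that
\[
\bigl|\,d(o,[\xi,\eta]) - d(o,\gamma)\,\bigr|\le M,
\]
which is precisely the asserted comparability. The hypothesis $\min\{d(\gamma_-,o),d(\gamma_+,o)\}\gg 0$ of Proposition \ref{tightbetweenconical}(3) is built into the phrase ``endpoints on the corresponding geodesic rays converging to $\xi$ and $\eta$'': as $\gamma_-\to\xi$ along $\alpha_1$ and $\gamma_+\to\eta$ along $\alpha_2$, the word-distances from $o$ go to infinity, so the hypothesis is eventually satisfied.

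For the Floyd-boundary case $\xi,\eta\in \uGfLo$, the reasoning is identical. The definition of $\uGfLo$ is formally the same as that of $\uGLo$, and the Remarks \ref{sizef}(1) following Lemma \ref{charuc} note that the characterization of uniformly conical points via $(\epsilon,R,L)$-transitional geodesic rays works equally in $\pGf$ without the assumption of geometrical finiteness. Inspection of the proof of Proposition \ref{tightbetweenconical} shows that all tools used there (the Visibility Lemma \ref{karlssonlem}, the Floyd-length estimate of Lemma \ref{floydtrans}, the contracting property of horospheres in Lemma \ref{periphsystem}, and the contracting property of transitional geodesics in Lemma \ref{transcontract}) are statements about the Cayley graph and the Floyd metric, so they transfer verbatim to the Floyd-boundary setting.

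There is essentially no obstacle beyond a bookkeeping check that the definitions of $\uGLo$ and $\uGfLo$ match the hypothesis of Proposition \ref{tightbetweenconical}; all the genuine work (finding the transitional point on $\gamma$ close to both $\alpha_1$ and $\alpha_2$, and using visibility to relate $d(o,\gamma)$ to $d(o,[\xi,\eta])$) has already been carried out in the proof of that proposition. The corollary is therefore a direct repackaging with the constant $M$ inherited from Proposition \ref{tightbetweenconical}(3).
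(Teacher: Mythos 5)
Your proposal is correct and follows essentially the same route as the paper, which derives the corollary directly from claim (3) of Proposition \ref{tightbetweenconical} together with the characterization of uniformly conical points via $(\epsilon,R,L)$-transitional rays (Lemma \ref{charuc} and the definitions of $\uGLo$, $\uGfLo$). Your additional remarks on the hypothesis $\min\{d(\gamma_-,o),d(\gamma_+,o)\}\gg 0$ and on the Floyd-boundary case are exactly the bookkeeping the paper leaves implicit.
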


\section{Patterson-Sullivan measures on ends of a geodesic tree}\label{Section4}

In this section, we shall construct an iterated transitional tree having several nice properties which will allow us to carry out the Patterson's construction on this tree. The space of ends  of the tree equipped with the Patterson-Sullivan measure will give rise to an Ahlfors regular subset of the boundary.

\subsection{Iterated Transitional Trees}

Let $\GP$ be a relatively hyperbolic pair and $\Gx$ the Cayley graph of $G$ with respect to $S$. The existence of large transitional trees is established in \cite[Theorem 5.9]{YANG7}. The main difference of the construction below is that these trees will be equipped with certain periodicity. By this reason we call them \textit{iterated transitional trees}.
We start by recalling several results from \cite{YANG7}.
\begin{defn}[Partial Cone]\label{pconedef}
For $\epsilon, R
\ge 0$, the \textit{partial cone $\Omega_{\epsilon, R}(g)$} at $g \in G$ is the set of elements $h \in G$ such that there exists a geodesic $\gamma=[1, h]$ containing $g$ and one of the following holds.
\begin{enumerate}
\item $d(1, h) \le d(1, g) + 2R,$
\item
$\gamma$ contains an $(\epsilon, R)$-transitional point $v$ such that $d(v, g) \le 2R$.
\end{enumerate}
\end{defn}

For $\Delta \ge 0, n \ge 0$, define
$$
A(g, n, \Delta) = \{h \in G:  n-\Delta \le d(1, h) - d(1, g) <
n+\Delta\},
$$
for any $g \in G$. For simplicity we write $A(n, \Delta):=A(1, n,
\Delta)$. For $r, \epsilon, R, \Delta > 0$, define
$$
\Omega_{\epsilon, R}(g, n, \Delta) = \Omega_{\epsilon, R}(g)
\cap A(g, n, \Delta),
$$
for any $g \in G, n \ge 0$.

For fixed $\epsilon, R>0$,  two partial cones
$\Omega_{\epsilon, R}(g), \Omega_{\epsilon, R}(g')$ are of
 \textit{same type} if   $$g'g^{-1}\cdot
\Omega_{\epsilon, R}(g)= \Omega_{\epsilon, R}(g').$$ By abuse of language, we  say that $g, g'$ have the same partial cone types.

The following result generalizes the result of Cannon \cite{Cannon} for hyperbolic groups.

\begin{lem}[Finiteness of partial cone types]\label{FinPCones}\cite[Lemma B.1]{YANG7}
There exist $\epsilon, R_0>0$ such that for any $R > R_0$, there are
at most $M=M(\epsilon, R)$ types among all $(\epsilon, R)$-partial
cones $\{\Omega_{\epsilon, R}(g):  g \in G\}$.
\end{lem}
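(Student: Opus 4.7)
The plan is to adapt Cannon's finite cone types argument to the partial-cone setting by showing that the partial cone $\Omega_{\epsilon, R}(g)$ is determined by the ``labeled ball'' of $g$ of some sufficiently large radius $N = N(\epsilon, R)$. Finiteness of partial cone types then follows from the fact that there are only finitely many such labeled balls up to isomorphism.

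First, I would fix $\epsilon := \epsilon(1)$ from Lemma \ref{floydtrans}.1, and choose $R_0$ larger than both the bounded-intersection constant $\mathcal R(\epsilon)$ from Lemma \ref{periphsystem} and the contracting constant $\mu_1$, so that at most one horosphere $X \in \mathbb P$ is relevant to the $(\epsilon, R)$-transitional condition near any vertex. Next, for a radius $N$ to be specified later, I would define the \emph{$N$-type} of $g$ as the pair $(B(g, N), \sigma_g)$ where $\sigma_g : B(g, N) \to \mathbb Z$, $x \mapsto d(1, x) - d(1, g)$ is the signed distance from the identity; two elements $g, g'$ have the same $N$-type if left multiplication by $g'g^{-1}$ induces a label-preserving graph isomorphism $B(g, N) \to B(g', N)$ mapping $\sigma_g$ to $\sigma_{g'}$. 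Since $G$ is finitely generated, there are only finitely many labeled balls of radius $N$, and on each the signed distance takes values in $[-N, N]$, so there are only finitely many $N$-types.

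Third, I would show that for $N$ large enough (say $N \ge 3R + \epsilon$), equality of $N$-types implies equality of partial cone types, i.e.\ $g'g^{-1}\Omega_{\epsilon, R}(g) = \Omega_{\epsilon, R}(g')$. For $h \in \Omega_{\epsilon, R}(g)$ satisfying condition (1) of Definition \ref{pconedef}, i.e.\ $d(1, h) \le d(1, g) + 2R$, we have $h \in B(g, 2R) \subset B(g, N)$; applying the $N$-type isomorphism to $h' := g'g^{-1} h$ immediately gives $d(1, h') \le d(1, g') + 2R$ and, via signed-distance preservation, produces a geodesic $[1, h']$ through $g'$. For condition (2), the transitional point $v \in B(g, 2R)$ transports to $v' = g'g^{-1}v \in B(g', 2R)$; since $[1,h] \cap B(v, R) \subset B(g, 3R) \subset B(g, N)$, the local transitional data is visible in the ball type, and the $G$-invariance of the horosphere system $\mathbb P$ ensures that the isomorphism preserves the transitional property at $v'$ in the translated geodesic.

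The main obstacle, I expect, will be arguing that the translated piece $g'g^{-1}[v, h]$ concatenated with a geodesic from $1$ to $v'$ through $g'$ actually forms a geodesic $[1, h']$ through $g'$ — the ``cone extension'' problem. A priori, $h'$ might admit a shorter path avoiding $g'$ altogether, an issue made more delicate by the fact that in the relatively hyperbolic setting the full cone type is in general infinite (due to parabolic subgroups). To overcome this, the plan is to use the contracting/Morse property of transitional geodesics (Lemma \ref{transcontract}): any geodesic from $1$ to $h'$ must pass within a uniform bounded distance of the transitional point $v'$, and combining this Morse-type rigidity with the local rigidity provided by the $N$-type (for sufficiently large $N$) forces such a geodesic to actually traverse $g'$. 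Making this rigorous, and verifying that it handles the cases where $v$ lies either before or after $g$ on $\gamma$, constitutes the technical heart of the argument.
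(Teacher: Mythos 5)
The paper does not actually prove this lemma---it is quoted from \cite[Lemma B.1]{YANG7}---so the comparison below is with the Cannon-type argument that the cited source (and your own outline) follows. Your framework is the right one: record the level function $x\mapsto d(1,x)-d(1,g)$ on $B(g,N)$, note there are finitely many such types, handle case (1) of Definition \ref{pconedef} by level preservation, and reduce case (2) to the ``cone extension'' problem. The gap is in how you propose to solve that problem. First, Lemma \ref{transcontract} is not applicable: the geodesic $[1,h]$ witnessing $h\in\Omega_{\epsilon,R}(g)$ is only known to have one transitional point $v$ within $2R$ of $g$; beyond $v$ it may be $(\epsilon,R)$-deep in horospheres for arbitrarily long stretches, so it is not an $(\epsilon,R,L)$-transitional geodesic for any bounded $L$ and has no contracting property you can quote. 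Second, and more seriously, the assertion ``any geodesic from $1$ to $h'$ must pass within a uniform bounded distance of $v'$'' is circular as stated. The only mechanism available for such a Morse statement is the Floyd lower bound $\fds_{v'}(1,h')\ge\kappa$ from Lemma \ref{floydtrans} combined with the visibility Lemma \ref{karlssonlem}, and Lemma \ref{floydtrans} requires $v'$ to be a transitional point of a \emph{quasi-geodesic} from $1$ to $h'$; but the only candidate path through $v'$ is the concatenation $[1,g']\cdot g'g^{-1}[g,h]_\gamma$, whose geodesicity is exactly what you are trying to establish. Moreover, the target ``forces such a geodesic to actually traverse $g'$'' is neither needed nor attainable; what one must rule out is the strict inequality $d(1,h')<d(1,g')+d(g',h')$.

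The missing idea is Cannon's inductive bootstrap on $d(g,h)$, which legitimizes the visibility argument step by step. For $d(g,h)\le N$ the level data alone gives $d(1,h')=d(1,g')+d(g',h')$. In the inductive step, choose the initial geodesic $[1,g']$ so that its terminal segment is the translate of the terminal segment of $[1,g]_\gamma$ (possible because the translated segment descends in level by $1$ per edge); then the induction hypothesis provides a genuine geodesic from $1$ to the predecessor $h_0'$ of $h'$ containing $v'$, and the $G$-invariance of $\mathbb P$ together with the matched window around $v'$ shows $v'$ is $(\epsilon,R)$-transitional on it. Lemma \ref{floydtrans} now gives $\fds_{v'}(1,h_0')\ge\kappa$, hence $\fds_{v'}(1,h')\ge\kappa/2$ since $d(v',h_0')$ is large, and Lemma \ref{karlssonlem} forces every geodesic $\beta'$ from $1$ to $h'$ to pass within $D:=\varphi(\kappa/2)$ of $v'$, hence within $D+2R$ of $g'$. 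If $d(1,h')<d(1,g')+d(g',h')$, cut $\beta'$ at such a point $y$, transport $y$ back by $(g'g^{-1})^{-1}$ using level preservation on $B(g',N)$ (so $N$ must exceed $D+2R$, not merely $3R+\epsilon$), and splice to obtain a path from $1$ to $h$ of length strictly less than $d(1,g)+d(g,h)$, contradicting that $g$ lies on a geodesic $[1,h]$. With this induction your outline becomes the intended proof; without it, the key step of your plan is unsupported.
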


The following is a key technical result in \cite[Lemma 5.8]{YANG7}.

\begin{lem}\label{TreeSetG}
There exist $\epsilon, R, \Delta, \theta, L_0>0$ with the following
property.

For any $L
>L_0$ there exists a subset $\hat G$ of $G$ such that
\begin{equation}\label{LBound}
\sharp(\Omega_{\epsilon, R}(g, L,\Delta) \cap \hat G) > \theta \cdot\exp(L\cdot \g G),\; 1\in \hat G
\end{equation}
for any $g \in \hat G$.
\end{lem}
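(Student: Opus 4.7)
The plan is to build $\hat G$ as a uniformly branching tree in $G$, where every vertex has a prescribed number of children all of a single fixed ``good'' cone type, exploiting the finiteness of partial cone types from Lemma \ref{FinPCones} together with a Patterson--Sullivan type companion-cone growth estimate.

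First, I would fix $\epsilon, R$ as in Lemma \ref{FinPCones}, so there are at most $M$ cone types, and then invoke the companion-cone lower bound sketched in the block just after Definition \ref{pconedef} and established along the lines of \cite{YANG7}: there exist $r, \Delta_0, \kappa > 0$ such that every $g \in G$ has a companion $g'$ with $d(g, g') < r$ and $\sharp \Omega_{\epsilon, R}(g', L, \Delta_0) \geq \kappa \exp(L\,\g G)$, with $g' = 1$ whenever $g = 1$. Enlarging the annular slack to $\Delta := \Delta_0 + r$ and using the triangle inequality on annuli, this bound transfers from $g'$ to $g$ itself: $\sharp \Omega_{\epsilon, R}(g, L, \Delta) \geq \kappa \exp(L\,\g G)$ for every $g \in G$.

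The core reduction is then to extract a \emph{self-reproducing} cone type $\tau^\star$: one for which, for some $\theta = \theta(\kappa, M) > 0$ and every $g$ of cone type $\tau^\star$, the partial cone $\Omega_{\epsilon, R}(g, L, \Delta)$ contains at least $\theta \exp(L\,\g G)$ elements that themselves have cone type $\tau^\star$. A single pigeonhole inside $\Omega_{\epsilon, R}(1, L, \Delta)$ already produces a dominant type $\tau$ appearing $\geq (\kappa/M) \exp(L\,\g G)$ times; since partial cones depend only on cone type up to left translation, the same bound propagates to every element of type $\tau$, and nesting pigeonholes over successive levels forces (by finiteness of cone types, morally a Perron--Frobenius argument on the $M \times M$ cone-type transition data) the sequence of dominant types to stabilize at some $\tau^\star$.

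With $\tau^\star$ in hand, $\hat G$ is constructed level by level: set $\hat G_0 := \{1\}$, and at step $n+1$, for every $g \in \hat G_n$ adjoin to $\hat G_{n+1}$ at least $\theta \exp(L\,\g G)$ elements of $\Omega_{\epsilon, R}(g, L, \Delta)$ of cone type $\tau^\star$. This is possible at $g = 1$ thanks to the property $g' = 1$ of the companion-cone estimate combined with the pigeonhole defining $\tau^\star$, and at every subsequent vertex thanks to the self-reproducing property of $\tau^\star$. By construction $1 \in \hat G$ and the bound (\ref{LBound}) holds at every vertex. The main obstacle is the existence of the self-reproducing cone type $\tau^\star$: a single pigeonhole yields only a transient dominant type, and it is the interplay between the companion-cone lower bound, the finiteness of cone types, and the fact that $\g G$ is realized as a true exponential growth rate that prevents the pigeonhole fraction from collapsing across levels.
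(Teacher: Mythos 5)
The paper does not actually prove Lemma \ref{TreeSetG}: it is imported verbatim from \cite[Lemma 5.8]{YANG7}, so your proposal has to stand as a self-contained argument, and its first load-bearing step fails. The companion-cone estimate you appeal to does not appear in the compiled paper (it is itself a result of \cite{YANG7}), and, more importantly, your ``transfer'' of it from the companion $g'$ to $g$ is invalid. Enlarging $\Delta$ to $\Delta_0+r$ only repairs the annulus condition; it does nothing for cone membership. An element $h\in\Omega_{\epsilon,R}(g',L,\Delta_0)$ is certified by a geodesic $[1,h]$ passing through $g'$ with a transitional point near $g'$, and such an $h$ need not lie on any geodesic through $g$ at all, so $\Omega_{\epsilon,R}(g',L,\Delta_0)\not\subset\Omega_{\epsilon,R}(g,L,\Delta_0+r)$ in general. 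If a uniform bound $\sharp\Omega_{\epsilon,R}(g,L,\Delta)\succ \exp(L\,\g G)$ held for \emph{every} $g\in G$, the companion formulation would be pointless; partial cones at arbitrary points (for instance points whose nearby geodesic extensions are all deep in a horosphere, or points of degenerate cone type) can be much smaller than the ambient growth, and circumventing exactly this is the content of the companion statement.

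The second half of your argument is also where the circularity lies. The pigeonhole at $1$ does give a dominant type $\tau$ with $\geq (\kappa/M)\exp(L\,\g G)$ representatives in $\Omega_{\epsilon,R}(1,L,\Delta)$, but to iterate you need, at every element $x$ of type $\tau$ (or $\tau^\star$), an exponential supply of same-type elements inside $\Omega_{\epsilon,R}(x,L,\Delta)$. The defining identity $x'x^{-1}\Omega_{\epsilon,R}(x)=\Omega_{\epsilon,R}(x')$ for points of equal type does not say that this translation preserves the cone types of the elements \emph{inside} the cones, nor does it provide any lower bound on $\sharp\Omega_{\epsilon,R}(x,L,\Delta)$ from which to pigeonhole --- that lower bound is again the false transfer step, and the recursive bound you need at every vertex is essentially the statement of Lemma \ref{TreeSetG} itself. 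Your ``Perron--Frobenius'' stabilization of dominant types is likewise only asserted. Note that the paper's genuine use of cone-type pigeonholing occurs later, in Step 1 of the proof of Lemma \ref{ITT}, where it is legitimate precisely because the set $\hat G$ of Lemma \ref{TreeSetG} is already available as input, and even there one only obtains eventual periodicity of types (the constants $t_0,n_0\le M$), not a single self-reproducing type at every level. So as written, the proposal either presupposes the quantitative core of \cite[Lemma 5.8]{YANG7} (whose proof rests on Patterson--Sullivan measures and shadow-type estimates for the whole group, not on pigeonhole alone) or leaves it unproved.
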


\begin{conv}[$\epsilon, R, \Delta$]\label{epsilonRDelta}
Until the end of Section \ref{Section4}, the constants $\epsilon, R, \Delta>0$ are given by Lemmas \ref{FinPCones} and \ref{TreeSetG}, and satisfy Convention \ref{epsilonR1}.
\end{conv}

The following terminology comes from the paper \cite{BishopJ} which was certainly very motivating for us.

\begin{defn}[Iterated Tree Set]\label{treeset}
For given $L>0$, an \textit{$L$-iterated tree set} $T$
in $G$ is a union
of a sequence of sets $T_i$ ($i\ge 0$) in $G$ defined
inductively as follows.

Let $T_0=\{1\}$. Assume that $T_i$ is defined for $i\ge 0$. The \textit{children} $T(x)$ of  $x\in T_i$ is a subset in $\Omega_{\epsilon,
R}(x, L, \Delta)$. Then $T_{i+1}$ is the union of children
of all $x\in T_i$.
\end{defn}

Recall that a subset $Z$ of a metric space space $(X,d)$ is called {\it $C$-separated} if the distance$d(z_1,z_2) \geq C$ for every pair of distinct points $\{z_1, z_2\}\subset Z.$ The following fact is elementary.
\begin{lem}\label{sepnet}
Let $(X, d)$ be a proper metric space on which a group $G \subset \isom(X)$ acts properly on $X$. For any orbit $Go$ ($o\in X$) and $C>0$ there exists a constant $\theta=\theta(Go, C)
>0$ with the following property.

For  any finite set $Y$ in $Go$, there exists a $C$-separated
subset $Z \subset Y$ such that $\sharp Z \ge \theta \cdot \sharp Y$.
\end{lem}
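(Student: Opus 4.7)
The plan is to apply a standard greedy extraction argument once we establish a uniform bound on the number of orbit points inside a ball of radius $C$.

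First I would prove that there exists an integer $N=N(Go,C)$ such that $\sharp(B(x,C)\cap Go)\le N$ for every $x\in X$. If the intersection is empty there is nothing to check; otherwise pick any $go\in B(x,C)\cap Go$ and observe that by the triangle inequality $B(x,C)\subset B(go,2C)$, so
\[
\sharp(B(x,C)\cap Go)\le \sharp(B(go,2C)\cap Go)=\sharp(B(o,2C)\cap Go),
\]
where the last equality uses that $g$ acts on $X$ by an isometry and permutes the orbit. Since $X$ is proper, $\overline{B(o,2C)}$ is compact, and since $G$ acts properly on $X$ the set $\{g\in G: go\in \overline{B(o,2C)}\}$ is finite. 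Therefore $N:=\sharp(B(o,2C)\cap Go)<\infty$ gives the desired uniform bound.

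Next I would run a greedy selection. Given a finite $Y\subset Go$, set $Y_0:=Y$ and, as long as $Y_i\ne\emptyset$, pick any $z_{i+1}\in Y_i$ and put $Y_{i+1}:=Y_i\setminus B(z_{i+1},C)$. The procedure stops after finitely many steps since $Y$ is finite; let $Z:=\{z_1,\dots,z_k\}$ be the collection of chosen points. By construction, for every $j>i$ we have $z_j\in Y_{j-1}\subset Y_i\setminus B(z_i,C)$, so $d(z_i,z_j)\ge C$, meaning $Z$ is $C$-separated. Moreover at each step we delete at most $\sharp(B(z_{i+1},C)\cap Go)\le N$ points from $Y_i$, so $\sharp Y\le kN=N\cdot\sharp Z$, which yields $\sharp Z\ge \theta\cdot \sharp Y$ with $\theta:=1/N$.

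I do not see any serious obstacle here; the only point requiring care is the uniform bound $N$, and the argument above reduces it to a single compactness/properness statement about the fixed ball $B(o,2C)$. Everything else is bookkeeping for the greedy algorithm.
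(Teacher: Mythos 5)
Your argument is correct and is essentially the paper's proof: both rest on the same key point that properness of the action gives a uniform bound $N$ on the number of orbit points in any ball of radius $C$ (you spell out the reduction to $B(o,2C)$, which the paper leaves implicit), and then take $\theta=1/N$. Your greedy extraction is just an explicit way of producing a maximal $C$-separated subset $Z$ with $Y\subset N_C(Z)$, which is exactly how the paper concludes.
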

\begin{proof}
Let $Z$ be a maximal $C$-separated set in $Y$. We have $Y\subset N_C(Z)$.
Since the action of $G$ on $(X, d)$ is proper, any ball of radius $C$ contains at most $N$ points in $Go$. The result follows for $\theta:=1/N$.
\end{proof}

An \textit{$(\epsilon, R, L)$-transitional geodesic tree $\mathcal T$} rooted at $o$ in $\Gx$ is a tree subgraph with a distinguished vertex $x$ such that  every branch in $\mathcal T$ originating at $x$ is a $(\epsilon, R, L)$-transitional geodesic in $\Gx$.


In order to obtain a useful theory of Patterson-Sullivan measures,  certain symmetry on the iterated tree set is required. This is the content of the following.
\begin{lem}[Existence of iterated transitional trees]\label{ITT}
There exist constants $L_0, C_0, t_0, n_0>0$ such that for $L>L_0, C>C_0$ there are  $\theta=\theta(C)$ and $L'=L'(L)$ and an iterated tree set $T$ parameterized by
$(\epsilon, R, L)$ with the following properties:
\begin{enumerate}
\item
$x^{-1}T(x)=y^{-1}T(y)$ for any $x \in T_t, y\in T_{t+n_0}$ and $t\ge t_0$,
\item
$x^{-1}T(x)=y^{-1}T(y)$ for any $x, y \in T_t$ and $t\ge t_0$.
\item
$\sharp T(x) \ge \theta \cdot \exp(\g G L)$ for any $x\in T$.
\item
$T(x)$ is $C$-separated for any $x\in T$.
\item
there exists an $(\epsilon, R, L')$-transitional geodesic tree
$\mathcal T$ rooted at $1$ in $\Gx$ such that the vertex set $\mathcal T^0$ contains $T$, and lies in $N_{L'}(T)$.
\end{enumerate}
\end{lem}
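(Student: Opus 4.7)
The plan is to combine the finiteness of partial cone types (Lemma \ref{FinPCones}) with the exponential partial cone estimate (Lemma \ref{TreeSetG}) and the discretization procedure (Lemma \ref{sepnet}), and then force periodicity of the construction by a pigeonhole on cone types.

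First I would use Lemma \ref{TreeSetG} to fix the auxiliary set $\hat G$, inside which every element has at least $\theta\cdot\exp(L\g G)$ descendants in its $(\epsilon,R,L,\Delta)$-partial cone. Letting $M=M(\epsilon,R)$ bound the number of partial cone types, I would inductively construct the sets $T_i$ by taking, for each $x\in T_i$ of cone type $\tau$, a single cone type $\tau'(\tau)$ realized by at least a $1/M$-fraction of the children of $x$ in $\Omega_{\epsilon,R}(x,L,\Delta)\cap\hat G$; keep only these children. The deterministic map $\tau\mapsto\tau'(\tau)$ on the finite set of cone types produces an eventually periodic sequence, so there exist $t_0\ge 0$ and $n_0\ge 1$ such that $\tau(x)$ depends only on $t\bmod n_0$ for $x\in T_t$, $t\ge t_0$. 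Once all elements of $T_t$ share a common cone type, the identity $x^{-1}\Omega_{\epsilon,R}(x)=y^{-1}\Omega_{\epsilon,R}(y)$ (Lemma \ref{FinPCones}) lets me define $T(x):=x\cdot\Omega_t$ for a single set $\Omega_t\subset G$ depending only on $t\bmod n_0$; this gives (1) and (2) simultaneously.

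For (3) and (4), I would at each level apply Lemma \ref{sepnet} to the reference set $\Omega_t$ to extract a $C$-separated subset of size at least $\theta(C)$ times its cardinality, and then translate. Combining the losses gives $\sharp T(x)\ge \theta(C)M^{-1}\theta_0\exp(L\g G)$, which after renaming $\theta$ yields (3); condition (4) is immediate from the translation-invariance of $C$-separation and the fact that the thinning is performed on the common template $\Omega_t$.

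It remains to produce the geodesic tree $\mathcal T$ of (5). For each edge $x\to y$ with $y\in T(x)$, the partial cone definition furnishes a geodesic $[1,y]$ through $x$, hence a geodesic $[x,y]$ of length in $[L-\Delta,L+\Delta)$ containing an $(\epsilon,R)$-transitional point within $2R$ of $x$. Concatenating these along any branch from $1$ gives, by the partial cone definition, a genuine geodesic from $1$ to any descendant, so transitional points occur at spacings at most $L+2R$ and the resulting paths are $(\epsilon,R,L')$-transitional with $L'=L+2R$. Taking $\mathcal T$ to be the union of all these geodesic segments, every vertex lies within $L'$ of $T$ by construction. The main obstacle is verifying that $\mathcal T$ is truly a tree, i.e.\ that the geodesic segments associated to different parents do not reconverge; I would handle this by arguing that two branches diverging at a common ancestor remain separated, invoking the $C$-separation of $T(x)$ for $C$ chosen larger than $2(L+\Delta)$ so that the $(L+\Delta)$-neighborhoods of distinct siblings are disjoint, combined with the contracting property of transitional geodesics (Lemma \ref{transcontract}) to rule out later reconvergence of descendants from different siblings.
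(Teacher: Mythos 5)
Your construction of the iterated tree set $T$ (template translation of a common child set $\Omega_t$, pigeonhole on the finitely many partial cone types for periodicity, Lemma \ref{sepnet} for separation) is essentially the paper's Steps 1--2, and your derivation of properties (1)--(4) and of the transitional geodesic tree $\mathcal T$ with $L'\approx L+2R+\Delta$ matches the paper. The genuine gap is in your verification that $\mathcal T$ is a tree. You propose to take $C>2(L+\Delta)$ so that distinct siblings have disjoint $(L+\Delta)$-neighborhoods. This cannot work: every child $y\in T(x)\subset\Omega_{\epsilon,R}(x,L,\Delta)$ satisfies $d(x,y)<L+\Delta$ (a geodesic $[1,y]$ passes through $x$), so $\diam{T(x)}<2(L+\Delta)$ and a $C$-separated subset with $C>2(L+\Delta)$ contains at most one point -- destroying property (3). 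Moreover the statement requires $C_0$ to be independent of $L$, and this is not cosmetic: $\theta=\theta(C)$ enters the lower bound $\g T\ge (L\g G+\log\theta)/(L+\Delta)$ in Lemma \ref{Divergent}, and only an $L$-independent $C$ lets $\g T\to\g G$ as $L\to\infty$. Your fallback appeal to the contracting property (Lemma \ref{transcontract}) is too vague to replace this, since two geodesics with the same endpoints in a relatively hyperbolic Cayley graph need not fellow-travel in general.

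What the paper does instead is a Floyd-visibility argument yielding an $L$-independent threshold $C_0=2(2R+\phi(\kappa/2)+\Delta)$: if two distinct geodesics $\alpha_1,\alpha_2$ of $\mathcal T$ join the same $x,w\in T$ through distinct siblings $y_1\ne y_2\in T(x)$, then the $(\epsilon,R)$-transitional point $z_1\in\alpha_1$ within $2R$ of $y_1$ satisfies $\fds_{z_1}(x,w)\ge\kappa$ by Lemma \ref{floydtrans}, so Karlsson's Lemma \ref{karlssonlem} forces $d(z_1,\alpha_2)\le D_0=\phi(\kappa)$; comparing distances along $\alpha_2$ then gives $d(y_1,y_2)\le 2(2R+D_0+\Delta)$, contradicting $C$-separation for any $C>C_0$. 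You would need to supply this (or an equivalent quantitative fellow-travelling statement near transitional points) to close the argument; one further small point is that your intermediate per-vertex selection inside $\Omega_{\epsilon,R}(x,L,\Delta)\cap\hat G$ presumes the bound (\ref{LBound}) at every $x\in T_i$, whereas it only holds for $x\in\hat G$ -- the translation-of-a-template device you (and the paper) end with is what actually repairs this.
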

\begin{proof}
Set $L_0=\Delta$, and all other constants will be defined in the proof. We divide the proof into 3 steps for the reader convenience.

\textbf{Step 1:}
At this step we construct the iterated tree set $T$ with properties (1-3). The construction proceeds by an induction argument. Set $T_0=\{1\}$ to start.

Let $M$ be the number of $(\epsilon, R)$-partial cone types in $G$ given by Lemma \ref{FinPCones}, and $\hat G$ the set   by Lemma \ref{TreeSetG}.  Then there exists $T_1 \subset \Omega_{\epsilon, R}(1, L, \Delta) \cap \hat G$ such that every element in $T_1$ has the same partial cone type and the inequality (\ref{LBound}) holds for $g=1$ where the constant $\kappa$ is divided by $M$. By Lemma \ref{sepnet}, we can also require that $T_1$ is $C$-separated, where $\kappa$ is further decreased and depends on $C$ (given in Step 3 below).

Fix some $x_1\in T_1$.  Up to dividing $\theta$ by $M$ again, we choose $Y$ to be a subset of $\Omega_{\epsilon, R}(x_1, L, \Delta)\cap \hat G$ such that the inequality (\ref{LBound}) holds for $Y$ and every element in $Y$ has the same partial cone type. By the same reason, we can choose  $Y$ to be $C$-separated. Since all $x\in T_1$ are of same type as $x_1$,  we could define $$T(x):=xx_1^{-1} Y \subset \Omega_{\epsilon, R}(x, L, \Delta).$$ Then all elements in the union $T_2 :=\cup_{x\in T_1}T(x)$ have the same partial cone types. We note that $Y$ is chosen to be contained in $\hat G$, but $T_2$ may not be in $\hat G$.

We repeat  the same argument to construct $T_i$ for $i\ge 3$, with a sequence of divisions of $\kappa$. By construction, all elements in the constructed $T_i$ are of the same partial cone type. Since there are finitely many partial cone types, we obtain that there are $1 \le t_0, n_0 \le M$ so that $x^{-1}T(x)=y^{-1}T(y)$ for any $x \in T_t, y\in T_{t+n_0}$ and $t\ge t_0$. This also implies that the division of $\theta$ stops after at most $n_0$ times, and thus $\theta$ in the inequality (\ref{LBound}) can be chosen uniformly for all $T(x)$ where $x\in T_i$ $(i \ge 1)$. The set $T$ satisfies the properties ($1-3$).

\textbf{Step 2:} Using the iterated tree set $T$, we will now construct a geodesic  graph $\mathcal T$.

Without loss of generality assume that $\kappa < 1$.  The root of
$\mathcal T$ is  $\mathcal T_0=\{1\}$.  Assume that $\mathcal T_i$ is defined for $i\ge 0$ and for each terminal vertex $x \in \mathcal T_i$, denote by $\gamma_x$ the geodesic $[1, x]$ in $\mathcal T_i$. We choose a geodesic $[x, y]$ for each $y\in T(x)$. Since $T_i(x)$ is a subset in $\Omega_{\epsilon, R}(x, L,\Delta) \cap \hat G$, we set
\begin{equation}\label{GeodTree}
\mathcal T_{i+1} = \cup_{x \in \mathcal T_i} (\cup_{y \in
T(x)}\gamma_x \cdot [x, y]),
\end{equation}
where $\gamma_x \cdot [x, y]$ is a geodesic in $\Gx$. Inductively, we get the limit $\mathcal T =\lim_{i\to
\infty} \mathcal T_i$. By construction, each geodesic ray originating at $1$ is $(\epsilon, R, L')$-transitional for $L':=L+2R+\Delta$. By construction we have  $T\subset \mathcal T^0\subset N_{L'}(T)$.

\textbf{Step 3:} We now prove that $\mathcal T$ is a geodesic tree rooted at $1$ in $\Gx$. Indeed, if not, there exist two distinct geodesics $\alpha_1, \alpha_2$ in $\mathcal T$ with the same endpoints $x, w \in T$ such that the length of $\alpha_1, \alpha_2$ is minimal among all such choices. Assume that $x$ is closer to $1$ than $w$. Consider two points $y_i \in \alpha_i \cap T(x)$ for $i=1, 2$. By the choice of $\alpha_1, \alpha_2$, we have $y_1\ne y_2$.  Then by construction, $d(y_i, w)\ge L-\Delta$ for $i=1, 2$. Moreover, there exists an $(\epsilon, R)$-transitional point $z_1\in \alpha_1$ such that $d(y_1, z_1) \le 2R$.

Let $D_0=\phi(\kappa)$, where $\phi$ is given by Lemma \ref{karlssonlem} and $\kappa=\kappa(\epsilon, R)$ by Lemma \ref{floydtrans}.  There exists $z_2 \in \alpha_2$ such that $d(z_1, z_2) \le D_0$ and then $d(y_1, z_2) \le 2R+D_0$. We can choose $ \tilde y_2\in \alpha_2$ such that $d(x, \tilde y_2) = d(x, y_1)$. Hence $d(z_2, \tilde y_2)=|d(x, z_2)-d(x, \tilde y_2)|=|d(x, z_2)-d(x, y_1)|\le 2R+D_0$. It follows that $d(y_1, \tilde y_2) \le 2(2R+D_0)$.

Since $y_1, y_2$ lie in the annulus $A(x, L, \Delta)$, we get $|d(x, y_1)-d(x, y_2)| \le 2\Delta$, and then $d(y_2, \tilde y_2) = |d(x, \tilde y_2)-d(x, y_2)| \le 2\Delta$. It follows that $d(y_1, y_2) \le 2(2R+D_0+\Delta)$. Choosing now the constant $C$ to be greater than
\begin{equation}\label{consep}
C_0:=2(2R+\phi(\kappa/2)+\Delta)\end{equation}
we obtain  that $T(x)$ is $C$-separated in $\Omega_{\epsilon,
R}(x, L, \Delta)$, and $d(y_1, y_2) \ge C_0 > 2(2R+D_0+\Delta)$ which  is  a contradiction. Thus, $\mathcal T$ is a rooted geodesic  tree.
\end{proof}
\begin{rems} \begin{enumerate}\item By Lemma \ref{charuc} the boundary  of the tree $\mathcal T$ (in $\pG$ or in $\pGf$) constructed above consists of uniformly conical points.
\item The constant $C_0$ in (\ref{consep}) is bigger than we really need in the above proof (it is enough to replace
$\varphi(\kappa/2)$ by the smaller term $D_0=\varphi(\kappa))$) but we do need such a constant in the next lemma.
\end{enumerate}
\end{rems}

In the next two lemmas, we shall derive more properties of the  sets $T$ and $\mathcal T$ constructed in Lemma \ref{ITT}. To this end, we recall the notion of Poincar\'e series.

For a
subset $X \subset G$ and a point $o\in G$, set
$$\PS{X}{o} = \sum\limits_{g \in X} \exp(-sd(o, g)), \; s \ge 0.$$
Define the \textit{critical exponent} of
$\PS{X}{o}$ to be
\begin{equation}\label{critexp}\g X=\limsup\limits_{n \to \infty} \frac{\log \sharp (B(o, n)\cap X)}{n},\end{equation}
where $S$ is a fixed finite symmetric generating set of $G$, and $B(o,n)$ is the ball in the word metric of radius $n$ centered at $o$.

It is elementary fact that $\PS{X}{o}$ converges for $s> \g X$, and diverges for $s<\g X$.

Recall that the bilipschitz equivalence $\asymp_{\rm const}$ between two functions means that they are comparable up to a constant (see Section \ref{Section2}). We have the following.

\begin{lem}\label{PoincareSeries}
Under the same assumptions as in Lemma \ref{ITT},  we have
$$\PS{T}{x} \asymp_{L} \PS{T}{y}$$ for any $x, y\in T$ and $s\ge 0$, whenever one of the series converges.
\end{lem}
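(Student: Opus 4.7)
Set $G(x):=\PS{T[x]}{x}$, where $T[x]$ denotes the subtree of $T$ consisting of $x$ and all its descendants with respect to the tree structure on $T$. The plan is to prove two separate comparisons: \textbf{(A)} $G(x)\asymp_L G(y)$ for every $x,y\in T$, and \textbf{(B)} $\PS{T}{x}\asymp_L G(x)$ for every $x\in T$. Chaining these comparisons yields the statement $\PS{T}{x}\asymp_L\PS{T}{y}$.

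For \textbf{(A)}, I would combine properties (1) and (2) of Lemma \ref{ITT} with a straightforward induction on the truncation depth to prove that $x^{-1}T[x]=y^{-1}T[y]$ whenever $x\in T_t,\ y\in T_{t+n_0}$ or whenever $x,y\in T_t$ (both cases with $t\geq t_0$). Consequently $G(x)=G(y)$ in these situations, so $G$ takes at most $n_0$ distinct values on $\bigcup_{t\geq t_0}T_t$ by periodicity, together with finitely many more values coming from the initial levels $T_t$ with $t<t_0$. Under the convergence hypothesis each such value lies in $[1,\infty)$ (the contribution of $g=x$ gives at least $1$), so finitely many positive finite numbers are pairwise bilipschitz-comparable with a constant depending only on $L$.

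For \textbf{(B)}, write $x=x_t$ with tree ancestors $1=x_0,x_1,\dots,x_t=x$. The inequality $G(x)\leq\PS{T}{x}$ is immediate; for the reverse I would decompose
$$T\setminus T[x]\ =\ \{x_0,\dots,x_{t-1}\}\ \sqcup\ \bigsqcup_{k=0}^{t-1}\ \bigsqcup_{y\in T(x_k)\setminus\{x_{k+1}\}}T[y].$$
The ancestor contribution $\sum_{k<t}e^{-sd(x,x_k)}$ is dominated by the geometric series $\sum_{j\geq 1}e^{-sj(L-\Delta)}$, since $d(x,x_k)\geq(t-k)(L-\Delta)$. For each subtree $T[y]$ with $y\in T(x_k)\setminus\{x_{k+1}\}$ the key input would be the Gromov-product estimate
$$d(x,g)\ \geq\ d(x,x_k)+d(x_k,g)-M,\qquad g\in T[y],$$
for some constant $M=M(L)$. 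Combined with the tree identity $d(x_k,g)=d(x_k,y)+d(y,g)$ (because $[x_k,y]\cdot[y,g]$ is a subpath of the geodesic $[1,g]$ in $\mathcal T$) and with $G(y)\leq C_LG(x)$ from \textbf{(A)}, this bounds the contribution of $T[y]$ to $\PS{T}{x}-G(x)$ by $C_Le^{sM}e^{-sd(x,x_k)}e^{-sd(x_k,y)}G(x)$. Summing over $y\in T(x_k)$ (using the growth bound $\sharp T(x_k)\lesssim e^{\delta_{G,S}(L+\Delta)}$) and then over $k$ gives $\PS{T}{x}\leq(1+C'_L)G(x)$, completing \textbf{(B)}.

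The main obstacle is the displayed Gromov-product inequality. The segments $[x_k,x]$ and $[x_k,g]$ are both $(\epsilon,R,L')$-transitional geodesics starting at $x_k$, and their first vertices beyond $x_k$, namely $x_{k+1}$ and $y$, lie in the $C$-separated set $T(x_k)$ with $C\geq C_0$; hence the two segments diverge immediately at $x_k$. I expect the estimate to follow by applying Proposition \ref{tightbetweenconical} directly to these two segments with basepoint $x_k$, or by first extending them to transitional geodesic rays inside $\mathcal T$, invoking part (3) of Proposition \ref{tightbetweenconical}, and noting that $x_k$ then lies within a uniformly bounded distance of the resulting bi-infinite geodesic $[\xi,\eta]$. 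Either way the needed bound rests purely on the transitional geometry of Section \ref{Section2} and introduces no new geometric input.
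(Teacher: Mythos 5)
Your skeleton is the same as the paper's: your $G(x)=\PS{T[x]}{x}$ is the cone series $\PS{\Omega(x)}{x}$, your step (A) is the paper's first Claim (the equivalence (\ref{BiLipCone}) via periodicity), and your step (B) is the paper's branch-point decomposition of $T\setminus\Omega(x)$, with your displayed inequality $d(x,g)\ge d(x,x_k)+d(x_k,g)-M$ being precisely the paper's inequality (\ref{AlmostEquality}). The genuine gap is that you never prove this inequality, and the tool you invoke cannot deliver it. Proposition \ref{tightbetweenconical} only says that the connecting quasi-geodesic is transitional, locates a transitional point on it close to both rays, and compares $d(o,\gamma)$ with $d(o,[\xi,\eta])$; no part of it bounds $d(x_k,[x,g])$ (equivalently the Gromov product of $x$ and $g$ at $x_k$) by a constant. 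Nor does ``the first vertices beyond $x_k$ lie in a $C$-separated set, hence the segments diverge immediately'' imply such a bound by itself: a priori the two transitional geodesics could re-approach and fellow-travel far beyond $x_k$, which is exactly what has to be excluded. The paper's proof of its second Claim ($d(o,[x,y])\le D(L)$) does this work: assuming the projection $z$ of the branch point $o$ to $[x,y]$ is far from $o$, it combines the contracting property of the transitional geodesic $[x,y]$ (Lemma \ref{transcontract}), a transitional point on $[o,x]$ at distance about $L+\Delta+2R$ from $o$, Lemma \ref{floydtrans} and the visibility Lemma \ref{karlssonlem} to force the two distinct children $w,w''\in T(o)$ on the branches towards $x$ and $y$ to be within $2(\phi(\kappa/2)+2R+\Delta)$ of each other, contradicting $C$-separation --- this is exactly why $C_0$ in (\ref{consep}) was taken larger than Step 3 of Lemma \ref{ITT} needs. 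That argument (or an equivalent one) is the substantive content of Lemma \ref{PoincareSeries}, and it is missing from your proposal.

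A secondary problem is the end of your step (A): the ``finitely many values'' of $G$ are values of series depending on $s$, so comparing them as fixed numbers gives a constant $C(s,L)$ with no control as $s$ decreases to the critical exponent, whereas the statement (and its use for the Patterson--Sullivan measures, where $s_i\to\g T$) requires a constant depending on $L$ alone, uniformly in $s$ and in $x,y$. This can be repaired --- e.g.\ by the paper's termwise counting comparison in its first Claim, or by the recursion $G(x)=1+\sum_{x'\in T(x)}e^{-sd(x,x')}G(x')$ together with $G\ge 1$, $\sharp T(x)\asymp_L 1$ and the observation that for $s$ above a fixed threshold all the series lie in $[1,2]$ --- but as written the uniformity is asserted, not proved.
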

\begin{proof}
 Let $\Omega(x)$ be a {\it cone} at $x\in T$, which is the union of $y\in T$ such that the unique geodesic $[1, y]$ in the geodesic tree $\mathcal T$ contains $x$.

\begin{claim}
The Poincar\'e series of $T$ is bilipschitz equivalent to that of any cone at a vertex in $T$:
\begin{equation}\label{BiLipCone}
\PS{T}{1} \asymp_{L} \PS{\Omega(x)}{x},
\end{equation}
for any $x \in T$.
\end{claim}
\begin{proof}[Proof of the Claim]
It follows from Lemma \ref{ITT}.(1) that after a finite time $t_0$, the  set  $T$ is periodic with a fixed period $n_0$.
So it is enough to show (\ref{BiLipCone}) for $x\in T$ such that $t_0 \le d(1,x)\le n_0+t_0.$ By Lemma \ref{ITT}.2, the cones  based at  points $y\in T_t$  have the same type where $t=d(1,x)$. Thus the number $a_n$ of points in $T$ situated at the distance $n$ from $1$ is at most $C\cdot b_{n,x}$. Here $C$ is the number of elements in the ball $B(1,t),$ and $b(n,x)$  is the number of elements of $\Omega(x)$ at the distance $n$ from $x$. The same argument works in the opposite sense. The Claim follows.\end{proof}

To complete the proof of the lemma, by (\ref{BiLipCone}), it suffices to establish the following \begin{equation}\label{BiLipCone2}
\PS{T}{1} \asymp_{L} \PS{T\setminus \Omega(x)}{x},
\end{equation}  as \ref{BiLipCone} and \ref{BiLipCone2} would imply $\PS{T}{1}\asymp_L\PS{T}{x},\foral x\in T.$

 For  $y\in T\setminus \Omega(x)$, let $o$ be the farrest point to $1$ such that $o\in T$ and $[1,o]\subset [1, x]\cap[1, y]$, where the geodesics $[1, x]$ and $[1, y]$ are in the geodesic tree $\mathcal T$. The point $o$ will be referred to as the \textit{branch point} of  $[1, x]$ and $[1, y]$.

By Lemma \ref{ITT}, $[1, x], [1, y]$ are $(\epsilon, R, L')$-transitional geodesics, where $L'=L'(L)$. By Proposition \ref{tightbetweenconical}, $[x, y]$ is transitional and so is contracting by Lemma \ref{transcontract}.

\begin{claim}
There exists a uniform constant $D=D(L)>0$ such that $d(o, [x, y]) \le D$.
\end{claim}
\begin{proof}[Proof of the Claim]

Let $z \in [x, y]$ be the projection   of $o$ to a geodesic $[x, y]$ in the Cayley graph $\Gx$. By the contracting property of $[x, y]$ it follows from the inequality \ref{EQ2'} that there exists $D_1=D_1(\epsilon, R, L')$ such that $$\max\{d(z, [o, x]), d(z, [o, y])\} \le D_1.$$
So, let $x_1 \in [o, x], y_1\in [o, y]$ such that $d(z, x_1)\le D_1$ and $d(z, y_1)\le D_1$.

Set  $d(o,z)=d$, then
 \begin{equation}\label{opposite}
 \min\{d(o, x_1), d(o, y_1)\} \geq d-D_1
 \end{equation}


Let $w \in [o, x_1]\cap T(o)$ where $T(o)\subset \Omega_{\epsilon, R}(o, L, \Delta)$. Then $d(o, w)<L+\Delta$. Furthermore since $x\in T$  there exists an $(\epsilon, R)$-transitional point $x_2\in  [o, x] $ such that $d(w, x_2) \le 2R$, and so $d(o, x_2) \leq L+\Delta+2R.$ Using (\ref{opposite})
we deduce \begin{equation}\label{cruconst} d(x_2, [x_1,  y_1])\geq d(x_1,o)-d(x_2,o)-2D_1\geq K,\end{equation}   \noindent where $K=d-3D_1-L-\Delta-2R.$

We affirm  that \begin{equation}\label{hypothet}
K\leq \phi(\kappa/2),
\end{equation}
where $\kappa$ and $\phi$ are universal constants given by Lemmas \ref{floydtrans} and  \ref{karlssonlem}.
respectively.
  Indeed, suppose (\ref{hypothet}) is not true, then $d(x_2, [x_1, y_1]) \ge \phi(\kappa/2)$. By Lemma \ref{karlssonlem} we have $\fds_{x_2}(x_1, y_1)\le \kappa/2$. Since $x_2$ is transitional,    Lemma \ref{floydtrans}  yields $\fds_{x_2}(o, x_1) \ge \kappa.$  It follows $\fds_{x_2}(o, y_1) \ge \kappa/2$, and thus  $\exist \tilde x_2\in [o,y]\ :\  d(x_2, [o,y])=d(x_2, \tilde x_2) \le \phi(\kappa/2)$.

\medskip

 \centerline{\includegraphics[width=6.5cm, height=6.5cm]{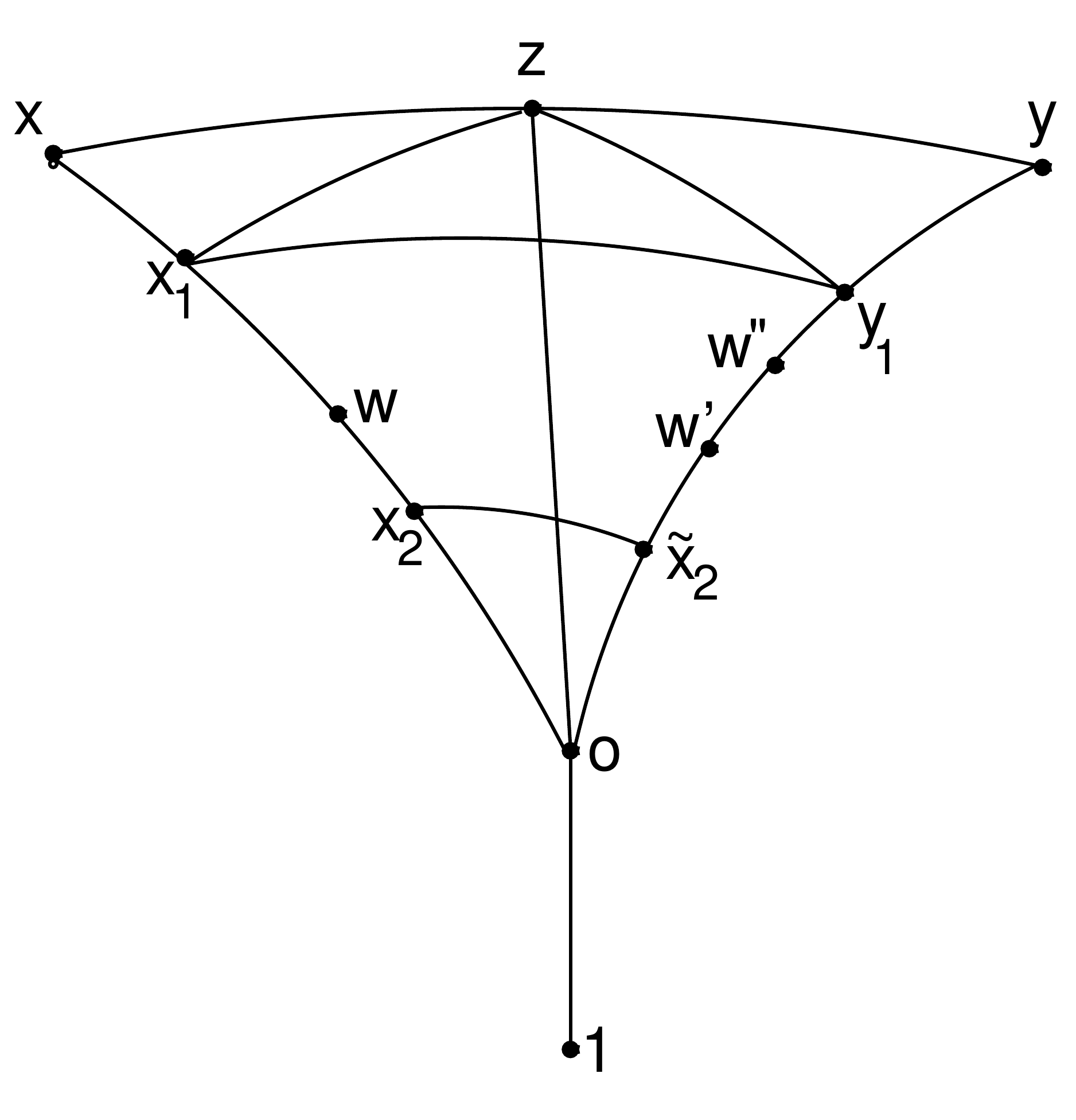}}

 Following the argument of Step (3) of Lemma \ref{ITT} we choose a vertex $w'\in [o,y]$ such that $d(o,w')=d(o,w).$   Then we have $d(\tilde x_2, w')=\vert d(o,\tilde x_2)-d(o,w)\vert\leq d(\tilde x_2,w)\leq 2R+\phi(\kappa/2).$ Then $d(w',w)\leq d(w',\tilde x_2)+d(\tilde x_2,w)\leq 2(2R+\phi(\kappa/2)).$ Let $w''\in T(o)\cap [o,y]$. Since  $w\in\Omega_{\epsilon, R}(o, L, \Delta)$  we have $d(w'',w')\leq \vert d(o,w') - d(o, w'')\vert\leq  2\Delta$. Indeed $\vert (d(o,w')=d(o,w)) -L\vert\leq \Delta$ and also $\vert d(o,w'')-L\vert\leq\Delta.$ Therefore for the vertices  $w'', w\in T(o)\subset T$ we have $d(w,w'')\leq  2(\phi(\kappa/2)+R+\Delta).$ This is impossible by (\ref{consep}). The obtained contradiction implies that  $K\leq \phi(\kappa/2)$ and by definition of $K$ (see (\ref{cruconst})), we have

  $$d(o,z)=d\leq D=D_1+\phi(\kappa/2)+L+\Delta+2R.$$
The claim is proved.
\end{proof}

The second claim implies
\begin{equation}\label{AlmostEquality}
d(o,x)+d(o,y)\ge d(x,y)\ge d(o,x)+d(o,y)-2D.
\end{equation}
Given $o\in [1, x)\cap T$, we denote by $Y_o$ the set of elements $y\in T\setminus \Omega(x)$ such that $o \in[1, x]$ is the branch point of $[1, y]$ and $[1, x]$ in $\mathcal T$. The argument of the first Claim also yields
$$\PS{\Omega(o)}{o} \asymp_L\PS{Y_o}{o}.$$
Then (\ref{BiLipCone}) and (\ref{AlmostEquality}) imply
$$
\sum_{y \in Y_o} \exp(-sd(x, y))   \asymp_{L}   \exp(-sd(o, x))\cdot \PS{T}{1},
$$
 for every $o\in [1, x)\cap T$. By construction of $T$ in Lemma \ref{ITT} the sequence of points $[1, x)\cap T$ has the property that any two consecutive points has a distance between $L-\Delta$ and $L+\Delta$. Summing up over all $o\in [1, x)\cap T$, we get
$$
\sum_{y \in T\setminus \Omega(x)} \exp(-sd(x, y)) \asymp_{L} \sum_{0\le k < d(1, x)} \exp(-sk) \cdot  \PS{T}{1} \asymp_{L} \PS{T}{1},
$$
which proves (\ref{BiLipCone2}). The Lemma is proved.
\end{proof}

\begin{lem}\label{Divergent}
Under the same assumptions in Lemma \ref{ITT}, the Poincar\'e series $\PS{T}{1}$ is divergent at $s=\g T$. Furthermore,
$\displaystyle \lim_{L\to \infty}\g T = \g G$.
\end{lem}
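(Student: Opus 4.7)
The plan is to exploit the almost-self-similar structure of $T$ provided by Lemma \ref{ITT} to control the Poincar\'e series by a one-variable ``transfer'' function, and then to read off both the divergence at $s=\g T$ and the limit $\lim_{L\to\infty}\g T=\g G$ from this control.

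\medskip

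\noindent\textbf{Setting up a recursion.} Write $a_i(s)=\sum_{g\in T_i}\exp(-s\,d(1,g))$, so $\PS{T}{1}=\sum_{i\ge 0}a_i(s)$. Since by Lemma \ref{ITT}(5) the geodesic tree $\mathcal T$ has each branch a geodesic in $\Gx$, for every $y\in T(x)$ we have the exact additivity $d(1,y)=d(1,x)+d(x,y)$. Hence
\[
a_{i+1}(s)\;=\;\sum_{x\in T_i}e^{-s\,d(1,x)}\sum_{y\in T(x)}e^{-s\,d(x,y)}\;=\;b_i(s)\cdot a_i(s),
\]
where $b_i(s):=\sum_{y\in T(x)}e^{-s\,d(x,y)}$ is, by Lemma \ref{ITT}(2), the same number for every $x\in T_i$ (for $i\ge t_0$), and, by Lemma \ref{ITT}(1), satisfies $b_{i+n_0}(s)=b_i(s)$ for $i\ge t_0$. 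Setting $B(s):=\prod_{j=t_0}^{t_0+n_0-1}b_j(s)$, one gets the clean periodicity $a_{i+n_0}(s)=B(s)\,a_i(s)$ for all $i\ge t_0$.

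\medskip

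\noindent\textbf{Divergence at the critical exponent.} The function $B(s)$ is continuous, strictly decreasing in $s$, and satisfies $B(s)\to\infty$ as $s\to-\infty$ and $B(s)\to 0$ as $s\to+\infty$ (the latter because the number of children in each generation is finite and distances $d(x,y)\ge L-\Delta>0$). Thus there is a unique $s_\ast$ with $B(s_\ast)=1$. For $s>s_\ast$ the sequence $a_i(s)$ decays geometrically along each residue class mod $n_0$, so $\PS{T}{1}<\infty$; for $s<s_\ast$ it grows geometrically and $\PS{T}{1}=\infty$. Hence $\g T=s_\ast$. At $s=\g T$ we have $B(\g T)=1$, so $a_i(\g T)$ is eventually periodic and bounded below by a positive constant, yielding $\PS{T}{\g T}=\sum_i a_i(\g T)=\infty$, which is the divergence claim.

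\medskip

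\noindent\textbf{Convergence $\g T\to\g G$.} The inclusion $T\subset G$ gives at once $\g T\le\g G$. For the matching lower bound, iterate the growth estimate from Lemma \ref{ITT}(3): every $x\in T_i$ has $\sharp T(x)\ge \theta\exp(L\g G)$ children at distance at most $L+\Delta$, so $\sharp T_i\ge(\theta\exp(L\g G))^i$ and every element of $T_i$ lies within $i(L+\Delta)$ of $1$. Plugging into the definition \eqref{critexp} yields
\[
\g T\;\ge\;\frac{\log\theta+L\,\g G}{L+\Delta}.
\]
Since $\theta=\theta(C)$ is independent of $L$, the right-hand side tends to $\g G$ as $L\to\infty$, which together with $\g T\le \g G$ gives $\lim_{L\to\infty}\g T=\g G$.

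\medskip

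\noindent\textbf{Main obstacle.} The substantive point is verifying that the periodicity of Lemma \ref{ITT}(1)--(2) truly upgrades the estimate $\Theta_T(s,x)\asymp_L\Theta_T(s,y)$ of Lemma \ref{PoincareSeries} to an \emph{exact} multiplicative recursion $a_{i+n_0}(s)=B(s)a_i(s)$; this depends crucially on the fact that the tree $\mathcal T$ is a geodesic tree in $\Gx$, so that no triangle-inequality losses occur when adding distances along branches. Once this is in place the rest is a transfer-operator computation.
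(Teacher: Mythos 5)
Your proposal is correct, and the two halves should be assessed separately. The limit statement $\lim_{L\to\infty}\g T=\g G$ is proved exactly as in the paper: the inclusion $T\subset G$ gives $\g T\le \g G$, and iterating Lemma \ref{ITT}(3) gives $\sharp\bigl(B(1,i(L+\Delta))\cap T\bigr)\ge \theta^i\exp(i L\,\g G)$, hence $\g T\ge (L\g G+\log\theta)/(L+\Delta)$. The divergence statement, however, you prove by a genuinely different route. The paper works with annulus counts $A_T(1,n,3\Delta_0)$, shows they are submultiplicative using the periodicity of $T$, and applies Fekete's lemma to get $\sharp A_T(1,n,\Delta_0)\succ \exp(n\g T)$ for all $n$, whence divergence of $\PS{T}{1}$ at $s=\g T$; you instead exploit the exact self-similarity of Lemma \ref{ITT}(1)--(2) together with the additivity $d(1,y)=d(1,x)+d(x,y)$ for $y\in T(x)$ (valid because children lie in partial cones, equivalently because branches of $\mathcal T$ are geodesics through $x$) to get the exact recursion $a_{i+n_0}(s)=B(s)a_i(s)$, identify $\g T$ with the unique root of $B(s)=1$ via the standard abscissa-of-convergence fact, and read off divergence from $B(\g T)=1$. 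Your transfer-function argument is sharper where it applies (it gives eventual exact periodicity of the generation sums at the critical exponent), while the paper's coarser Fekete argument only uses counts up to uniform constants and produces the uniform annulus lower bound in the form used elsewhere; both deliver what the lemma needs. One point you should make explicit: the equality $a_{i+1}(s)=b_i(s)a_i(s)$ (rather than merely $a_{i+1}\le b_i a_i$, which would be useless for divergence) also requires that each element of $T_{i+1}$ has a unique parent, i.e.\ that the children sets $T(x)$, $x\in T_i$, are pairwise disjoint and the generations $T_i$ are disjoint; this follows from $\mathcal T$ being a rooted geodesic tree (Step 3 of Lemma \ref{ITT}) and is implicitly assumed throughout the paper (unique geodesics $[1,y]$ in $\mathcal T$, branch points, cones), so it is available to you, but it is the one place where your "exactness" genuinely goes beyond the bilipschitz estimates of Lemma \ref{PoincareSeries}.
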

\begin{proof}
It is inspired by the proof of Proposition 4.1 in \cite{DPPS}. Consider the annulus set in $T$,
$$
A_T(g, n, 3\Delta_0) :=A(g, n, 3\Delta_0)\cap T,
$$
where $\Delta_0 :=\Delta + L+2R$ and $n\ge 0$. Observe that there exists $c>0$ such that
\begin{equation}\label{HomoAnnulus}
c^{-1}\cdot \sharp A_T(g', n, 3\Delta_0) \le \sharp A_T(g, n, 3\Delta_0) \le c \cdot \sharp A_T(g', n, 3\Delta_0)
\end{equation}
for any $g, g'\in T$ and $n\ge 0$. Indeed, this is a direct consequence of Lemma \ref{ITT} that $T$ has certain periodicity. Moreover, we claim that
\begin{claim}
The following inequality holds
$$\sharp A_T(1, n+m, 3\Delta_0) \le c \cdot \sharp A_T(1, n, 3\Delta_0) \cdot \sharp A_T(1, m, 3\Delta_0),$$
for $n, m\ge 0$.
\end{claim}
\begin{proof}[Proof of the Claim]
For $h \in A_T(n+m, 3\Delta_0)$, we connect $1$ and $h$ by a geodesic $[1, h]$ in $\mathcal T$. Assume that $d(1, h)=m+n+3\Delta_1$ for some $|\Delta_1| \le \Delta_0$.  Let $z \in [1, h]$ such that $d(1, z)=n+3/2\cdot \Delta_1$. Note that $z$ might not be in $T$. However, by Lemma \ref{ITT}.4), there exists $w\in T$ such that $d(z, w) \le \Delta+L +2R = \Delta_0$ and then $d(w, h) \le m + 3\Delta_0$. This implies that $w\in A(1, n, 3\Delta_0)$ and $h \in A_T(w, m, 3\Delta_0)$. The conclusion thus follows from (\ref{HomoAnnulus}).
\end{proof}

Define $a_n=c\cdot \sharp A_T(1, n, 3\Delta_0)$. The above Claim implies that  $a_{n+m}\le a_n a_m$. So the sequence $(\log a_n)_n$ is  subadditive. Then by  Fekete Lemma   $\displaystyle \lim_{n\to \infty} \frac{\log a_n}{n}=\inf\Bigl\{\frac{\log a_n}{n}: n\ge 1\Bigr\}.$
Since $(a_n)_n$ is non-decreasing we have
  $a_n\leq\sum_{0\le i\le n}a_i\leq na_n$. So

 $$ \g T= \limsup_{n\to \infty} \frac{\log \sum_{0\le i\le n}a_i}{n} =\lim_{n\to \infty} \frac{\log a_n}{n}=\inf\Bigl\{\frac{\log a_n}{n}: n\ge 1\Bigr\}.$$
It follows that $\sharp A_T(1, n, \Delta_0) \ge c^{-1}\exp(n\g T)$ for $n \ge 1$. Observe that
$$
\PS{T}{1} \asymp_{L, \Delta} \sum\limits_{n \ge 0} \sharp A_T(1, n, \Delta_0) \cdot \exp(-sn), \; s \ge 0,
$$
whenever both parts are finite. Thus, $\PS{T}{1}$ is divergent at $s=\g T$.

To prove the second statement we estimate the lower bound of $\g T$. By Lemma \ref{ITT}, we notice that $$\sharp (B(1, i(L+\Delta)) \cap T) \ge \theta^i \cdot \exp(i\cdot \g G\cdot L),$$ for $i\ge 0$. This implies that $$\g T \ge \frac{\log \sharp B_T(1, i(L+\Delta))\cap T}{i(L+\Delta)} \ge \frac{L\cdot \g G+\log \theta}{L+\Delta}.$$
We obtain $\displaystyle \lim_{L\to\infty} \g T\ge\delta_{G,S}.$ Since $\g T\le  \delta_{G,S}\ (\forall  L),$  the lemma follows.
\end{proof}

\subsection{Patterson-Sullivan measures on the space of ends of an iterated transitional tree}\label{Section4.2}
In this and next subsections, for any $L\gg0$, let $T$ and $\mathcal T$ be the iterated tree set and transitional tree respectively given by Lemma \ref{ITT}. At the same time, assume that they  satisfy Lemmas \ref{PoincareSeries} and \ref{Divergent}.

We denote by the common notation $\pT$ the limit set of $T$ in either the Bowditch boundary $\pG$ or in the Floyd boundary $\pGf$. In this subsection, we shall construct a Patterson-Sullivan measure on $\pT$.

Consider the set $\mathcal M(\widetilde T)$ of finite Borel measures on
$\widetilde T:=T \cup \pT$, which is endowed   with the weak-convergence topology.
Then $\mu_n \to \mu$ for $\mu_n \in \mathcal M(\widetilde T)$ if and only if
$\liminf\limits_{n \to \infty}\mu_n(U) \ge \mu(U)$ for any open set
$U \subset  \widetilde T$. Note that a set of uniformly bounded measures in $\mathcal M(\widetilde T)$ is relatively compact.

We first construct a family of measures $\{\mu_v^s\}_{v\in T} \subset \mathcal M(\widetilde T)$ supported on $T$. Set
$$\mu^s_v = \frac{1}{\PS{T}{1}} \sum\limits_{g \in T} \exp(-sd(v, g)) \cdot \dirac{g},$$
where $s >\g T$ and $v \in T$. By Lemma \ref{PoincareSeries}, the measures $\{\mu_v^s\}_{v\in T}$ are bounded by a uniform constant depending on $L$.

By Lemma \ref{Divergent}, for any $v\in T$, $\PS{T}{v}$ is divergent at $s=\g T$. Choose $s_i \to \g T$ such that $\mu_v^{s_i}$ converge in
$\mathcal M(\widetilde T)$. The limit
measures $\mu_v = \lim \mu_v^{s_i}$ are called \textit{Patterson-Sullivan measures}
at $v$. Clearly,
$\{\mu_v\}_{v\in G}$ are absolutely continuous with respect to each other.

In the sequel, we will write PS-measures as shorthand for Patterson-Sullivan
measures.

A horofunction co-cycle $B_\xi: G\times G \to \mathbb R$ at conical points $\xi \in \pG$ or $\xi \in \pGf$ was studied in \cite{YANG7}. The precise definition is not relevant here, but we have the following estimation.

\begin{lem}\label{rephorofunc}\cite[Lemma 2.20]{YANG7}
For any $L>0$ there exists $C=C(L)>0$ such that the following holds.

Fix $\xi \in \partial T$. For any $x, y \in G$, there is
a neighbourhood $V$ of $\xi$ in $\Gf$ or $G \cup \pG$ such that the following property holds:
$$|B_\xi(x, y)-B_z(x, y)| < C, \forall z \in V \cap G,$$
where $B_z(x, y) := d(z, x) - d(z, y)$.
\end{lem}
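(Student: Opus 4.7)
The goal is to show that the horofunction cocycle $B_\xi(x,y)$ at a point $\xi \in \partial T$ is approximated, with error depending only on $L$, by $B_z(x,y) = d(z,x) - d(z,y)$ for every $z$ in a suitable neighborhood $V \cap G$ of $\xi$. The plan is to exploit the fact that $\xi$ is an endpoint of an $(\epsilon, R, L)$-transitional geodesic ray (by Lemma \ref{charuc}), so that geodesics from $x$ and from $y$ to $\xi$ must eventually fellow-travel in a quantitatively controlled way, and similarly geodesics from $z$ to $x$ and from $z$ to $y$ pass near the same transitional region whenever $z$ is close enough to $\xi$ in $\fdo$.

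First, I would fix geodesic rays $\alpha_x = [x, \xi]$ and $\alpha_y = [y, \xi]$. Applying Proposition \ref{tightbetweenconical} to the triangle with vertices $x, y, \xi$, there is a constant $M=M(L)$ and an $(\epsilon_1, R_1)$-transitional point $p$ such that $\alpha_x$ contains a point $p_x$ and $\alpha_y$ contains a point $p_y$ with $d(p_x, p), d(p_y, p) \le M$, so that $d(p_x, p_y) \le 2M$. Moreover, on the far side of these transitional points the two rays share (up to bounded distance) the same sub-ray converging to $\xi$.

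Second, by Lemma \ref{karlssonlem} and a standard Visibility argument, there exists a neighbourhood $V$ of $\xi$ in $\overline{G}_\lambda$ (resp.\ $G \cup \pG$) such that for every $z \in V \cap G$, any geodesic $[z,x]$ passes within a uniform distance $D$ of $p_x$ and any geodesic $[z,y]$ passes within $D$ of $p_y$. Indeed, if $z$ is so close to $\xi$ that $\rho_v(z,\xi)$ is small at the relevant vertices $v=p_x, p_y$, then applying Proposition \ref{tightbetweenconical}(2) to the triangles $(x,z,\xi)$ and $(y,z,\xi)$ yields transitional points on $[z,x]$ and $[z,y]$ that lie in bounded neighbourhoods of $p_x$ and $p_y$ respectively. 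Consequently
\[
d(z,x) = d(z, p_x) + d(p_x, x) + O(D), \qquad d(z,y) = d(z, p_y) + d(p_y, y) + O(D).
\]

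Third, since $d(p_x, p_y) \le 2M$, we get $|d(z, p_x) - d(z, p_y)| \le 2M$, and therefore
\[
B_z(x,y) \;=\; d(z,x) - d(z,y) \;=\; \bigl[d(p_x, x) - d(p_y, y)\bigr] + O(M+D).
\]
The right-hand side is independent of $z$, up to an error bounded by a constant $C=C(L)$ (absorbing $2M + 2D$). Since $B_\xi(x,y)$ is defined in \cite{YANG7} as the appropriate limit of $B_z(x,y)$ along sequences $z \to \xi$ through $G$, this shows both that the limit exists up to the same bounded error and that $|B_\xi(x,y) - B_z(x,y)| < C$ for all $z \in V \cap G$.

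The main obstacle is Step 2: verifying that the neighbourhood $V$ can be chosen uniformly so that the thin-triangle conclusion applies with a constant $D$ that does not degenerate as $x, y$ vary. This boils down to checking that the transitional point on $[x,\xi]$ near $p_x$ remains visible from $z$ once $z$ is Floyd-close to $\xi$, which is ensured by the uniformity built into Lemma \ref{karlssonlem} combined with the contracting property of transitional geodesics (Lemma \ref{transcontract}) applied at the fixed vertices $p_x$ and $p_y$. Once $D$ is uniform, the telescoping estimate above gives the desired constant $C(L)$.
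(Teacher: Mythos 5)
The paper does not actually prove this lemma from scratch: it is quoted from \cite[Lemma 2.20]{YANG7}, and the accompanying remark only verifies that the hypotheses of that proof transfer to points of $\partial T$ (every $\xi\in\partial T$ is the endpoint of an $(\epsilon,R,L')$-transitional ray with uniform constants by Lemmas \ref{ITT} and \ref{charuc}, whence a constant $C=C(L)$). Your attempt to reconstruct a self-contained proof from the tools in this paper has a genuine gap. The central step invokes Proposition \ref{tightbetweenconical}, but in configurations that the proposition does not cover: it concerns two $(\epsilon_1,R_1,L)$-transitional geodesic rays issuing from a \emph{common} basepoint $o$ and ending at \emph{distinct} boundary points $\xi\ne\eta$, together with a quasi-geodesic joining points of the two rays. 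You apply it first to the ``triangle'' $x,y,\xi$, i.e.\ to two rays $[x,\xi]$ and $[y,\xi]$ ending at the \emph{same} ideal point, and then to the triangles $(x,z,\xi)$ and $(y,z,\xi)$, which have two vertices in $G$ and one ideal vertex. Neither configuration is an instance of the proposition, and what you need from it --- that two geodesics asymptotic to the same point $\xi$ eventually fellow-travel within a distance depending only on $L$, and that any geodesic $[z,x]$ passes uniformly close to $p_x$ once $z$ is Floyd-close to $\xi$ --- is precisely the nontrivial content of the Busemann-type estimate in the relatively hyperbolic setting, where the Cayley graph is not hyperbolic and naive thin-triangle arguments are unavailable. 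Moreover, you tacitly use that $[x,\xi]$ and $[y,\xi]$ are transitional with constants controlled by $L$; what the construction gives is only that the ray from $1$ inside $\mathcal T$ is $(\epsilon,R,L')$-transitional, and transferring this to rays from arbitrary basepoints with quantitative constants requires an argument you do not supply.

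A secondary issue: since the paper deliberately leaves the definition of $B_\xi$ unspecified, your final step, which takes $B_\xi(x,y)$ to be ``the appropriate limit'' of $B_z(x,y)$, rests on an unverified assumption about \cite{YANG7}; the bounded-oscillation estimate you derive would suffice for such a definition, but you cannot close the argument without it. In short, your outline follows the plausible geometric strategy behind \cite[Lemma 2.20]{YANG7} (which is what the paper simply cites), but the two key fellow-traveling claims in your Steps 1 and 2 are not consequences of the results you invoke, so the proof as written is incomplete.
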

\begin{rems} (on the proof)
The above statement is proved in \cite[Lemma 2.20]{YANG7} for a conical point of the Bowditch boundary, where the constant $C$ is universal (not depending on $L$).  In our setting  by Lemma \ref{ITT} there exists an $(\epsilon, R, L')$-transitional ray in the tree $\mathcal T$ ending at $\xi$ in $\partial T$. Then by Lemma  \ref{charuc} the constant $R$ is uniform for every $\xi\in\partial T$. So  the same proof as \cite[Lemma 2.20]{YANG7} works to produce a constant $C=C(L)$.

We have to warn the reader that the constant $C>0$ cannot be made  uniform for all conical points for the action  $G\act \pGf$ on the Floyd boundary as the action is not necessarily geometrically finite (see the discussion after Lemma \ref{charuc}).

\noindent{\it End of remarks.}
\end{rems}

 With the help of Lemma \ref{rephorofunc}, the following can be proven exactly as Th\'eor\`eme 5.4 in \cite{Coor}.

\begin{lem}\label{quasiconf}
PS-measures $\{\mu_g\}_{g\in T}$ on $\pT$ satisfy the following property,
\begin{equation}\label{cdensity}
\frac{d\mu_{g}}{d\mu_{h}}(\xi) \asymp_L \exp(-\g T B_\xi (g, h)),
\end{equation}
for $\mu_h$-a.e. points $\xi \in \pT$ and any
$g, h \in T$ .
\end{lem}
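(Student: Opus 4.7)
The plan is to follow Coornaert's argument for word-hyperbolic groups, with the only structural change being that the uniform horofunction estimate from Lemma \ref{rephorofunc} introduces an $L$-dependence into the multiplicative constants.

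First I would compute the Radon--Nikodym derivative before passing to the limit. For $s > \g T$ and $g,h \in T$, the finite measures $\mu_g^s, \mu_h^s$ are supported on the countable set $T$, and the trivial identity $d(g,x) - d(h,x) = B_x(g,h)$ gives
\begin{equation*}
\mu_g^s(\{x\}) = \exp\bigl(-s\, B_x(g,h)\bigr)\,\mu_h^s(\{x\}), \qquad x\in T.
\end{equation*}
Then I would pick an arbitrary $\xi \in \pT$ and a neighbourhood $V$ of $\xi$ in $\widetilde T$ provided by Lemma \ref{rephorofunc} applied to the pair $(g,h)$, so that $|B_x(g,h) - B_\xi(g,h)| < C(L)$ for every $x \in V \cap T$. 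Restricting to such $x$ yields
\begin{equation*}
e^{-sC}\exp\!\bigl(-s B_\xi(g,h)\bigr)\,\mu_h^s(A) \;\le\; \mu_g^s(A) \;\le\; e^{sC}\exp\!\bigl(-s B_\xi(g,h)\bigr)\,\mu_h^s(A)
\end{equation*}
for every Borel subset $A \subset V \cap T$.

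Next I would pass to the weak limit along the sequence $s_i \to \g T$ fixed in the construction of the PS-measures. Since the total masses are uniformly bounded by Lemma \ref{PoincareSeries}, a standard portmanteau-type argument (taking $A = U$ an arbitrary open set with $\overline U \subset V$, and using $\liminf \mu_g^{s_i}(U) \ge \mu_g(U)$ together with $\limsup \mu_h^{s_i}(\overline U) \le \mu_h(\overline U)$) gives
\begin{equation*}
C(L)^{-1}\exp\!\bigl(-\g T\, B_\xi(g,h)\bigr)\,\mu_h(U) \;\le\; \mu_g(U) \;\le\; C(L)\exp\!\bigl(-\g T\, B_\xi(g,h)\bigr)\,\mu_h(U)
\end{equation*}
for every such $U$, where $C(L) = e^{\g T \cdot C(L)}$ after absorbing the constant from Lemma \ref{rephorofunc}. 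Because this holds on a neighbourhood basis of every $\xi \in \pT$, the Lebesgue differentiation theorem (applied in the compact metric space $\widetilde T$ with respect to the metric $\bar\rho$ or $\rho$) identifies the Radon--Nikodym derivative $\tfrac{d\mu_g}{d\mu_h}(\xi)$ with $\exp(-\g T\, B_\xi(g,h))$ up to the multiplicative constant $C(L)$, for $\mu_h$-a.e.\ $\xi$, which is exactly \eqref{cdensity}.

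The main obstacle is the passage to the limit together with the identification of the Radon--Nikodym derivative: one must keep careful track that the constant $C$ from Lemma \ref{rephorofunc} depends on $L$ but not on $\xi$, so that after taking weak limits the estimate holds uniformly on a neighbourhood basis and can be promoted via differentiation to a pointwise a.e.\ statement. Everything else -- the construction of the pre-limit measures, the absolute continuity among $\{\mu_v\}_{v\in T}$, and the weak compactness -- is already in place from the preceding lemmas and Coornaert's argument transfers verbatim.
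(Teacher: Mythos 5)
Your overall strategy coincides with the paper's: the paper proves this lemma by simply invoking Coornaert's Th\'eor\`eme 5.4 together with Lemma \ref{rephorofunc}, and your pre-limit computation on atoms, the use of the neighbourhood $V$ furnished by Lemma \ref{rephorofunc}, and the passage to the weak limit along $s_i\to \g T$ (with no Patterson auxiliary function, which is legitimate here because $\PS{T}{1}$ diverges at $s=\g T$ by Lemma \ref{Divergent}) are exactly that argument.

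The one step that is not justified as written is the appeal to the Lebesgue differentiation theorem in $(\widetilde T,\bar\rho)$: differentiating one Borel measure by another along balls requires a Vitali/Besicovitch-type covering property or a doubling measure, neither of which is available at this stage (Ahlfors regularity of the PS-measure is obtained only later via the Shadow Lemma, which itself relies on the present lemma, so invoking it here would be circular). Fortunately you do not need it. From your open-set estimate, regularity of finite Borel measures on the compact space gives $C^{-1}\exp(-\g T B_\xi(g,h))\,\mu_h(A)\le \mu_g(A)\le C\exp(-\g T B_\xi(g,h))\,\mu_h(A)$ for every Borel $A\subset V_\xi$; this already forces $\mu_g\ll\mu_h$ on $V_\xi$ and sandwiches $\phi=d\mu_g/d\mu_h$ for $\mu_h$-a.e.\ point of $V_\xi$ between these two constants (if, say, $E=\{\eta\in V_\xi:\ \phi(\eta)>C\exp(-\g T B_\xi(g,h))\}$ had positive $\mu_h$-measure, integrating $\phi$ over $E$ would contradict the upper bound). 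To replace $B_\xi$ by $B_\eta$ for a.e.\ $\eta\in V_\xi$, apply Lemma \ref{rephorofunc} at $\eta$: any neighbourhood of $\eta$ contained in $V_\xi$ meets $G$, so $|B_\eta(g,h)-B_\xi(g,h)|<2C(L)$, at the cost of an extra factor $\exp(2C(L)\,\g T)$. Covering the compact set $\pT$ by finitely many such $V_\xi$ yields \eqref{cdensity}. A last cosmetic point: the portmanteau step really produces mixed inequalities such as $\mu_g(U)\le C\exp(-\g T B_\xi(g,h))\,\mu_h(\overline U)$; this is harmless, but it is cleaner to pass to the limit on continuous functions supported in $V_\xi$, as Coornaert does.
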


\subsection{Shadow Lemma}
We shall establish a shadow lemma for $\{\mu_g\}_{g\in T}$ on $\pT$.
\begin{defn}[Shadow]

The \textit{shadow $\Pi_r(g)$} at $g \in T$ is the set of points
$\xi \in \pT$ such that there exists SOME geodesic $[1, \xi]$ in $\mathcal T$
intersecting $B(g, r)$.
\end{defn}



\begin{lem}[Shadow Lemma]\label{ShadowLem}
There exists $r_0 > 0$ such
that the following holds
$$
\begin{array}{rl}
\exp(-\g T d(1, g)) \prec \mu_1(\Pi_r(g))  \prec_r \exp(-\g T d(1, g))\\
\end{array}
$$
for any $r > r_0$ and $g\in T$.
\end{lem}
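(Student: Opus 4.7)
The plan is to prove the two inequalities separately, both relying on the quasi-conformal density relation $\frac{d\mu_1}{d\mu_g} \asymp_L \exp(-\g T B_\xi(1,g))$ of Lemma \ref{quasiconf}, the horofunction approximation of Lemma \ref{rephorofunc}, and an auxiliary total/cone mass estimate. I would first establish that for every $g \in T$,
\[
c_L \;\le\; \mu_g\bigl(\overline{\Omega(g)} \cap \partial T\bigr) \;\le\; \mu_g(\partial T) \;\le\; C_L,
\]
with constants depending only on $L$. The upper bound is immediate from the weak-$*$ convergence together with $\mu_g^{s}(\partial T) = \PS{T}{g}/\PS{T}{1}$, which is $\asymp_L 1$ by Lemma \ref{PoincareSeries}. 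For the lower bound, note that $\mu_g^{s}(\Omega(g)) = \PS{\Omega(g)}{g}/\PS{T}{1}$, which is $\asymp_L 1$ by the Claim $\PS{T}{1} \asymp_L \PS{\Omega(x)}{x}$ established in the proof of Lemma \ref{PoincareSeries}; taking weak limits gives the estimate.

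For the upper bound on $\mu_1(\Pi_r(g))$, pick $\xi \in \Pi_r(g)$ and a geodesic ray $\gamma$ in $\mathcal T$ from $1$ ending at $\xi$ with some $y_0 \in \gamma$ satisfying $d(g, y_0) \le r$. For any $z \in \gamma$ beyond $y_0$, since $\gamma$ is a geodesic,
\[
B_z(1, g) \;=\; d(1, z) - d(g, z) \;\ge\; d(1, y_0) - d(g, y_0) - r \;\ge\; d(1, g) - 2r.
\]
Taking $z \to \xi$ and invoking Lemma \ref{rephorofunc} yields $B_\xi(1, g) \ge d(1, g) - 2r - C(L)$. Combining Lemma \ref{quasiconf} with the total mass bound:
\[
\mu_1(\Pi_r(g)) \;\asymp_L\; \int_{\Pi_r(g)} \exp(-\g T B_\xi(1,g))\, d\mu_g(\xi) \;\prec_{r, L}\; \exp(-\g T d(1, g))\,\mu_g(\partial T),
\]
giving the right-hand inequality.

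For the lower bound I would use the key observation that whenever $\xi \in \overline{\Omega(g)} \cap \partial T$, the tree structure furnished by Lemma \ref{ITT} forces every geodesic in $\mathcal T$ from $1$ to $\xi$ to pass through $g$, so $\xi \in \Pi_r(g)$ for every $r \ge 0$. Moreover, for any $z$ on such a ray beyond $g$, one has exactly $B_z(1, g) = d(1, g)$, and therefore Lemma \ref{rephorofunc} gives $|B_\xi(1, g) - d(1, g)| \le C(L)$. Applying Lemma \ref{quasiconf} and the auxiliary cone mass lower bound,
\[
\mu_1(\Pi_r(g)) \;\ge\; \mu_1\bigl(\overline{\Omega(g)} \cap \partial T\bigr) \;\asymp_L\; \exp(-\g T d(1, g))\, \mu_g\bigl(\overline{\Omega(g)} \cap \partial T\bigr) \;\succ_L\; \exp(-\g T d(1, g)),
\]
uniformly in $r$, which matches the fact that the first inequality in the statement has no subscript $r$.

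The main obstacle I anticipate is the control of the horofunction cocycle for $\xi$ on the Floyd boundary, where the action of $G$ need not be geometrically finite and so Lemma \ref{rephorofunc} produces only an $L$-dependent (rather than universal) constant $C(L)$. This is exactly acceptable here because the points in $\partial T$ are $L$-uniformly conical with parameter controlled by the tree construction (Lemma \ref{charuc}), and all our comparisons are already declared $\asymp_L$. The constant $r_0$ in the statement is then any value sufficient to ensure that a shadow is non-trivial; both inequalities extend automatically to all larger $r$.
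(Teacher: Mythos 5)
Your argument is correct, and for the lower bound it takes a genuinely different route from the paper. The upper bound is essentially the paper's intended argument: for $\xi\in\Pi_r(g)$ you estimate $B_\xi(1,g)\ge d(1,g)-2r-C(L)$ along a tree ray through $B(g,r)$ via Lemma \ref{rephorofunc}, plug this into the density relation of Lemma \ref{quasiconf}, and multiply by the uniform bound on the total masses $\mu_g(\pT)$ coming from Lemma \ref{PoincareSeries}; this is exactly where the $r$-dependence of $\prec_r$ enters. For the lower bound the paper argues differently: it fixes $r_0$ so that the shadow seen from $g$ captures a definite proportion of the mass, i.e.\ it proves $\mu_g(\pT\setminus\Pi_r(g))<\epsilon$ uniformly in $g$ for $r>r_0$ by a Poincar\'e-series estimate over the convex cone of the complement (branch points at distance $>r$ from $g$, giving a factor $\sum_{r\le k\le d(1,g)}\exp(-sk)$), and then multiplies by the density lower bound. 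You instead exhibit the explicit subset $\overline{\Omega(g)}\cap\pT\subset\Pi_r(g)$, on which $B_\xi(1,g)=d(1,g)+O(C(L))$, and bound its $\mu_g$-mass below by $\PS{\Omega(g)}{g}\asymp_L\PS{T}{1}$ (the cone claim inside the proof of Lemma \ref{PoincareSeries}) together with a weak-limit argument. Both routes rest on the same inputs (Lemmas \ref{PoincareSeries}, \ref{quasiconf}, \ref{rephorofunc}); yours buys a lower bound valid for every $r\ge 0$ with an $r$-independent constant, so $r_0$ plays no role on that side, while the paper's complement estimate yields the extra (here unused) information that shadows exhaust the $\mu_g$-mass as $r\to\infty$.

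Two points you use implicitly and should record. First, passing from $\mu_g^{s_i}(\Omega(g))\succ_L 1$ to $\mu_g(\overline{\Omega(g)}\cap\pT)\succ_L 1$ needs the closed-set half of the portmanteau theorem (so convergence of total masses along the chosen subsequence) together with $\mu_g(T)=0$, which follows from the divergence of $\PS{T}{1}$ at $s=\g T$ (Lemma \ref{Divergent}). Second, the inclusion $\overline{\Omega(g)}\cap\pT\subset\Pi_r(g)$ requires knowing that every boundary point accumulated by $\Omega(g)$ is the endpoint of a geodesic ray of $\mathcal T$ passing through $g$; this follows from local finiteness of $\mathcal T$ (Lemma \ref{ITT}) plus the fact that, for geodesics issuing from $1$, points beyond a far branch vertex are Floyd-close, so the limiting tree ray ends at the given boundary point (Lemma \ref{karlssonlem}). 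Neither point is a gap at the level of detail of the paper, but they are the places where your shortcut does real work.
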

\begin{rem}\label{notautologyshadow}
In \cite{YANG7} the Shadow lemma was proved for the whole group $G$. The current lemma describes the shadows of the points $g\in T$ in terms of $\g T$.
\end{rem}
\begin{proof}
By Lemmas \ref{rephorofunc} and \ref{quasiconf}, there exists $C_1=C_1(L), C_2=C_2(L)>0$ such that the
following holds
\begin{equation}\label{denstybound}
C_1  \exp(-\g T d(1, g)) \le \frac{d\mu_1}{d\mu_{g}}(\xi)  \le C_2 \exp(-\g{T}  d(1, g))
\end{equation}
for $\mu_1$-a.e. points $\xi \in \pT$. So in order to estimate $\mu_1(\Pi_r(g))$ we can do it for $\mu_g(\Pi_r(g))$.

\begin{claim}\label{bdcomp}
Given any $\epsilon > 0$, there is a constant $r_0 > 0$ such that
the following holds $$\mu_g(\pT \setminus \Pi_r(g)) < \epsilon$$ for
all $g \in T$ and $r > r_0$.
\end{claim}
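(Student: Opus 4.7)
The plan is to prove a uniform bound on $\mu_g^s(\tilde\Pi_r(g)^c)$ and then transfer it to $\mu_g$ by a weak-limit argument. Extend the shadow to $\widetilde T = T \cup \pT$ by setting
$$
\tilde\Pi_r(g) \;=\; \{h \in \widetilde T : [1,h]_{\mathcal T} \cap B(g,r) \neq \emptyset\},
$$
where $[1,h]_{\mathcal T}$ denotes the unique geodesic in $\mathcal T$ joining $1$ to $h$. Then $\tilde\Pi_r(g)$ is open and $\tilde\Pi_r(g) \cap \pT = \Pi_r(g)$. Divergence of $\PS{T}{1}$ at $s = \g T$ (Lemma \ref{Divergent}) forces $\mu_g^s(\{h\}) \to 0$ for each $h \in T$, hence $\mu_g(T)=0$ and $\mu_g(\pT \setminus \Pi_r(g)) = \mu_g(\widetilde T \setminus \tilde\Pi_r(g))$.

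Assume $d(1,g) \geq r$ (otherwise $\tilde\Pi_r(g) = \widetilde T$) and enumerate the ancestors of $g$ in $T$ as $g_0 = 1, g_1, \ldots, g_n = g$; by Lemma \ref{ITT} they satisfy $d(g_{i-1}, g_i) \in [L-\Delta, L+\Delta]$. Let $\tilde g = g_{j^*+1}$, where $j^*$ is the largest index with $d(g_{j^*}, g) \geq r$, so $r-(L+\Delta) \leq d(g,\tilde g) < r$. For any $h \in T$ with $[1,h]_{\mathcal T}$ avoiding $B(g,r)$, the branch point $p=p_h$ of $[1,g]_{\mathcal T}$ and $[1,h]_{\mathcal T}$ satisfies $d(p,g) > r > d(\tilde g,g)$, so $p$ lies strictly above $\tilde g$ on $[1,g]$ and consequently $h \in T \setminus \Omega(\tilde g)$.

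The key geometric input is the second Claim in the proof of Lemma \ref{PoincareSeries}: applied to $[1,g]$ and $[1,h]$ with branch point $p$, it furnishes a constant $D=D(L)$ such that $d(p,[g,h]) \leq D$ for every $\Gx$-geodesic $[g,h]$. Projecting $p$ onto $[g,h]$ and using the $\Gx$-triangle inequality $d(\tilde g,h) \leq d(\tilde g,p)+d(p,h)$, we obtain
$$
d(g,h) \;\geq\; d(g,p)+d(p,h)-2D \;=\; d(g,\tilde g)+d(\tilde g,p)+d(p,h)-2D \;\geq\; d(g,\tilde g)+d(\tilde g,h)-2D,
$$
hence $d(g,h) \geq r - C_1 + d(\tilde g,h)$ with $C_1 = L+\Delta+2D$, valid for every $h \in T \setminus \Omega(\tilde g)$. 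Summing termwise and invoking the bilipschitz equivalence $\PS{T \setminus \Omega(\tilde g)}{\tilde g} \asymp_L \PS{T}{1}$ of Lemma \ref{PoincareSeries} (equation (\ref{BiLipCone2})), we produce a constant $C_L$ with
$$
\mu_g^s(\tilde\Pi_r(g)^c) \;\leq\; \frac{e^{-s(r-C_1)}}{\PS{T}{1}}\, \PS{T \setminus \Omega(\tilde g)}{\tilde g} \;\leq\; C_L\, e^{-s(r-C_1)}
$$
uniformly in $s > \g T$ and $g \in T$.

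Passing to the weak limit --- via Portmanteau applied to the open set $\tilde\Pi_r(g)$, perturbing $r$ to a $\mu_g$-continuity value if necessary --- we deduce
$$
\mu_g(\pT \setminus \Pi_r(g)) \;\leq\; C_L'\, e^{-\g T\, r},
$$
which is less than $\epsilon$ once $r$ exceeds a threshold $r_0(\epsilon, L)$. The main obstacle lies in the third step: the uniformity of the constant $D$ in the thin-triangle inequality rests crucially on every branch of $\mathcal T$ being $(\epsilon, R, L')$-transitional, and is \emph{not} a consequence of any hyperbolicity of the ambient Cayley graph $\Gx$ --- it is precisely what makes the construction of the iterated transitional tree $\mathcal T$ worthwhile in the relatively hyperbolic setting.
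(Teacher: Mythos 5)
Your core estimate is sound and is, in substance, the paper's own argument with slightly different bookkeeping: the paper also reduces to those vertices whose branch point with $[1,g]_{\mathcal T}$ lies at distance $>r$ from $g$, invokes the thin-triangle Claim and the almost-additivity (\ref{AlmostEquality}) from the proof of Lemma \ref{PoincareSeries}, and compares the resulting tail with $\PS{T}{1}$; your single application of (\ref{BiLipCone2}) at the ancestor $\tilde g$ packages the same computation cleanly, and the uniform bound $\mu_g^s\bigl(\widetilde T\setminus\tilde\Pi_r(g)\bigr)\le C_L\,e^{-s(r-C_1)}$ for $s>\g T$, $g\in T$, is correct (as is the reduction $\mu_g(T)=0$, since points of $T$ are isolated in $\widetilde T$).

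The genuine gap is in the passage to the weak limit. What you need is $\mu_g\bigl(\widetilde T\setminus\tilde\Pi_r(g)\bigr)\le\liminf_i\mu_g^{s_i}\bigl(\widetilde T\setminus\tilde\Pi_r(g)\bigr)$, and in the weak topology used in the paper this inequality is available for \emph{open} sets; so the topological fact required is that the \emph{complement} $\widetilde T\setminus\tilde\Pi_r(g)$ is open (equivalently, that $\tilde\Pi_r(g)$ is closed in $\widetilde T$), not that $\tilde\Pi_r(g)$ is open. Portmanteau applied to the open set $\tilde\Pi_r(g)$ only gives $\mu_g(\tilde\Pi_r(g))\le\liminf_i\mu_g^{s_i}(\tilde\Pi_r(g))$, which bounds the shadow from above and says nothing about its complement. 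The fallback ``perturb $r$ to a $\mu_g$-continuity value'' does not repair this: the word metric is integer-valued on vertices, so $r\mapsto\tilde\Pi_r(g)$ is locally constant in $r$, and if the given combinatorial set has $\mu_g(\partial\tilde\Pi_r(g))>0$ then every $r$ in a whole interval is bad --- there is no nearby continuity value to move to. To close the argument you must prove the relevant topological statement, e.g.\ that $\tilde\Pi_r(g)$ is a finite union of cylinders $\{h\in\widetilde T:\,v\in[1,h]_{\mathcal T}\}$ over tree vertices $v\in B(g,r)$ and that each cylinder is clopen in $\widetilde T$; this is true, but it needs exactly the Floyd lower bound at branch points (a transitional point within $L'$ of the branch point $p$ gives $\fds_{\lambda,1}$-separation $\succeq_L\lambda^{d(1,p)}$ between the two sides), so that any sequence in $\widetilde T$ converging to $\xi\in\pT$ eventually has tree-geodesics agreeing with $[1,\xi]_{\mathcal T}$ past any prescribed vertex. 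The paper sidesteps the direction issue by applying the liminf inequality to the open set $V\cup(\pT\setminus\Pi_r(g))$, where $V$ is the set of $T$-vertices lying on rays ending outside the shadow (its openness rests on the same kind of geometric justification, but that is the set for which the open-set inequality points the right way); either that device, or an explicit proof that your $\tilde\Pi_r(g)$ is clopen, is what is missing from your write-up.
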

\begin{proof}[Proof of the Claim]
Note that $\Pi_r(g)$ is a closed set. We consider the \textit{convex cone} $\mathcal C(\pT \setminus \Pi_r(g))$ of $\pT \setminus \Pi_r(g)$, which consists of all geodesic rays in $\mathcal T$originating at $1$ and terminating at a point in  $\pT \setminus \Pi_r(g)$. Let $V$ be the set of vertices of $T$ in $\mathcal C(\pT \setminus \Pi_r(g))$.

 For any $x\in V$, consider the branch point $o$ of $[1, x]$ and $[1, g]$ in $\mathcal T$ (defined in the proof of Lemma \ref{PoincareSeries}). Since $x \notin \mathcal C(\Pi_r(g))$, we have $d(g, o) > r$. By a similar argument to that of
   Lemma \ref{PoincareSeries}, we get $$\PS{V}{g} = \sum\limits_{x \in V} \exp(-sd(x, g)) \asymp \sum_{r \le k \le d(1, g)} \exp(-sk) \cdot  \PS{T}{1}.$$

 So,
$$\mu_g^s(V)=\frac{\PS{V}{g}}{ \PS{T}{1}} \asymp \sum_{r \le k \le d(1, g)} \exp(-sk) ,$$
which tends to $0$ when $r \to \infty$ and   $s> \g T$.

Thus, the $\mu_g^s$-measure of the open set $V\cup (\pT \setminus \Pi_r(g))$ can be arbitrarily small  for $r$ large enough, and so is $\mu_g(\pT \setminus \Pi_r(g))$. This proves the claim.
\end{proof}

By Lemma \ref{PoincareSeries}, we have that $\{\mu_g(\pT)\}_{g\in T}$ are  lower and upper bounded by a uniform constant depending on $L$.  Let $\eta_1=1/2\inf\{\mu_g(\pT): g\in T\}>0$ and $\eta_2=\sup\{\mu_g(\pT): g\in T\} < \infty$. By the above Claim, there is a constant
$r_0>0$ such that the following holds
\begin{equation}\label{measurelbound}
\eta_1< \mu_{g}( \Pi_r(g)) < \eta_2,\; \forall r > r_0,
\end{equation}
for all  $g \in T$.   So (\ref{denstybound}) implies that
$$
\eta_1 C_1 \exp(-\g T d(1, g)) \le \mu_1(\Pi_r(g)) \le \eta_2 C_2 \exp(-\g T d(1, g)),
$$
for all $g \in G$. The Lemma is proved.
\end{proof}

By  Shadow Lemma to estimate the PS-measure of balls  we need to compare it with that of  shadows a in the boundary $\pT$. Below, we use the symbol $\floor{s}$ to denote the integer part of $s \in \mathbb R$. Denote by $B_{\fds_{\lambda, 1}}(\xi, t)$ (resp. $B_{\fds_{\lambda, 1}}(\xi, t)$) the  ball in $\pT$ around $\xi \in \pT$ of radius $t$ with respect to the metric $\fds_{\lambda, 1}$(resp. $\bar \fds_{\lambda, 1}$).

\begin{lem}[Shadows $\asymp$ Balls]\label{ShadowDisk}
Let $r_0$ given by Lemma \ref{ShadowLem}. There exists $0 < \lambda_0 <1$ such that for any $\lambda \in [\lambda_0, 1)$ and $L>0$, there exist $r=r(L, \lambda)>r_0$ and $C=C(L,\lambda)>0$ with the following property.

For any $\xi \in \pT$ and $0<t<\lambda$, the following holds
\begin{equation}\label{FloydVisualEQ}
B_{\fds_{\lambda, 1}}(\xi, C^{-1}t)
\subset \Pi_r(g) \subset B_{\fds_{\lambda, 1}}(\xi, Ct),
\end{equation}
and
\begin{equation}\label{ShortcutVisualEQ}
B_{\bar \fds_{\lambda, 1}}(\xi, C^{-1}t)
\subset \Pi_r(g) \subset B_{\bar\fds_{\lambda, 1}}(\xi, Ct),
\end{equation}
where $g\in [1, \xi]$ is chosen such that $d(1, g)= \floor{\log_\lambda t}$.
\end{lem}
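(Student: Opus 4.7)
The plan is to establish the two inclusions separately, using a direct Floyd--length computation for the easy direction and the metric comparison of Proposition~\ref{FloydMetric} together with Proposition~\ref{tightbetweenconical} (thin transitional triangles) for the hard direction. Throughout, set $\xi \in \pT$ and choose $g \in [1,\xi]\cap \mathcal T$ with $d(1,g) = \floor{\log_\lambda t}$, so that $\lambda^{d(1,g)} \asymp t$. Every point of $\pT$ is the endpoint of an $(\epsilon, R, L')$-transitional geodesic ray in $\mathcal T$ rooted at $1$ (Lemma~\ref{ITT}), hence $\pT \subset \uGfLo$ for the relevant $L$, and likewise $\pT \subset \uGLo$ via the Floyd map; so Proposition~\ref{FloydMetric} is available in both forms.

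\textbf{Upper inclusion.} For $\Pi_r(g) \subset B_{\fds_{\lambda,1}}(\xi, Ct)$: if $\eta \in \Pi_r(g)$ then a geodesic ray $\alpha_2 = [1,\eta]$ in $\mathcal T$ meets $B(g,r)$ at some point $g_\eta$. Concatenating the reversed subray $[g,\xi]_{\alpha_1}$ (with $\alpha_1 := [1,\xi]$), a geodesic $[g,g_\eta]$ of length at most $r$, and $[g_\eta,\eta]_{\alpha_2}$, one obtains a path from $\xi$ to $\eta$ whose Floyd length at $1$ is bounded by a constant times $\lambda^{d(1,g)-r}$ by the standard geometric-series estimate (using that every edge of the three pieces is at $d_S$-distance at least $d(1,g)-r$ from $1$). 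Taking $C$ to absorb the $\lambda^{-r}$ factor yields $\fds_{\lambda,1}(\xi,\eta) \le Ct$. The shortcut counterpart $\Pi_r(g) \subset B_{\bar\fds_{\lambda,1}}(\xi,Ct)$ is then automatic from $\bar\fds \le \fds$ (see \eqref{floydshortcut}).

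\textbf{Lower inclusion.} Suppose $\fds_{\lambda,1}(\xi,\eta) < C^{-1}t$. By Proposition~\ref{FloydMetric} applied to $\xi,\eta \in \uGfLo$, we have $\lambda^{n} \asymp_L \fds_{\lambda,1}(\xi,\eta)$, with $n := d(1,[\xi,\eta])$, so enlarging $C$ (in terms of $L$ and $\lambda$) forces $n \ge d(1,g) + N_0$ for any prescribed $N_0$. Now invoke Proposition~\ref{tightbetweenconical}(2)--(3) applied to the rays $\alpha_1 = [1,\xi]$, $\alpha_2 = [1,\eta]$ and a quasi-geodesic between far points on them approximating $[\xi,\eta]$: this produces a transitional point $z \in [\xi,\eta]$ with $d(z,\alpha_i) \le M$ for $i=1,2$ and with $d(1,z)$ within a uniform constant of $n$. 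Pick $g_i \in \alpha_i$ within $M$ of $z$; for $N_0$ large enough $d(1,g_i) > d(1,g)$, so $g$ lies on the initial subgeodesic $[1,g_1]_{\alpha_1}$. The two transitional subgeodesics $[1,g_1]_{\alpha_1}$ and $[1,g_2]_{\alpha_2}$ originate from $1$ and have endpoints at distance at most $2M$; combining the contracting property of transitional geodesics (Lemma~\ref{transcontract}) with a thin-triangle argument in the spirit of Proposition~\ref{tightbetweenconical}(2) yields uniform fellow-traveling and in particular $d(g,\alpha_2) \le r$ for a constant $r = r(L,\lambda)$. Hence $\alpha_2 \cap B(g,r) \neq \emptyset$, so $\eta \in \Pi_r(g)$. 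The shortcut inclusion $B_{\bar\fds_{\lambda,1}}(\xi, C^{-1}t) \subset \Pi_r(g)$ is proved identically, invoking the shortcut form of Proposition~\ref{FloydMetric} instead of the Floyd one.

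\textbf{Main obstacle.} The delicate point is to make the fellow-traveling constants uniform. In the Gromov-hyperbolic case $\delta$-thin triangles trivially give that two geodesic rays from a common origin to close ideal endpoints stay close. In the relatively hyperbolic setting one must combine the contracting property of transitional geodesics with a careful thin-transitional-triangle argument, ensuring that the constants $n_0(\xi,\eta), r(\xi,\eta)$ appearing in the proof of Proposition~\ref{tightbetweenconical}(3) are controlled uniformly over $\xi,\eta \in \pT$ by quantities depending only on $L$ and $\lambda$.
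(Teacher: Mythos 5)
Your two inclusions are individually correct, and your easy direction ($\Pi_r(g)\subset B(\xi,Ct)$, via the geometric-series Floyd-length estimate along the two rays and the short bridge, then $\bar\fds\le\fds$) is essentially the paper's argument for that inclusion. For the hard direction, however, you take a genuinely different route. The paper argues entirely at the basepoint $g$: since $[1,\xi]$ is transitional, there is an $(\epsilon,R)$-transitional point $z$ with $d(z,g)\le L$, Lemma \ref{floydtrans} gives $\bar\fds_{\lambda,z}(1,\xi)\ge\kappa$, the basepoint-change inequality (\ref{lambdabilip}) turns this into $\fds_{\lambda,g}(1,\xi)\ge\kappa\lambda^{L}=:2C_1$, and then $\fds_{\lambda,1}(\xi,\eta)\le C_1t$ rescaled to basepoint $g$ plus the triangle inequality give $\fds_{\lambda,g}(1,\eta)\ge C_1$, so Karlsson's visibility Lemma \ref{karlssonlem} puts the tree geodesic $[1,\eta]$ within $r=\max\{\varphi(C_1),r_0\}$ of $g$ --- no reference to $[\xi,\eta]$, to Proposition \ref{FloydMetric}, or to Proposition \ref{tightbetweenconical} is needed. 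You instead convert $\fds_{\lambda,1}(\xi,\eta)<C^{-1}t$ into $d(1,[\xi,\eta])\ge d(1,g)+N_0$ via Proposition \ref{FloydMetric} and then use Proposition \ref{tightbetweenconical} plus a fellow-traveling claim; this is precisely the alternative the authors flag in the remark after Proposition \ref{FloydMetric}, and it is non-circular since Section \ref{Section3} is independent of Section \ref{Section4}, so the route is legitimate. What it buys is conceptual transparency (shadows are visual balls once one has the formula $\fds_{\lambda,1}(\xi,\eta)\asymp_L\lambda^{d(1,[\xi,\eta])}$) and a uniform treatment of Floyd and shortcut metrics; what it costs is heavier machinery, the extra requirement that $\eta$ be uniformly conical based at $1$ (harmless here because the balls are taken inside $\pT$, whereas the paper's argument works for an arbitrary boundary point $\eta$), and one under-justified step: the fellow-traveling of $[1,g_1]_{\alpha_1}$ and $[1,g_2]_{\alpha_2}$ is not really delivered by Lemma \ref{transcontract}. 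The clean way to close that step is the paper's own mechanism: take the transitional point $v\in\alpha_1$ with $d(v,g)\le L$; since $d(1,v)\le d(1,g)+L$ while $d(1,[\xi,\eta])\ge d(1,g)+N_0$, choosing $N_0>L+\varphi(\kappa/2)$ makes $[\xi,\eta]$ avoid $B(v,\varphi(\kappa/2))$, so $\fds_{\lambda,v}(\xi,\eta)<\kappa/2$ while $\fds_{\lambda,v}(1,\xi)\ge\kappa$, whence $\fds_{\lambda,v}(1,\eta)\ge\kappa/2$ and visibility gives $d(v,\alpha_2)\le\varphi(\kappa/2)$, hence $d(g,\alpha_2)\le L+\varphi(\kappa/2)=:r$ --- at which point your detour has effectively reduced to the paper's short argument, which is why the paper's version is the more economical one.
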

\begin{proof} Let $\lambda_0$ be given by Proposition \ref{Floydmap}. For any $\lambda \in [\lambda_0, 1[$, we consider the family of Floyd metrics $\{\fds_{\lambda, v}\}_{v_\in G}$.

For any $0<t<\lambda$, let $g\in [1, \xi]$ such that $d(1, g)= \floor{\log_\lambda t}$. Thus,  $$\lambda^{d(1, g)+1} < t \le \lambda^{d(1, g)}.$$ By   construction of $T$ (see Lemma \ref{ITT}.5), we know that $[1, \xi]$ is $(\epsilon, R, L)$-transitional.  So there exists an $(\epsilon, R)$-transitional point $z$ in $[1, \xi]$ such that $d(z, g)\le L$.  By Lemma \ref{floydtrans}.2, there exists $\kappa=\kappa(\lambda, \epsilon, R)$ such that $\bar \fds_{\lambda, z}(1, \xi)\ge \kappa$. By
 property (\ref{lambdabilip}), we have $\fds_{\lambda, g}(1, \xi)\ge \bar\fds_{\lambda, g}(1, \xi)\ge \kappa\cdot \lambda^L$.

Set $2C_1=\kappa\cdot \lambda^{L}$  and $r=\max\{\phi_\lambda(C_1), r_0\}$ where the function $\phi$ is given in Lemma \ref{karlssonlem}. Let $\eta \in B_{\fds_{\lambda, 1}}(\xi, C_1 t)$. By
property (\ref{lambdabilip}), it follows that $\fds_{\lambda, g}(\eta, \xi)
\le \lambda^{-d(g,1)}\fds_{\lambda, 1}(\eta, \xi) \le C_1$. Then $\fds_{\lambda, g}(1, \eta) \ge  C_1$ and by  Lemma \ref{karlssonlem}, we have $d(g, [1, \eta]) \le r$. So $\eta \in \Pi_r(g)$. This proves the first inclusions of (\ref{FloydVisualEQ}) and (\ref{ShortcutVisualEQ}) for $C=C_1$

Let $\eta \in \Pi_{r}(g)$ so that $d(g, [1, \eta]) \le r$ for some geodesic $[1, \eta]$. Consequently, there exists $w\in [1, \eta[$ such that $d(1, w)=d(1, g)$ and $d(g, w) \le 2r$. By Lemma \ref{Floydgeod} any segment of $[1, \xi]$ is a Floyd geodesic with respect to $\fds_{\lambda, 1},$ so $\fds_{\lambda, 1}(\xi, g)={\lambda^{d(1,g)}\over 1-\lambda}.$ Let $\alpha$ be  a word geodesic  between $w$ and $g$. Every edge of $\alpha$ is in the word distance at most $d(1,g)-2r$ from $1.$ So the Floyd length of $\alpha$ is at most
$2r\cdot \lambda^{d(1,g)-2r}$.
We  obtain
$$
\begin{array}{lll}
\bar \fds_{\lambda, 1}(\xi, \eta) \le \fds_{\lambda, 1}(\xi, \eta) &\le \fds_{\lambda, 1}(g, \xi)+\fds_{\lambda, 1}(w, \eta) + \fds_{\lambda, 1}(g, w) \\
\\
&\le 2(\frac{1}{1-\lambda}+\frac{r}{\lambda^{2r}})\cdot {\lambda^{d(g,1)}}.
\end{array}
$$
Let $\displaystyle C_2=2\lambda^{-1}(\frac{1}{1-\lambda}+\frac{r}{\lambda^{2r}})$. Then $\bar\fds_{\lambda, 1}(\xi, \eta) \le  \fds_{\lambda, 1}(\xi, \eta)\le C_2 t$ and so the second inclusions of (\ref{FloydVisualEQ}) and (\ref{ShortcutVisualEQ}) follow.

Setting $C=\max\{C_1, C_2\}$ we complete the proof of the lemma.
\end{proof}

\subsection{Proof of Theorem \ref{ThmAR}}
We recapitulate the main results of the previous Subsections in the following.
\begin{prop}\label{PSmeasureonT}
There exists $\lambda_0>0$ such that for every $\lambda \in [\lambda_0, 1)$ and  $L\gg 0$, there exist an $L$-iterated tree set $T$ and a PS-measure $\mu_1$ on $\pT$ satisfying:
\begin{equation}\label{measureball}
\mu_1(B_{\fdo}(\xi, t)) \asymp_{\lambda, L} t^{-\g T/\log \lambda},
\end{equation}
for any $\xi \in \pT$ and $0<t <\lambda$.
\end{prop}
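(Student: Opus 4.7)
The plan is to combine directly the Shadow Lemma (Lemma~\ref{ShadowLem}) with the comparison between shadows and Floyd balls (Lemma~\ref{ShadowDisk}). Let $\lambda_0$ be large enough so that both Proposition~\ref{Floydmap} and Lemma~\ref{ShadowDisk} apply, and fix $\lambda\in [\lambda_0,1)$ and $L\gg 0$. Let $T$, $\mathcal T$, and $\mu_1$ be as produced in Subsection~\ref{Section4.2}; let $r=r(L,\lambda)>r_0$ and $C=C(L,\lambda)>0$ come from Lemma~\ref{ShadowDisk}.

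First I would reduce the proposition to choosing, for a given $\xi\in\partial T$ and $0<t<\lambda$, two well-placed vertices on the geodesic $[1,\xi]\subset\mathcal T$. Since $\mathcal T$ has $\mathcal T^0\subset N_{L'}(T)$ (Lemma~\ref{ITT}.5), consecutive elements of $T$ along this ray are spaced by at most $L+\Delta$. Thus there exist $g^\pm\in T\cap[1,\xi]$ with
$$\lfloor\log_\lambda(Ct)\rfloor - (L+\Delta) \;\le\; d(1,g^+) \;\le\; \lfloor\log_\lambda(Ct)\rfloor,$$
$$\lfloor\log_\lambda(t/C)\rfloor \;\le\; d(1,g^-) \;\le\; \lfloor\log_\lambda(t/C)\rfloor + (L+\Delta).$$
Applying Lemma~\ref{ShadowDisk} to $g^\pm$ (with shift in $C$ absorbed, producing a new constant $C'=C'(L,\lambda)$) gives
$$B_{\fdo}(\xi,t) \;\subset\; \Pi_r(g^+) \quad\text{and}\quad \Pi_r(g^-) \;\subset\; B_{\fdo}(\xi,t).$$

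Next, Lemma~\ref{ShadowLem} applied to $g^\pm\in T$ yields
$$\mu_1(\Pi_r(g^\pm)) \;\asymp_{L,\lambda}\; \exp(-\g T\, d(1,g^\pm)).$$
By the choice of $g^\pm$, $d(1,g^\pm) = \log_\lambda t + O_{L,\lambda}(1)$, so
$$\exp(-\g T\, d(1,g^\pm)) \;\asymp_{L,\lambda}\; \exp(-\g T \log_\lambda t) \;=\; t^{-\g T/\log\lambda}.$$
Combining the two sandwiches gives
$$\mu_1(\Pi_r(g^-)) \;\le\; \mu_1(B_{\fdo}(\xi,t)) \;\le\; \mu_1(\Pi_r(g^+)),$$
and both extremes are $\asymp_{L,\lambda}\; t^{-\g T/\log\lambda}$, yielding the desired estimate~(\ref{measureball}).

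\textbf{Main obstacle.} The only mildly delicate point is the discrepancy between the statement of Lemma~\ref{ShadowDisk}, which is formulated for vertices on a geodesic of $\mathcal T$, and the Shadow Lemma, which is proved only for $g\in T$. This is resolved by the uniform spacing of $T$-vertices along any ray of $\mathcal T$ (Lemma~\ref{ITT}): passing from a vertex of $\mathcal T$ to the nearest vertex of $T$ alters $d(1,\cdot)$ by at most $L+\Delta$ and correspondingly alters the shadow radius by a factor depending only on $L$ and $\lambda$, which is harmless since the bilipschitz symbol $\asymp_{L,\lambda}$ already allows such constants.
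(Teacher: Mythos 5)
Your proposal is correct and follows essentially the same route as the paper: the paper's own proof consists precisely of citing Lemma~\ref{ITT} for the tree, then Lemmas~\ref{ShadowLem} and \ref{ShadowDisk} plus ``direct calculations'' for the Ahlfors regularity, and your write-up supplies those calculations, including the genuinely needed (and in the paper unspoken) step of replacing the vertex of Lemma~\ref{ShadowDisk} by a nearby $T$-vertex so that the Shadow Lemma applies. The only slips are cosmetic and absorbed by the constants $\asymp_{L,\lambda}$: for $g^{-}$ you should round $\log_\lambda(t/C)$ up rather than down so that $C\lambda^{d(1,g^{-})}\le t$ holds, and the regime where $\log_\lambda(Ct)$ is too small (i.e.\ $t$ comparable to $\lambda$) is handled trivially since then both sides of (\ref{measureball}) are bounded above and below by constants depending only on $L,\lambda$.
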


\begin{proof} The existence of the tree $T$ is proved in Lemma \ref{ITT}. Lemmas \ref{ShadowLem} and \ref{ShadowDisk} and direct calculations imply that  $\pT$ is Alhfors $Q$-regular  for $Q=-\g T/\log \lambda$ (see the definition in the Introduction). Hence, (\ref{measureball}) follows.
\end{proof}

 By Lemma \ref{charuc} $\pT$ consists of uniformly conical points, so Proposition \ref{PSmeasureonT} implies the first claim of the Theorem. The statement $Q_i \to \g G\ (i \to \infty)$ is proved in  Lemma \ref{Divergent}. Theorem  \ref{ThmAR} is proved.

\section{Proofs of Theorems \ref{ThmHD} and \ref{ThmHD2}}\label{Section5}
We consider the Floyd metric on $\pGf$ and shortcut metric on $\pG$, where the corresponding Theorems \ref{ThmHD} and \ref{ThmHD2} are proved with the same argument.

The following lemma giving the upper bound for the Hausdorff dimension is due to Marc Bourdon. We notice that it is a general fact which is true for a finitely generated group $G$ without assuming that it is relatively hyperbolic.
\begin{lem}[M. Bourdon, oral communication]\label{Bourdon}
For every $\lambda \in (0, 1)$, the Hausdorff dimension $\HD_{\fds_{\lambda, 1}}$ of $\pGf$ (respectively $\HD_{\bar \fds_{\lambda, 1}}$ of $\pG$) with respect to the Floyd metric $\fds_{\lambda, 1}$ (respectively to the shortcut metric $\bar \fds_{\lambda, 1}$ is upper bounded by $-\g G/\log \lambda$.
\end{lem}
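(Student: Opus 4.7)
The plan is to bound the Hausdorff dimension directly by constructing an efficient cover of the boundary using the word-sphere structure and the exponential rescaling of the Floyd metric, then to deduce the shortcut case from the Floyd case via the Lipschitz nature of the Floyd map.

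For each $n\ge 1$ and each $g$ in the word sphere $S_n=\{h\in G:d_S(1,h)=n\}$, let $V_g$ denote the ``shadow'' of $g$ at infinity, namely the set of $\xi\in\partial_\lambda G$ such that some word geodesic from $1$ to $\xi$ passes through $g$. By Lemma \ref{karlssonlem} every $\xi\in\partial_\lambda G$ is the endpoint of a word geodesic ray from $1$, so $\{V_g:g\in S_n\}$ covers $\partial_\lambda G$ for every $n$. The first key step is the diameter estimate
$$
\mathrm{diam}_{\rho_{\lambda,1}}(V_g)\ \le\ \frac{2\lambda^{n}}{1-\lambda},
$$
obtained as follows: if $\xi\in V_g$ and $\gamma$ is a word geodesic from $1$ to $\xi$ through $g$, then the subray $[g,\xi]_\gamma$ has every edge at word distance $\ge n$ from the basepoint $1$, so its Floyd length is bounded by $\sum_{k\ge n}\lambda^k=\lambda^n/(1-\lambda)$; hence $\rho_{\lambda,1}(g,\xi)\le\lambda^n/(1-\lambda)$, and the triangle inequality gives the claim.

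The second step is to count: by definition of the growth rate, for every $\varepsilon>0$ there exists $n_0$ with $\sharp S_n\le \sharp B(n)\le \exp((\delta_{G,S}+\varepsilon)n)$ for $n\ge n_0$. Fix $s>-\delta_{G,S}/\log\lambda$, equivalently $s\log\lambda+\delta_{G,S}<0$; choose $\varepsilon>0$ so small that $s\log\lambda+\delta_{G,S}+\varepsilon<0$. Then for all $n\ge n_0$ the cover $\{V_g:g\in S_n\}$ has mesh at most $\varepsilon_n:=2\lambda^n/(1-\lambda)\to 0$ and yields
$$
\mathcal H^s_{\varepsilon_n}(\partial_\lambda G)\ \le\ \sum_{g\in S_n}\mathrm{diam}(V_g)^s\ \le\ \Bigl(\tfrac{2}{1-\lambda}\Bigr)^s\exp\bigl(n(s\log\lambda+\delta_{G,S}+\varepsilon)\bigr),
$$
which tends to $0$ as $n\to\infty$. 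Thus $\mathcal H^s(\partial_\lambda G)=0$, giving $\HD_{\rho_{\lambda,1}}(\partial_\lambda G)\le-\delta_{G,S}/\log\lambda$.

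For the shortcut metric, observe that the Floyd map $F_\lambda:\partial_\lambda G\to\Lambda G$ is surjective and, by the construction of $\bar\rho_{\lambda,1}$ from $\rho_{\lambda,1}$ via (\ref{floydshortcut}) and (\ref{shortcut}), it satisfies $\bar\rho_{\lambda,1}(F_\lambda(\xi),F_\lambda(\eta))\le\rho_{\lambda,1}(\xi,\eta)$; in other words $F_\lambda$ is $1$-Lipschitz. Since Hausdorff dimension does not increase under Lipschitz surjections, we conclude
$$
\HD_{\bar\rho_{\lambda,1}}(\Lambda G)\ \le\ \HD_{\rho_{\lambda,1}}(\partial_\lambda G)\ \le\ -\frac{\delta_{G,S}}{\log\lambda}.
$$
The only point requiring slight care is the $\varepsilon$-uniformity in the growth rate, and the fact that the estimates use only the definition of $\rho_{\lambda,1}$ as a path metric (no relative hyperbolicity is needed); this matches the remark that the lemma holds for any finitely generated group.
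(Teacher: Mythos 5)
Your proposal is correct and follows essentially the same route as the paper: cover $\pGf$ by the cones (shadows) over the word sphere $S_n$, bound their $\fds_{\lambda,1}$-diameter by $2\lambda^n/(1-\lambda)$ using that geodesic rays from the basepoint are Floyd geodesics, and then use the growth-rate count of $S_n$ to kill $\mathcal H^s$ for every $s>-\g G/\log\lambda$. The only (harmless) difference is that you deduce the shortcut case from the Floyd case via the $1$-Lipschitz surjection $F_\lambda$, while the paper simply runs the identical covering argument for $\bar\fds_{\lambda,1}$, using $\bar\fds_{\lambda,1}\le\fds_{\lambda,1}$.
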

\begin{proof}
To give an upper bound, it suffices to prove that $\mathcal H^s(\pGf)=0$ for any fixed $s > -\g G/\log \lambda$.

Define $S_n=\{g\in G: d(1, g)=n\}$. For any $g \in S_n$, define the cone $\Omega_g :=\{\xi \in \pGf, g\in [1, \xi]\}$, where $[1, \xi]$ is a geodesic between $1$ and $\xi$.

For any $\xi\in \pGf$, consider a point $x \in [1, \xi]\cap S_{n}$. By Lemma \ref{floydrays} the sub-ray $[x, \xi)$ is a $\fds_{\lambda, 1}$-Floyd geodesic. So   $\fds_{\lambda, 1}(x, \xi) = \frac{\lambda^n}{1-\lambda}$ for any $\xi\in \pGf$. Thus, $\{\Omega_g: g\in S_n\}$ is an $\varepsilon$-covering of $\pGf$, where $\varepsilon:=\frac{2\lambda^n}{1-\lambda}$.

For any $t \in ]-\frac{\g G}{\log \lambda}, s[$, we have $-t\log \lambda >\g G$ and so $\sharp S_n \prec_t \lambda^{-tn}$ for $n\ge 1$. We obtain for all $n \ge 1$:
$$
\mathcal H^s(\pGf) \le \sum_{g\in S_n} \varepsilon^s \prec  \lambda^{(s-t)n}
$$
which then tends $0$ as $n\to \infty$. Thus, $\mathcal H^s(\pGf)=0$ for any $s > -\frac{\g G}{\log \lambda}$. The lemma is proved.
\end{proof}

So the upper bound on the Hausdorff dimension of $\pGf$ and $\pG$ in Theorems \ref{ThmHD}, \ref{ThmHD2} is proved. In the remainder of proofs, we aim to establish the lower bound for the Hausdorff dimension.

Taking into account Proposition \ref{PSmeasureonT}: there exists a universal $\lambda_0>0$ such that for each $L\gg 0$, there exist an $L$-iterated tree $T$ and a PS-measure $\mu_1$ on $\pT$ such that (\ref{measureball}) holds and $\g T \to \g G$ as $L \to \infty$.

The following lemma shows that PS-measures
constructed in Section \ref{Section4} are actually the Hausdorff measures
on $\pT$ with respect to the  Floyd metric $\fds_{\lambda, 1}$ restricted on $\pT$.

\begin{lem}\label{Unique}
Let $\mu_1$ be a PS-measure on $\pT$ in $\pG$ or $\pGf$. Denote $\sigma=-\g
T/\log \lambda$. Then we have
$$
\mathcal H_{\sigma}(A) \asymp_L \mu_1(A).
$$
for any subset $A \subset \pT$.
\end{lem}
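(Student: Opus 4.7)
The plan is to deduce the comparability $\mathcal H^\sigma \asymp_L \mu_1$ from the fact that $\partial T$ is Ahlfors $\sigma$-regular, with $L$-dependent constants, for the exponent $\sigma = -\g T/\log\lambda$. This is exactly the content of Proposition \ref{PSmeasureonT}: one has the two-sided estimate $\mu_1(B_\rho(\xi, t)) \asymp_{\lambda, L} t^\sigma$ at every $\xi \in \partial T$ and every $0 < t < \lambda$, in both the Floyd and the shortcut metric cases. Once this pointwise ball bound is in hand, the conclusion is a standard consequence of the mass distribution principle together with a Vitali-type covering argument, which I will sketch in the next two paragraphs.

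For the upper bound $\mu_1(A) \prec_L \mathcal H^\sigma(A)$, let $\{U_i\}$ be any countable covering of $A$ by sets of diameter at most $\epsilon < \lambda$. Each $U_i$ is contained in the ball $B_\rho(\xi_i, \diam{U_i})$ for any choice of $\xi_i \in U_i$, so Proposition \ref{PSmeasureonT} applied to each $\xi_i$ gives
$$
\mu_1(A) \le \sum_i \mu_1(U_i) \le \sum_i \mu_1\bigl(B_\rho(\xi_i, \diam{U_i})\bigr) \prec_L \sum_i \diam{U_i}^\sigma.
$$
Taking the infimum over all such coverings yields $\mu_1(A) \prec_L \mathcal H^\sigma_\epsilon(A)$ for every admissible $\epsilon$, and sending $\epsilon \to 0$ produces $\mu_1(A) \prec_L \mathcal H^\sigma(A)$.

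For the lower bound $\mathcal H^\sigma(A) \prec_L \mu_1(A)$, fix an open neighborhood $U \supset A$ in $\partial T$ and a small $\epsilon > 0$. For each $\xi \in A$ pick $r_\xi \in (0, \epsilon)$ with $B_\rho(\xi, 5 r_\xi) \subset U$, and apply the $5r$-covering lemma (valid in any metric space) to the family $\{B_\rho(\xi, r_\xi)\}_{\xi \in A}$ to extract a countable disjoint subfamily $\{B_\rho(\xi_i, r_i)\}_i$ such that $A \subset \bigcup_i B_\rho(\xi_i, 5 r_i)$. Using Proposition \ref{PSmeasureonT} together with the disjointness of the smaller balls and their containment in $U$, we obtain
$$
\mathcal H^\sigma_{10\epsilon}(A) \le \sum_i (10 r_i)^\sigma \prec_L \sum_i \mu_1\bigl(B_\rho(\xi_i, r_i)\bigr) \le \mu_1(U).
$$
Letting $\epsilon \to 0$ first and then taking the infimum over open neighborhoods $U \supset A$ (using that $\mu_1$ is a finite Borel measure on the compact metrizable space $\partial T$, hence outer regular) gives $\mathcal H^\sigma(A) \prec_L \mu_1(A)$.

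The only mildly delicate point is that the constants in Proposition \ref{PSmeasureonT} are uniform over all basepoints $\xi \in \partial T$ and depend only on $L$ and $\lambda$; but this was built into the construction of $T$ in Lemma \ref{ITT} precisely because $\mathcal T$ is a uniformly $(\epsilon, R, L')$-transitional tree. Everything else—converting between $\diam{U_i}^\sigma$ and $\mu_1$ via Ahlfors regularity, the $5r$-covering lemma, and outer regularity of Radon measures on compacta—is routine, so I do not expect a significant obstacle beyond writing out these standard steps.
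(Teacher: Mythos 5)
Your proposal is correct and follows essentially the same route as the paper: both bounds are deduced from the Ahlfors regularity of $\mu_1$ on $\pT$ (Proposition \ref{PSmeasureonT}), the upper bound by summing over an $\varepsilon$-covering and the lower bound via the $5r$-covering lemma of \cite[Theorem 2.1]{Matt} together with regularity of the measures. The only cosmetic difference is that the paper sandwiches $A$ between a compact $K$ and an open $U$ using Radon regularity of both $\mathcal H_\sigma$ and $\mu_1$, whereas you force the covering balls into $U$ by shrinking their radii and invoke only outer regularity of $\mu_1$; also note the centers $\xi_i$ in your upper bound must be taken in $U_i\cap\pT$ so that Proposition \ref{PSmeasureonT} applies, a trivial adjustment.
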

\begin{proof}
In the proof, we assume that $\pT$ is a subset of  the Bowditch boundary. The proof  for $\pT\subset \pGf$ is similar.

Let $\mathcal B$ be an
$\varepsilon$-covering of $A$ for $\varepsilon>0$. Then $\mu_1(A) \le \sum_{B \in \mathcal B}
\mu_1(B)$. Let $\varepsilon \to 0$. By Proposition \ref{PSmeasureonT}, we obtain
that $\mu_1(A) \prec_L \mathcal H_{\sigma}(A)$.

For the other inequality, we need to make use of the following well-known covering result. Let $B$ be a metric ball of radius $\textrm{rad}(B)$ in a proper metric space $X$. Denote by $5B$ the union of all balls of radius $2\cdot \textrm{rad}(B)$ intersecting $B$ so that $\diam{5B}\le 10\cdot \textrm{rad}(B)$. Then  by  \cite[Theorem 2.1]{Matt} for a family of balls $\mathcal B$
in $X$ with uniformly bounded radii there exists a sub-family
$\mathcal B' \subset \mathcal B$ of pairwise disjoint balls such
that the following holds
\begin{equation}\label{vatlicovering}
\bigcup\limits_{B \in \mathcal B} B \subset \bigcup\limits_{B \in
\mathcal B'} 5B.
\end{equation}

Note that $\mu_1,
\mathcal H_{\sigma}$ are Radon measures. Then for any $\tau
>0$ there exists a compact set $K$ and an open set $U$ such that $K
\subset A \subset U$ and $\mathcal H_{\sigma}(U\setminus K) < \tau, \mu_1(U
\setminus K) < \tau$.

Set $\epsilon_0:=\fds_1(K, \pG \setminus U) >0$. For any $0 < \epsilon
< \epsilon_0$, let $\mathcal B$ be an $\varepsilon$-covering of $K$.
By (\ref{vatlicovering}) and Proposition \ref{PSmeasureonT}, there exists a sub-family $\mathcal B'$ of $\mathcal B$ such that
$$
H_{\sigma}(K)\le \sum_{B
\in \mathcal B'} (\diam{5B})^{\sigma}  \le  \sum_{B
\in \mathcal B'} (10 \cdot \textrm{rad}(B))^{\sigma} \prec_L \mu_1(U).
$$
The condition
$\tau\to 0$ yields $ H_{\sigma}(A) \prec \mu_1(A)$.
\end{proof}
\begin{rem}
We note that  $\mu_1$ is unique in
the following sense: if $\mu_1, \mu'_1$ are two PS-measures, then $d\mu_1/d\mu'_1$ is bounded from up and below.
\end{rem}

Lemma \ref{Unique} proves that the Hausdorff dimension of $\pT$ is equal to $\sigma$. Since $\pT$ is a subset of the set of uniformly conical points in $\pGf$ and $\pG$, the dimension $\sigma=-\g
T/\log \lambda$ of $\pT$ gives a lower bound of $\HD_{\fds_{\lambda, 1}}(\uGf)$ and $\HD_{\bar \fds_{\lambda, 1}}(\uG)$.

Letting $L\to\infty$, we have $\g T \to \g G$ by Proposition \ref{PSmeasureonT}. So, $$\HD_{\fds_{\lambda, 1}}(\uGf)\ge -\g
G/\log \lambda$$ and $$\HD_{\fds_{\lambda, 1}}(\uG)\ge -\g
G/\log \lambda.$$ The proofs of Theorems \ref{ThmHD} and \ref{ThmHD2} are complete.

\section{Tight Paths and Floyd metrics}\label{Section3}

In this section, we shall develop a detailed understanding of shortcut geodesics via a class of well-controlled paths called (generalized) tight paths.

\subsection{Tight paths}
It is well-known that in hyperbolic spaces, a sufficiently ``long'' local geodesic becomes globally a quasi-geodesic. This property  in general fails for the Cayley graph of a relatively hyperbolic group. V. Gerasimov and L. Potyagailo proposed  in \cite{GePo3} a notion of tight paths as a generalization of local geodesics to the relative setting. The following definition is a small modification of it.

\begin{defn}\label{tightdefn}
For $c\ge 1, l>0$, a path $\gamma$ is called \textit{$(c, l)$-tight} path if for any two points $x, y\in \gamma$ with $d(x,y)\le l$ the subpath $[x, y]_\gamma$  is a $c$-quasi-geodesic.
\end{defn}
\begin{rem}
This definition is a partial case of \cite[Definition 6.1]{GePo3} where a local quasi-geodesicity is requested outside of the horospheres only and  an additional condition is assumed for the horospheres. So if a path is tight in the sense of Definition \ref{tightdefn} it is also tight in the sense of   \cite[Definition 6.1]{GePo3} but not necessarily vice versa. In particular we can use all results proven in \cite{GePo3}. In addition, the above definition implies that every subpath of a tight path is   a tight
path itself, which is not always true in the general case. This stability of the tightness  for subpaths will be often used below.

We also stress that the above definition does not coincide with the standard notion of local (quasi-)geodesicity when the  assumption   that the length of a subpath (and not its diameter) is small implies   its (quasi-)geodesicity.
\end{rem}

In what follows, to reduce cumbersome quantifiers, we continue to use Convention \ref{epsilonR1} without explicit mention on the constants $\epsilon, R$, which depend on the parameter $c>0$ in tight paths.

We recall the following result about tight paths proved in \cite{GePo3}.

\begin{lem}\label{tightpath}
For any $c\ge 1$, there exist $\kappa=\kappa(c), l_0=l_0(c)>0$ with the following property.

Let $\gamma$ be a $(c, l)$-tight path for $l\ge l_0$. Then $\fds_{v}(\gamma_-,\gamma_+) \ge \kappa$ for any $(\epsilon, R)$-transitional
point $v\in \gamma$.
\end{lem}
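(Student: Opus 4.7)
The strategy is to reduce Lemma \ref{tightpath} to Lemma \ref{floydtrans}(1) by localizing near the transitional point $v$ and then extending the Floyd-length bound to the full endpoints $\gamma_\pm$. The main obstacle is that a $(c,l)$-tight path is only \emph{locally} a $c$-quasi-geodesic (on subpaths of diameter at most $l$), so Lemma \ref{floydtrans}(1) does not apply to $\gamma$ directly, and the tails of $\gamma$ beyond a neighborhood of $v$ might in principle return close to $v$.

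\textbf{Localization.} Let $\kappa_0 := \kappa(c, \epsilon, R) > 0$ be the constant from Lemma \ref{floydtrans}(1). If $d(\gamma_-, \gamma_+) \le l$, then $\gamma$ is itself a $c$-quasi-geodesic by tightness and the conclusion follows immediately from Lemma \ref{floydtrans}(1). Otherwise, choose $x \in [\gamma_-, v]_\gamma$ and $y \in [v, \gamma_+]_\gamma$ with $d(v, x) = d(v, y) = \lfloor l/3 \rfloor$. Since $d(x, y) \le 2l/3 \le l$, by tightness $[x, y]_\gamma$ is a $c$-quasi-geodesic on which $v$ is $(\epsilon, R)$-transitional; Lemma \ref{floydtrans}(1) then yields $\bar{\fds}_v(x, y) \ge \kappa_0$.

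\textbf{Extension by contradiction.} Suppose $\fds_v(\gamma_-, \gamma_+) < \kappa_0/3$. Pick a path $\alpha \subset \Gx$ joining $\gamma_-$ and $\gamma_+$ with $\fls_v(\alpha) < \kappa_0/3$. Concatenating $[x, \gamma_-]_\gamma$, $\alpha$, and $[\gamma_+, y]_\gamma$ gives a path from $x$ to $y$ in $\Gx$, so the triangle inequality yields
\[
\kappa_0 \le \fds_v(x, y) \le \fls_v([x, \gamma_-]_\gamma) + \tfrac{\kappa_0}{3} + \fls_v([\gamma_+, y]_\gamma),
\]
and hence one of the tails, say $[x, \gamma_-]_\gamma$, has Floyd length at least $\kappa_0/3$.

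\textbf{The main difficulty} is then to rule this out. The tail $[x, \gamma_-]_\gamma$ is itself a $(c, l)$-tight path beginning at distance $\lfloor l/3 \rfloor$ from $v$; partition it into overlapping windows of length $l$, each of which is a $c$-quasi-geodesic by tightness. By the Visibility Lemma \ref{karlssonlem}, any such window with $\fls_v$-length at least $\kappa$ must come within $\varphi(\kappa)$ of $v$, while tightness bounds the arc length of each excursion that returns near $v$ by a constant multiple of its depth (since two points of $\gamma$ at word-distance $\le l$ span a subpath of length at most $cl+c$). A geometric-series summation over such potential returns yields a bound of the form $\fls_v([x, \gamma_-]_\gamma) \le C\lambda^{l/(3c)}$, for a constant $C = C(c, \lambda)$. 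Choosing $l_0$ large enough that $C\lambda^{l_0/(3c)} < \kappa_0/3$ produces the required contradiction, and setting $\kappa := \kappa_0/3$ completes the proof. The hardest step is precisely this Floyd-length decay estimate for tails of tight paths, which requires controlling oscillations of $\gamma$ near $v$ using the tightness hypothesis in tandem with the Visibility Lemma.
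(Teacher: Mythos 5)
Your overall route is genuinely different from the paper's: the paper does not reprove this statement but refers to \cite[Proposition 6.7]{GePo3} together with the argument in the proof of Theorem B there, whereas you attempt a self-contained proof. Your localization step is essentially sound, with two fixable blemishes: transitionality of $v$ on $\gamma$ does not automatically restrict to the subpath $[x,y]_\gamma$, since points of the tails could a priori re-enter $B(v,R)$ and be exactly the points witnessing that $v$ is not deep; this can be repaired because a return point $w$ of a tail with $d(v,w)\le R\le l$ would give, by tightness, $\len([v,w]_\gamma)\le cR+c$, contradicting $\len([v,w]_\gamma)\ge \lfloor l/3\rfloor$ once $l_0>3(cR+c)$. (One also needs to treat the degenerate case where $v$ lies within $\lfloor l/3\rfloor$ of an endpoint.)

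The genuine gap is the decisive estimate $\fls_v([x,\gamma_-]_\gamma)\le C\lambda^{l/(3c)}$, which you assert rather than prove, and which the sketched mechanism cannot deliver. Tightness controls $\gamma$ only at scales up to $l$: it does force the tail to stay at distance at least about $l/(3c)$ from $v$, and it makes the path coarsely self-avoiding (points at arc-length separation $>cl+c$ are at word distance $>l$), but beyond the scale $l$ it gives no bound on how long the tail may linger at a given distance $n$ from $v$. The only a priori bound on the arc length of the tail inside $B(v,n)$ obtainable from coarse self-avoidance is proportional to the number of $l$-separated points in $B(v,n)$, hence exponential in $n$; since by Convention \ref{constlambda} the lemma must hold for $\lambda$ arbitrarily close to $1$, the sum $\sum_n (\text{arc length at distance }n)\cdot\lambda^n$ is not small by counting alone, so your ``geometric-series summation over returns'' does not close. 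Concretely, the tail may dive into a peripheral coset far from $v$ and travel there for a very long time at roughly constant distance from $v$ (peripheral subgroups are arbitrary finitely generated groups, and inside $N_\epsilon(gP)$ a tight path is only a local quasi-geodesic), so the total Floyd length of a tail need not be small: you are in fact trying to prove a statement strictly stronger than the lemma, which only bounds the Floyd distance between the endpoints. Excluding such behaviour requires the relatively hyperbolic structure (the contracting property and bounded intersection of horospheres from Lemma \ref{periphsystem}, played against the visibility Lemma \ref{karlssonlem}), which is exactly what the cited arguments of \cite{GePo3} supply and what is missing from your final step; as written, the proof is incomplete there.
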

\begin{proof}[Comments on the proof]
The statement that $\fds_{v}(\gamma_-,\gamma_+) \ge \kappa$ is first established in \cite[Proposition 6.7]{GePo3} for a special sub-sequence of transitional vertices $v\in\gamma.$  Then it is shown in  the proof of \cite[Theorem B]{GePo3} that  the tigthtness of a path implies it for every transitional vertex (up to decreasing the constant $\kappa$).
\end{proof}

We  call below  a sequence of   points $z_i=\gamma(t_i)$ of a length-parametrized path $\gamma$ {\it well-ordered} if $t_i > t_{i-1}\ (t_i\in\mathbb Z).$

The following lemma is an intermediate step in the proof  of   Proposition \ref{tightadmissible} below which is the main result of this subsection.

\begin{lem}[Transitional tight path is quasi-geodesic]\label{tightranspath}
For any $c, L\ge0$, there exist $l_0=l_0(L), c'=c'(c)\ge 1$ with the following property.

Let $\gamma$ be a $(c, l)$-tight path for $l\ge l_0$. Assume that $\gamma$ is an $(\epsilon, R, L)$-transitional path, where $\epsilon, R$ satisfy Convention \ref{epsilonR1}. Then $\gamma$ is a $c'$-quasi-geodesic.
\end{lem}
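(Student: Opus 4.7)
The plan is to pass from the global transitional property of $\gamma$ to a useful transitional property of arbitrary subpaths $\gamma'=[x,y]_\gamma$, and then to count transitional points along $\gamma'$ by projecting them to a geodesic $[x,y]$. The main technical point will be an upgrade: an $(\epsilon,R)$-transitional point $w$ of $\gamma$ lying sufficiently deep inside $\gamma'$ is in fact $(\epsilon,R)$-transitional for $\gamma'$, because $(c,l)$-tightness of $\gamma$ forbids $\gamma\setminus\gamma'$ from looping back close to $w$.

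First I would fix a subpath $\gamma'=[x,y]_\gamma$, set $N=\ell(\gamma')$, $n=d(x,y)$, and pick $M:=L+cR+c+1$. If $N\le 2M$ then $N\le c'n+c'$ is automatic for $c'\ge 2M$, so I may assume $N>2M$. For any $v\in\gamma'$ at $\gamma$-path-distance $\ge M$ from both $x$ and $y$, the $(\epsilon,R,L)$-transitional hypothesis on $\gamma$ supplies a transitional-in-$\gamma$ point $w$ with $\ell([v,w]_\gamma)\le L$, and then $w\in\gamma'$. I claim $w$ is also $(\epsilon,R)$-transitional with respect to $\gamma'$: otherwise some $X\in\mathbb{P}$ would fulfill $\gamma'\cap B(w,R)\subset N_\epsilon(X)$ while $\gamma\cap B(w,R)\not\subset N_\epsilon(X)$, giving $p\in(\gamma\setminus\gamma')\cap B(w,R)$ with $d(p,w)\le R\le l$; $(c,l)$-tightness would then force $\ell([p,w]_\gamma)\le cR+c$, and since the arc $[p,w]_\gamma$ must cross $x$ or $y$, the $\gamma$-path-distance from $w$ to $\{x,y\}$ would be at most $cR+c<M-L$, contradicting the choice of $v$.

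Then I would choose transitional-in-$\gamma'$ points $w_1<\dots<w_k$ in the $M$-interior of $\gamma'$ at consecutive $\gamma$-path-distance $\approx L$, so $k\asymp(N-2M)/L$. Lemma \ref{tightpath} applied to the tight subpath $\gamma'$ gives $\fds_{w_i}(x,y)\ge\kappa$, and the Visibility Lemma \ref{karlssonlem} yields projections $\pi(w_i)\in[x,y]$ with $d(w_i,\pi(w_i))\le D_0:=\varphi(\kappa)$. The contrapositive of $(c,l_0)$-tightness shows $d(w_i,w_j)>l_0$ whenever $|i-j|\ge M':=\lceil(cl_0+c)/L\rceil+1$, so $d(\pi(w_i),\pi(w_j))>l_0-2D_0$ once $l_0>2D_0$. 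Packing pairwise-$(l_0-2D_0)$-separated points on a segment of length $n$ gives $\lceil k/M'\rceil\le n/(l_0-2D_0)+1$, whence $N\le kL+O_{c,L}(1)\le (cl_0+c+2L)\,n/(l_0-2D_0)+O_{c,L}(1)$. Choosing $l_0=l_0(L)$ large enough that $(cl_0+c+2L)/(l_0-2D_0)\le 2c+1$ produces the desired constant $c'=c'(c)$ with $N\le c'n+c'$.

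The hard part is the upgrade step above: Lemma \ref{tightpath} requires transitional points of the very tight path to which it is applied, and the $(\epsilon,R,L)$-transitional property of $\gamma$ does not restrict naively to $\gamma'$ because $\gamma\cap B(w,R)$ can strictly contain $\gamma'\cap B(w,R)$. The rescue is that short loops of $\gamma$ returning into $B(w,R)$ from outside $\gamma'$ are forbidden by $(c,l)$-tightness as soon as the interior buffer satisfies $M\ge L+cR+c+1$ and $l_0\ge R$, which is exactly where the tight and the transitional hypotheses combine.
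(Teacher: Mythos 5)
Your argument is sound and uses the same key ingredients as the paper (Lemma \ref{tightpath} plus the visibility Lemma \ref{karlssonlem} to anchor transitional points near a geodesic), but the mechanism for the final length bound is genuinely different. The paper works with the whole path: it extracts transitional points $z_1,\dots,z_n$ of $\gamma$ with arc-length spacing between $2D_0$ and $2(L+D_0)$, and then inductively produces companion points $w_i$ with $d(z_i,w_i)\le D_0$ that are well-ordered along a fixed geodesic $\alpha$ from $\gamma_-$ to $\gamma_+$; quasi-geodesicity follows because each $[z_i,z_{i+1}]_\gamma$ is a $c$-quasi-geodesic by tightness and the $w_i$ advance monotonically along $\alpha$, giving $c'=c+2D_0$. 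You instead argue subpath by subpath and replace the monotone-companion argument by a separation-and-packing count: tightness forces transitional points that are far apart in arc length to be at least $l_0$ apart in the word metric, so their projections to $[x,y]$ are separated, their number is bounded by $d(x,y)$, and hence so is $\len([x,y]_\gamma)$. What your route buys is an explicit treatment of a point the paper glosses over: being $(\epsilon,R)$-transitional is not inherited by subpaths, and your ``upgrade'' step (tightness prevents $\gamma\setminus\gamma'$ from re-entering $B(w,R)$ when $w$ lies at arc distance greater than $cR+c$ from the endpoints of $\gamma'$) is exactly what legitimizes applying Lemma \ref{tightpath} to $\gamma'$; the paper's proof implicitly uses the same fact when it invokes Lemma \ref{tightpath} to get $\fds_{z_{i+1}}(z_i,\gamma_+)\ge\kappa$ for the subpath $[z_i,\gamma_+]_\gamma$. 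What the paper's route buys is cleaner constants, since the monotone argument never accumulates an $L$-dependent additive error.

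One bookkeeping point to repair: as written your constants depend on $L$, both in the degenerate case $N\le 2M$ (where you take $c'\ge 2M$ with $M=L+cR+c+1$) and in the additive term $O_{c,L}(1)$, whereas the statement asks for $c'=c'(c)$. This is absorbable rather than a real gap: when $d(x,y)\le l$ the tightness hypothesis already gives $\len([x,y]_\gamma)\le c\,d(x,y)+c$, and when $d(x,y)>l\ge l_0$ you may enlarge $l_0=l_0(L)$ beyond your $L$-dependent additive constant so that it is dominated by $d(x,y)$, leaving a constant depending only on $c$. With that adjustment your proof is complete.
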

\begin{proof}
By Lemma \ref{tightpath}, there exists $\kappa=\kappa(c)\ge 0$ such that $\fds_{x}(\gamma_-,\gamma_+) \ge \kappa$ for every $(\epsilon, R)$-transitional point $x \in \gamma$. Set $D_0=\phi(\kappa/2)$. Choose $l_0\ge 2(L+D_0)$.

Since any subpath of $\gamma$ is $(c, l)$-tight, it is enough to prove that there exists a linear bound for $\len(\gamma)$ with respect to $d(\gamma_-, \gamma_+)$. Let $\alpha$ be a geodesic with the same endpoints as $\gamma$. The idea of proof is to find two sequences of well-ordered points in $\gamma$ and $\alpha$ respectively which are uniformly close.

Since $\gamma$ is $(\epsilon, R, L)$-transitional, there exists a maximal set of $(\epsilon, R)$-transitional well-ordered points $\{z_i: 1\le i \le n\}$ in $\gamma$ such that $$\len([z_i, z_j]_\gamma)\ge 2D_0$$ for $i\ne j$ and $$\len([z_i, z_{i+1}]_\gamma)\le 2(L+D_0)$$ for $1 \le i < n$. Indeed, let $z_1$ be the first $(\epsilon, R)$-transitional point in $\gamma$.  Suppose $z_i$ is chosen for $i\ge 1$. If $\len([z_i, \gamma_+])\leq 2(L+D_0)$ then  $z_{i+1}=\gamma_+$. Consider the point $z$ in $[z_i, \gamma_+]_\gamma$ such that $\len([z_i, z]_\gamma) = L+2D_0$. If $z$ is $(\epsilon, R)$-transitional in $\gamma$, then set $z_{i+1}=z$. Otherwise, there exists an $(\epsilon, R)$-transitional point $z_{i+1}$ such that $\len([z, z_{i+1}]_\gamma) \le L$  and $\len([z_i, z_{i+1}]_\gamma) \le 2(L+D_0)$.

By  Lemma \ref{tightpath}, there exists $\kappa>0$ such that
$$\fds_{z_{i+1}}(z_i, \gamma_+) \ge \kappa$$
for any $1\le i < n$. By Lemma \ref{karlssonlem}, there exists $w_1\in \alpha$ such that $d(z_1, w_1) \le D_0$. We now choose other $w_i$ inductively for $i\ge 1$.

Suppose $w_i \in \alpha$ is chosen such that $d(z_i, w_i)\le D_0$. Since $d(z_{i+1}, z_i) \ge 2D_0$, we obtain $[z_i, w_i]\cap B(z_{i+1}, D_0)=\emptyset$. By the choice of $D_0=\phi(\kappa/2)$, we know that for any $v\in G$, any geodesic outside $B(v, D_0)$ has $\fls_v$-length at most $\kappa/2$. So $\fds_{z_{i+1}}(w_i, z_i) \le \kappa/2$ and then $\fds_{z_{i+1}}(w_i, \gamma_+) \ge \kappa/2$.  Thus there exists $w_{i+1} \in [w_i, \alpha_+]_\alpha$ such that $d(z_{i+1}, w_{i+1})\le D_0$. Clearly, the obtained points $w_i$ are well-ordered on $\alpha$.

As $l_0>2(L+D_0)$,   $[z_i, z_{i+1}]_\gamma$ is a $c$-quasi-geodesic by the tightness property. Since $w_i$ are well-ordered on $\alpha$, we see that $\gamma$ is a $c'$-quasi-geodesic for $c':=c+2D_0$.
\end{proof}

The following lemma will be often used further.

\begin{lem}[Bounded overlap]\label{bddoverlap}
For $c\ge 1$ and $(\epsilon, R)$ given by Convention \ref{epsilonR1}, there exist $K_0, l_0>0$ with the following property.

Let $\gamma$ be a $(c, l)$-tight path for $l \ge l_0$.  Assume that $\beta_1, \beta_2$ are two maximal connected segments of $\gamma$ such that $(\beta_i)_-, (\beta_i)_+ \in N_\epsilon(X_i)$ for some $X_i \in \mathbb P$ with $i=1,2$. Then $\len(\beta_1\cap \beta_2) \le K_0$. In particular, the endpoints of $\beta_i$ are $(\epsilon, R)$-transitional for $i=1, 2$.
\end{lem}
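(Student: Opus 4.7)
The approach combines two ingredients already developed in the paper: the bounded intersection of distinct peripheral cosets (Lemma \ref{periphsystem}) and the local $c$-quasi-geodesic property ensured by tightness (Definition \ref{tightdefn}). The plan is first to deduce the length bound by converting a $d$-diameter bound into a length bound through tightness, and then to extract the transitional-endpoint claim by a contradiction argument that reuses this length bound.

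I will set $l_0 := \mathcal{R}(\epsilon)$, with $\mathcal{R}$ from Lemma \ref{periphsystem}, and $K_0 := c\cdot\mathcal{R}(\epsilon) + c$. Observe that $\beta_1 \cap \beta_2$, being the intersection of two intervals on $\gamma$, is a connected subpath $[x,y]_\gamma$ (possibly empty). If $X_1 = X_2$, then by maximality either $\beta_1 = \beta_2$ or $\beta_1, \beta_2$ are distinct connected components of $\gamma \cap \overline{N_\epsilon(X_1)}$ and hence disjoint; both cases are trivial. Assume therefore $X_1 \neq X_2$. Since $\beta_1 \cap \beta_2 \subset \overline{N_\epsilon(X_1)} \cap \overline{N_\epsilon(X_2)}$, Lemma \ref{periphsystem} yields $d(x,y) \leq \diam{\beta_1 \cap \beta_2} \leq \mathcal{R}(\epsilon) \leq l_0 \leq l$. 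Tightness then forces $[x,y]_\gamma$ to be a $c$-quasi-geodesic, so
\[
\len(\beta_1 \cap \beta_2) \;\leq\; c\cdot d(x,y) + c \;\leq\; K_0.
\]

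For the ``in particular'' claim, I argue by contradiction: suppose $v = (\beta_1)_+$ is $(\epsilon, R)$-deep in some $Y \in \mathbb{P}$. If $Y = X_1$, then $\gamma \cap B(v, R) \subset N_\epsilon(X_1)$, which is incompatible with maximality of $\beta_1$: past $v$ on $\gamma$ we must exit $\overline{N_\epsilon(X_1)}$. If $Y \neq X_1$, let $\beta'$ be the maximal connected subpath of $\gamma$ in $\overline{N_\epsilon(Y)}$ containing $v$. Moving backward from $v$ along $\gamma$ by length $\ell$, tightness gives $d$-distance at most $\ell$; so for $\ell \leq R$ we remain in $B(v, R) \cap \gamma \subset N_\epsilon(Y)$, hence in $\beta'$, and we remain in $\beta_1$ so long as $\ell \leq |\beta_1|$. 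Therefore $\len(\beta_1 \cap \beta') \geq \min(R, |\beta_1|)$, whereas the first part applied to the pair $(\beta_1, \beta')$ (with $X_1 \neq Y$) gives $\len(\beta_1 \cap \beta') \leq K_0$. Choosing $R > K_0$ (allowed by Convention \ref{epsilonR1}) yields the contradiction as soon as $|\beta_1| > K_0$.

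The main obstacle is handling short $\beta_i$ (with $|\beta_i| \leq K_0$) in the transitional-endpoint claim: there the contradiction above does not immediately close. I plan either to run a symmetric argument at $(\beta_i)_-$, or to enlarge $K_0$ to absorb this degenerate regime, in which the whole of $\beta_i$ already sits in $\overline{N_\epsilon(X_i)} \cap \overline{N_\epsilon(Y)}$ whose diameter is controlled by Lemma \ref{periphsystem}. The essential geometric content is, however, captured by the interplay of bounded intersection and tightness packaged into the length bound of the first part.
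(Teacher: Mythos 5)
There is a genuine gap, and it sits at the first step of your length bound: you assert $\beta_1\cap\beta_2\subset \overline{N_\epsilon(X_1)}\cap\overline{N_\epsilon(X_2)}$, but nothing in the hypotheses gives this. By definition only the \emph{endpoints} $(\beta_i)_-,(\beta_i)_+$ are assumed to lie in $N_\epsilon(X_i)$; the endpoints $x,y$ of the overlap are in general one endpoint of $\beta_1$ and one endpoint of $\beta_2$, so a priori $x$ is only known to be $\epsilon$-close to $X_2$ and $y$ to $X_1$, and Lemma \ref{periphsystem} cannot be applied to bound $d(x,y)$. What is missing is the statement that the whole of $\beta_i$ stays uniformly close to $X_i$. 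This would be automatic if $\gamma$ were a quasi-geodesic (quasiconvexity of horospheres), but a tight path is only a \emph{local} quasi-geodesic, and this is exactly the point flagged in the remark after Lemma \ref{bddoverlap}: the lemma is trivial for quasi-geodesics, and your argument is essentially that trivial argument. The paper's proof supplies the missing ingredient: subpaths of tight paths are tight, and by \cite[Proposition 7.6]{GePo3} the horospheres are uniformly quasi-convex with respect to $(c,l)$-tight paths, so $\beta_i\subset N_{\varepsilon}(X_i)$ for a larger constant $\varepsilon=\varepsilon(\epsilon,c)$; bounded intersection is then applied at scale $\varepsilon$ (giving $\mathcal R(\varepsilon)$, not $\mathcal R(\epsilon)$, so your choices $l_0=\mathcal R(\epsilon)$ and $K_0=c\mathcal R(\epsilon)+c$ are also too small), and only the last step --- converting the diameter bound into a length bound via tightness, with $l_0>cR+c$ --- coincides with what you do. Your treatment of the case $X_1=X_2$ has the same flaw: you speak of components of $\gamma\cap\overline{N_\epsilon(X_1)}$, again tacitly assuming the segments are contained in the neighbourhood rather than merely having endpoints there.

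On the ``in particular'' claim, your strategy (a deep endpoint in $Y\ne X_i$ forces a long overlap of $\gamma$ with $N_\epsilon(Y)$ inside the $X_i$-neighbourhood, contradicting $R>\mathcal R$ from Convention \ref{epsilonR1}) is the right one --- it is the argument the paper runs in step (1) of Proposition \ref{tightbetweenconical} --- but it again needs $\beta_i\subset N_\varepsilon(X_i)$ from the quasiconvexity step, and, as you yourself note, it does not close when $\len(\beta_i)\le K_0$. In the paper this degenerate regime never arises because the lemma is applied only to $(\epsilon,K)$-components with $\len(\beta_i)\ge K>2K_0$ (Definition \ref{truncpath} and Lemma \ref{tightadmissible}), but as a freestanding proof of the stated lemma your argument is incomplete there as well. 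So the essential missing idea is the quasi-convexity of peripheral cosets with respect to tight paths; without it the bounded-intersection lemma cannot be brought to bear at all.
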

\begin{proof}
By Definition \ref{tightdefn} a subpath of a tight path is itself tight. Then by \cite[Proposition 7.6]{GePo3} it follows that there exists $l_0>0$ such that for all $l\geq l_0$ the elements of $\mathbb P$ are uniformly quasi-convex with respect to the system of $(c,l)$-tight paths.  This implies that there exists a uniform constant $\varepsilon=\varepsilon(\epsilon, c)>0$ such that $\beta_i \subset N_\varepsilon(X_i)$ for $i=1, 2$.   By Lemma \ref{periphsystem} we find  a constant $R=\mathcal R(\varepsilon) >0$ such that  $\diam{N_{\varepsilon}(X)\cap N_{\varepsilon}(X')}\leq  R$  for every $X, X'\in \mathbb P.$ Assume that $l_0 > { c R+c}$.  Since $\beta_i$ are $l$-local $c$-quasi-geodesic for $l >l_0$, it follows that $\len(\beta_1\cap \beta_2) \le K_0 :={c R+c}$.
\end{proof}
\begin{rem}
By the bounded intersection of $\mathbb P$, this lemma holds trivially if $\gamma$ is a quasi-geodesic. However, the tight path $\gamma$ above is    a local quasi-geodesic only.
\end{rem}

Let $\gamma$ be a $(c, l)$-tight path. Let $\epsilon=\epsilon(c)$ given by Convention \ref{epsilonR1} and $K_0$ given by Lemma \ref{bddoverlap}. For $K >K_0$, we consider all maximal connected segments $\beta_i$ in $\gamma$ ($1\le i \le m$) such that $\len(\beta_i) \ge K$ and $(\beta_i)_-, (\beta_i)_+ \in N_\epsilon(X_i)$ for some $X_i \in \mathbb P$. Consequently, $X_i\ne X_j$ for $i\ne j$.  These $(\beta_i, X_i)$ shall be referred to as \textit{$(\epsilon, K)$-components} of $\gamma$.

We stress that by the argument of Lemma \ref{bddoverlap}   the segment  $\beta_i$ belongs to $N_\epsilon(X_i)$ for a uniform $\epsilon>0$ and unique $X_i.$

We now introduce a modification of a tight path to make the obtained path a quasi-geodesic.

\begin{defn}[Truncation of  a tight path]\label{truncpath}
Let $\gamma$ be a $(c, l)$-tight path for $c\ge 1, l>0$. Consider all $(\epsilon, K)$-components $(\beta_i, X_i)$ ($1\le i \le m$) for a fixed $K  > 2K_0$, where $K_0>0$ is given by Lemma \ref{bddoverlap}.

Set $y_1=(\beta_1)_-, x_2=(\beta_1)_+$. If $\beta_i\cap \beta_{i-1}=\emptyset$ for $i\ge 2$, denote $y_i=(\beta_i)_-, x_{i+1}=(\beta_i)_+$; otherwise, set $y_i=x_{i-1}, x_{i+1}=(\beta_i)_+$. Replace $[y_i, x_{i+1}]_\gamma$ by a geodesic segment $[y_i, x_{i+1}]$ for each $i\ge 1$.

The path $\bar \gamma$ obtained in this way is called a \textit{$K$-truncation} of $\gamma$.
\end{defn}
\begin{rem}
The following observation is elementary and useful: every $\beta_i$ produces an $(\epsilon, K/2)$-deep point in $X_i$ in the truncation path $\bar \gamma$. Consequently, if $\bar \gamma$ does not contain an $(\epsilon, R)$-deep point, then $d((\beta_i)_-, (\beta_i)_+) \le 2R$ for all $\beta_i$.
\end{rem}

The following lemma is the main result of this subsection. It provides a  further generalization of Lemma \ref{tightranspath} to the truncated tight paths.

\begin{lem}[Truncation is quasi-geodesic]\label{tightadmissible}
For any $c\ge 1$, there exist $l_0=l_0(c), K=K(c), c'=c'(c)>0$ with the following property. For any $l\ge l_0$, the $K$-truncation of a $(c, l)$-tight path is a $c'$-quasi-geodesic.
\end{lem}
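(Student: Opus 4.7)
Plan: The $K$-truncation $\bar\gamma$ of a $(c,l)$-tight path $\gamma$ decomposes naturally as an alternating concatenation of geodesic chords $[y_i,x_{i+1}]$ with endpoints in $N_\epsilon(X_i)$ for distinct horospheres $X_i\in\mathbb P$, and subpaths $\sigma_i$ of $\gamma$ lying between consecutive chords. My first step is to show that each $\sigma_i$ is $(\epsilon,R,K+2R)$-transitional: by the very definition of $K$-truncation, no maximal $\epsilon$-component inside $\sigma_i$ has length $\ge K$, so every point of $\sigma_i$ lies within distance $K+2R$ of an $(\epsilon,R)$-transitional point. As $\sigma_i$ is itself a $(c,l)$-tight path, Lemma \ref{tightranspath} then yields that, once $l\ge l_0(c)$, each $\sigma_i$ is a $c_1(c)$-quasi-geodesic with a constant $c_1$ depending only on $c$.

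My second step is to recognize that $\bar\gamma$ is an admissible path in the sense of the concatenation machinery used throughout the relatively hyperbolic setting: the chord pieces $[y_i,x_{i+1}]$ play the role of geodesics with both endpoints in the contracting set $X_i$, while the $\sigma_i$ play the role of quasi-geodesic connectors. Three conditions need to be checked. First, each chord has length at least $K-2\epsilon$, because the replaced $\beta_i$ has length $\ge K$ with endpoints in $N_\epsilon(X_i)$. Second, each $\sigma_i$ has uniformly bounded projection onto $X_{i-1}$ and $X_i$; this follows from the contracting property of Lemma \ref{periphsystem} once we note that $\sigma_i$ leaves $N_{\mu_c}(X_{i-1})$ before entering $N_{\mu_c}(X_i)$, which in turn is guaranteed by the bounded-overlap Lemma \ref{bddoverlap} combined with the choice $K>2K_0$. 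Third, consecutive horospheres satisfy $X_i\ne X_{i+1}$ and have $\mathcal R(\epsilon)$-bounded intersection, again by Lemmas \ref{bddoverlap} and \ref{periphsystem}.

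With admissibility verified, the standard admissible-path proposition (compare the statement quoted for iterated trees in Lemma \ref{ITT} and the contracting-plus-transition arguments in Proposition \ref{tightbetweenconical}) then yields constants $K(c)$ and $c'(c)$, depending only on $c$, such that for $K\ge K(c)$ the concatenation $\bar\gamma$ is a $c'$-quasi-geodesic. The main technical obstacle is not any single estimate but the calibration of $\epsilon,R,K_0,K,l_0$ so that all the admissibility inequalities hold simultaneously; by Convention \ref{epsilonR1} and the control obtained from Lemmas \ref{floydtrans}, \ref{uniformtrans}, and \ref{periphsystem} this is a bookkeeping exercise. A self-contained alternative, avoiding the external admissible-path proposition, would be to mimic the proof of Lemma \ref{tightranspath}: pick well-ordered checkpoints on $\bar\gamma$ alternately at $(\epsilon,R)$-transitional points of the $\sigma_i$ and at the chord endpoints $y_i,x_{i+1}$, and then produce corresponding well-ordered points on a comparison geodesic using the Visibility Lemma \ref{karlssonlem} together with the uniform quasi-convexity of the horospheres, thereby bounding $\len(\bar\gamma)$ linearly in $d(\bar\gamma_-,\bar\gamma_+)$.
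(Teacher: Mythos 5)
Your proposal is a genuinely different route from the paper's own proof of this lemma, and it is essentially the strategy the paper uses later for the \emph{generalized} truncation (Lemma \ref{jointpath}), where it verifies the three concatenation conditions and quotes \cite[Corollary 3.3]{YANG6}. For the present lemma the paper stays self-contained: it observes via Lemma \ref{bddoverlap} that the chord endpoints $y_i, x_{i+1}$ are $(\epsilon,R)$-transitional points of $\gamma$, deduces from Lemma \ref{tightranspath} that the pieces $[x_i,y_i]_\gamma$ are quasi-geodesics (this matches your first step), then treats the junctions directly: since $y_i$ is transitional, Lemma \ref{tightpath} gives $\fds_{y_i}(x_i,x_{i+1})\ge \kappa$, so by Lemma \ref{karlssonlem} the point $y_i$ lies within $\phi(\kappa)$ of $[x_i,x_{i+1}]$, and a short triangle-inequality computation makes each ``transitional piece followed by a chord segment'' a $c_1$-quasi-geodesic; finally it reruns the checkpoint/fellow-travelling argument of Lemma \ref{tightranspath} with the transitional chord endpoints as well-ordered checkpoints on a comparison geodesic. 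Your ``self-contained alternative'' at the end is precisely this argument, so the comparison is: your main route buys an immediate conclusion once admissibility is checked, but it outsources the global statement to a proposition that is not stated in this paper (neither Lemma \ref{ITT} nor Proposition \ref{tightbetweenconical} supplies it; the actual reference is \cite{YANG6}), whereas the paper's route needs only Lemmas \ref{tightpath}, \ref{tightranspath}, \ref{bddoverlap} and \ref{karlssonlem}.

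Two of your admissibility checks are mis-justified as written, though both are repairable. First, the chord-length bound ``at least $K-2\epsilon$'' does not follow from $\len(\beta_i)\ge K$: the length of $\beta_i$ is measured along $\gamma$, which is only a local quasi-geodesic, so inside $N_\epsilon(X_i)$ the component may backtrack and $d(y_i,x_{i+1})$ could a priori be small. The correct argument is via tightness: if $d(y_i,x_{i+1})\le l$ then $[y_i,x_{i+1}]_\gamma$ is a $c$-quasi-geodesic, forcing $d(y_i,x_{i+1})\ge (K-c)/c$; hence the chords have length at least $\min\{l,(K-c)/c\}$, which suffices once $K(c)$ and $l_0(c)$ are chosen large. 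Second, the bounded projection of $\sigma_i$ onto $X_{i-1}$ and $X_i$ should be derived from the maximality of the components rather than from your phrase about leaving $N_{\mu_c}(X_{i-1})$ before entering $N_{\mu_c}(X_i)$: since $\beta_{i-1}$ and $\beta_i$ run from the first entry to the last exit of $\gamma$ in the respective $\epsilon$-neighbourhoods, the subpath $\sigma_i=[x_i,y_i]_\gamma$ meets neither $N_\epsilon(X_{i-1})$ nor $N_\epsilon(X_i)$ except at its endpoints; as $\sigma_i$ is a $c_1(c)$-quasi-geodesic and the constants can be calibrated so that $\epsilon\ge\mu_{c_1}$, Lemma \ref{periphsystem} then bounds $\diam{\proj_{X_{i-1}}(\sigma_i)}$ and $\diam{\proj_{X_i}(\sigma_i)}$ (compare the Claim inside the paper's proof of Lemma \ref{jointpath}, which plays the same role there). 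With these two repairs your admissible-path route goes through and yields constants depending only on $c$, as required.
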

\begin{proof}
Let $K>2K_0$ be a fixed integer, where $K_0$ is given by Lemma \ref{bddoverlap}. Let $\bar\gamma$ be the $K$-truncation of a $(c, l)$-tight path $\gamma$.  Keeping the notations as in Definition \ref{truncpath}, we have by Lemma \ref{bddoverlap}   that $y_i, x_{i+1}$ for $1\le i\le m$ are $(\epsilon, R)$-transitional points in $\gamma.$ Furthermore since $[x_i, y_{i}]_\gamma$ contains no $(\epsilon, K)$-components for $1\le i< m$, we see that $[x_i, y_{i}]_\gamma$ is an $(\epsilon, R, L)$-transitional path for $L:=K/2$. By Lemma \ref{tightranspath}, there exist $l_0=l_0(L), c_0=c_0(c)\ge 1$ such that $[x_i, y_{i}]_\gamma$ is a $c_0$-quasi-geodesic.

The geodesic $[y_i, x_{i+1}]$ belongs to the $\epsilon$-neighbourhood $N_\epsilon(X_i)$ for some $X_i\in \mathbb P $ where we assume that $y_i$ is the  entry point of $\gamma\cap N_\epsilon(X)$ and $x_{i+1}$ is the exit point of it. We will now show that $\gamma$ is a quasi-geodesic in a neighbourhood of $y_i.$

Since $y_i$ is $(\epsilon, R)$-transitional,   $\fds_{y_i}(x_i,x_{i+1}) \ge \kappa$ where $\kappa$ is given by Lemma \ref{tightpath}.
Hence, $d(y_i, [x_i,x_{i+1}])\leq D_0:=\phi(\kappa)$ where $\phi$ is the function given by  Lemma \ref{karlssonlem}. Connect the point $x_i$ with an arbitrary point $z\in [y_i, x_{i+1}]$ by a geodesic $\beta$. By the triangle inequality we have $d(x_i,y_i)+d(y_i,x_{i+1})\leq d(x_i,x_{i+1})+2D_0$. Since $d(y_i,z)+d(z,x_{i+1})=d(y_i,x_{i+1})$ we obtain $d(x_i, y_i)+d(y_i,z)\leq d(x_i,x_{i+1})-d(z,x_{i+1})+2D_0\leq d(x_i,z)+2D_0.$
Finally
$$
\begin{array}{lll}
\len([x_i,z]_{\bar\gamma})&=\len([x_i,y_i]_\gamma)+\len([y_i,z])\\
&\leq c_0d(x_i,y_i)+c_0+d(y_i,z)\\
&\leq c_0d(x_i,z)+c_0+2c_0D_0.
\end{array}
$$
So $[x_i,z]_{\bar\gamma}$ is a $c_1$-quasi-geodesic for $c_1:=c_0(2D_0+1).$

We have that any subpath $\tilde \gamma$ of the truncated path $\bar\gamma$ is the union of three types of $c_1$-quasi-geodesic subpaths:   a) $\gamma_i=[x_{i}, y_i]_\gamma$, b) $\beta_i=[y_i, x_{i+1}]_\gamma$ and c) $\delta=[a,b].$ Both vertices of the intervals of types a) and b) are transitional on the corresponding tight path $\gamma$, and   every $\gamma_i$ is $(\epsilon, R, L)$-transitional whereas $\beta_i$ is an $(\epsilon, K)$-component. The path $\tilde\gamma$ can contain at most two intervals $\delta$ of type $c)$ such that one of the endpoints of $\delta$ coincides with an endpoint of $\tilde\gamma$ and is an interior point of a geodesic truncation  of $\bar \gamma.$

Repeating the argument of Lemma \ref{tightranspath} consider a maximal well-ordered subset $V$ of the transitional vertices $\{v_j \in \tilde \gamma\}$ in the set $W:=\tilde \gamma\cap \{y_i, x_{i+1}: 1\le i\le m\}$  such that $d(v_j, v_{j+1})\ge 2D_0$. We connect the endpoints of $\tilde \gamma$ by a geodesic $\alpha.$ Then for each $v_j\in V$, there exists $v_j'\in \alpha$   such that $d(v_j, v_j')\le D_0$ and $v'_j\in [v'_{j-1}, v'_{j+1}]_\alpha$.  Since $V$ is maximal in $W$,  for any $w\in W$ there exists $v\in V$ such that $d(v, w)\le 2D_0$.  If $W=\{w_1, w_2, \cdots, w_n\}$, there exists a well-ordered set $W'=\{w_1', w_2', \cdots, w_n'\}$ of vertices in $\alpha$ such that $d(w_i, w_i')\le 3D_0$. Then $[\tilde \gamma_-,w_1]_{\tilde\gamma}$, $[w_i, w_{i+1}]_\gamma$ and $[w_n, \tilde \gamma_+]_{\tilde \gamma}$ are all $c_1$-quasi-geodesics by the above argument. We have
$$
\begin{array}{lll}
\len(\tilde \gamma)&\leq c_1(d(\tilde \gamma_-,w_1)+\sum_{i=1}^{n-1}d(w_i,w_{i+1}) +d(w_n, \tilde \gamma_+))+c_1\\
&\leq c_1(3D_0+d(\tilde \gamma_-,w'_1)+\sum_{i=1}^{n-1}(d(w'_i,w'_{i+1}) +3D_0)+d(w'_n, \tilde \gamma_+)+3D_0)+c_1\\
&\leq
 c'd(\tilde\gamma_-,\tilde\gamma_+)+c',
 \end{array}
$$
 where $c':=(1+3D_0)c_1.$ The Lemma is proved.
\end{proof}

\begin{conv}\label{lK0}
For any $c \ge 1$, we will assume further on that $l_0, K>0$ satisfy both Lemmas \ref{tightpath} and \ref{bddoverlap}.
\end{conv}

\subsection{Shortcut metrics and generalized tight paths}
Recall that a Floyd geodesic in the Floyd completion does not in general belongs to the Cayley graph and the shortening procedure described in subsection \ref{Section2.3} allows one to approximate them by local geodesics in the graph. Furthermore the following lemma shows that this approximation can be done using the tight paths:

\begin{lem}\cite[Corollary 7.8]{GePo3} \label{tithapprox}
For any $l>0$ there exists $\lambda_0\in]0,1[$ such that for every $\lambda\in]\lambda_0, 1[$ if the Floyd geodesic $\gamma\subset\Gf$ (with respect to the metric $\fdo$) joining  two distinct points $x,y$ in $\Gf$ does not belong to the Cayley graph $\Gx,$ then for $\varepsilon>0$ there exists a tight path $\tilde\gamma\subset\Gx $ such that $\vert\flo(\tilde\gamma)-\flo(\gamma)\vert <\varepsilon.$
\end{lem}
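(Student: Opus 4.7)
The plan is to formalize the shortening procedure from Subsection~\ref{Section2.3}, using Lemma~\ref{Floydgeod} as the main technical tool. First, fix $l > 0$ and take $\lambda_0 \in\, ]0,1[$ provided by Lemma~\ref{Floydgeod} so that for every $\lambda \in [\lambda_0, 1)$ and every $u, v \in \Gx$ with $d(u,v) \le l$, any path $\alpha$ in $\Gx$ from $u$ of edge-length $\ell(\alpha) \ge d(u,v)+1$ satisfies $\flo(\alpha) > \fdo(u,v)$; equivalently, the $\fdo$-geodesic between such $u, v$ is a word geodesic in $\Gx$.

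Since $\Gf$ is the Cauchy completion of $\Gx$ with respect to $\fdo$, I would choose a sequence of paths $\gamma_n \subset \Gx$ with $(\gamma_n)_- \to x$, $(\gamma_n)_+ \to y$ and $\flo(\gamma_n) \to \fdo(x, y) = \flo(\gamma)$. Then I would apply the shortening procedure to each $\gamma_n$: whenever there exist $u, v \in \gamma_n$ with $d(u,v) \le l$ such that $[u,v]_{\gamma_n}$ is not a word geodesic, replace $[u,v]_{\gamma_n}$ by a word geodesic in $\Gx$ between $u$ and $v$. By Lemma~\ref{Floydgeod} this substitution strictly decreases the Floyd length of the path, and it strictly decreases the integer-valued edge count $\ell(\gamma_n)$ by at least one. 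Consequently the procedure terminates after finitely many steps, producing a path $\tilde\gamma_n \subset \Gx$ with the same endpoints as $\gamma_n$ that is an $l$-local word geodesic. Such a path is $(1,l)$-tight in the sense of Definition~\ref{tightdefn}: any $u, v \in \tilde\gamma_n$ with $d(u,v) \le l$ cobound a subpath that is by construction a word geodesic, hence trivially a $1$-quasi-geodesic.

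Finally, I would conclude by a squeezing argument. By construction $\flo(\tilde\gamma_n) \le \flo(\gamma_n)$, while $\flo(\tilde\gamma_n) \ge \fdo((\tilde\gamma_n)_-, (\tilde\gamma_n)_+) = \fdo((\gamma_n)_-, (\gamma_n)_+)$, which tends to $\fdo(x,y) = \flo(\gamma)$ by continuity of $\fdo$ and the choice of $\gamma_n$. Combined with $\flo(\gamma_n) \to \flo(\gamma)$, this forces $\flo(\tilde\gamma_n) \to \flo(\gamma)$, so for all sufficiently large $n$ the choice $\tilde\gamma := \tilde\gamma_n$ satisfies $|\flo(\tilde\gamma) - \flo(\gamma)| < \varepsilon$. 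The only real subtlety is ensuring termination of the shortening procedure; this is handled by the observation that each replacement step strictly decreases the edge count of the current path, a non-negative integer. Note that the hypothesis that $\gamma$ does not lie in $\Gx$ is not used in the argument; it merely indicates when such an approximation is actually needed.
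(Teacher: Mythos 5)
Your proposal follows essentially the same route as the paper: the paper itself does not prove this lemma (it is quoted from \cite[Corollary 7.8]{GePo3}), and the only in-house material is the informal shortening procedure sketched in Subsection \ref{Section2.3}, which your argument formalizes correctly — the termination via the integer edge count, the identification of the terminal path as $(1,l)$-tight, and the final squeezing argument are all fine, as is your observation that the hypothesis that $\gamma$ leaves $\Gx$ is not really used. One point should be tightened: when you replace a non-geodesic subpath $[u,v]_{\gamma_n}$ (with $d(u,v)\le l$) by ``a word geodesic'', Lemma \ref{Floydgeod} only gives $\flo([u,v]_{\gamma_n})>\fdo(u,v)$, and distinct word geodesics between $u$ and $v$ can have different Floyd lengths, so an arbitrary choice of replacement is not immediately guaranteed to decrease the Floyd length. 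The clean fix is to replace by the $\fdo$-geodesic between $u$ and $v$, which by the ``in particular'' clause of Lemma \ref{Floydgeod} is a word geodesic of Floyd length exactly $\fdo(u,v)$, so each step strictly decreases the Floyd length; alternatively, an easy estimate shows that for $\lambda$ close enough to $1$ (depending on $l$) any word geodesic works, since a path from $u$ of combinatorial length at least $d(u,v)+1$ has $\flo$-length at least $(d(u,v)+1)\,\lambda^{d(o,u)+l}$, while any word geodesic between $u$ and $v$ has $\flo$-length at most $d(u,v)\,\lambda^{d(o,u)-l}$. With that adjustment the proof is complete and matches the intended argument.
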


The goal of this subsection is to extend this result to the geodesics with respect to the shortcut metrics $\{\sfdo\}_{o\in G}$ on $\pG$ (see Section \ref{Section2.2}). For this purpose we generalize the notion of a tight path as follows.

\begin{defn}[Generalized tight paths and truncations]\label{gentightdefn}
Let $\gamma$ be a finite sequence of $(c, l)$-tight paths $\gamma_i$ in $\Gx$ ($1 \le i \le  n$) such that $(\gamma_i)_+, (\gamma_{i+1})_- \in N_\epsilon(X_i)$ for some $X_i \in \mathbb P$ where $X_i\ne X_j$ ($1 \le i \ne j <  n$).

We say that $\gamma$ is a $(c, l)$-\textit{generalized tight path} if  for each pair of entry and exit points $y_i, x_{i+1}$ of $\gamma_i$ and $\gamma_{i+1}$ respectively in $N_\epsilon(X_i)$ we have $d(y_i, x_{i+1}) \ge l$  ($1\le i <n$).

Fix $K>0$. For $n>1$, consider the $K$-truncation $\bar \gamma_i$ of $[x_i, y_i]_{\gamma_i}$ where $1\le i\le n$. The path $$\tilde \gamma=\bar \gamma_1\cdot [y_1, x_2]\cdot \bar \gamma_2 \cdots [y_{n-1}, x_n] \cdot \bar \gamma_n$$ is called the \textit{$K$-truncation} of a generalized $(c, l)$-tight $\gamma$.
\end{defn}
\begin{rem}
Note that a generalized tight path is possibly not connected. If it is connected, then it is a tight path in Definition \ref{tightdefn}.
\end{rem}

\begin{lem}[Generalized truncation is quasi-geodesic]\label{jointpath}
For any $c\ge 1$, there exist $l_0, K, c'\ge 1$ such that for any $l>l_0$, the $K$-truncation of a $(c, l)$-generalized tight path is a $c'$-quasi-geodesic.
\end{lem}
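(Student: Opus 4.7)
The plan is to reduce the problem to Lemma \ref{tightadmissible} by observing that the $K$-truncation of a generalized tight path has essentially the same alternating structure (quasi-geodesic transitional pieces interspersed with geodesic segments through horospheres) as the $K$-truncation of a single tight path. The new ingredient is only the bridging geodesics $[y_i, x_{i+1}]$ at the joints through the horospheres $X_i$.

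First, I would fix $K > 2K_0$ and choose $l_0$ large enough so that Lemma \ref{tightadmissible} applies to each individual $\gamma_i$, yielding a constant $c_1=c_1(c)$ such that each $\bar\gamma_i$ is a $c_1$-quasi-geodesic. I would further enlarge $l_0$ so that, by Lemma \ref{uniformtrans} applied to the geodesic $[y_i, x_{i+1}]$ with endpoints in $N_\epsilon(X_i)$ and $d(y_i, x_{i+1})\ge l$, the interior of this geodesic contains $(\epsilon, R)$-deep points in $X_i$, while $y_i$ and $x_{i+1}$ themselves are $(\epsilon, R)$-transitional in $\gamma_i$ and $\gamma_{i+1}$ respectively (the latter being ensured by Lemma \ref{bddoverlap}).

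Next, I would analyze the global alternating structure of
$$\tilde\gamma = \bar\gamma_1\cdot[y_1,x_2]\cdot\bar\gamma_2\cdots[y_{n-1},x_n]\cdot\bar\gamma_n.$$
The internal horospheres appearing inside each $\bar\gamma_i$ are distinct by Lemma \ref{bddoverlap}, the joint horospheres $X_i$ are pairwise distinct by hypothesis, and the contracting/bounded intersection property of Lemma \ref{periphsystem} ensures that all these horospheres form a uniform bounded intersection system. Thus the full $\tilde\gamma$ is an alternation of (i) quasi-geodesic $(\epsilon,R,L')$-transitional segments and (ii) geodesic segments through distinct horospheres, for some $L' = L'(K)$.

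Finally, I would conclude by replicating the argument at the end of Lemma \ref{tightadmissible} (inspired by Lemma \ref{tightranspath}) applied to the whole $\tilde\gamma$. Namely, extract a maximal $2D_0$-separated well-ordered sequence $\{v_j\}$ of $(\epsilon,R)$-transitional vertices along $\tilde\gamma$, including all the joint vertices $y_i, x_{i+1}$. Lemma \ref{tightpath} (locally, on the appropriate tight sub-path $\gamma_i$) combined with Lemma \ref{floydtrans} and the contracting property of the adjacent horospheres provides a uniform lower bound $\rho_{v_j}(\tilde\gamma_-,\tilde\gamma_+)\ge\kappa$; then the Visibility Lemma \ref{karlssonlem} produces a well-ordered sequence $\{v_j'\}$ on any geodesic $\alpha = [\tilde\gamma_-,\tilde\gamma_+]$ with $d(v_j, v_j')\le D_0$. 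Summing the quasi-geodesic length estimates on $[v_j, v_{j+1}]_{\tilde\gamma}$ gives $\ell(\tilde\gamma) \le c'\cdot d(\tilde\gamma_-, \tilde\gamma_+) + c'$.

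The main obstacle I anticipate is controlling the Floyd lower bound at the joint vertices $y_i, x_{i+1}$ with respect to the endpoints of the \emph{full} $\tilde\gamma$, rather than only with respect to the endpoints of the local tight path $\gamma_i$. The resolution should use that if, say, $y_i$ were not transitional in $\tilde\gamma$, it would be $(\epsilon,R)$-deep in some horosphere $Y$; the pairwise distinctness of the $X_i$ and the bounded intersection property would force $Y=X_i$ or $Y=X_{i-1}$, contradicting the choice of $y_i$ as the exit point in $N_\epsilon(X_i)$ or the bounded overlap of consecutive horospheres. Here the hypothesis that consecutive joint horospheres are distinct is essential.
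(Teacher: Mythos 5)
Your reduction of each $\bar\gamma_i$ to a $c_1$-quasi-geodesic via Lemma \ref{tightadmissible}, and your observation that $y_i,x_{i+1}$ are transitional in $\gamma_i,\gamma_{i+1}$, match the paper's opening moves. But your final step has a genuine gap. You want to run the visibility argument of Lemmas \ref{tightranspath}/\ref{tightadmissible} over the whole concatenation $\tilde\gamma$, and for that you assert a uniform bound $\fds_{v_j}(\tilde\gamma_-,\tilde\gamma_+)\ge\kappa$ at the transitional/joint vertices, citing Lemma \ref{tightpath}, Lemma \ref{floydtrans} and contraction. Neither lemma gives this: Lemma \ref{tightpath} controls $\fds_v$ only with respect to the endpoints of the \emph{single} tight piece $\gamma_i$ containing $v$ (in Lemma \ref{tightadmissible} this sufficed precisely because all the points $x_i,y_i,x_{i+1}$ lay on one tight path), while Lemma \ref{floydtrans}(1) applies only to paths already known to be quasi-geodesics, which is what you are trying to prove --- the argument is circular. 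Passing from $\fds_v((\gamma_i)_-,(\gamma_i)_+)\ge\kappa$ to $\fds_v(\tilde\gamma_-,\tilde\gamma_+)\ge\kappa$ requires knowing that the other pieces $\gamma_j$, which are a priori disjoint from $\gamma_i$ and connected to it only through the inserted bridges, never return Floyd-close to $v$; that is exactly the global non-backtracking statement at stake. Your "resolution" of the anticipated obstacle only establishes that $y_i$ is a transitional point of $\tilde\gamma$, which by itself yields no Floyd estimate.

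The paper takes a different route at the joints, and this is the content you are missing. It proves a quantitative Claim: the entry point $z$ of the truncation $\bar\gamma_i$ into $N_\epsilon(X_i)$ satisfies $d(z,y_i)\le C$ for a uniform $C$ (using quasi-convexity of $X_i$ for $c_2$-quasi-geodesics, the bounded intersection function $\mathcal R$, and Lemma \ref{uniformtrans}); from this and the contracting property of Lemma \ref{periphsystem} one gets $\diam{\proj_{X_i}(\bar\gamma_i)},\ \diam{\proj_{X_i}(\bar\gamma_{i+1})}\le\tau$. Together with $d(y_i,x_{i+1})\ge l$ this exhibits $\tilde\gamma$ as an admissible path in the sense of \cite{YANG6}, and the quasi-geodesicity is then imported from Corollary 3.3 there (alternatively from \cite{GePo4}); that machinery is a projection/contraction argument, not a Floyd-length argument, and it is what replaces the unavailable global analogue of Lemma \ref{tightpath} for generalized tight paths. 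To repair your proof you would either need to prove such a bounded-projection statement and a local-to-global criterion for concatenations along contracting sets, or prove directly a Floyd lower bound for generalized tight paths, neither of which is supplied by the lemmas you cite.
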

\begin{proof}
Let $K=K(c)$ be given by Lemma \ref{tightadmissible}. Let $\tilde \gamma$ be the $K$-truncation of a generalized $(c, l)$-tight path $\gamma$. We keep the notations of Definition \ref{gentightdefn}. If $n=1$, the proof is finished by Lemma \ref{tightadmissible}. Assume that $n\ge 2$.

By Lemma \ref{tightadmissible}, there exists $c_1>0$ such that each $\bar \gamma_i$ is a $c_1$-quasi-geodesic for each $1\le i<n$. We prove below that $\bar \gamma_i$ and $\bar \gamma_{i+1}$ have bounded projection to $N_\epsilon(X_i)$ where $X_i \in \mathbb P$.

By Lemma \ref{periphsystem},  $X \in \mathbb P$ is $c_1$-contracting and there exist $\mu_{c_1}, D_{c_1}>0$ such that the (\ref{contractingproperty}) holds. By Convention \ref{epsilonR1}, we have $\epsilon \ge \mu_{c_1}$.  Let $z$ be the entry point of $\bar \gamma_i$ in $N_\epsilon(X_i)$.

\begin{claim}
There exists a constant $C>0$ such that $d(z, y_i)\leq C$.
\end{claim}
\begin{proof}[Proof of Claim]
Since $y_i$ is the entry point of $\gamma_i$ in $N_\epsilon(X_i)$, there exists an $(\epsilon, K)$-component $\beta$ of $\gamma_i$ such that $z\in \beta$. Consider the tight subpath $[\beta_+, y_i]_{\gamma_i}$ and its $K$-truncation $\beta_1$. By the argument in Lemma \ref{tightadmissible}, the path $[z, \beta_+]\cdot \beta_1$ is a $c_2$-quasi-geodesic for some $c_2>0$.

Since $X_i$ is quasi-convex, there exists $\varepsilon=\varepsilon(\epsilon, c_2)>0$ such that any $c_2$-quasi-geodesic with two endpoints in $N_\epsilon(X_i)$ lies in $N_\varepsilon(X_i)$. This implies that $\beta_1\subset N_\varepsilon(X_i)$. However,  there exists no $(\epsilon, K)$-components in $[\beta_+, y_i]_{\gamma_i}$. Indeed, if not, there exists an $(\epsilon, K)$-component $\beta'$ in  $[\beta_+, y_i]_{\gamma_i}$ and $Y \in \mathbb P$ such that $\beta'_\pm\in N_\epsilon(Y)$ and $d(\beta'_-,\beta'_+)>K>l_0$. Since $y_i$ is the entry point of $\gamma_i$ in $N_\epsilon(X_i)$, we have $Y\ne X_i$.
Since $\beta'_\pm\in N_\varepsilon(X_i)$, we get $d(\beta'_-,\beta'_+)\le R:=\mathcal R(\max\{\epsilon, \varepsilon\})$ by Lemma \ref{periphsystem}. This is a contradiction as $l_0>\mathcal R(\max\{\epsilon, \varepsilon\})$. The same reasoning shows that $d(z, \beta_+)\le R$.

Let $L=L(\varepsilon, 1)$ be given by Lemma \ref{uniformtrans}. If $d(z, y_i)>2L+R$, there exists an interior point  in $[\beta_+, y_i]_{\gamma_i}$ which is $(\epsilon_c, 1)$-deep in $X_i$. This is a contradiction, since $y_i$ is the entry point of $[\beta_+, y_i]_{\gamma_i}$ in $N_\epsilon(X_i)$ and $\epsilon\ge \epsilon_c$ by Convention \ref{epsilonR1}. Hence we proved that $d(z, y_i)\le C:=2L+R$. The claim is proved.
\end{proof}

By the contracting property \ref{periphsystem} we see that  $\proj_{X_i}(\bar \gamma_i)  \le \tau:=2(D_{c_1}+\epsilon)+C$. The same is true for  $\proj_{X_i}(\bar \gamma_{i+1})$. Then  $\tilde \gamma$ satisfies the following properties:
\begin{enumerate}
\item
Each $\bar\gamma_i$ is a $c_1$-quasi-geodesic,
\item
$\max\{\proj_{X_i}(\bar \gamma_i), \proj_{X_i}(\bar \gamma_{i+1})\}  \le \tau$,
\item
$d(x_i, y_{i+1}) >l$.
\end{enumerate}
(These properties imply that $\tilde \gamma$ is $(l, c_1, c_1, \tau)$-admissible   in the sense of \cite[Section 3]{YANG6}). Therefore by Corollary 3.3 in \cite{YANG6}, there exist $l_0, c'>0$ such that for any $l>l_0$ the truncation $\tilde \gamma$ is a $c'$-quasi-geodesic. The lemma is proved.
\end{proof}
\begin{rem}
An alternative way to prove the above Lemma is to use the arguments of Proposition 6.1.1 in \cite{GePo4} to prove that $\tilde \gamma$ is a curve whose distortion is a quadratic polynomial,  then it follows from Proposition 7.2.2 in \cite{GePo4} that $\tilde \gamma$ is linearly distorted.
\end{rem}

\begin{prop}[Approximation by generalized tight paths] \label{shortcutpaths}
For any $l\ge 0$, there exists $0<\lambda_0<1$ such that the following property holds for any $\lambda\in [\lambda_0, 1)$.

For any $\xi \ne \eta \in \pG$, there exists a sequence of generalized $(1, l)$-tight paths $\gamma_n$ with $(\gamma_n)_- \in [o, \xi]$,
$(\gamma_n)_+ \in [o, \eta]$ such that $$\lim_{n\to \infty} d(o, (\gamma_n)_-) = \lim_{n\to \infty} d(o, (\gamma_n)_+) = \infty$$ and $$\lim_{n\to \infty} \flo(\gamma_n) = \sfdo(\xi, \eta).$$
\end{prop}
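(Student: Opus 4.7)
The plan is to unpack the shortcut distance $\sfdo(\xi, \eta)$ as an infimum over chains of Floyd segments whose joins occur at parabolic points, then approximate each segment by a tight path in $\Gx$ via Lemma~\ref{tithapprox}, and string them together with horospherical jumps to form a generalized tight path.

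Given $\xi \ne \eta \in \pG$, I fix preimages $\tilde\xi, \tilde\eta \in \pGf$ under the Floyd map and let $\alpha = [o, \tilde\xi]$, $\beta = [o, \tilde\eta]$ be word-geodesic rays in $\Gx$; by Lemma~\ref{floydrays} these are simultaneously Floyd and shortcut geodesics from $o$. Fix $\varepsilon > 0$. By the definitions (\ref{shortcutdefn})--(\ref{shortcut}) there exist points $\xi_1 = \tilde\xi, \eta_1, \xi_2, \dots, \xi_m, \eta_m = \tilde\eta$ in $\Gf$ with $F(\eta_i) = F(\xi_{i+1}) =: p_i$ for $1 \le i < m$ and $\sum_{i=1}^m \fdo(\xi_i, \eta_i) < \sfdo(\xi, \eta) + \varepsilon$. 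By Proposition~\ref{FloydKernel}, whenever $p_i$ is conical we have $\eta_i = \xi_{i+1}$, so after merging such pairs into a single Floyd segment we may assume every $p_i$ is parabolic, with maximal parabolic stabilizer $G_{p_i}$ and associated horosphere $X_i \in \mathbb P$; we may moreover arrange the $p_i$, and hence the $X_i$, to be pairwise distinct.

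Next, for each $1 \le i \le m-1$ I approximate $\eta_i, \xi_{i+1} \in \partial_\lambda G_{p_i}$ in the Floyd metric by vertices $v_i, u_{i+1} \in G_{p_i}$ with errors at most $\varepsilon/m$; since elements of $G_{p_i}$ lie at bounded word distance from $X_i$, these vertices sit in $N_\epsilon(X_i)$ for a uniform $\epsilon > 0$. Because $\eta_i \ne \xi_{i+1}$ gives $\fdo(\eta_i, \xi_{i+1}) > 0$ while both sequences tend to the Floyd boundary, the word distance $d(v_i, u_{i+1})$ must diverge as the Floyd approximations sharpen, so we can ensure $d(v_i, u_{i+1}) \ge l$. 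For the extremal indices I take $u_1 \in \alpha$ and $v_m \in \beta$ with $d(o, u_1), d(o, v_m) \to \infty$; Lemma~\ref{floydrays} then gives $\fdo(u_1, \xi_1), \fdo(v_m, \eta_m) \to 0$. Now apply Lemma~\ref{tithapprox} to each pair $(u_i, v_i)$ to obtain a $(1, l)$-tight path $\gamma^{(i)} \subset \Gx$ from $u_i$ to $v_i$ with $|\flo(\gamma^{(i)}) - \fdo(u_i, v_i)| < \varepsilon/m$. The concatenation
\[
\gamma_n := \gamma^{(1)} \cdot [v_1, u_2] \cdot \gamma^{(2)} \cdots [v_{m-1}, u_m] \cdot \gamma^{(m)}
\]
then meets Definition~\ref{gentightdefn} as a $(1, l)$-generalized tight path with distinguished horospheres $X_1, \dots, X_{m-1}$. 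Summing Floyd lengths together with the triangle inequality gives $\flo(\gamma_n) \le \sfdo(\xi, \eta) + O(\varepsilon)$, while $\flo(\gamma_n) \ge \sfdo((\gamma_n)_-, (\gamma_n)_+)$ follows from (\ref{floydshortcut}); letting $\varepsilon \to 0$ yields the required sequence.

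The main obstacle is the rigorous verification of the generalized-tight condition $d(y_i, x_{i+1}) \ge l$, where $y_i, x_{i+1}$ are the entry/exit points of $\gamma^{(i)}, \gamma^{(i+1)}$ into the maximal subsegments of $N_\epsilon(X_i)$ containing $v_i$ and $u_{i+1}$ respectively. One must prevent $\gamma^{(i)}$, which approximates the Floyd geodesic $[u_i, v_i]$, from spending significant time in $N_\epsilon(X_i)$ before reaching $v_i$; this can be controlled via Lemma~\ref{bddoverlap} on the uniform bound for horospherical overlaps along a tight path, together with the transitional control supplied by Lemma~\ref{uniformtrans}, yielding $d(y_i, v_i), d(x_{i+1}, u_{i+1}) = O(1)$ so that $d(y_i, x_{i+1}) \ge d(v_i, u_{i+1}) - O(1) \ge l$ for sufficiently fine approximations.
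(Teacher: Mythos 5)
Your overall strategy (unpacking the shortcut chain, pushing the boundary points into the parabolic cosets, and approximating each Floyd segment by a tight path via Lemma~\ref{tithapprox}) is the same as the paper's, but the step you yourself flag as ``the main obstacle'' is a genuine gap, and your proposed fix does not work. You claim that $d(y_i,v_i)$ and $d(x_{i+1},u_{i+1})$ are $O(1)$, citing Lemmas~\ref{bddoverlap} and~\ref{uniformtrans}. Neither lemma gives this: Lemma~\ref{bddoverlap} only bounds the overlap of two maximal components of one tight path associated with \emph{different} horospheres, and Lemma~\ref{uniformtrans} concerns genuine (global) quasi-geodesics with both endpoints near a single $X\in\mathbb P$, whereas a tight path is only a local quasi-geodesic and here only its terminal endpoint $v_i$ is near $X_i$. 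In fact the claim is false in general: as $v_i\to\eta_i\in\partial_\lambda G_{p_i}$, the tight path from $u_i$ to $v_i$ may enter $N_\epsilon(X_i)$ near the projection of $u_i$ and then travel an arbitrarily long terminal stretch inside $N_\epsilon(X_i)$ to reach $v_i$ (tight paths can have arbitrarily long $(\epsilon,K)$-components -- that is exactly why truncations are introduced). So the entry point $y_i$ can be arbitrarily far from $v_i$ and can land close to the exit point $x_{i+1}$ of the next path, and then $d(y_i,x_{i+1})\ge l$ fails no matter how sharp your approximations $v_i,u_{i+1}$ are; making $d(v_i,u_{i+1})\ge l$ controls the wrong pair of points. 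The paper does not try to force the condition at all: it chooses the approximating points $\tilde\eta_i,\tilde\xi_{i+1}$ with the quantitative conditions (1)--(2) (Floyd-distance $\le\min\{\kappa/4,\varepsilon/6m\}$ from the limit points, and any path of length $\le 3l$ between them has $\fls_o$-length $\le\kappa/4$), and when $d(y_j,x_{j+1})<l$ it deduces $\max\{d(\tilde\eta_j,y_j),d(\tilde\xi_{j+1},x_{j+1})\}\ge l+1$, applies Lemma~\ref{Floydgeod} to reroute through a geodesic $[y_j,x_{j+1}]$ without increasing the total Floyd length, drops the pair $(\eta_j,\xi_{j+1})$ from the chain, re-approximates by a single tight path, and iterates (finitely many times since $m$ is finite). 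Your proof is missing this merging mechanism, which is the heart of the argument.

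A second, smaller problem: you build $\gamma_n$ as the concatenation $\gamma^{(1)}\cdot[v_1,u_2]\cdot\gamma^{(2)}\cdots\gamma^{(m)}$ and count the connectors in $\flo(\gamma_n)$. A generalized tight path in Definition~\ref{gentightdefn} is allowed to be disconnected, and its Floyd length should be the sum over the tight pieces only. The connectors cannot be neglected: since $\fdo(v_i,u_{i+1})$ stays bounded below by roughly $\kappa=\min_i\fds_o(\eta_i,\xi_{i+1})>0$, the visibility Lemma~\ref{karlssonlem} forces the geodesic $[v_i,u_{i+1}]$ to pass within bounded distance of $o$, so its Floyd length is bounded below by a positive constant independent of the approximation. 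With the connectors included, $\flo(\gamma_n)$ exceeds $\sfdo(\xi,\eta)$ by at least $(m-1)$ times a fixed positive amount whenever the chain genuinely has $m\ge 2$ links, so the stated limit would fail for your $\gamma_n$ as defined.
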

\begin{proof}
By definition  of the shortcut metric (\ref{shortcutdefn}), for any $\varepsilon>0$, there are
finitely many pairs $(\eta_i, \xi_{i+1}) \in \omega$ where $1\le i < m$ such that
\begin{equation}\label{sumtightpaths}
\sfdo(\xi, \eta) \ge \sum_{1\le i \le m} \fdo (\xi_i, \eta_{i}) -\varepsilon/3,
\end{equation}
where $\xi_1:=\xi, \eta_{m}:=\eta$. If $m=1$, the proof is completed by Lemma \ref{tithapprox}. Assume that $m\ge 2$.

Let $\kappa=\min\{\fds_o(\eta_i, \xi_{i+1}): 1\le i < m\}>0$. For each $1\le i< m$, there exists $X_i\in \mathbb P$ such that $\eta_i, \xi_{i+1} \in \partial_\lambda(X_i)$ where $\partial_\lambda(X_i)$ is the topological boundary of $X_i$ in $\pGf$.

First we claim that one can choose $\tilde \xi_1, \tilde \eta_{m}$ and $\tilde \eta_i, \tilde \xi_{i+1} \in X_i$ for each $1\le i< m$ such that the following two conditions hold,
\begin{enumerate}
\item
$\max\{\fds_o(\tilde \xi_i, \xi_i), \fds_o(\tilde \eta_i, \eta_i)\} \le \min\{\kappa/4, \frac{\varepsilon}{6m}\}$ for $1\le i\le m$.
\item
If there exists a path $\alpha$ between $\tilde \eta_i, \tilde \xi_{i+1}$ for $1\le i< m$ such that $\len(\alpha)\le 3l$, then it has $\fls_o$-length at most $\kappa/4$.

\end{enumerate}
Indeed, (1) is true for $\tilde \xi_i$ and $\tilde \eta_i$ sufficiently close to $\xi_i$ and $\eta_i$
respectively.  To prove (2), let $R=\min\{d(1, \tilde \xi_i), d(1,\tilde\eta_i): 1\le i\le m\}.$  We have $d(1, \alpha)\geq R-3l$. So for sufficiently large  $R$ the statement (2) follows from the visibility lemma \ref{karlssonlem}.

By Lemma \ref{tithapprox}, we can connect
$\tilde \xi_{i}, \tilde \eta_{i}$ by a $(1, l)$-tight path $\gamma_i$ for $1\le i\le m$ such that $(\gamma_i)_-=\tilde \xi_{i}$ and $(\gamma_i)_+=\tilde \eta_{i}$ and
\begin{equation}\label{approxbytightpaths}
|\fdo (\tilde \xi_i, \tilde \eta_{i})-  \flo (\gamma_i)|\le \displaystyle{\varepsilon\over 6m}.
\end{equation}
By the condition (1) above,  (\ref{sumtightpaths}) and (\ref{approxbytightpaths}) , the following holds
\begin{equation}\label{sumoftightpaths}
\sfdo(\xi, \eta) \ge  \sum_{1\le i \le m} \flo (\gamma_i) -{5\varepsilon \over 6}.
\end{equation}

Let $y_{i}, x_{i+1}$ be the entry and exit points of $\gamma_{i}$ and $\gamma_{i+1}$ in $N_\epsilon(X_i)$ respectively. If $d(y_i, x_{i+1}) \ge  l$ for all $1\le i < m$, then we are done: $\{\gamma_i\}$ give the generalized tight path. Otherwise, assume that $d(x_{j+1}, y_{j}) \le l$ for some $1\le j < m$.

Observe that $\max\{d(\tilde \eta_{j}, y_j),  d(\tilde \xi_{j+1}, x_{j+1})\}\ge l+1$. Indeed, if not, it follows that $\tilde \eta_j, \tilde \xi_{j+1}$ are connected by a path of length at most $3l$. By the above condition (2), we have $\fds_o(\tilde \eta_j, \tilde \xi_{j+1}) \le \kappa/4$. By the condition (1), we have $\fds_o(\eta_j, \xi_{j+1}) \le 3\kappa/4$.  We arrive at a contradiction with the definition of $\kappa$. Thus, we proved that $\max\{d(\tilde \eta_{j}, y_j),  d(\tilde \xi_{j+1}, x_{j+1})\}\ge l+1$. By Lemma \ref{Floydgeod}, we obtain the following
$$
\flo([y_j, \tilde \eta_j]_{\gamma_j}) + \flo([\tilde \xi_{j+1}, x_{j+1}]_{\gamma_{j+1}})  \ge \flo([y_j, x_{j+1}]).
$$
which yields
$$
\begin{array}{lll}
\flo(\gamma_j)+\flo(\gamma_{j+1}) &\ge \flo([\tilde \xi_{j}, y_j]_{\gamma_j}) +\flo([y_j, x_{j+1}]) +  \flo([x_{j+1}, \tilde \eta_{j+1}]_{\gamma_{j+1}})\\
&\ge \fdo(\tilde \xi_{j}, \tilde \eta_{j+1}).
\end{array}
$$
This implies that we can drop the pair $(\eta_j, \xi_{j+1})$ in (\ref{sumtightpaths}) such that the corresponding inequality in (\ref{sumoftightpaths}) still holds. Precisely, choose a $(1, l)$-tight path $\alpha_{j}$ between $\tilde \xi_j, \tilde \eta_{j+1}$ such that $$|\flo(\alpha_{j})-\fdo (\tilde \xi_j, \tilde \eta_{j+1})|\le {\varepsilon \over 6m}.$$ So $\flo(\gamma_j)+\flo(\gamma_{j+1}) \ge \flo(\alpha_{j})-\displaystyle{\varepsilon \over 6m}. $
It follows by (\ref{sumoftightpaths}), $$\sfdo(\xi, \eta) \ge \sum_{1\le i \le m; i \ne j, j+1} \flo (\gamma_i) +\flo(\alpha_{j}) - {5\varepsilon\over 6} - {\varepsilon \over 6m}.$$

Consider the new set of $(1, c)$-tight paths $\gamma_i$ ($ i \ne j, j+1$) and $\alpha_{j}$. Repeat the above argument for those $j$ for which $d(x_{j+1}, y_{j}) \le l$.  Since $m$ is finite, for every $\varepsilon>0$ we obtain a generalized tight path $\gamma$ such that $\sfdo(\xi, \eta) \ge \flo(\gamma)-\epsilon$. The Proposition is proved.
\end{proof}

\subsection{Floyd and shortcut metrics on uniformly conical points}
A priori, the shortcut metrics as quotient of the Floyd metrics might be distorted in a unexpected way. The main result of this subsection is to show that this distortion is not severe for uniformly conical points.

Fix a basepoint $o\in G$.  Recall that, in Section \ref{Section2.4},  $\uGLo$ denotes the set of uniformly conical points  $\xi\in \pG$ for which there exists an $(\epsilon, R, L)$-transitional geodesic ray between $o$ and $\xi$. Similarly, denote by $\uGfLo$ the set of uniformly conical points in $\pGf$ \textit{based} at $o$.  By Proposition \ref{FloydKernel}, there exists one-to-one correspondence between $\uGLo$ and $\uGfLo$.


The following is a version of Proposition \ref{tightbetweenconical} for generalized tight paths.
\begin{prop}\label{gromovproduct}
There exist $l_0, D>0$ such that for any $L>0$, there exists $M=M(L)>0$ with the following property.

Denote $\alpha_1=[o, \xi]$ and $\alpha_2=[o, \eta]$ for $\xi\ne \eta \in {\mathcal C}_LG$. Let $\gamma$ be a generalized $(1, l)$-tight path for some $l\ge l_0$ with $\gamma_-\in \alpha_1$ and $\gamma_+\in \alpha_2$. If $d(o, \gamma_-), d(o, \gamma_+)\gg 0$, then there exists $z \in \gamma$ such that $d(z, \alpha_1\cup \alpha_2)\le D$ and $d(z, \alpha_i) \le M$ for $i=1, 2$. Moreover, $|d(o, z)-d(o, [\xi, \eta])|\le M$.
\end{prop}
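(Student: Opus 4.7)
The plan is to reduce Proposition \ref{gromovproduct} to Proposition \ref{tightbetweenconical} via the truncation construction of Lemma \ref{jointpath}. Applying Lemma \ref{jointpath} with $c=1$, there exist universal constants $l_0, K>0$ and $c'\ge 1$ such that for every $l\ge l_0$ the $K$-truncation $\tilde\gamma$ of $\gamma$ is a $c'$-quasi-geodesic. By the construction in Definition \ref{gentightdefn}, $\tilde\gamma$ has the same endpoints as $\gamma$, so $\tilde\gamma_-\in \alpha_1$ and $\tilde\gamma_+\in\alpha_2$. Since $\xi,\eta\in \mathcal C_LG$, the rays $\alpha_1,\alpha_2$ are $(\epsilon,R,L)$-transitional (Lemma \ref{charuc} and the definition of the basepointed uniformly conical set).

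Assuming $d(o,\gamma_-), d(o,\gamma_+)$ are sufficiently large, I would apply parts (2) and (3) of Proposition \ref{tightbetweenconical} to the $c'$-quasi-geodesic $\tilde\gamma$ together with the $(\epsilon,R,L)$-transitional rays $\alpha_1,\alpha_2$. This produces an $(\epsilon_{c'},R_{c'})$-transitional point $z'\in\tilde\gamma$ satisfying $d(z',\alpha_1\cup\alpha_2)\le D_0$ for the universal constant $D_0$ of Proposition \ref{tightbetweenconical}(2), $d(z',\alpha_i)\le M_0$ for $i=1,2$ with $M_0=M_0(L)$, and $|d(o,z')-d(o,[\xi,\eta])|\le M_0$. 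It remains to transfer $z'$ from the auxiliary path $\tilde\gamma$ to a nearby point on $\gamma$ itself. By construction, $\tilde\gamma$ consists of subpaths of $\gamma$ interleaved with inserted geodesic segments of two types: the segments $[y_i,x_{i+1}]$ replacing $(\epsilon,K)$-components inside some $\gamma_i$, and the bridging geodesics joining consecutive tight paths $\gamma_i,\gamma_{i+1}$ through a horosphere $X_i\in\mathbb P$. In both cases the inserted geodesic has both endpoints in $N_\epsilon(X)$ for some $X\in \mathbb P$, and by quasiconvexity of horospheres (Lemma \ref{periphsystem}) it lies entirely in $N_{\epsilon'}(X)$ for a uniform constant $\epsilon'$. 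Moreover all such endpoints belong to $\gamma$.

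Since $z'$ is $(\epsilon_{c'},R_{c'})$-transitional with $\epsilon_{c'}\ge \epsilon'$ by Convention \ref{epsilonR1}, it cannot be $R_{c'}$-deep in any $X\in\mathbb P$. Hence either $z'$ already lies on $\gamma$ (and we take $z=z'$), or $z'$ lies on some inserted horospheric segment but at distance at most $R_{c'}$ from one of its endpoints; in the latter case we take $z$ to be that endpoint, which lies on $\gamma$. In both cases $d(z,z')\le R_{c'}$. Setting $D:=D_0+R_{c'}$ (still a universal constant) and $M:=M_0+R_{c'}$ yields all three required estimates for $z\in\gamma$. The main obstacle in this argument is precisely this last transfer step: a priori the point supplied by Proposition \ref{tightbetweenconical} lives on the quasi-geodesic $\tilde\gamma$ rather than on $\gamma$, and the universal-versus-$L$-dependent distinction in the two bounds must be preserved. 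The key point resolving this is that transitional points, by their very definition, cannot sit in the interior of horospheric neighborhoods, while every inserted segment in the truncation lies inside such a neighborhood—so any transitional point of $\tilde\gamma$ that is not already on $\gamma$ must be within a universal distance $R_{c'}$ of an endpoint of an inserted segment, all of which are points of $\gamma$.
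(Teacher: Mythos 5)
Your strategy is the same as the paper's: truncate the generalized tight path via Lemma \ref{jointpath} into a $c'$-quasi-geodesic $\tilde\gamma$ with the same endpoints, apply Proposition \ref{tightbetweenconical} (2)--(3) to $\tilde\gamma$ together with the two $(\epsilon_1,R_1,L)$-transitional rays to produce an $(\epsilon_{c'},R_{c'})$-transitional point $z'$ on $\tilde\gamma$, and then transfer $z'$ back to $\gamma$ at a cost that is uniform in $L$ -- which is exactly the point you correctly single out as essential for keeping $D$ independent of $L$.

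The only step that is not right as written is the transfer estimate. From the fact that $z'$ is not $(\epsilon_{c'},R_{c'})$-deep you obtain a witness $w\in\tilde\gamma\cap B(z',R_{c'})$ with $w\notin N_{\epsilon_{c'}}(X)$; this $w$ need not be an endpoint of the inserted segment (it may lie on an entirely different portion of $\tilde\gamma$, possibly on another inserted segment), so the claimed bound $d(z',\gamma)\le R_{c'}$ does not follow. What does follow, since the subpath of $\tilde\gamma$ from $z'$ to $w$ must pass through an endpoint $p\in\gamma$ of the inserted segment and $\tilde\gamma$ is a $c'$-quasi-geodesic, is $d(z',p)\le \ell([z',w]_{\tilde\gamma})\le c'(R_{c'}+1)$, still a constant depending only on $c=1$, which suffices. (Relatedly, the inequality $\epsilon_{c'}\ge\epsilon'$, needed to force $w$ off the inserted segment, is not literally part of Convention \ref{epsilonR1} and should be arranged explicitly.) The paper handles this step instead by applying Lemma \ref{uniformtrans} to the inserted geodesic: if $z'$ were farther than $L_1=L(\epsilon_c,R_{c'})$ from both of its endpoints it would be deep, contradicting transitionality, giving $d(z',\gamma)\le 2L_1$ with $L_1$ uniform. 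With either repair your argument is complete and yields uniform $D$ and $M=M(L)$ as required.
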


\begin{proof}
Let $l_0, c'\ge c, K>0$ given by Lemma \ref{jointpath} such that the $K$-truncation $\tilde \gamma$ of a generalized $(c, l)$-tight $\gamma$ for $l\ge l_0$  is a $c'$-quasi-geodesic. By Proposition \ref{tightbetweenconical}, there exists an $(\epsilon_{c'}, R_{c'})$-transitional point $z$ in $\tilde \gamma$ such that the conclusion of this Proposition holds. If the point $z$ lies on $\gamma$, then we are done. So below, we assume that $z \notin \gamma$, and then have two cases to consider as follows.

Let $L_1=L(\epsilon_{c}, R_{c'})$ be given by Lemma \ref{uniformtrans}, where $\epsilon_{c'}\ge \epsilon_{c} \ge \epsilon_{1}$ and they all satisfy Convention \ref{epsilonR1}.  Assume that $l_0 \ge 2L_1$.

\textbf{Case 1.}
The point $z$ lies in some $(\epsilon_{c}, K)$-component $\beta$ of a $(c, l)$-tight path $\gamma_i$.  Then $\max\{d( \beta_-, \gamma), d( \beta_+, \gamma)\} \le L_1$.
Indeed, if not,  then $\min\{d(z, \beta_-), d(z, \beta_+)\} \ge L_1$.  By applying Lemma \ref{uniformtrans} for the geodesic $\beta$, we have $z \in \tilde \gamma$ is $(\epsilon_{1}, R_{c'})$-deep in $X_j$. This is a contradiction, as $z$ is an $(\epsilon_{c'}, R_{c'})$-transitional point in $\tilde \gamma$.


\textbf{Case 2.}  The point $z$ lies in some $[y_j, x_{j+1}]$ for some $j$, where $[y_j, x_{j+1}]$ is given in Definition \ref{gentightdefn} of a generalized tight path.
By the same reasoning as above, we apply Lemma \ref{uniformtrans} for the geodesic $[ (\beta)_-, (\beta)_+]$. Then $\min\{d(z, y_j), d(z, x_{j+1})\} \le L_1$.

Thus, we proved that $z$ has a distance at most $2L_1$ to a point in $\gamma$. The conclusion follows as a consequence of Proposition \ref{tightbetweenconical}.
\end{proof}

The main result of this subsection   is the following.

\begin{prop}[Visual Floyd/shortcut metric]\label{FloydMetric}
There exists $0< \lambda_0<1$ such that the following holds for any $L>0$ and $\lambda\in [\lambda_0, 1)$.

We have $$\fdo(\xi,
\eta) \asymp_{L} \lambda^{n},\; \forall \;\xi \ne \eta \in \uGfLo
$$
and
$$\sfdo(\xi,
\eta) \asymp_{L} \lambda^{n},\; \forall \;\xi \ne \eta \in \uGLo $$ where $n=d(o, [\xi,\eta])$.
\end{prop}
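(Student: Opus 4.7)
The plan is to prove the two bilipschitz estimates $\fdo(\xi,\eta)\asymp_L \lambda^n$ and $\sfdo(\xi,\eta)\asymp_L\lambda^n$ by handling the upper and lower bounds separately. The upper bounds are essentially a length computation, while the lower bounds extract a uniformly transitional point from the geodesic (resp.\ generalized tight path) connecting $\xi$ and $\eta$ and pass through the bilipschitz change-of-basepoint inequality (\ref{lambdabilip}).

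For the upper bound I would fix a bi-infinite word geodesic $\gamma=[\xi,\eta]$ in $\Gx$, which exists because $\Gf$ is visual. Let $v\in\gamma$ be a closest point to $o$, so $d(o,v)=n$. Since the word distance from $o$ grows at least linearly as one moves along $\gamma$ away from $v$, the Floyd length is bounded by
\[
\flo(\gamma)\le 2\sum_{k\ge n}\lambda^k=\frac{2\lambda^n}{1-\lambda},
\]
hence $\fdo(\xi,\eta)\le\flo(\gamma)\prec\lambda^n$, and by (\ref{floydshortcut}) the same estimate holds for $\sfdo$.

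For the lower bound in the Floyd case, approximate $\xi,\eta$ by points $x_k\in[o,\xi]$, $y_k\in[o,\eta]$ with $x_k\to\xi$, $y_k\to\eta$. The rays $[o,\xi],[o,\eta]$ are $(\epsilon_1,R_1,L)$-transitional by membership in $\uGfLo$, so Proposition \ref{tightbetweenconical}(1,3) applied to the $1$-quasi-geodesic $[x_k,y_k]$ yields that $[x_k,y_k]$ is $(\epsilon_1,R_1,L')$-transitional and $d(o,[x_k,y_k])\le n+M$ for $k\gg0$, with $L',M=L'(L),M(L)$. Projecting $o$ to $[x_k,y_k]$ and moving at most $L'$ along the geodesic gives an $(\epsilon_1,R_1)$-transitional point $w_k\in[x_k,y_k]$ with $d(o,w_k)\le n+M+L'$. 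Lemma \ref{floydtrans}(1) gives $\fds_{w_k}(x_k,y_k)\ge\kappa$ for a universal $\kappa$, so (\ref{lambdabilip}) yields
\[
\fdo(x_k,y_k)\;\ge\;\lambda^{d(o,w_k)}\fds_{w_k}(x_k,y_k)\;\ge\;\kappa\,\lambda^{n+M+L'},
\]
and letting $k\to\infty$ (using continuity of $\fdo$ on $\Gf$) gives $\fdo(\xi,\eta)\succ_L\lambda^n$.

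For the shortcut case I would run the same scheme with generalized tight paths. By Proposition \ref{shortcutpaths}, pick a sequence of generalized $(1,l)$-tight paths $\gamma_k=\bigsqcup_i\gamma_{k,i}$ with $(\gamma_k)_-\in[o,\xi]$, $(\gamma_k)_+\in[o,\eta]$, endpoints tending to the unique Floyd preimages of $\xi,\eta$ (Proposition \ref{FloydKernel}), and $\flo(\gamma_k)\to\sfdo(\xi,\eta)$. Proposition \ref{gromovproduct} supplies, for large $k$, a point $z_k\in\gamma_k$ with $|d(o,z_k)-n|\le M(L)$; moreover the proof of that proposition produces an $(\epsilon_{c'},R_{c'})$-transitional point of the truncation $\tilde\gamma_k$ at uniformly bounded distance from $z_k$. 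Using the bounded-overlap Lemma \ref{bddoverlap}, this transitional point can be pulled back to an $(\epsilon_1,R_1)$-transitional point $\hat z_k$ of the tight subpath $\gamma_{k,i_0}$ containing $z_k$, still satisfying $d(o,\hat z_k)\le n+M'$ for some $M'=M'(L)$. Lemma \ref{tightpath} then gives $\fds_{\hat z_k}((\gamma_{k,i_0})_-,(\gamma_{k,i_0})_+)\ge\kappa$, and combining the trivial bounds $\flo(\gamma_k)\ge\flo(\gamma_{k,i_0})\ge\fdo((\gamma_{k,i_0})_-,(\gamma_{k,i_0})_+)$ with (\ref{lambdabilip}) gives
\[
\flo(\gamma_k)\;\ge\;\kappa\,\lambda^{d(o,\hat z_k)}\;\ge\;\kappa\,\lambda^{n+M'}.
\]
Passing to the limit yields $\sfdo(\xi,\eta)\succ_L\lambda^n$. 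The delicate step, which I expect to be the main obstacle, is the transfer of the transitional point from the truncation $\tilde\gamma_k$ back to the underlying tight subpath: a transitional point on $\tilde\gamma_k$ can lie on a truncation chord rather than on $\gamma_{k,i_0}$ itself, and one must use Lemma \ref{bddoverlap} to argue that the entry/exit points of the replaced horoball excursions are transitional points of $\gamma_{k,i_0}$ lying at bounded distance, so that Lemma \ref{tightpath} still applies with uniform constants depending only on $L$.
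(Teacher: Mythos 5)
Your lower bounds are sound, but the upper bound as written has a genuine gap. You claim that along a bi-infinite geodesic $\gamma=[\xi,\eta]$ the word distance to $o$ ``grows at least linearly'' away from the closest point $v$, and deduce $\flo(\gamma)\le 2\sum_{k\ge n}\lambda^k$. For a general geodesic in a non-hyperbolic Cayley graph this is false: all one knows is $d(o,\gamma(t))\ge\max\{n,|t|-n\}$, and inside a horosphere (think of a coset of a parabolic $\mathbb Z^2$) a geodesic segment of length comparable to $n$, or to $L$, can stay at essentially constant distance from $o$; the resulting estimate is only $\flo(\gamma)\prec n\lambda^{n}$, which does not give $\asymp_L\lambda^n$. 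To repair your route you must use that $\xi,\eta\in\uGfLo$ forces $[\xi,\eta]$ to be transitional with constants depending on $L$ --- note that Proposition \ref{tightbetweenconical}(1) is stated for finite quasi-geodesics with endpoints on the rays, so a limiting argument is needed for the bi-infinite geodesic --- and then invoke Lemma \ref{transcontract} plus a projection argument to get $d(o,\gamma(t))\ge n+|t|-C(L)$; the constant in the upper bound then depends on $L$, which is all the statement requires (and all that is true). The paper sidesteps this entirely: it never touches $[\xi,\eta]$ for the upper bound, but takes points $x_k\in[o,\xi]$, $y_k\in[o,\eta]$ within $M(L)$ of the center point $z_k$ furnished by Proposition \ref{gromovproduct}, and uses that along geodesic rays issuing from $o$ the Floyd tails are exact geometric series, yielding $\sfdo(\xi,\eta)\le 2\lambda^{n-2M}\bigl(\frac{1}{1-\lambda}+M\bigr)$.

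For the lower bounds you take a genuinely different, and legitimate, route. Your Floyd-case argument (transitional point on $[x_k,y_k]$ at distance $\le n+M+L'$ from $o$, Lemma \ref{floydtrans}, then (\ref{lambdabilip}) and continuity) is exactly the ``easier'' case the paper leaves to the reader. In the shortcut case the paper does not localize at a tight piece: it compares $\flo(\gamma_k)$ with the Floyd length of the segment $[w_k,v_k]$ of the ray $\alpha_2$ via Lemma \ref{floydrays}, and this is why it is essential there that the constant $D$ of Proposition \ref{gromovproduct} is independent of $L$ --- otherwise $\lambda_0$, which must be chosen before $L$, could not be taken with $\frac{\lambda^{D}}{1-\lambda}-\frac{D}{\lambda^{D}}>0$. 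Your alternative (pull the transitional point of the truncation back to a transitional point $\hat z_k$ of the tight piece $\gamma_{k,i_0}$, then use Lemma \ref{tightpath}, $\flo(\gamma_k)\ge\flo(\gamma_{k,i_0})\ge\fdo((\gamma_{k,i_0})_-,(\gamma_{k,i_0})_+)$ and (\ref{lambdabilip})) does work: the pull-back is precisely Cases 1 and 2 in the proof of Proposition \ref{gromovproduct}, where Lemma \ref{uniformtrans} places the transitional point within a uniform $L_1$ of an endpoint of an $(\epsilon,K)$-component or of an entry/exit point $y_j,x_{j+1}$, and the final clause of Lemma \ref{bddoverlap} makes those endpoints $(\epsilon,R)$-transitional in the pieces; you should spell out the chord case $[y_j,x_{j+1}]$ explicitly rather than only the replaced excursions. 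A pleasant by-product of your version is that the constraints on $\lambda_0$ come only from $l_0(1)$, Proposition \ref{shortcutpaths} and the standing conventions, hence are $L$-independent, so the quantifier order of the Proposition is respected without the paper's delicacy about $D$; the $L$-dependence sits only in the multiplicative constant $\kappa\lambda^{M'}$, as it should.
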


\begin{proof}
Let us consider the shortcut metric case only. The Floyd metric case is similar and even easier.

Let $\alpha_1, \alpha_2$ be two $(\epsilon_1, R_1, L)$-transitional geodesic rays
originating at $o$ and terminating at $\xi, \eta$
respectively.

Let $l_0, D>0$ be given by Proposition \ref{gromovproduct}, and we choose $\lambda_0 \in ]0, 1[$ verifying Proposition \ref{shortcutpaths} for $l=l_0$.  Then by Propositions   \ref{gromovproduct} and \ref{shortcutpaths} there exists $M=M(L)>0$ such that the following holds:
\begin{enumerate}
\item
For each $k>0$, there exists a sequence of generalized $(1, l_0)$-tight paths $\gamma_k$ with $(\gamma_k)_- \in \alpha_1, (\gamma_k)_+ \in \alpha_2$ and such that $(\gamma_k)_-  \to \xi$,
$(\gamma_k)_+  \to \eta$ and \begin{equation}\label{totalength2}
|\flo(\gamma_k) -\sfdo(\xi, \eta)|\le  1/k.
\end{equation}
\item
There exists $z_k \in \gamma_k$ such that $d(z_k, \alpha_1\cup \alpha_2)\le D$ and $d(z_k, \alpha_i) \le M$ for $i=1, 2$. Moreover, $|d(o, z_k)-d(o, [\xi, \eta])|\le M$.
\end{enumerate}
Denote $u_k:=(\gamma_k)_-$ and $v_k:=(\gamma_k)_+$.

\textbf{Upper bound.} Choose $x_k\in \alpha_1$ and $y_k\in \alpha_2$ such that $\max\{d(z_k, x_k), d(z_k, y_k)\}\le M$. Then for every point $t\in [x_k, z_k]\cup[z_k,y_k]$ we have
$$d(o,t) \ge d(o, z_k) -M\ge n-2M.$$ Hence
$$\max\{\flo([x_k, z_k]), \flo([y_k,z_k])\}\leq M\cdot \lambda^{n-2M}.$$
We also have
$$
\begin{array}{lll}
\max\{\flo([x_k, \xi]_{\alpha_1}), \flo([y_k, \eta]_{\alpha_2})\} \le \frac{\displaystyle{\lambda^{\min\{d(o, x_k), d(o, y_k)\}}}}{1-\lambda}  \le \frac{\lambda^{n-2M}}{1-\lambda}.
\end{array}
$$
It follows that
\begin{equation}\label{halflength1}
\begin{array}{lll}
\sfdo(\xi, \eta)
&\le \flo([x_k,\xi]_{\alpha_1}) + \flo([y_k, \eta]_{\alpha_2}) + \flo([x_k, z_k])+\flo([y_k, z_k])   \\
\\
&\le 2 \lambda^{n-2M} (\frac{1}{1-\lambda} +
M).
\end{array}
\end{equation}

Let $C_1:=2 \lambda^{-2M} (\frac{1}{1-\lambda} +
M)$. Then $\sfdo(\xi, \eta) \le  C_1 \lambda^{n}.$

\textbf{Lower bound.} Since $d(z_k, \alpha_1\cup \alpha_2)\le D$, there exists $w_k \in \alpha_1\cup \alpha_2$ such that $d(z_k, w_k)\le D$. Assume that $w_k\in \alpha_2$ for concreteness.

By Lemma \ref{floydrays}, any segment of $\alpha_2$ is a Floyd geodesic. Since $v_k \to \eta$ and $d(o, w_k)\le n+D$, we can assume that $w_k\in [o, w_k]_{\alpha_2}$ for all $k\gg 0$. So
$$
\begin{array}{lll}
\fdo(v_k, w_k)=\flo([w_k, v_k]_{\alpha_2}) &= \frac{\lambda^{d(o, w_k)}}{1-\lambda}-\frac{\lambda^{d(o, v_k)}}{1-\lambda} \\
\\
& \ge \frac{\lambda^{d(o, z_k)+D}}{1-\lambda} -\frac{\lambda^{d(o, v_k)}}{1-\lambda} .
\end{array}
$$
We have $$\fdo(v_k, w_k)  \le \flo(\gamma_k) +\flo([w_k, z_k]).$$
Since $d(z_k, w_k)\le D$ we have
$$
\flo([w_k, z_k]) \le D\cdot \lambda^{d(o, z_k)-D}.
$$
Thus
\begin{equation}\label{halflength2}
\begin{array}{lll}
\flo(\gamma_k) &\ge \fdo(v_k, w_k) -\flo([w_k, z_k])  \\
\\
&\ge \flo([v_k, w_k]_{\alpha_2}) -\flo([w_k, z_k])  \\
\\
&\ge (\frac{\lambda^{D}}{1-\lambda} -
\frac{D}{\lambda^{D}})\lambda^{d(o, z_k)}-\frac{\lambda^{d(o, v_k)}}{1-\lambda}\\
\\
&\ge (\frac{\lambda^{D}}{1-\lambda} -
\frac{D}{\lambda^{D}})\cdot\lambda^M\cdot \lambda^{n}-\frac{\lambda^{d(o, v_k)}}{1-\lambda}.
\end{array}
\end{equation}
Since $D$ does not depend on $L$ by Lemma \ref{gromovproduct},  there exists   $1>\lambda_0>0$ such that
\begin{equation}\label{crucialinquality}
\frac{\lambda^{D}}{1-\lambda} -
\frac{D}{\lambda^{D}}\ge \frac{\lambda_0^{D}}{1-\lambda_0} -
\frac{D}{\lambda_0^{D}}>0,
\end{equation}
for any $\lambda \in [\lambda_0, 1)$.
Let $C_2:=(\frac{\lambda_0^{D}}{1-\lambda_0} -
\frac{D}{\lambda_0^{D}}) \cdot\lambda^M>0$. Note that $d(o, v_k) \to \infty$ as $k\to \infty$.   By (\ref{totalength2}) and (\ref{halflength2}),  passing to the limit when $k\to \infty$,  we obtain
$$
\sfdo(\xi, \eta) \ge  C_2 \lambda^{n},
$$
for any $\xi\ne \eta \in  \uGLo$ and any $L>0$. The proof is then complete.
\end{proof}
\begin{rem}
\begin{enumerate}
\item
The fact that the constant $D$ does not depend on $L$ is crucial for the choice of $\lambda_0$ in (\ref{crucialinquality}).
\item
This lemma gives an asymptotic formula for two uniformly conical points with respect to Floyd metric and shortcut metric.  This could be used to give an alternative proof of Lemma \ref{ShadowDisk}, but cannot be derived from (the proof of) Lemma \ref{ShadowDisk}.
\end{enumerate}
\end{rem}

\section{Appendix: Visual metrics and Floyd metrics are bilipschitz equivalent}
The aim of the Appendix is to give a short proof that  the visual Gromov metric $\nu_{a, o}$ and the Floyd metric $\fdo$ on the boundary $\partial X$ of   a $\delta$-hyperbolic graph $(X, d)$ are bilipschitz equivalent for some choice of parameters $a$ and $\lambda.$ This fact, mentioned in the Introduction, is often considered as a folklore, however we have not found a complete proof of it in the literature   (see e.g. \cite[Lemma 7.2.1]{Gro} and the key inequality after it, or \cite[Formula (1.3)]{Coor},  in both cases the fact is stated without proof).

Recall the definition of the Gromov visual metric $\nu$ on $\partial X$. For a real parameter $a>0$ set $\delta_{a,o}(\xi, \eta)=e^{-a(\xi\vert\eta)},$ where $(\xi\vert\eta)$ denotes  the Gromov product for the basepoint $o.$
Let \begin{equation}\label{defvis}\nu_{a, o}(\xi,\eta)=\inf\Big\{\sum_{i=0}^n\delta_{a,o}(c_{i-1}, c_i)\ :\ c_i\in{\mathcal C}_{\xi,\eta}\Big\},\end{equation} where ${\mathcal C}_{\xi,\eta}$ is the set of chains of points in $\partial X$ such that $c_0=\xi$ and $c_n=\eta.$ If $\displaystyle a < {\ln 2\over 6\delta}$  then $\nu_{a, o}$ is a metric on $\partial X$   satisfying the following  inequality \cite[Proposition 7.10]{GH}:
\begin{equation}\label{productmetric}
(3-2e^{a\delta})\cdot \delta_{a,o}(\xi, \eta)\leq \nu_{a, o}(\xi,\eta)\leq \delta_{a,o}(\xi, \eta)
\end{equation}

 \begin{rem}We note that an inequality similar to (\ref{productmetric}) where the metric $\nu$  is replaced by the Floyd metric (which is our goal now) is formally stated in  \cite[page 5]{Pu}    but no  justification is given.
\end{rem}
For  the Gromov product the following inequality is true \cite[Lemma 2.7]{CDP}:

 \begin{equation}\label{productgeodist}
e^{-4a\delta}\cdot \delta_{a,o}(\xi, \eta)\leq e^{-a d(o,[\xi, \eta])}\leq  \delta_{a,o}(\xi, \eta),
\end{equation}

\noindent where $d(o,[\xi, \eta])$ is the distance in $X$ from $o$ to the union of all geodesics between $\xi$ and $\eta.$ The inequalities  (\ref{productmetric}) and  (\ref{productgeodist}) imply:

\begin{equation}\label{visualmetricP}
\nu_{a, o}(\xi, \eta)\asymp _{C_1}e^{-a\cdot d(o, [\xi, \eta])}
\end{equation}
  for the constant $C_1=\max\{e^{4a\delta}, 3-2e^{a\delta}\}.$

The following proposition provides the bilipschitz equivalence between the  visual metric and the Floyd metric on the boundary of   $X.$

\begin{prop}\label{VisualFloyd}
Let $(X, d)$ be a $\delta$-hyperbolic graph. There exist a constant  $a_0$ such that for any $a\in ]0, a_0]$  there exists a constant $C$ for which
\begin{equation}\label{bilipsch}
\forall \xi, \eta \in \pX: \nu_{a, o}(\xi, \eta) \asymp_C \fdo(\xi, \eta), 
\end{equation}
\noindent where  $\lambda=e^{-a}.$
\end{prop}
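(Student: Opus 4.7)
The plan is to establish, for the Floyd metric, the estimate
\[
\fdo(\xi,\eta) \asymp \lambda^{d(o,[\xi,\eta])}, \quad \xi\ne\eta\in\partial X,
\]
where $\lambda=e^{-a}$. Combined with (\ref{visualmetricP}) this yields the bilipschitz equivalence (\ref{bilipsch}). In spirit this is the hyperbolic specialization of Proposition \ref{FloydMetric}, but since a hyperbolic graph has empty peripheral system $\mathbb P$ the argument simplifies considerably and no relatively hyperbolic machinery is needed; it splits into separate upper and lower bounds.

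First I would prove the upper bound. Fix a bi-infinite geodesic $[\xi,\eta]$ and choose $v\in[\xi,\eta]$ realizing $n:=d(o,[\xi,\eta])$. A standard $\delta$-hyperbolicity computation shows that $v$ is a nearest point of $[\xi,\eta]$ to $o$ up to an additive $\delta$-error, and moreover distance from $o$ grows essentially linearly along each half ray $[v,\xi)$ and $[v,\eta)$: more precisely $d(o,p)\ge d(v,p)+n-C\delta$ for every $p$ on either ray. Summing edge lengths along $[v,\xi)$ therefore gives
\[
\flo([v,\xi)) \;\le\; \lambda^{-C\delta}\sum_{k\ge 0}\lambda^{n+k} \;\le\; \frac{C_\delta\,\lambda^n}{1-\lambda},
\]
and symmetrically for $[v,\eta)$. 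Adding the two yields $\fdo(\xi,\eta)\le 2C_\delta\lambda^n/(1-\lambda)$.

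For the lower bound I would use the fact that in a $\delta$-hyperbolic graph every vertex on any geodesic is vacuously $(\epsilon,R)$-transitional (since $\mathbb P=\emptyset$), so Lemma \ref{floydtrans}(1) applies with uniform constants to give $\bar\fds_v(\xi,\eta)\ge\kappa$ for some $\kappa=\kappa(\delta,\lambda)>0$; alternatively one argues directly that any path in $X$ between small neighbourhoods of $\xi$ and $\eta$ must pass within $\delta$ of $v$ by geodesic stability, so its $\fls_v$-length is bounded below by a fixed amount. Applying this at the closest-point $v$ with $d(o,v)=n$ and invoking the bilipschitz estimate (\ref{lambdabilip}) between Floyd metrics at different basepoints gives
\[
\fdo(\xi,\eta)\;\ge\;\lambda^{d(o,v)}\fds_v(\xi,\eta)\;\ge\;\kappa\,\lambda^n.
\]

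The main obstacle is the choice of the threshold $a_0$: one needs $a<\ln 2/(6\delta)$ so that (\ref{productmetric}) and hence (\ref{visualmetricP}) hold, while simultaneously $\lambda=e^{-a}$ must be sufficiently close to $1$ so that the hyperbolic version of Lemma \ref{floydtrans} supplies a positive uniform constant $\kappa(\delta,\lambda)$; this is exactly the same type of constraint that forces $\lambda\in[\lambda_0,1)$ elsewhere in the paper. Once $a_0$ is fixed to satisfy both requirements, combining the two displayed bounds with (\ref{visualmetricP}) gives (\ref{bilipsch}) with an explicit constant $C=C(\delta,\lambda)$.
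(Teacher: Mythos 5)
Your upper bound $\fdo(\xi,\eta)\prec_{\delta}\lambda^{n}$, $n=d(o,[\xi,\eta])$, is correct and is essentially the paper's own thin-triangle estimate (the paper runs it with finite points $x,y$ and a $\delta$-centre $c$ of the triangle $o,x,y$, then passes to the limit), so that half matches. The genuine gap is in your lower bound, i.e.\ in the claim $\fds_v(\xi,\eta)\ge\kappa$ for $v$ a nearest point of $[\xi,\eta]$ to $o$. Neither of your two justifications works in the stated setting. First, Lemma \ref{floydtrans} is a statement about the Cayley graph of a relatively hyperbolic group, with constants tied to the peripheral system and to Convention \ref{constlambda}; the Appendix proposition concerns an arbitrary $\delta$-hyperbolic graph $(X,d)$ with no group action, so that lemma cannot simply be invoked. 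Second, your ``direct'' alternative --- that any path between small neighbourhoods of $\xi$ and $\eta$ must pass within $\delta$ of $v$ by geodesic stability --- is false: stability applies to geodesics and quasi-geodesics, whereas $\fdo$ is an infimum over \emph{all} paths, and an arbitrary path can avoid any fixed ball around $v$ by detouring around it. Showing that such detours still have Floyd length bounded below is precisely the point that requires an argument, and your sketch does not supply one.

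The paper closes exactly this gap by a different route: it never bounds $\fdo$ from below by $\lambda^{n}$ directly, but instead proves $\nu_{a,o}\le C_1\,\fdo$ by comparing the two infima: every path in the graph yields a chain as in (\ref{defvis}), and for an edge $e=[x,y]$ one has $(x|y)_o\ge d(o,e)-\tfrac12$, hence $e^{-a(x|y)_o}\le e^{a/2}\lambda^{d(o,e)}$, so the chain sum for $\nu_{a,o}$ is dominated (up to a uniform factor) by the Floyd length of the path; the bilipschitz equivalence then follows from (\ref{productmetric}) and (\ref{visualmetricP}) together with the upper bound. You can repair your proof either by this chain comparison (which, combined with (\ref{visualmetricP}), also yields the estimate $\fdo\ge c\,\lambda^{n}$ you wanted), or by first tightening a near-optimal path into a long local geodesic as in the shortening procedure of Subsection \ref{Section2.3} and then using the hyperbolic local-to-global principle to reduce to quasi-geodesics, after which stability, Lemma \ref{karlssonlem}-type visibility and (\ref{lambdabilip}) give the lower bound; as written, however, the lower bound is unjustified.
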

\begin{proof}
By (\ref{defvis}) and (\ref{visualmetricP}) we have

$$\displaystyle \nu_{a,o}(\xi, \eta)=\inf_{{\mathcal C}_{\xi,\eta}}\sum_{i=0}^n \Big\{\lambda^{d(o, [c_{i-1}, c_i])}\ :\ c_i\in {\mathcal C}_{\xi, \eta}\Big\}\leq C_1\rho_{\lambda,o}(\xi, \eta).$$

\noindent Indeed $\rho_{\lambda,o}$ is obtained by taking  infimum of the  expression  (\ref{defvis}) over the subset of $\mathcal C_{\xi,\eta}$ given by  the sets of vertices of paths between $\xi$ and  $\eta$ (see Section 2.2).

To prove the opposite inequality, we need to use the $\delta$-thin triangle property. Consider a geodesic triangle with vertices $o, x, y$ in $X.$ There exists a $\delta$-center $c$ on $[x, y]$ such that $d(c, [o,x]) \le \delta$ and $d(c, [o,y])\le \delta$. For notational simplicity we ignore a small  uniform difference between   different hyperbolicity constants   (see e.g. \cite[Proposition 2.21]{GH}), and denote all of them by $\delta>0$. Since  $|d(o, c)-d(o, [x,y])|$ is uniformly upper bounded to simplify the notations again we  assume that  $d(o,c)=d(o, [x,y]).$

Choose $x'\in [o, x]$ and $y'\in [o, y]$ so that $\max\{d(c, x'), d(c,y')\}\le \delta$. We have $ \min\{d(o, [x', c]), d(o, [y',c]) \geq d(o, [x,y])-\delta$. Hence the Floyd length $\flo([x',c])$ of $[x',c]$ is at most $$\delta\cdot \lambda^{d(o,[x',c])}\geq \delta\cdot \lambda^{d(o, [x,y])-\delta},$$ and similarly for $\flo([y',c]).$  Since   $[x', x]$ and $[y', y]$ are Floyd $\fdo$-geodesics (Lemma \ref{floydrays}) we have $$\displaystyle\flo([x', x])={\lambda^{d(o, x')}\over 1-\lambda}\leq {\lambda^{d(o,[x,y])-\delta}\over 1-\lambda},$$   and the same for     $\flo([y', y])$. Summing all up we obtain the following estimation for the Floyd length of $[x,y]$:
$$
\begin{array}{ll}
\fdo(x, y)&\le \flo([x', x])+\flo([x', c])+\flo([y', c])+\flo([y', y])\\
&\le 2\frac{\lambda^{d(o, [x,y])}}{1-\lambda}+   2\delta\cdot \lambda^{d(o, [x,y])-\delta}\\
&\le C_2\cdot \lambda^{d(o, [x,y])},
\end{array}
$$
for the constant   $\displaystyle C_2=\max\Big\{{2\over 1-\lambda}, {2\delta\over \lambda^{\delta}}\Big\}$.  Passing to the limits when $x\to \xi\in\partial X$ and $y\to\eta\in\partial X$, and using (\ref{visualmetricP}) we obtain (\ref{bilipsch})  for the constant $C=\max\{C_1, C_2\}.$

\end{proof}

\bibliographystyle{amsplain}
 \bibliography{bibliography}

\end{document}